\documentclass[11pt]{article}

\usepackage[letterpaper,margin=1in]{geometry}
\usepackage[T1]{fontenc}
\usepackage[utf8]{inputenc}
\usepackage{times}
\usepackage{microtype}
\usepackage{amsmath,amssymb,amsfonts,amsthm,mathtools}
\usepackage{bm}
\usepackage{graphicx}
\usepackage[font=small,skip=1pt]{subcaption}
\usepackage{array}
\usepackage{booktabs}
\usepackage{multirow}
\usepackage{xcolor}
\usepackage{algorithm}
\usepackage[noend]{algpseudocode}
\usepackage{tikz}
\usetikzlibrary{arrows.meta,positioning,calc,shapes.geometric,fit,backgrounds}
\usepackage{enumitem}
\usepackage[round,sort&compress]{natbib}
\usepackage{authblk}
\usepackage{fancyhdr}
\usepackage[colorlinks=true,linkcolor=blue!55!black,citecolor=green!45!black,urlcolor=blue!55!black]{hyperref}
\usepackage{url}

\theoremstyle{plain}
\newtheorem{theorem}{Theorem}[section]
\newtheorem{lemma}[theorem]{Lemma}
\newtheorem{proposition}[theorem]{Proposition}
\newtheorem{corollary}[theorem]{Corollary}
\theoremstyle{definition}

\newtheorem{assumption}{Assumption}

\theoremstyle{remark}
\newtheorem{remark}[theorem]{Remark}

\newcommand{\R}{\mathbb{R}}
\newcommand{\E}{\mathbb{E}}
\newcommand{\PP}{\mathbb{P}}
\newcommand{\Var}{\operatorname{Var}}
\newcommand{\Cov}{\operatorname{Cov}}
\newcommand{\simplex}{\Delta_n}
\newcommand{\bias}{b}
\newcommand{\eps}{\varepsilon}
\newcommand{\norm}[1]{\lVert #1 \rVert}
\newcommand{\inner}[2]{\langle #1 ,\, #2 \rangle}
\newcommand{\wt}[1]{\widetilde{#1}}
\newcommand{\wh}[1]{\widehat{#1}}
\newcommand{\one}{\mathbf{1}}
\newcommand{\CROWD}{\textsc{Crowd}}
\newcommand{\proj}{\Pi_{\mathcal{X}}}
\newcommand{\Xset}{\mathcal{X}}
\newcommand{\HM}{\mathrm{HM}}
\newcommand{\AM}{\mathrm{AM}}
\DeclareMathOperator*{\argmin}{arg\,min}

\DeclareMathOperator{\diag}{diag}
\DeclareMathOperator{\tr}{tr}

\title{\bfseries Asking the Crowd the Right Question:\\[2pt]
Bias-Cancelling Weights for Federated Learning}

\author[1,2]{Ilya Kuruzov\thanks{Corresponding author: \href{mailto:kuruzov.ia@phystech.edu}{kuruzov.ia@phystech.edu}}}
\author[1]{Dmitrii Vishovan}
\author[1]{Kirill Novoselov}
\author[3]{Yuriy Dorn}
\author[4]{Darina Dvinskikh}
\author[1,5,6]{Alexander Gasnikov}
\affil[1]{Innopolis University}
\affil[2]{Moscow Independent Research Institute of Artificial Intelligence}
\affil[3]{Lomonosov Moscow State University}
\affil[4]{Higher School of Economics}
\affil[5]{Trusted AI Research Center, RAS}
\affil[6]{Steklov Mathematical Institute, RAS}
\date{}

\begin{document}
\maketitle
\thispagestyle{empty}

\begin{abstract}
A federated objective is a weighted sum of client risks, and the weights are almost
always fixed in advance. We treat them instead as the only instrument of a
wisdom-of-crowds mechanism: clients are noisy views of one truth, each seeing it through
an independent distortion that is unbiased across the crowd. That the optimal weights
are inversely proportional to the clients' error energies is classical; we begin at the
question that answer presupposes, which energies belong there and whether a crowd can
recover them from itself. Excess risk on the truth is of the exact order of the
aggregate bias energy, so no optimizer can repair a bad weight vector; the truth itself
is identifiable only up to a linear tilt, so subtracting estimated client biases
provably reproduces uniform weighting. The expected per-client second moments, however,
are exactly identified from the law of the crowd's disagreement by a well-conditioned
linear inversion, a step random-effects meta-analysis cannot take because a source
reports once; their realized counterparts are estimable up to an incoherence floor the
algorithm can measure. This yields \CROWD, which reads the disagreement off the
optimization trajectory at no extra cost and matches a Bayesian minimax lower bound in
the same constant: per instance as the horizon grows, and unconditionally as the prior
becomes diffuse. For arbitrary distortions it stays competitive with the optimal
weights, at a ratio governed by a geometric incoherence the algorithm can measure. On
real scans split into sites with their own miscalibrated detectors it attains the
oracle excess risk; on a companion federation that pulls bias and noise apart,
weighting by noise variance is worse than not weighting at all, and \CROWD{} is not.
\end{abstract}

\section{Introduction}
\label{sec:intro}

Every federated system optimises a weighted objective $F_w=\sum_{i=1}^nw_if_i$
over $w\in\simplex$, and the weights are chosen once and never revisited.
\textsc{FedAvg} sets $w_i\propto n_i$ \citep{mcmahan2017}; agnostic and fair variants
replace $w$ by a worst-case or tilted simplex point
\citep{mohri2019,li2020fairness,li2021tilted}; client sampling chooses whom
to query but leaves the target fixed \citep{cho2022,chen2020optimal}. The weights
encode a preference, and $F_w$ is treated as ground truth.

This paper takes the opposite view. Suppose instead that the clients are $n$ noisy
views of one truth: $n$ hospitals measuring the same biology through different
instruments, $n$ annotators labelling one concept through different habits. Then there
is a single correct objective $f_0$, no client's risk equals it, and every weight
vector produces a different wrong objective. This is the wisdom of crowds
\citep{galton1907,surowiecki2004}: the crowd beats its members when the errors are
individual, independent and not systematically aligned. \emph{If the weights are the
only instrument, what can the crowd's own disagreement tell us about how to set them,
and what survives when the crowd does not behave as assumed?}

We posit $f_i=f_0+h_i$ with $h_i$
independent across clients and $\E\nabla h_i(x)=0$. Writing $H_w=\sum_iw_ih_i$, the
weighted objective is $F_w=f_0+H_w$, and everything is governed by the single vector
\begin{equation}
\label{eq:intro-bias}
\bias(w):=\nabla H_w(x_0^\star)=\sum_{i=1}^nw_i\beta_i,
\qquad \beta_i:=\nabla h_i(x_0^\star),\qquad x_0^\star:=\argmin f_0,
\end{equation}
through its \emph{bias energy} $\norm{\bias(w)}^2$. Section~\ref{sec:setup} sandwiches
the excess risk on the truth between $c\norm{\bias(w)}^2/L$ and
$C\norm{\bias(w)}^2/\mu$. Hence no optimiser can repair a bad $w$, and no $w$ is
punished beyond its bias energy. The linear tilt is also the only crowd model
whose members are simultaneously convex and mean-zero.

\paragraph{Contributions.} Write $\zeta_i^2=\E\norm{\beta_i}^2$ and $\AM,\HM$ for the
arithmetic and harmonic means of $\zeta_1^2,\dots,\zeta_n^2$. Weights inversely
proportional to error energies, with the harmonic mean as value, are classical
\citep{hedges1985,dersimonian1986,markowitz1952} and have been applied to federated
aggregation \citep{reyes2021,toelle2025}. We ask which energies belong there when the
target is unobservable, and whether a crowd can recover them from itself.

\begin{enumerate}[label=(\arabic*),itemsep=1pt,topsep=2pt,leftmargin=6mm]
\item \emph{The right energy is the bias energy, and it is identifiable}
(Sections~\ref{sec:setup}--\ref{sec:ident}). The truth is identifiable only up to a
linear tilt; the expected per-client bias energies are identified from the law of the
disagreement by a well-conditioned inversion, the realised ones up to a measurable
incoherence floor.
\item \emph{An estimator at no extra cost, with a matching lower bound}
(Sections~\ref{sec:alg}--\ref{sec:lower}). The residual cancels $\nabla f_0$ at any
iterate, which gives \CROWD{}. Its excess risk matches the bias part
$\Psi_0/(2\mu)$ of a Bayesian minimax lower bound for a Gaussian crowd in the same
constant, to a factor $1+o(1)$. The match holds per instance as the horizon grows,
and unconditionally in the diffuse-prior limit (Theorem~\ref{thm:closure}).
\item \emph{A guarantee beyond the model} (Section~\ref{sec:beyond}). For arbitrary
realised tilts \CROWD{} is $\tfrac{1+\kappa}{1-\kappa}$-competitive with the optimal
weights up to estimation error, $\kappa$ being a geometric incoherence.
\end{enumerate}

Section~\ref{sec:related} reviews related work. Section~\ref{sec:setup} states the
model and the oracle target, Section~\ref{sec:alg} derives the algorithm,
Section~\ref{sec:theory} contains the theory, and Section~\ref{sec:exp} the
experiments. Every numbered statement is proved
in its own subsection of the appendix, which also contains the notation, the
experimental protocols and the further experiments.

\section{Related work}
\label{sec:related}

\paragraph{Heterogeneity as a rate obstacle.} Federated optimisation mostly treats
client heterogeneity as an obstacle to \emph{optimising} a given weighted objective
\citep{kairouz2021}: local-update drift
\citep{mcmahan2017,stich2019,koloskova2020,woodworth2020}, proximal terms
\citep{li2020fedprox}, control variates \citep{karimireddy2020},
objective-inconsistency corrections \citep{wang2020fednova}, server adaptivity
\citep{reddi2021}. There the target is fixed and heterogeneity inflates the rate
through a gradient-dissimilarity constant; under linear tilts that constant instead
shifts the limit, and weighted minibatching converges at the homogeneous rate.

\paragraph{Weights as a preference, and as a quality proxy.} The rules named above
encode a normative goal and treat the resulting objective as ground truth
\citep{mohri2019,li2020fairness,li2021tilted,sagawa2020}. Reading the weights as an
estimator is orthogonal to these: it replaces the target rather than the rule. A second
family weights by a surrogate for update quality: inverse gradient or parameter
variance \citep{reyes2021,toelle2025}, a generalisation-bound surrogate
\citep{wang2023weighting}, validation loss, or consistency with an estimated consensus
\citep{chen2026td}. These rules reach inverse-cost closed forms by the meta-analytic
route, take the cost to be inverted as given -- in practice a noise variance or a proxy
for it -- and treat the consensus as a legitimate target. We argue that the cost to
invert is a bias energy, give an estimator of it with a guarantee, and show that the
consensus itself is not identifiable.

\paragraph{Meta-analysis.} Inverse-variance weighting is the canonical combination of
unbiased estimators of one quantity \citep{hedges1985}. Random-effects meta-analysis
\citep{dersimonian1986} adds a between-study variance: the normal--normal hierarchical
model assigns each study a within-study variance and a single shared between-study
variance, estimated by the method of moments, by \textsc{REML}, or by one of the many
alternatives surveyed in \citet{veroniki2016}, and the standard heterogeneity summaries
\citep{higgins2002} are likewise scalar. This is structural: a study reports once, so
the observed effects cannot support one variance component per study alongside the
common mean. A federated round is instead a repeated draw from the same client, which
is what makes per-client quantities estimable in our setting.

\paragraph{Pooling and forecast combination.} The optimal-weight formula for
correlated sources is the minimum-variance portfolio of \citet{markowitz1952}, and
opinion pooling names the conditions under which averaging works
\citep{genest1986,page2007}. Forecasting reports the opposite difficulty: estimated
optimal weights often lose to the simple average, because estimating them adds variance
\citep{stock2004,smith2009,claeskens2016,demiguel2009}.

\paragraph{Learning whom to learn from.} Collaborative learning weights peers'
gradients to trade bias against variance. \citet{even2022} prove matching upper and
lower bounds on sample complexity for collaborative learning, with the gain governed by
the number of agents close to the target agent, and \citet{chayti2022} and
\citet{grimberg2021} give related bias--variance trade-offs. In all of these the
weights are chosen from pairwise discrepancy bounds that the algorithm is given, and
the authors name the knowledge of those bounds as the main limitation of the approach.
That line also personalises: each agent has its own risk and its own weight vector, and
can evaluate its own risk on held-out data. In the one-truth setting there is a single
objective that nobody observes, and the quantities that play the role of the pairwise
discrepancies have to be estimated from the crowd itself.

\paragraph{Bias-aware combination with an observable target.}
\citet{alamouti2025combining} study multi-agent combination with agent-specific biases
and reach the same harmonic value at inverse-cost weights. Their biases depend on
observable covariates, and their learnability ratio measures the fraction that is
predictable from them, so correcting bias is legitimate in their model. In their
zero-learnability regime their main theorem reduces to oracle inverse-variance
weighting without addressing how the energies are obtained, and their plug-in estimates
each agent's energy by the average squared distance to the current consensus, computed
from the same data that built the consensus. \citet{kiselman2026spcacw} select
aggregation weights by minimising an upper bound on a designated client's convergence
error. The rule trades peer bias against stochastic variance and assigns zero weight to
harmful peers; it is evaluated on \textsc{Mnist}, \textsc{Cifar}-100 and \textsc{Leaf}
Shakespeare. Their target is the designated client's own risk, which that client can
evaluate on held-out data, so peer bias relative to the target is a measurable quantity
and estimating it is a regression problem rather than an identification problem. They
produce one weight vector per client and do not aim at a common objective; the two
approaches are complementary, since a federation could use their rule for
personalisation and a common-objective rule for its shared model. In both works the
target is observable, so neither an identification question nor a matching lower bound
arises. We return to both comparisons after the corresponding results.

\paragraph{Crowdsourcing, differencing, and the optimisation toolkit.} Estimating
annotator reliabilities from agreement patterns is classical
\citep{dawid1979,karger2011,zhang2014spectral}. Those models are discrete-label and
typically identify reliabilities from the spectrum of an agreement matrix. Our setting
is continuous, high-dimensional and coupled to an optimisation loop. The items are the
algorithm's own iterates, chosen adaptively, and the answers are stochastic gradients
whose common part is an unknown nuisance that changes every round. The centring that
removes it is the analogue of fixed-effects differencing in the two-way models of
econometrics \citep{abowd1999}. There the same limited-mobility bias in variance
components is well documented \citep{andrews2008}. Our cross-fitting over disjoint
halves of the rounds is the sample-splitting device of \citet{chernozhukov2018}, used
here to remove the noise-induced inflation exactly. The optimisation components are
standard: projected \textsc{SGD} for smooth strongly convex objectives
\citep{robbins1951,nemirovski2009,polyak1992} and minibatch stochastic approximation
\citep{lan2012,dekel2012}. The lower-bound technique is that of Gaussian location
problems with a diffuse nuisance \citep{lehmann2005,tsybakov2009}, and the
concentration tools are standard \citep{vershynin2018,laurent2000,jin2019subgaussian}.

\paragraph{Applications.} Cross-institution medical federations are the motivating
example \citep{rieke2020,sheller2020,dayan2021}. Site-specific measurement offsets,
coding conventions and label-prevalence shifts are the mechanism that the linear-tilt
model abstracts. Linearised covariate-shift corrections \citep{shimodaira2000} give
another route to the same tilt structure.

\section{Problem statement}
\label{sec:setup}

We work on a closed convex $\Xset\subseteq\R^d$ with Euclidean projection $\proj$,
write $\simplex$ for the simplex on $[n]$ and $\one$ for the all-ones vector.

\begin{assumption}[Crowd]
\label{as:crowd}
$f_i=f_0+h_i$, where $h_1,\dots,h_n$ are independent random $C^2$ functions with
$\E\nabla h_i(x)=0$ for every $x$, drawn once and then fixed. We write
$\beta_i:=\nabla h_i(x_0^\star)$ for client $i$'s realised tilt at the truth's
optimum, $q_i:=\norm{\beta_i}^2$ for its realised bias energy, and
$\zeta_i^2:=\E q_i$ for the expected one. We also write $H_w=\sum_iw_ih_i$ and
$\lambda_w:=\sup_{\Xset}\norm{\nabla^2H_w}_{\mathrm{op}}$, bounded by
$\ell:=\max_i\ell_i$ when $\ell_i$ bounds $\norm{\nabla^2h_i}_{\mathrm{op}}$. The
\emph{linear-tilt} case is $h_i=\inner{\beta_i}{\cdot}$, i.e.\ $\ell=0$.
\end{assumption}

Two classical wisdom-of-crowds hypotheses do the work. Independence says clients err
for unrelated reasons; it fails under a shared preprocessing pipeline, and
Section~\ref{sec:beyond} shows what survives. Mean-zero says the crowd is centred on
the truth rather than on a shared distortion; it is untestable, and assumed whenever
an aggregate is called closer to ground truth than its parts.

\begin{assumption}[Truth]
\label{as:reg}
$f_0$ is $\mu$-strongly convex and $L$-smooth on $\Xset$, $0<\mu\le L$, with
$f_0^\star=\min_\Xset f_0$ attained at $x_0^\star$.
\end{assumption}

\begin{assumption}[Oracle]
\label{as:oracle}
At round $k$ the server broadcasts $x^{k-1}$ and each client returns
$\wh g_i^k=\nabla f_i(x^{k-1})+\eps_i^k$ computed from $b$ fresh samples, with
$\E[\eps^k_i\mid\mathcal F_{k-1}]=0$,
$\E[\norm{\eps^k_i}^2\mid\mathcal F_{k-1}]\le\sigma_i^2/b$ and conditional
independence across $i$. Query points may be adaptive. We write
$\tau_i^2=\sigma_i^2+\tfrac1{n^2}\sum_j\sigma_j^2$, which bounds the noise level of
client $i$'s report after the crowd mean has been subtracted.
\end{assumption}

Assumption~\ref{as:reg} applies to the truth only. The perturbations $h_i$ need not
be convex, and in a non-linear crowd they cannot all be: a crowd of almost
surely convex mean-zero perturbations is almost surely affine
(Proposition~\ref{prop:rigid} below). The way past the linear model is therefore perturbative. Assumption~\ref{as:oracle} bounds a conditional rather
than a uniform second moment and lets the query point depend on everything seen so
far, which matters because our statistic is computed at the algorithm's own iterates.

Formally, the rigidity statement reads as follows.

\begin{proposition}[Rigidity]
\label{prop:rigid}
Let $h_1,\dots,h_n$ satisfy Assumption~\ref{as:crowd} on an open convex set. If each
$h_i$ is almost surely convex, then almost surely every $h_i$ is affine.
\end{proposition}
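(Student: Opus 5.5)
Since each $h_i$ is $C^2$, convexity is the same as $\nabla^2 h_i(x)\succeq 0$ for every $x$, and the aim is to show that every Hessian vanishes. The mean-zero hypothesis concerns gradients, so we pass it to Hessians by differencing: $\E\nabla h_i(x)=0$ for all $x$ gives $\E[\nabla h_i(y)-\nabla h_i(x)]=0$. On the open convex set the segment from $x$ to $y$ lies in the domain, so
\[
\inner{\nabla h_i(y)-\nabla h_i(x)}{y-x}=\int_0^1 (y-x)^\top\nabla^2 h_i(x+t(y-x))\,(y-x)\,dt .
\]
Convexity makes this quantity $Z_{xy}\ge 0$ almost surely. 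Its expectation is $\inner{\E\nabla h_i(y)-\E\nabla h_i(x)}{y-x}=0$. A nonnegative random variable with zero mean is zero almost surely, so $Z_{xy}=0$ a.s. for each fixed pair $(x,y)$.

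Next we upgrade ``for each pair, almost surely'' to ``almost surely, for all pairs''. Take a countable dense set $D$ of pairs in the domain, for example points with rational coordinates. Intersecting countably many full-measure events gives one event of probability one on which $Z_{xy}=0$ for every $(x,y)\in D$. Fix a realisation in that event. The integrand $t\mapsto (y-x)^\top\nabla^2h_i(x+t(y-x))(y-x)$ is continuous and nonnegative, and it integrates to zero. It therefore vanishes identically, so $v^\top\nabla^2 h_i(z)v=0$ for every $z$ on the segment, with $v=y-x$.

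For a fixed rational point $z$, the rational directions $v$ with $z\pm\epsilon v$ in the domain are dense. A positive semidefinite matrix with $v^\top A v=0$ for a spanning set of $v$ satisfies $Av=0$ for each of them, so $A=0$. Hence $\nabla^2h_i(z)=0$ on a dense set, and by continuity of the Hessian ($C^2$) it vanishes everywhere. A function with zero Hessian on a connected open convex set is affine. This holds for each $i$ (finitely many), which completes the argument.

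The main obstacle is the measure-theoretic care: the quantifier swap has to go through countably many pairs and continuity. We also need integrability of $\nabla h_i(x)$, which is implicit in $\E\nabla h_i(x)=0$ being defined. Independence across clients is not needed.
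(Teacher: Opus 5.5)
Your proof is correct and follows the paper's argument. The monotonicity defect $\inner{\nabla h_i(y)-\nabla h_i(x)}{y-x}$ is nonnegative with zero mean, so it vanishes a.s.\ for each pair, and a countable dense set of pairs plus continuity upgrades this to a single probability-one event. The only difference is the last step: you pass through the Hessian (a positive semidefinite matrix whose quadratic form vanishes on a spanning set is zero), which uses the $C^2$ hypothesis, whereas the paper extends the vanishing defect to all pairs by continuity of $\nabla h_i$ and reads off affinity from $g(t)=h_i(x_0+t(x-x_0))$, so it needs only $C^1$.
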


Since $F_w=f_0+H_w$, choosing $w$ and optimising $F_w$ are the only two things one can
do. The bias-transfer lemma says they combine additively through the aggregate bias
$\Gamma_w:=\norm{\bias(w)}$.

\begin{lemma}[Bias transfer]
\label{lem:transfer}
\label{lem:transfer-full}
Let Assumptions~\ref{as:crowd}--\ref{as:reg} hold, $w\in\simplex$, $\Xset=\R^d$ and
$\lambda_w<\mu$, and write $\Gamma_w:=\norm{\bias(w)}$.
\begin{enumerate}[label=(\roman*),itemsep=1pt,topsep=2pt]
\item $F_w$ is $(\mu-\lambda_w)$-strongly convex and $(L+\lambda_w)$-smooth, and its
minimiser obeys $\norm{x_w^\star-x_0^\star}\le\Gamma_w/(\mu-\lambda_w)$.
\item For every $x$, the excess risk on the truth is at most
$\tfrac{2L}{\mu-\lambda_w}[F_w(x)-\min F_w]+\tfrac{L\Gamma_w^2}{(\mu-\lambda_w)^2}$, and
in the linear-tilt case the bias constant can be made sharp: for every $\epsilon>0$,
\[
f_0(x)-f_0^\star\ \le\ (1+\epsilon)\,\frac{\Gamma_w^2}{2\mu}\ +\ \Big(1+\tfrac1\epsilon\Big)\frac{L}{\mu}\big[F_w(x)-\min F_w\big].
\]
\item If $\lambda_w<\mu/2$ then $f_0(x_w^\star)-f_0^\star\ge
\tfrac{1}{2L}(\tfrac{\mu-2\lambda_w}{\mu-\lambda_w})^2\Gamma_w^2$, which is
$\Gamma_w^2/(2L)$ in the linear case.
\end{enumerate}
\end{lemma}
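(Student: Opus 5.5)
The plan is to do everything through first-order optimality and the two gradient inequalities that strong convexity and smoothness give, using $\Xset=\R^d$ so that $\nabla f_0(x_0^\star)=0$ and $\nabla F_w(x_w^\star)=0$. For (i), since $h_i\in C^2$ we have $\nabla^2F_w=\nabla^2f_0+\nabla^2H_w$. By Assumption~\ref{as:reg} and the definition of $\lambda_w$, every eigenvalue of this Hessian lies in $[\mu-\lambda_w,\,L+\lambda_w]$, which gives the strong convexity and smoothness constants. Evaluating at the truth's optimum gives $\nabla F_w(x_0^\star)=\nabla f_0(x_0^\star)+\nabla H_w(x_0^\star)=\bias(w)$. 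Strong monotonicity of $\nabla F_w$ then yields $(\mu-\lambda_w)\norm{x_0^\star-x_w^\star}\le\norm{\nabla F_w(x_0^\star)-\nabla F_w(x_w^\star)}=\Gamma_w$.

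For the first bound in (ii), I would start from $L$-smoothness of $f_0$ at its minimiser, $f_0(x)-f_0^\star\le\tfrac L2\norm{x-x_0^\star}^2$. I would then split $x-x_0^\star=(x-x_w^\star)+(x_w^\star-x_0^\star)$ and apply $\norm{a+b}^2\le2\norm a^2+2\norm b^2$. The first piece is controlled by strong convexity of $F_w$, namely $\norm{x-x_w^\star}^2\le\tfrac{2}{\mu-\lambda_w}[F_w(x)-\min F_w]$. The second piece is controlled by (i). Together these give exactly the stated constants.

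For the sharp linear-tilt bound, I would instead go through gradients. In that case $\lambda_w=0$ and $\nabla f_0(x)=\nabla F_w(x)-\bias(w)$. Polyak--Łojasiewicz for the $\mu$-strongly convex $f_0$ gives $f_0(x)-f_0^\star\le\norm{\nabla f_0(x)}^2/(2\mu)$. Young's inequality with parameter $\epsilon$ gives $\norm{\nabla F_w(x)-\bias(w)}^2\le(1+\epsilon)\Gamma_w^2+(1+\tfrac1\epsilon)\norm{\nabla F_w(x)}^2$. Finally, $L$-smoothness of $F_w$, whose Hessian equals that of $f_0$, gives $\norm{\nabla F_w(x)}^2\le2L[F_w(x)-\min F_w]$. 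Dividing by $2\mu$ reproduces the display exactly.

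For (iii), I would use the reverse inequality from $L$-smoothness of $f_0$ on $\R^d$: $f_0(y)-f_0^\star\ge\norm{\nabla f_0(y)}^2/(2L)$. At $y=x_w^\star$, optimality gives $\nabla f_0(x_w^\star)=-\nabla H_w(x_w^\star)$. The step needing care is lower-bounding $\norm{\nabla H_w(x_w^\star)}$, since the bias is defined at $x_0^\star$, not at $x_w^\star$. The gradient of $H_w$ is $\lambda_w$-Lipschitz by the mean-value theorem along the segment, so
\[
\norm{\nabla H_w(x_w^\star)}\ge\Gamma_w-\lambda_w\norm{x_w^\star-x_0^\star}\ge\Gamma_w\Big(1-\tfrac{\lambda_w}{\mu-\lambda_w}\Big)=\Gamma_w\tfrac{\mu-2\lambda_w}{\mu-\lambda_w},
\]
where the second inequality uses (i). This quantity is nonnegative exactly when $\lambda_w<\mu/2$, so squaring is legitimate and gives the claim. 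With $\lambda_w=0$ it reduces to $\Gamma_w^2/(2L)$.

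I expect no genuine obstacle. The only points to watch are the ones just noted: the Lipschitz transfer of $\nabla H_w$ from $x_0^\star$ to $x_w^\star$ in (iii), and choosing the PL route rather than the distance route in (ii). The distance route would lose the factor $L/\mu$ on the bias term, so it cannot give the sharp constant $\Gamma_w^2/(2\mu)$.
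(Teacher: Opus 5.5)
Your proposal is correct and follows the paper's proof essentially step for step. For (i) you use the Hessian bounds and strong monotonicity at $x_0^\star$, where $\nabla F_w(x_0^\star)=\bias(w)$. For (ii) you use the split $\norm{x-x_0^\star}^2\le2\norm{x-x_w^\star}^2+2\norm{x_w^\star-x_0^\star}^2$ for the general bound, and the Polyak--\L{}ojasiewicz, Young and smoothness chain for the sharp linear-tilt bound. For (iii) you transfer $\nabla H_w$ from $x_0^\star$ to $x_w^\star$ via its $\lambda_w$-Lipschitz continuity, and combine this with $f_0(y)-f_0^\star\ge\norm{\nabla f_0(y)}^2/(2L)$.
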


So any method converging to $\argmin F_w$, even exactly with infinite data, pays
$\Theta(\Gamma_w^2)$ on the truth. The $1+\epsilon$ refinement of (ii) is what lets the
two bounds of Section~\ref{sec:theory} meet in one constant. The non-linear analysis of
Section~\ref{sec:beyond} runs on a bounded domain, where the minimiser of $F_w$ may lie
on the boundary; part (i) and the general bound of (ii) survive there.

\begin{lemma}[Constrained bias transfer]
\label{lem:transfer-c}
Let Assumptions~\ref{as:crowd}--\ref{as:reg} hold on a closed convex $\Xset$ whose
interior contains $x_0^\star$, let $w\in\simplex$ with $\lambda_w<\mu$, and let
$x^\star_w$ minimise $F_w$ over $\Xset$. Then
$\norm{x^\star_w-x_0^\star}\le\Gamma_w/(\mu-\lambda_w)$, and for every $x\in\Xset$,
$f_0(x)-f_0^\star\le\tfrac{2L}{\mu-\lambda_w}[F_w(x)-F_w(x^\star_w)]+\tfrac{L\Gamma_w^2}{(\mu-\lambda_w)^2}$.
\end{lemma}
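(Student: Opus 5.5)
We prove Lemma~\ref{lem:transfer-c} by replacing the first-order condition $\nabla F_w(x_w^\star)=0$ from Lemma~\ref{lem:transfer}, which is no longer available, with variational inequalities. Strong monotonicity then handles the boundary case.

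\emph{Step 1: convexity constants.} On $\Xset$ we have $\nabla^2F_w=\nabla^2 f_0+\nabla^2H_w\succeq(\mu-\lambda_w)I$. Hence $F_w$ is $(\mu-\lambda_w)$-strongly convex on $\Xset$, and its gradient is strongly monotone:
\[
\inner{\nabla F_w(x)-\nabla F_w(y)}{x-y}\ge(\mu-\lambda_w)\norm{x-y}^2 .
\]
So the minimiser $x_w^\star$ over $\Xset$ exists and is unique. Strictly, the monotonicity inequality follows from integrating the Hessian along segments, which are contained in $\Xset$ by convexity. The Hessian bound is only asserted on $\Xset$, so this is the point to be careful about when $\Xset$ has empty interior. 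Here $x_0^\star$ is interior, so $\Xset$ has nonempty interior and the bound extends by continuity.

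\emph{Step 2: distance bound.} Since $x_0^\star$ is interior and minimises $f_0$ over $\Xset$, we have $\nabla f_0(x_0^\star)=0$, and therefore $\nabla F_w(x_0^\star)=\nabla H_w(x_0^\star)=\bias(w)$. Optimality of $x_w^\star$ over $\Xset$ gives $\inner{\nabla F_w(x_w^\star)}{x_0^\star-x_w^\star}\ge0$. Add this to the strong-monotonicity inequality with $x=x_0^\star$, $y=x_w^\star$:
\[
(\mu-\lambda_w)\norm{x_0^\star-x_w^\star}^2\le\inner{\bias(w)}{x_0^\star-x_w^\star}\le\Gamma_w\norm{x_0^\star-x_w^\star}.
\]
Dividing through yields $\norm{x_w^\star-x_0^\star}\le\Gamma_w/(\mu-\lambda_w)$.

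\emph{Step 3: risk bound.} Since $\nabla f_0(x_0^\star)=0$ and $f_0$ is $L$-smooth, for any $x\in\Xset$
\[
f_0(x)-f_0^\star\le\tfrac L2\norm{x-x_0^\star}^2\le L\norm{x-x_w^\star}^2+L\norm{x_w^\star-x_0^\star}^2 .
\]
We bound the first term by constrained strong convexity. The optimality condition at $x_w^\star$ gives $F_w(x)-F_w(x_w^\star)\ge\tfrac{\mu-\lambda_w}{2}\norm{x-x_w^\star}^2$, and hence
\[
L\norm{x-x_w^\star}^2\le\tfrac{2L}{\mu-\lambda_w}\,[F_w(x)-F_w(x_w^\star)].
\]
Step 2 bounds the second term by $L\Gamma_w^2/(\mu-\lambda_w)^2$. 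Adding the two bounds gives the claim.

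The argument is a direct adaptation of the unconstrained case. The only genuinely new point is Step 2, where the boundary is handled by pairing the variational inequality with strong monotonicity instead of setting the gradient to zero. Interiority of $x_0^\star$ is what makes $\nabla F_w(x_0^\star)$ equal to the bias.
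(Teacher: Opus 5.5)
Your proof is correct and follows the paper's argument essentially step for step. For the displacement you pair the variational inequality at $x^\star_w$ with strong monotonicity and use $\nabla F_w(x_0^\star)=\bias(w)$, which needs the interior minimiser; for the risk you combine $L$-smoothness of $f_0$ along the segment $[x_0^\star,x]\subseteq\Xset$ with the constrained quadratic growth $F_w(x)-F_w(x^\star_w)\ge\tfrac{\mu-\lambda_w}{2}\norm{x-x^\star_w}^2$.
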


\subsection{The oracle target: a harmonic law of crowds}
\label{sec:oracle}

Suppose an oracle reveals the bias energies. For weights independent of the tilts,
independence and mean-zero kill the cross terms, and the resulting quadratic is
minimised in closed form.

\begin{proposition}[Exact bias identity, oracle weights, and the gain]
\label{prop:oracle}
Under Assumption~\ref{as:crowd}, for every $w\in\simplex$ independent of the tilts,
$\E\Gamma_w^2=\sum_iw_i^2\zeta_i^2$. For any $c\in\R^n_{>0}$ the minimum of
$\sum_iw_i^2c_i$ over $\simplex$, and also over the affine hull $\one^\top w=1$,
equals $(\sum_ic_i^{-1})^{-1}$ and is attained at $w_i^\star\propto c_i^{-1}>0$.
With $c_i=\zeta_i^2$ this gives $\Psi_0=\HM/n$ against $\AM/n$ for uniform weights.
Moreover \emph{(i)} the ratio is $\AM/\HM\ge1$, with equality iff all $\zeta_i$ are
equal; \emph{(ii)} $\Psi_0\le\min_i\zeta_i^2$; \emph{(iii)} the ratio is unbounded.
\end{proposition}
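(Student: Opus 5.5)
The proof has three parts: the bias identity, the constrained minimisation, and the comparison of harmonic and arithmetic means.

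\textbf{The identity.} Since $w$ is independent of the tilts, I first condition on $w$ and expand
\[
\Gamma_w^2=\norm{\sum_iw_i\beta_i}^2=\sum_iw_i^2\norm{\beta_i}^2+\sum_{i\neq j}w_iw_j\inner{\beta_i}{\beta_j}.
\]
Independence of $h_i$ and $h_j$ gives $\E\inner{\beta_i}{\beta_j}=\inner{\E\beta_i}{\E\beta_j}$ for $i\ne j$. The mean-zero part of Assumption~\ref{as:crowd}, evaluated at the deterministic point $x_0^\star$, gives $\E\beta_i=\E\nabla h_i(x_0^\star)=0$, so every cross term vanishes. Two integrability points need care: $x_0^\star$ depends only on $f_0$ and is therefore nonrandom, and the expected energy $\zeta_i^2=\E q_i$ must be finite, which the statement implicitly assumes. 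When $w$ is random but independent, I take the tower expectation over $w$ at the end.

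\textbf{The minimisation.} I minimise over the affine hull first. By Cauchy--Schwarz,
\[
1=\Big(\sum_iw_i\Big)^2=\Big(\sum_i w_ic_i^{1/2}\cdot c_i^{-1/2}\Big)^2\le\Big(\sum_iw_i^2c_i\Big)\Big(\sum_ic_i^{-1}\Big).
\]
This gives the lower bound $(\sum_ic_i^{-1})^{-1}$. Equality holds iff $w_ic_i^{1/2}\propto c_i^{-1/2}$, that is $w_i\propto c_i^{-1}$. Because $c>0$ these weights are strictly positive, so the minimiser lies in $\simplex$, and the minimum over the simplex equals the minimum over the hull. With $c_i=\zeta_i^2$ the minimum is $(\sum_i\zeta_i^{-2})^{-1}=\HM/n$. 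Uniform weights give $\sum_i\zeta_i^2/n^2=\AM/n$.

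\textbf{The comparison.} The ratio of the two values is $\AM/\HM$.
\begin{itemize}
\item[(i)] $\AM\ge\HM$ is the classical AM--HM inequality. It also follows from the same Cauchy--Schwarz step with $w$ uniform, and equality there forces all $\zeta_i$ to be equal.
\item[(ii)] $\Psi_0$ is a minimum over $\simplex$, so it is at most the value at any vertex $e_i$, which is $\zeta_i^2$. Taking the smallest vertex gives $\Psi_0\le\min_i\zeta_i^2$.
\item[(iii)] Take $n=2$, $\zeta_1^2=1$ and $\zeta_2^2=t\to\infty$. Then $\AM/\HM=(1+t)^2/(4t)\to\infty$, so the ratio is unbounded.
\end{itemize}

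The main obstacle is the identity, specifically its measurability and integrability: $w$ must be independent of the whole tilt vector, and the energies $\zeta_i^2$ must be finite.
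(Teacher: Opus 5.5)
Your proposal is correct and follows the paper's proof essentially step for step: the cross terms vanish by independence and mean zero at the deterministic point $x_0^\star$, Cauchy--Schwarz on the affine hull gives the harmonic value with a strictly positive minimiser that therefore lies in $\simplex$, and (i) is the AM--HM inequality. The differences are cosmetic. Your vertex bound for (ii) replaces the paper's $\sum_j\zeta_j^{-2}\ge\zeta_{\min}^{-2}$. Your $n=2$ family for (iii) works at every fixed $n$ once it is padded with unit-energy clients, which up to relabelling is the paper's choice $\zeta_1^2=R$, $\zeta_j^2=1$ for $j\ge2$.
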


A correctly
weighted crowd is never worse than its best member and behaves as a crowd of
$n\cdot\AM/\HM$. With $n=21$ and $\zeta_1^2=100$, $\zeta_j^2=1$ otherwise, uniform
weighting gives $\AM/n=120/441=0.272$, the value of a uniform crowd of $3.7$ unit-energy
members: one miscalibrated site among twenty unit-energy ones makes the uniform crowd
behave like a crowd of fewer than four. The simplex constraint is inactive, so signed
aggregation helps only when biases are correlated, which is where the weights cease to be identifiable (Proposition~\ref{prop:corr} in Section~\ref{sec:ident}).
The size-proportional weights used in practice are therefore arbitrarily suboptimal
when site quality and site size are unrelated. Precision-weighted rules
\citep{reyes2021,toelle2025} reach the same closed form by the meta-analytic route, and
so, read as an algebraic rule, does the weighting of \citet{kiselman2026spcacw}; what
the proposition adds is which cost belongs in it, and the rest of the paper asks how to
obtain that cost.

Since the aggregate gradient variance is $\sum_iw_i^2\sigma_i^2/b$, the same
proposition with $c_i=\zeta_i^2+\sigma_i^2/K$ gives the finite-sample target
\begin{equation}
\label{eq:psiK}
\Psi(K):=\Big(\sum_i\big(\zeta_i^2+\sigma_i^2/K\big)^{-1}\Big)^{-1},
\qquad w^\star_i(K)\propto\big(\zeta_i^2+\sigma_i^2/K\big)^{-1},
\end{equation}
where $K$ is the number of samples per client. A client is discounted for bias and
for noise on one scale, at one variance unit per $K$ samples. On a Gaussian crowd
$\Psi(K)/(2\mu)$ is a worst-case lower bound for every algorithm, and \CROWD{}
approaches it per instance as the horizon grows (Section~\ref{sec:lower}).

The lower bound of Section~\ref{sec:lower} splits a harmonic value into a bias part and
a sampling part, and needs the following property.

\begin{lemma}[Harmonic superadditivity]
\label{lem:harmonic}
$H(c):=(\sum_ic_i^{-1})^{-1}$ is concave and positively homogeneous of degree one on
$\R^n_{>0}$; hence $H(a+b)\ge H(a)+H(b)$, with equality if and only if $b$ is
proportional to $a$.
\end{lemma}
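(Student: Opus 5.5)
Homogeneity is immediate: $H(tc)=(\sum_i t^{-1}c_i^{-1})^{-1}=tH(c)$ for $t>0$.

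For concavity, the cleanest route is the variational formula already implicit in Proposition~\ref{prop:oracle}. That proposition gives, for every $c\in\R^n_{>0}$,
\[
H(c)=\min_{\one^\top w=1}\sum_i w_i^2c_i ,
\]
with the unique minimiser $w_i^\star(c)=c_i^{-1}/\sum_jc_j^{-1}$. Uniqueness holds because the objective is strictly convex in $w$ on the affine hull. Each map $c\mapsto\sum_iw_i^2c_i$ is linear in $c$, so $H$ is a pointwise infimum of linear functions and hence concave. Superadditivity then follows from homogeneity plus concavity:
\[
H(a+b)=2H\big(\tfrac{a+b}{2}\big)\ge 2\cdot\tfrac12\big(H(a)+H(b)\big).
\]
I will instead argue directly from the variational formula, because that also handles the equality case. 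Take $w^\star=w^\star(a+b)$. Then
\[
H(a+b)=\sum_iw_i^{\star2}a_i+\sum_iw_i^{\star2}b_i\ge H(a)+H(b),
\]
since $w^\star$ is feasible for both minimisations.

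The equality characterisation is the only step needing care. Equality forces both inequalities to be tight. By uniqueness of the minimiser, this means $w^\star(a+b)=w^\star(a)=w^\star(b)$. Now $w^\star(a)=w^\star(b)$ says $a^{-1}\propto b^{-1}$ componentwise, i.e.\ $b=ta$ for some $t>0$. Conversely, if $b=ta$, homogeneity gives $H(a+b)=(1+t)H(a)=H(a)+H(b)$. I expect no real obstacle here beyond stating the uniqueness of the minimiser explicitly. The quickest way to get it is to note that on the affine hull the objective is a positive-definite quadratic, since all $c_i>0$; alternatively, one can verify uniqueness by Cauchy--Schwarz, $1=(\sum_i w_i)^2\le\sum_iw_i^2c_i\sum_ic_i^{-1}$, whose equality case is exactly $w_i\propto c_i^{-1}$.
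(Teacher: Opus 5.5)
Your proposal is correct and is essentially the paper's proof: the variational formula $H(c)=\min_{\one^\top w=1}\sum_iw_i^2c_i$ from Proposition~\ref{prop:oracle} gives concavity as a pointwise minimum of linear functions, and evaluating at $w^{a+b}$ gives superadditivity. The equality case follows, as in the paper, from uniqueness of the minimisers, $w^{a+b}=w^a=w^b$, which forces $a^{-1}\propto b^{-1}$.
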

The defect can be large. With $n=2$, $a=(1,10)$ and $b=(10,1)$ one has $H(a+b)=5.50$
against $H(a)+H(b)=1.82$, so the two sides differ by a factor three; they coincide
when the bias and noise profiles are proportional.
\section{Algorithm design}
\label{sec:alg}

Choosing $w_i\propto\zeta_i^{-2}$ requires knowing the $\zeta_i^2$; this section asks
what a crowd can learn about itself and turns the answer into an algorithm.

\begin{proposition}[Two impossibilities]
\label{prop:ident-neg}
Under Assumptions~\ref{as:crowd}--\ref{as:oracle}: (i) for every $c\in\R^d$, replacing
$f_0$ by $f_0+\inner{c}{\cdot}$ and each $h_i$ by $h_i-\inner{c}{\cdot}$ leaves every
client risk, and hence the law of the transcript of any protocol, unchanged, while for
$c\ne0$ and $\Xset=\R^d$ the target moves; (ii) subtracting from each client's
gradients its deviation from the crowd mean makes the aggregate an unbiased gradient of
$F_{w_{\mathrm{unif}}}$ for every $w\in\simplex$, with expected bias energy $\AM/n$.
\end{proposition}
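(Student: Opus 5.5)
Part (i) is a direct invariance computation, and part (ii) is an algebraic identity followed by an application of Proposition~\ref{prop:oracle}. I treat them in that order.

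For (i), fix $c\in\R^d$ and set $\tilde f_0=f_0+\inner{c}{\cdot}$ and $\tilde h_i=h_i-\inner{c}{\cdot}$. Then $\tilde f_0+\tilde h_i=f_i$ pointwise, so every client risk and every gradient $\nabla f_i(x)$ is unchanged. I first check that the tilted pair is admissible. Strong convexity and smoothness of $\tilde f_0$ are unchanged, since the Hessian is untouched, so Assumption~\ref{as:reg} holds with the same $(\mu,L)$. For Assumption~\ref{as:crowd}, note that $\E\nabla\tilde h_i=-c$; the tilted pair is therefore an exact observational equivalent of the original one, but it is not itself a mean-zero crowd. This is the point to phrase carefully. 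I will state (i) as: the data-generating law depends on $(f_0,h)$ only through $(f_i)_i$, so any protocol whose transcript is a function of the oracle answers $\wh g_i^k=\nabla f_i(x^{k-1})+\eps_i^k$ has the same law under both descriptions. I will argue this by induction over rounds. The query $x^{k-1}$ is a measurable function of past answers and internal randomness, and the answer's conditional law given $\mathcal F_{k-1}$ depends only on $\nabla f_i(x^{k-1})$ and the noise law. Hence the joint laws coincide. Mean-zero is a property of the unobserved decomposition, and the transcript cannot test it; this is why the realised truth is identified only up to the tilt. For the target-moves claim with $\Xset=\R^d$: the first-order condition gives $\nabla f_0(\tilde x^\star)=-c$, whereas $\nabla f_0(x_0^\star)=0$. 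So $\tilde x^\star\ne x_0^\star$ whenever $c\neq0$, and strong monotonicity gives $\norm{\tilde x^\star-x_0^\star}\ge\norm{c}/L$.

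For (ii), let $\bar g=\frac1n\sum_j\wh g_j$ and define the corrected report $\wh g_i-(\wh g_i-\bar g)=\bar g$. Every corrected report therefore equals the crowd mean, so $\sum_iw_i\bar g=\bar g$ for every $w\in\simplex$. Its conditional expectation is $\frac1n\sum_j\nabla f_j=\nabla F_{w_{\mathrm{unif}}}$, so the aggregate is an unbiased gradient of $F_{w_{\mathrm{unif}}}$ regardless of $w$. The bias vector of the effective objective is then $\bias(w_{\mathrm{unif}})$. Since $w_{\mathrm{unif}}$ is deterministic and hence independent of the tilts, Proposition~\ref{prop:oracle} gives $\E\Gamma^2=\sum_i n^{-2}\zeta_i^2=\AM/n$.

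The only real obstacle is the bookkeeping in (i). One has to make precise that the mean-zero condition lives in the unobservable decomposition and not in the law of the transcript. In particular, the equivalence must hold for adaptive protocols, which is handled by the round-by-round induction on conditional laws described above.
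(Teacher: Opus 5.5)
Your proposal is correct. Part (i) follows the paper's argument: the pointwise identity $f_0+\inner{c}{\cdot}+h_i-\inner{c}{\cdot}=f_i$, a round-by-round induction showing that query points and conditional report laws agree (so adaptive protocols are covered), and the first-order condition $\nabla f_0(x)=-c$ for the moved target.

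Your caveat that the shifted crowd is not mean-zero is accurate when a deterministic $c$ is applied to the random crowd. The paper resolves the point at the level of realisations instead. For Gaussian linear tilts, $(\beta_i-c)_i$ lies in the support of a centred law, so the shifted instance is itself a possible realisation of an admissible crowd. That is what makes the two instances indistinguishable members of the model class, rather than merely two descriptions of the same data.

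One slip: $\norm{\tilde x^\star-x_0^\star}\ge\norm{c}/L$ does not come from strong monotonicity. It comes from the $L$-Lipschitz gradient, $\norm{c}=\norm{\nabla f_0(\tilde x^\star)-\nabla f_0(x_0^\star)}\le L\norm{\tilde x^\star-x_0^\star}$; strong monotonicity gives the opposite bound $\norm{c}/\mu$. Neither bound is needed for the statement.

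Part (ii) reads the correction differently from the paper. The paper subtracts the exact, noise-free deviation $\wt{\nabla h_i}(x)=\nabla h_i(x)-\frac1n\sum_j\nabla h_j(x)$. Each corrected report $\wh g_i-\wt{\nabla h_i}(x)$ then keeps its own noise and has conditional mean $\nabla F_{w_{\mathrm{unif}}}(x)$, the same for every $i$, so every $w\in\simplex$ yields that mean.

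You subtract the empirical deviation $\wh g_i-\bar g$, which collapses every report to $\bar g$ and makes independence of $w$ trivial. Both routes reach $\bias(w_{\mathrm{unif}})$ and $\AM/n$ via Proposition~\ref{prop:oracle}. The paper's version is the stronger impossibility: even perfect knowledge of each client's disagreement only reproduces uniform weighting. Yours covers a correction read off a single round. Your computation extends to any conditionally unbiased estimate of $\wt{\nabla h_i}(x)$, which contains both versions as special cases.
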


No amount of data identifies the truth, so a claim that an aggregate is closer to
ground truth is a claim about the crowd. By (ii) correcting clients does not help --
the disagreement reveals deviations from the consensus, not the consensus error --
but weighting does, because unequal $\zeta_i$ make the dispersion informative.

Both parts separate the present setting from neighbouring ones. Truth-discovery
schemes \citep{chen2026td} score each client by its consistency with an estimated
``ideal'' aggregate, which by (i) is not an identifiable object. In the model of
\citet{alamouti2025combining} biases are partly predictable from observable covariates,
so correcting them is legitimate; by (ii) it is futile here, where no such side
information exists. And no analogue of (i) arises when the target is one client's own
risk, as in \citet{kiselman2026spcacw}: there peer bias can be estimated by comparison
with the designated client. Here no reference client exists, which is why the
statistic below is a maximal invariant of the shift group and why an inversion is
needed.

\begin{figure}[t]
\centering
\begin{tikzpicture}[font=\footnotesize,scale=0.92,every node/.style={transform shape},
  client/.style={draw,rounded corners=2pt,minimum width=26mm,minimum height=6mm,fill=blue!6,inner sep=2pt},
  server/.style={draw,rounded corners=2pt,minimum width=27mm,minimum height=8mm,fill=green!8,inner sep=2pt},
  box/.style={draw,rounded corners=2pt,minimum width=54mm,minimum height=6.2mm,fill=orange!8,inner sep=2pt},
  ar/.style={-{Latex[length=2mm]},thick}]
\node[client] (c1) at (0,1.6) {client $1$: $f_0+h_1$};
\node[client] (c2) at (0,0.8) {client $2$: $f_0+h_2$};
\node at (0,0.25) {$\vdots$};
\node[client] (cn) at (0,-0.45) {client $n$: $f_0+h_n$};
\node[server] (agg) at (4.0,0.6) {server: $\wh g^k_1,\dots,\wh g^k_n$};
\node[box] (res) at (9.2,2.05) {residual $r_i^k=\wh g_i^k-\frac1n\sum_j \wh g_j^k$};
\node[box] (est) at (9.2,1.25) {cross-fit $\Rightarrow \wh s_i$,\quad invert $T$ $\Rightarrow \wh q_i$};
\node[box] (wgt) at (9.2,0.45) {$w_i\propto\max\{\wh q_i+\bar\sigma_i^2/(bK),\varrho\}^{-1}$ \ (at $k\in\mathcal R$)};
\node[box,fill=red!8] (step) at (9.2,-0.55) {$x^{k}=\proj\big(x^{k-1}-\eta_k\sum_i w_i\wh g_i^k\big)$};
\draw[ar] (c1.east) -- (agg.west);
\draw[ar] (c2.east) -- (agg.west);
\draw[ar] (cn.east) -- (agg.west);
\draw[ar] (agg.north) |- (res.west);
\draw[ar] (res) -- (est);
\draw[ar] (est) -- (wgt);
\draw[ar] (wgt) -- (step);
\draw[ar] (agg.south) |- (step.west);
\draw[ar] (step.south) -- ++(0,-0.45) -| node[pos=0.25,below] {broadcast $x^k$} (-1.75,0) -- (-1.75,1.6) -- (c1.west);
\draw[ar] (-1.75,0.8) -- (c2.west);
\draw[ar] (-1.75,-0.45) -- (cn.west);
\begin{scope}[on background layer]
\node[draw,dashed,rounded corners,fit=(res)(est)(wgt),inner sep=1.5mm,
      label={[font=\scriptsize\itshape,inner sep=1pt]above:by-product of the reports, no extra queries}] {};
\end{scope}
\end{tikzpicture}
\caption{One round of \CROWD. The unknown $\nabla f_0(x^{k-1})$ cancels in the
residual, so the disagreement is measured at any query point at no extra cost.}
\label{fig:pipeline}
\end{figure}

\paragraph{The residual statistic.} Part (i) also tells us which statistic to use. Define the residual
\begin{equation}
\label{eq:resid}
r_i^k:=\wh g_i^k-\frac1n\sum_j\wh g_j^k
\qquad\text{and write }\ \wt\beta_i:=\beta_i-\tfrac1n\textstyle\sum_j\beta_j
\end{equation}
for the corresponding disagreement in the tilts. The residual cancels $\nabla
f_0(x^{k-1})$ identically, at any query point and for any $f_0$. It is a maximal
invariant of the per-round shift group (Lemma~\ref{lem:residual} below), so by
Proposition~\ref{prop:ident-neg}(i) it loses nothing a shift-invariant
procedure could have used. For linear tilts $r_i^k=\wt\beta_i+\wt\eps^k_i$
exactly.

\begin{lemma}[Residual representation; maximal invariant]
\label{lem:residual}
Under Assumptions~\ref{as:crowd} and \ref{as:oracle}, the residuals
$r_i^k=\wh g_i^k-\frac1n\sum_j\wh g_j^k=\wt{\nabla h_i}(x^{k-1})+\wt\eps_i^k$, with
$\wt{\nabla h_i}:=\nabla h_i-\tfrac1n\sum_j\nabla h_j$ and
$\wt\eps^k_i:=\eps^k_i-\tfrac1n\sum_j\eps^k_j$, do not depend on
$\nabla f_0(x^{k-1})$, whatever the query point and whatever $f_0$ is; the array
$(r^k_i)$ is a maximal invariant of $(\wh g^k_i)$ under per-round common shifts; and
$\E[\wt\eps^k_i\mid\mathcal F_{k-1}]=0$,
$\E[\norm{\wt\eps^k_i}^2\mid\mathcal F_{k-1}]\le\tau_i^2/b$.
\end{lemma}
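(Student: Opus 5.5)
The proof splits into three parts, taken in the order of the statement: the algebraic representation, the maximal-invariance claim, and the noise moments.

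\emph{Representation.} By Assumption~\ref{as:oracle} and $f_i=f_0+h_i$,
\[
\wh g_i^k=\nabla f_0(x^{k-1})+\nabla h_i(x^{k-1})+\eps_i^k .
\]
Averaging over $j$ and subtracting, the term $\nabla f_0(x^{k-1})$ is common to all clients and cancels identically. This leaves $r_i^k=\wt{\nabla h_i}(x^{k-1})+\wt\eps_i^k$. The identity is pointwise, so it holds at any query point, including one chosen adaptively from $\mathcal F_{k-1}$, and for any $f_0$.

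\emph{Maximal invariance.} Let $G$ be the group of per-round translations $(\wh g^k_i)_{i}\mapsto(\wh g^k_i+v^k)_{i}$ with $v^k\in\R^d$, acting separately on each round. Invariance is immediate, since a common $v^k$ cancels in $\wh g_i^k-\frac1n\sum_j\wh g_j^k$. For maximality, suppose two arrays $(\wh g^k_i)$ and $(\wh g'^k_i)$ have the same residuals. Set $v^k:=\frac1n\sum_j(\wh g'^k_j-\wh g^k_j)$. Then
\[
\wh g'^k_i=r_i^k+\tfrac1n\textstyle\sum_j\wh g'^k_j=r_i^k+\tfrac1n\sum_j\wh g^k_j+v^k=\wh g^k_i+v^k ,
\]
so the two arrays lie in one orbit. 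Equivalently, the map $(\wh g_i)\mapsto\big((r_i),\frac1n\sum_j\wh g_j\big)$ is a bijection. Its inverse is $\wh g_i=r_i+\bar g$, and the residuals satisfy $\sum_ir_i=0$.

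\emph{Noise moments.} Conditional unbiasedness follows from linearity of conditional expectation: each $\eps_j^k$ has conditional mean zero given $\mathcal F_{k-1}$. For the second moment, write $\wt\eps_i^k=(1-\tfrac1n)\eps_i^k-\tfrac1n\sum_{j\ne i}\eps_j^k$. By conditional independence across clients and zero conditional means, the cross terms vanish, giving
\[
\E[\norm{\wt\eps_i^k}^2\mid\mathcal F_{k-1}]=(1-\tfrac1n)^2\E[\norm{\eps_i^k}^2\mid\mathcal F_{k-1}]+\tfrac1{n^2}\sum_{j\ne i}\E[\norm{\eps_j^k}^2\mid\mathcal F_{k-1}] .
\]
This is at most $\big((1-\tfrac1n)^2\sigma_i^2+\tfrac1{n^2}\sum_{j\ne i}\sigma_j^2\big)/b$. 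Since $(1-\tfrac1n)^2\le1$, it is bounded by $\big(\sigma_i^2+\tfrac1{n^2}\sum_j\sigma_j^2\big)/b=\tau_i^2/b$.

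\emph{Main obstacle.} The only delicate point is that the cross terms vanish conditionally. Independence must be used in the conditional form given by Assumption~\ref{as:oracle}, together with integrability, which the finite second moments supply. The query point is $\mathcal F_{k-1}$-measurable, so adaptivity causes no difficulty.
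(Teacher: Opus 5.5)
Your proof is correct and follows the paper's argument step for step. It uses the same pointwise cancellation of $\nabla f_0(x^{k-1})$, the same orbit argument (equal residuals force the two arrays to differ by the common shift of their row means), and the same decomposition $\wt\eps^k_i=(1-\tfrac1n)\eps_i^k-\tfrac1n\sum_{j\ne i}\eps_j^k$ with vanishing conditional cross terms for the moment bound; your explicit note that finite second moments make those cross terms integrable is a detail the paper leaves implicit.
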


Two obstacles separate this signal from the weights. First, noise inflates it: the
plug-in estimate $\norm{\wh\beta_i}^2$ over $K_0$ rounds carries an offset of at most
$\tau_i^2/(bK_0)$, and removing it would need the unknown $\tau_i$. We instead split the
probing rounds into two interleaved halves by parity, average within each, and take the
inner product of the two averages:
\begin{equation}
\label{eq:est}
\wh\beta_i^{(a)}=\frac{1}{|\mathcal K_a|}\sum_{k\in\mathcal K_a}r^k_i,\qquad
\wh s_i=\big\langle\wh\beta^{(1)}_i,\wh\beta^{(2)}_i\big\rangle .
\end{equation}
The halves use disjoint rounds with uncorrelated noise, so the inner product is
exactly unbiased for $\norm{\wt\beta_i}^2$ with no knowledge of the noise level
\citep{chernozhukov2018}. The factor-two variance cost is acceptable: an offset
would distort most the energies of the low-bias clients, which the weights should
favour. Interleaving keeps both halves on the same stretch of trajectory.

Second, subtracting the crowd mean removes part of each client's own tilt and mixes in
part of everyone else's, so $\wh s_i$ estimates $(Tq)_i$ rather than $q_i$, for the map
$(Tq)_i=(1-\tfrac2n)q_i+\tfrac1{n^2}\sum_jq_j$. The inversion of $T$ is therefore part
of the estimator, and Section~\ref{sec:ident} shows it is well conditioned.
Algorithm~\ref{alg:weights} inverts $T$ and weights by $\wh q_i+\bar\sigma_i^2/(bK)$
as in~\eqref{eq:psiK}, $K$ being the remaining horizon if known and the rounds seen so
far otherwise. The floor $\varrho>0$ is not cosmetic. The clipped inversion can
return $\wh q_i=0$ exactly, and on an admissible noiseless instance
($\bar\sigma_i=0$, possible with $\kappa<1$) the unfloored cost would
vanish and the normalisation would divide by zero. Flooring costs at most $\varrho$
in the $\ell_\infty$ error that the weight-stability bound of Section~\ref{sec:est} charges, so any $\varrho$
far below the energy scale (e.g.\ $10^{-12}$ times the largest unfloored cost) is
safe.

\begin{figure}[t]
\footnotesize
\begin{minipage}[t]{0.535\linewidth}
\begin{algorithm}[H]
\caption{\CROWD{}}
\label{alg:crowd}
\begin{algorithmic}[1]
\Require $x^0$; stepsizes $\eta_k$; batch $b$; noise bounds $\bar\sigma_i\ge\sigma_i$;
refresh rounds $\mathcal R\subseteq\{2,3,\dots\}$
\State $w^1\gets\one/n$;\ \ $S^{(1)},S^{(2)}\gets0$;\ \ $m_1,m_2\gets0$
\For{$k=1,2,\dots$}
  \Statex \hspace{\algorithmicindent}\textit{Clients, in parallel}
  \State $\wh g^k_i\gets\nabla f_i(x^{k-1})+\eps^k_i$ \Comment{$b$ fresh samples}
  \State $x^k\gets\proj\big(x^{k-1}-\eta_k\sum_iw^k_i\wh g^k_i\big)$
  \Statex \hspace{\algorithmicindent}\textit{Server: statistics from the same reports}
  \State $r^k_i\gets\wh g^k_i-\frac1n\sum_j\wh g^k_j$ \Comment{$\nabla f_0$ cancels}
  \State $a\gets2-(k\bmod2)$ \Comment{odd or even half}
  \State $S^{(a)}\gets S^{(a)}+r^k$;\ \ $m_a\gets m_a+1$
  \If{$k\in\mathcal R$}
    \State $w^{k+1}\gets\Call{Weights}{S^{(1)}/m_1,\,S^{(2)}/m_2}$
  \Else\ $w^{k+1}\gets w^k$
  \EndIf
\EndFor

\end{algorithmic}
\end{algorithm}
\end{minipage}\hfill
\begin{minipage}[t]{0.445\linewidth}
\begin{algorithm}[H]
\caption{\textsc{Weights}$(\wh\beta^{(1)},\wh\beta^{(2)})$}
\label{alg:weights}
\begin{algorithmic}[1]
\Require half-averages $\wh\beta^{(1)},\wh\beta^{(2)}$; rounds $K$ the weights will serve; $b$ and $\bar\sigma_i$ as in Algorithm~\ref{alg:crowd}; floor $\varrho>0$
\State $\wh s_i\gets\big\langle\wh\beta^{(1)}_i,\wh\beta^{(2)}_i\big\rangle$ \Comment{by~\eqref{eq:est}}
\State $\wh q_i\gets\big[\tfrac{n}{n-2}\big(\wh s_i-\tfrac{\sum_j\wh s_j}{n(n-1)}\big)\big]_+$
\Statex \Comment{invert $T$, clip at zero}
\State $\wh c_i\gets\max\big\{\wh q_i+\bar\sigma_i^2/(bK),\,\varrho\big\}$ \Comment{by~\eqref{eq:psiK}; floor}
\State \Return $w_i=\wh c_i^{-1}/\sum_j\wh c_j^{-1}$
\end{algorithmic}
\end{algorithm}
\end{minipage}
\end{figure}

Algorithm~\ref{alg:crowd} (Figure~\ref{fig:pipeline}) costs $O(nd)$ per round beyond
the optimisation step and calls Algorithm~\ref{alg:weights} only at refresh rounds;
since $w^{k+1}$ depends on rounds $\le k$, the anytime analysis of Section~\ref{app:anytime} applies.
\section{Main theoretical results}
\label{sec:theory}

Section~\ref{sec:ident} shows what the disagreement determines, and
Section~\ref{sec:est} turns that identity into an estimator and the end-to-end excess
risk of \CROWD; Section~\ref{app:anytime} extends the bound to the refreshing schedule
used in practice. Section~\ref{sec:lower} matches the upper bound from below, first per
instance and then, in Section~\ref{app:closure}, unconditionally.
Section~\ref{sec:beyond} treats non-linear crowds and then drops the crowd model while
keeping a competitive guarantee. Section~\ref{app:check} asks what the transcript can
be used to check, which gives a guard, and Section~\ref{app:partial} treats partial
participation.

\subsection{What the disagreement determines}
\label{sec:ident}

The tilts themselves are out of reach, by Proposition~\ref{prop:ident-neg}; their
second moments are not. Centring mixes the energies across clients in a way that
depends only on the crowd size, and the mixing can be undone.

\begin{theorem}[Second-moment identifiability]
\label{thm:ident}
Let Assumption~\ref{as:crowd} hold with $h_i$ linear, and $n\ge3$. With $T$ as in
Section~\ref{sec:alg}, the expected disagreement energies $s_i=\E\norm{\wt\beta_i}^2$
satisfy $s=T\zeta^2$, and $T$ is invertible with
\[
(T^{-1}s)_i=\frac{n}{n-2}\Big(s_i-\frac{1}{n(n-1)}\sum_js_j\Big),
\qquad
\norm{T^{-1}}_{\infty\to\infty}=\frac{n^2-2}{(n-1)(n-2)}=:C_n .
\]
Hence $\zeta^2$, and with it the oracle weights, are identified by the law of the
disagreement residuals. Moreover $C_3=3.5$, $C_{20}\le1.17$, $C_n\to1$, and $T$ is
singular for $n=2$.
\end{theorem}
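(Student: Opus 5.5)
The plan is to first derive $s=T\zeta^2$ by direct expansion, and then handle $T$ by writing it in rank-one form. With $\bar\beta=\frac1n\sum_j\beta_j$, I would expand
\[
\norm{\wt\beta_i}^2=\norm{\beta_i}^2-\tfrac{2}{n}\textstyle\sum_j\inner{\beta_i}{\beta_j}+\tfrac1{n^2}\sum_{j,l}\inner{\beta_j}{\beta_l}.
\]
Taking expectations, independence and $\E\beta_j=0$ (Assumption~\ref{as:crowd}, evaluated at $x_0^\star$) kill every cross term $\E\inner{\beta_j}{\beta_l}$ with $j\ne l$. Only the diagonal terms survive, which gives $s_i=(1-\tfrac2n)\zeta_i^2+\tfrac1{n^2}\sum_j\zeta_j^2$, i.e.\ $s=T\zeta^2$. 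Linearity of $h_i$ is used only to guarantee that $\beta_i$ is a fixed random vector, so that $\wt\beta_i$ is exactly the residual mean (Lemma~\ref{lem:residual}). This ties $s$ to the law of the residuals, which is what makes the identification statement meaningful.

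For invertibility, write $T=aI+\tfrac1{n^2}\one\one^\top$ with $a=1-\tfrac2n$. When $n\ge3$ we have $a>0$, so Sherman--Morrison gives
\[
T^{-1}=\tfrac1a\Big(I-\tfrac{1}{n^2a+n}\one\one^\top\Big).
\]
Since $n^2a+n=n(n-1)$ and $1/a=\tfrac{n}{n-2}$, this is exactly the displayed formula. Alternatively, one can check it directly by applying the formula to $T\zeta^2$ and using $\one^\top T\zeta^2=(1-\tfrac2n+\tfrac1n)\one^\top\zeta^2=\tfrac{n-1}{n}\one^\top\zeta^2$. For $n=2$ we get $a=0$, so $T=\tfrac14\one\one^\top$ has rank one and is singular; concretely, $\wt\beta_1=-\wt\beta_2$ forces $s_1=s_2$ regardless of $\zeta^2$.

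For the norm, $\norm{\cdot}_{\infty\to\infty}$ is the maximum absolute row sum. Row $i$ of $T^{-1}$ has diagonal entry $\tfrac{n}{n-2}(1-\tfrac1{n(n-1)})$ and $n-1$ off-diagonal entries each equal to $-\tfrac{n}{n-2}\cdot\tfrac1{n(n-1)}$. Note that $1-\tfrac1{n(n-1)}>0$. The absolute row sum is therefore $\tfrac{n}{n-2}\big(1-\tfrac1{n(n-1)}+\tfrac1n\big)=\tfrac{n}{n-2}\cdot\tfrac{n^2-2}{n(n-1)}=C_n$. The numerical claims then follow by substitution: $C_3=7/(2\cdot1)=3.5$, $C_{20}=398/342\approx1.164\le1.17$, and $C_n\to1$ because numerator and denominator are both $n^2(1+o(1))$.

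Finally, identification of $\zeta^2$ follows from $\zeta^2=T^{-1}s$, and the oracle weights follow as $w_i^\star\propto\zeta_i^{-2}$ via Proposition~\ref{prop:oracle}. The only delicate point is the sign bookkeeping in the row sum, namely making sure the diagonal entry stays positive so that absolute values add up as claimed; beyond that I expect no real obstacle.
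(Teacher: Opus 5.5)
Your algebra is correct throughout. This covers the expansion giving $s=T\zeta^2$, the Sherman--Morrison inverse, the row-sum computation of $C_n$, the numerical values, and the rank-one degeneracy at $n=2$; the remark $\wt\beta_1=-\wt\beta_2$ is a nice concrete witness there. Your ``alternatively, check directly'' variant using $\one^\top T\zeta^2=\tfrac{n-1}{n}\one^\top\zeta^2$ is essentially the paper's own inversion route: sum the forward identity over $i$, then solve for $\zeta_i^2$.

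The gap is in the clause that $\zeta^2$ is identified \emph{by the law of the disagreement residuals}. You dispose of it by saying $\wt\beta_i$ is ``exactly the residual mean''. But that is a conditional mean given the unobserved tilts, and it does not by itself make $s_i$ a functional of the observable law. By Lemma~\ref{lem:residual}, $r^k_i=\wt\beta_i+\wt\eps^k_i$, so the natural observable second moment is $\E\norm{r^k_i}^2=s_i+\E\norm{\wt\eps^k_i}^2$. This is inflated by a noise level that Assumption~\ref{as:oracle} only bounds and nobody knows.

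The missing step removes the noise using two distinct rounds $k<l$. Since $\wt\beta_i$ is $\mathcal F_0$-measurable and $(\wt\eps^k_i)_k$ is a martingale difference sequence,
\[
\E\inner{r^k_i}{r^l_i}=\E\norm{\wt\beta_i}^2+\E\inner{\wt\beta_i}{\wt\eps^k_i}+\E\inner{\wt\beta_i+\wt\eps^k_i}{\E[\wt\eps^l_i\mid\mathcal F_{l-1}]}=s_i .
\]
The middle term vanishes on conditioning on $\mathcal F_{k-1}$ and the last on conditioning on $\mathcal F_{l-1}$. This holds for adaptive query points and needs no knowledge of the noise level.

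This is also where linearity genuinely enters. It is not needed to make $\beta_i$ a fixed random vector, which it is anyway as $\nabla h_i(x_0^\star)$. What linearity does is make the signal $\wt{\nabla h_i}(x^{k-1})$ equal $\wt\beta_i$ at every query point, so the same vector appears in both rounds.

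An alternative fix is an $L^2$ law of large numbers: $\tfrac1K\sum_{k\le K}r^k_i\to\wt\beta_i$ exhibits $\wt\beta_i$ as a measurable function of the residual sequence, at the price of infinitely many rounds. The paper's two-round identity is also exactly what the cross-fitted statistic $\wh s_i$ of \eqref{eq:est} estimates. With that step in place, $\zeta^2=T^{-1}s$ and $w^\star_i\propto\zeta_i^{-2}$ follow as you say.
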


The constant $C_n$ is the conditioning of the inversion: an $\ell_\infty$ error in
$s$ is amplified by at most $C_n$; independence is equally necessary (Proposition~\ref{prop:corr} below). Three claims of different strength must be kept apart.
\emph{Population identifiability} (Theorem~\ref{thm:ident}): the law of the
disagreement determines the expected energies $\zeta_i^2$ exactly. \emph{Realised
estimation} (Section~\ref{sec:est}): the realised $q_i=\norm{\beta_i}^2$ are
estimable only up to an incoherence floor of order $\kappa\bar q$, which no number of
rounds removes. \emph{Exact recovery} of the $q_i$ or of the tilts is neither claimed
nor possible (Proposition~\ref{prop:ident-neg}(i); the floor survives
$K_0\to\infty$). The weights need only the second, crudest claim, through the
$2$-Lipschitz map of Lemma~\ref{thm:weightsub} in Section~\ref{sec:est}.

Theorem~\ref{thm:ident} is the step random-effects meta-analysis cannot take. The
parallel with the moment identity of \citet{dersimonian1986} is exact in form: the
heterogeneity statistic of meta-analysis and our cross-fitted disagreement statistic
are both moment inversions of an \textsc{ANOVA}-type decomposition. Three differences
matter. Our sources are optimisation clients queried repeatedly at moving,
algorithm-chosen points, rather than fixed summary statistics. The effect is
high-dimensional and the relevant functional is a quadratic form. And we need
per-client heterogeneity rather than a pooled variance, which the harmonic law of
Proposition~\ref{prop:oracle} requires; with a common heterogeneity variance it reduces
to inverse-noise weighting, and the common-variance rule of Figure~\ref{fig:baselines}
recovers none of the gain. The per-client energies also play the role of the pairwise
discrepancies that collaborative learning \citep{even2022,chayti2022,grimberg2021}
takes as given: their expectations are identified by the law of the transcript, and the
realised ones are estimable up to the incoherence floor of Section~\ref{sec:est}.

Independence is what makes the cross terms vanish in Proposition~\ref{prop:oracle} and
what makes $T$ the right map to invert. Without it the optimal weights are a
minimum-variance portfolio, and the disagreement no longer determines them.

\begin{proposition}[Correlation destroys identifiability]
\label{prop:corr}
For jointly Gaussian centred crowds with $\Cov(\beta_i,\beta_j)=\Sigma_{ij}I_d/d$,
$\E\norm{\bias(w)}^2=w^\top\Sigma w$ for deterministic $w$, minimised over
$\{\one^\top w=1\}$ by the minimum-variance portfolio $w\propto\Sigma^{-1}\one$
\citep{markowitz1952}, which is also the minimiser over $\simplex$ when
$\Sigma^{-1}\one\ge0$. The map $\Sigma\mapsto J\Sigma J$ has kernel exactly
$\{\one u^\top+u\one^\top:u\in\R^n\}$, and for $\Sigma=I_3$ versus
$\Sigma'=I_3+\one u^\top+u\one^\top$, $u=(0.1,0,0)^\top$ -- both positive definite --
the disagreement residuals have identical laws while the optimal weights differ,
$(\tfrac13,\tfrac13,\tfrac13)$ versus $(0.267,0.367,0.367)$. Under $\Sigma'$ the
uniform weights attain $0.4000$ against the optimum $0.3933$, and under $\Sigma$ the
weights optimal for $\Sigma'$ attain $0.3400$ against $\tfrac13$. Hence no rule that
is a function of the residuals is optimal for both.
\end{proposition}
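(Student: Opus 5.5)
The plan is to verify the four claims in order: the bias identity, the portfolio minimiser, the kernel of the centring map, and the numerical counterexample. The step needing the most care is the kernel characterisation; the rest is bookkeeping.

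\emph{Bias identity and minimiser.} For deterministic $w$, expand
$\E\norm{\bias(w)}^2=\sum_{i,j}w_iw_j\E\inner{\beta_i}{\beta_j}$. Since $\Cov(\beta_i,\beta_j)=\Sigma_{ij}I_d/d$ and the crowd is centred, $\E\inner{\beta_i}{\beta_j}=\tr(\Sigma_{ij}I_d/d)=\Sigma_{ij}$, so the bias energy equals $w^\top\Sigma w$. Minimising this over $\{\one^\top w=1\}$ is a strictly convex quadratic with a single linear constraint, because $\Sigma\succ0$. A Lagrange multiplier gives $\Sigma w=\nu\one$, hence $w=\Sigma^{-1}\one/(\one^\top\Sigma^{-1}\one)$, with value $(\one^\top\Sigma^{-1}\one)^{-1}$. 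If $\Sigma^{-1}\one\ge0$, this affine minimiser lies in $\simplex\subset\{\one^\top w=1\}$, so it also minimises over the simplex.

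\emph{Kernel of $\Sigma\mapsto J\Sigma J$.} Write $J=I-P$ with $P=\one\one^\top/n$. The stacked residual vector is $(J\otimes I_d)\beta$, which is centred Gaussian with covariance $(J\Sigma J)\otimes I_d/d$. Its law therefore depends on $\Sigma$ only through $J\Sigma J$. Since $J\one=0$, every $\one u^\top+u\one^\top$ lies in the kernel. For the converse, take a symmetric $M$ with $JMJ=0$. Expanding gives $M=PM+MP-PMP$. Set $a=M\one/n$ and $m=\one^\top M\one/n^2$. Then $PM=\one a^\top$, $MP=a\one^\top$ and $PMP=m\one\one^\top$. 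Hence $M=\one u^\top+u\one^\top$ with $u=a-\tfrac m2\one$. The kernel statement is exactly what lets two positive definite covariances share a residual law.

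\emph{Counterexample.} With $u=(0.1,0,0)^\top$, the perturbation $\one u^\top+u\one^\top$ has first row $(0.2,0.1,0.1)$, first column $(0.2,0.1,0.1)^\top$, and zeros elsewhere. Then $J\Sigma'J=J\Sigma J$, so the two residual laws coincide.

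For positive definiteness of $\Sigma'$, I would check the leading minors: $1.2$, then $1.2-0.01>0$, then $\det\Sigma'=1.2-0.01-0.01=1.18>0$.

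To find the optimal weights under $\Sigma'$, solve $\Sigma'w\propto\one$. Try $w=(4/15,11/30,11/30)$:
\[
\text{row }1:\ 1.2\cdot\tfrac4{15}+0.1\cdot\tfrac{22}{30}=\tfrac{59}{150},\qquad
\text{row }2:\ 0.1\cdot\tfrac4{15}+\tfrac{11}{30}=\tfrac{59}{150}.
\]
Row 3 agrees by symmetry. So $w$ is the portfolio, it is nonnegative and hence optimal over $\simplex$, and its value is $\tfrac{59}{150}\approx0.3933$.

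The remaining values are direct computations. Uniform weights under $\Sigma'$ give $\one^\top\Sigma'\one/9=(3+0.2+0.4)/9=0.4$. Under $\Sigma=I_3$, the weights $w$ above give $\norm{w}^2=\tfrac{64+242}{900}=0.34$, against the optimum $\tfrac13$.

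Finally, any rule that is a function of the residuals returns the same weight law under $\Sigma$ and $\Sigma'$. Since the two optima differ, such a rule cannot be optimal for both.
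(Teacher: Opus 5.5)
Your proof is correct and follows the paper's own structure step for step. The local variations are all valid and somewhat tidier: Lagrange multipliers instead of Cauchy--Schwarz, reading the kernel directly off $M=PM+MP-PMP$ instead of the paper's linear-functional argument, Sylvester's criterion instead of diagonal dominance, and the exact check $\Sigma'w=\tfrac{59}{150}\one$ for $w=(\tfrac4{15},\tfrac{11}{30},\tfrac{11}{30})$ instead of a numerical solve.

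Your closing sentence, about rules built from the residuals, uses the fact that the residual array $r^k_i=\wt\beta_i+\wt\eps^k_i$ of Lemma~\ref{lem:residual}, and not just $\wt\beta$, has the same law under $\Sigma$ and $\Sigma'$. This holds because the noise law is unchanged, and you should state it explicitly.
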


The kernel is $n$-dimensional and meets the diagonal matrices only at zero when
$n\ge3$, since $\one u^\top+u\one^\top$ is diagonal only if $u_i+u_j=0$ for all
$i\ne j$. Independence restricts $\Sigma$ to the diagonal family, where $J\Sigma J$
is therefore injective; that injectivity is the invertibility of $T$ in
Theorem~\ref{thm:ident}.
\subsection{Estimation error, and why a crude estimate suffices}
\label{sec:est}

Theorem~\ref{thm:ident} concerns expectations, while an algorithm sees one realisation.
For it the map from bias to disagreement energies is only approximately $T$, and one
geometric quantity controls the gap.

\begin{assumption}[Incoherence]
\label{as:incoh}
With $Q$ the Gram matrix of the realised tilts and $D$ its diagonal,
$\kappa:=\norm{D^{-1/2}(Q-D)D^{-1/2}}_{\mathrm{op}}<1$.
\end{assumption}

Assumption~\ref{as:incoh} is a property of one realisation, on which the
assumption-free guarantee of Section~\ref{sec:beyond} also rests. It gives two
deterministic facts:
\begin{equation}
\label{eq:incoh-facts}
\norm{\wt\beta_i}^2=(Tq)_i+\eta_i\ \ \text{with}\ \ |\eta_i|\le3\kappa\bar q,
\qquad\quad
(1-\kappa)\,V(w)\ \le\ \Gamma_w^2\ \le\ (1+\kappa)\,V(w),
\end{equation}
where $V(w):=\sum_iw_i^2q_i$ is the diagonal proxy for the bias energy
(Lemmas~\ref{lem:realised} and~\ref{lem:proxy}). For Gaussian tilts with $d\gtrsim n$,
Lemma~\ref{lem:kappa} gives $\kappa\le C(\sqrt{n/d}+\sqrt{\log(n/\delta)/d})$. The
operative condition, however, compares the crowd size with the number of
\emph{independent causes} of heterogeneity. That number is at most $c^2$ for $c$
classes differing through a label channel, and the number of detectors when the
instruments differ. For a near-linear crowd $\kappa$
is estimable from the two half-averages the algorithm already holds (Section~\ref{app:check}).

The facts in~\eqref{eq:incoh-facts} and the bound on $\kappa$ are the following
lemmas.

\begin{lemma}[Diagonal proxy]
\label{lem:proxy}
Under Assumption~\ref{as:incoh}, $(1-\kappa)V(w)\le\norm{\bias(w)}^2\le(1+\kappa)V(w)$
for every $w\in\R^n$.
\end{lemma}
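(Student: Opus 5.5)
The plan is to write the bias energy as a quadratic form in the Gram matrix and then compare that form with its diagonal part. Let $B=[\beta_1,\dots,\beta_n]\in\R^{d\times n}$, so that $\bias(w)=Bw$ and
\[
\norm{\bias(w)}^2=w^\top B^\top Bw=w^\top Qw,\qquad V(w)=w^\top Dw .
\]
The claim then becomes a two-sided comparison between the quadratic forms of $Q$ and $D$. Rewriting Assumption~\ref{as:incoh} in that language should give it directly.

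Suppose first that every $q_i>0$, so that $D$ is invertible and $D^{-1/2}$ is well defined, as the assumption implicitly requires. Put $M:=D^{-1/2}(Q-D)D^{-1/2}$. It is symmetric, and $\norm{M}_{\mathrm{op}}=\kappa$ gives the Loewner bounds $-\kappa I\preceq M\preceq\kappa I$. Congruence by $D^{1/2}$ preserves Loewner order, and $D^{1/2}MD^{1/2}=Q-D$, so
\[
-\kappa D\ \preceq\ Q-D\ \preceq\ \kappa D,
\]
that is, $(1-\kappa)D\preceq Q\preceq(1+\kappa)D$. Evaluating this at an arbitrary $w\in\R^n$ gives the claim. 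Equivalently, substitute $v=D^{1/2}w$: then $w^\top(Q-D)w=v^\top Mv$ lies in $[-\kappa\norm{v}^2,\kappa\norm{v}^2]$, and $\norm{v}^2=V(w)$. Nothing in this argument uses $w\in\simplex$, which is why the statement holds on all of $\R^n$.

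The only real obstacle is the degenerate case where some $q_i=0$, meaning $\beta_i=0$. Then column $i$ of $B$ vanishes, so row and column $i$ of $Q$ are zero, and client $i$ contributes nothing to either $\norm{\bias(w)}^2$ or $V(w)$. I would therefore restrict to the index set $\{i:q_i>0\}$ and interpret $D^{-1/2}$ as a pseudo-inverse square root, which is presumably how the assumption is meant. The argument above then applies verbatim to the sub-Gram matrix, and the omitted coordinates add zero to both sides. I would state this convention explicitly and leave the rest of the proof as is.
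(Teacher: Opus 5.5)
Your proof is correct and is essentially the paper's argument: write $\norm{\bias(w)}^2=w^\top Qw=V(w)+w^\top(Q-D)w$, substitute $z=D^{1/2}w$, and bound $|z^\top D^{-1/2}(Q-D)D^{-1/2}z|\le\kappa\norm{z}^2=\kappa V(w)$. Your Loewner-order phrasing and your treatment of the degenerate case $q_i=0$ are harmless additions; the paper simply takes $D$ invertible, as Assumption~\ref{as:incoh} implicitly does.
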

\begin{lemma}[Realised inversion error]
\label{lem:realised}
For \emph{arbitrary} vectors $\beta_1,\dots,\beta_n$ with $q_i=\norm{\beta_i}^2>0$
and incoherence $\kappa$, and with $\bar q=\tfrac1n\sum_jq_j$,
\[
\norm{\wt\beta_i}^2=(Tq)_i+\eta_i,\qquad|\eta_i|\le3\kappa\bar q\quad\text{for every }i,
\]
so $\norm{T^{-1}(\norm{\wt\beta}^2)-q}_\infty\le3C_n\kappa\bar q$. No distributional
assumption is used.
\end{lemma}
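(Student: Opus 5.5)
The plan is to expand the centred energy directly and compare it with $Tq$ entry by entry. Write $Q$ for the Gram matrix $Q_{jk}=\inner{\beta_j}{\beta_k}$, $D=\diag(q)$, and $M:=D^{-1/2}(Q-D)D^{-1/2}$, so that $\norm{M}_{\mathrm{op}}=\kappa$. The hypothesis $q_i>0$ makes $D$ invertible, so $M$ is well defined. Since $\wt\beta_i=\beta_i-\frac1n\sum_j\beta_j$, expanding gives
\[
\norm{\wt\beta_i}^2=q_i-\frac2n\sum_jQ_{ij}+\frac1{n^2}\sum_{j,k}Q_{jk}.
\]
Splitting every sum into its diagonal and off-diagonal parts, the diagonal contributions are $q_i-\frac2nq_i+\frac1{n^2}\sum_jq_j=(Tq)_i$. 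This is exactly the map $T$ of Section~\ref{sec:alg}. The remainder is therefore
\[
\eta_i=-\frac2n\,e_i^\top(Q-D)\one+\frac1{n^2}\,\one^\top(Q-D)\one .
\]
No distributional input enters this step; it is pure algebra.

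Next I will bound the two off-diagonal pieces using only $\norm{M}_{\mathrm{op}}=\kappa$. I write each of them as a bilinear form in $M$, via $e_i^\top(Q-D)\one=(D^{1/2}e_i)^\top M(D^{1/2}\one)$ and $\one^\top(Q-D)\one=(D^{1/2}\one)^\top M(D^{1/2}\one)$. With $\norm{D^{1/2}e_i}=\sqrt{q_i}$ and $\norm{D^{1/2}\one}^2=\sum_jq_j=n\bar q$, Cauchy--Schwarz gives two bounds. The first is $|e_i^\top(Q-D)\one|\le\kappa\sqrt{q_i\,n\bar q}$. The second is $|\one^\top(Q-D)\one|\le\kappa n\bar q$.

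The only point needing care is the first term, because the bound $\sqrt{q_i n\bar q}$ is not a multiple of $\bar q$ by itself. The step that makes it uniform in $i$ is the crude inequality $q_i\le\sum_jq_j=n\bar q$, which gives $\sqrt{q_i\,n\bar q}\le n\bar q$. Hence
\[
|\eta_i|\le\frac2n\,\kappa n\bar q+\frac1{n^2}\,\kappa n\bar q=\Big(2+\frac1n\Big)\kappa\bar q\le3\kappa\bar q .
\]
This is the claimed bound, and it holds for every $i$ and for arbitrary vectors $\beta_1,\dots,\beta_n$.

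For the consequence, assume $n\ge3$ so that $T$ is invertible by Theorem~\ref{thm:ident}. Apply $T^{-1}$ to the vector identity $\norm{\wt\beta}^2=Tq+\eta$ to obtain $T^{-1}(\norm{\wt\beta}^2)-q=T^{-1}\eta$. Then
\[
\norm{T^{-1}(\norm{\wt\beta}^2)-q}_\infty\le\norm{T^{-1}}_{\infty\to\infty}\norm{\eta}_\infty\le 3C_n\kappa\bar q,
\]
using the value $\norm{T^{-1}}_{\infty\to\infty}=C_n$ computed in Theorem~\ref{thm:ident}. I expect no real obstacle here. The one place where a careless bound would lose is the cross term $\frac2n e_i^\top(Q-D)\one$: bounding it client by client through $|Q_{ij}|\le\kappa\sqrt{q_iq_j}$ summed over $j$ would cost an extra factor of up to $\sqrt n$. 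Using the operator norm of $M$ in one step avoids that loss.
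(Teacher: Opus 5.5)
Your proposal is correct and follows the paper's proof essentially step for step. You isolate the off-diagonal remainder $\eta_i=-\tfrac2n e_i^\top(Q-D)\one+\tfrac1{n^2}\one^\top(Q-D)\one$, bound both pieces as bilinear forms in $M=D^{-1/2}(Q-D)D^{-1/2}$, use $q_i\le n\bar q$ to get $(2+\tfrac1n)\kappa\bar q\le3\kappa\bar q$, and conclude with $\norm{T^{-1}}_{\infty\to\infty}=C_n$, exactly as the paper does.
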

The raw consensus-distance statistic $\norm{\wt\beta_i}^2$ thus estimates $(Tq)_i$, a
version of $q_i$ shrunk towards the crowd mean. This is the plug-in of
\citet{alamouti2025combining} and the scoring step of truth discovery
\citep{chen2026td}; the inversion of $T$ is what corrects it
(Figure~\ref{fig:baselines}).

\begin{lemma}[Incoherence of a Gaussian crowd]
\label{lem:kappa}
Let $\beta_i\sim\mathcal N(0,\tfrac{\zeta_i^2}dI_d)$ independently, $\delta\in(0,1)$,
and put
\[
\epsilon_0:=2\sqrt{\frac{\log(4n/\delta)}{d}}+\frac{2\log(4n/\delta)}{d},
\qquad
\epsilon_1:=C\Big(\sqrt{\frac nd}+\sqrt{\frac{\log(4/\delta)}{d}}\Big),
\]
with $C:=K^2C_V$, where $C_V$ is the absolute constant of
\citet[Thm.~4.6.1]{vershynin2018}, taken to be at least one (enlarging it only
weakens that theorem's bound, and it is needed so that $\epsilon_1/K^2$ dominates the
theorem's $C_V\sqrt{n/d}+t/\sqrt d$), and $K\ge1$ the sub-Gaussian norm of a standard
Gaussian vector. If $\epsilon_0\le\tfrac12$ and $\epsilon_1\le1$, then with probability
at least $1-\delta$, $\max_i|q_i/\zeta_i^2-1|\le\epsilon_0$ and
$\kappa\le(\epsilon_1+\epsilon_0)/(1-\epsilon_0)$. Both conditions hold when
$d\ge C'(n+\log(4n/\delta))$ for an absolute constant $C'$.
\end{lemma}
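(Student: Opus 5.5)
The proof splits into two independent events, one for the diagonal of the Gram matrix $Q$ and one for its off-diagonal part, followed by a deterministic step that combines them.

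\textbf{Step 1: the norms.} Write $\beta_i=\tfrac{\zeta_i}{\sqrt d}g_i$ with $g_i\sim\mathcal N(0,I_d)$ independent. Then $q_i/\zeta_i^2=\norm{g_i}^2/d$ is a normalised $\chi^2_d$ variable. The Laurent--Massart bounds \citep{laurent2000} give, for $x=\log(4n/\delta)$,
\[
\PP\big(\norm{g_i}^2-d\ge2\sqrt{dx}+2x\big)\le e^{-x},
\qquad
\PP\big(d-\norm{g_i}^2\ge2\sqrt{dx}\big)\le e^{-x}.
\]
So each two-sided deviation $|q_i/\zeta_i^2-1|\le\epsilon_0$ fails with probability at most $\delta/(2n)$. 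A union bound over $i$ puts the diagonal event at probability at least $1-\delta/2$.

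\textbf{Step 2: the normalised Gram matrix.} Let $G=[g_1,\dots,g_n]\in\R^{d\times n}$. Its rows are i.i.d.\ isotropic Gaussian vectors in $\R^n$. By \citet[Thm.~4.6.1]{vershynin2018}, applied with $t=\sqrt{\log(4/\delta)}$ so that the failure probability $2e^{-t^2}$ equals $\delta/2$,
\[
\norm{\tfrac1dG^\top G-I_n}_{\mathrm{op}}\le K^2\max(\epsilon',\epsilon'^2),
\qquad
\epsilon'=C_V\Big(\sqrt{n/d}+t/\sqrt d\Big).
\]
Since $C_V\ge1$, the choice $C=K^2C_V$ gives $K^2\epsilon'\le\epsilon_1$. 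The condition $\epsilon_1\le1$ together with $K\ge1$ forces $\epsilon'\le1$, so $\max(\epsilon',\epsilon'^2)=\epsilon'$ and the bound is at most $\epsilon_1$. This is the bookkeeping step that needs care, and it is why the constant convention appears in the statement.

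\textbf{Step 3: transfer to $\kappa$.} Put $M=\tfrac1dG^\top G$, $\Lambda=\diag(M)$ and $E=M-I$; on the event of Step 2, $\norm{E}_{\mathrm{op}}\le\epsilon_1$. Since $Q=Z M Z$ with $Z=\diag(\zeta_i)/1$ up to the $1/\sqrt d$ already absorbed in $M$, we get $D=Z\Lambda Z$. The diagonal scaling by $Z$ then cancels:
\[
D^{-1/2}(Q-D)D^{-1/2}=\Lambda^{-1/2}(M-\Lambda)\Lambda^{-1/2}.
\]
Split $M-\Lambda=E-\diag(E)$ and bound
\[
\norm{E-\diag(E)}_{\mathrm{op}}\le\norm{E}_{\mathrm{op}}+\max_i|E_{ii}|.
\]
By Step 1, $\max_i|E_{ii}|=\max_i|q_i/\zeta_i^2-1|\le\epsilon_0$, and $\norm{\Lambda^{-1}}_{\mathrm{op}}\le 1/(1-\epsilon_0)$. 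This yields
\[
\kappa\le\frac{\epsilon_1+\epsilon_0}{1-\epsilon_0}
\]
on the intersection of the two events, which has probability at least $1-\delta$. The condition $\epsilon_0\le\tfrac12$ guarantees $\Lambda$ is invertible.

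\textbf{Step 4: the sufficient dimension.} If $d\ge C'(n+\log(4n/\delta))$ with $C'$ large, then $\log(4n/\delta)/d\le1/C'$, which makes $\epsilon_0\le2/\sqrt{C'}+2/C'\le\tfrac12$. Likewise $n/d$ and $\log(4/\delta)/d\le\log(4n/\delta)/d$ are at most $1/C'$, so $\epsilon_1\le 2C/\sqrt{C'}\le1$ once $C'\ge4C^2$.
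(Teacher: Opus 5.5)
Your proof is correct and follows the paper's argument: Laurent--Massart with a union bound for the norms, Theorem~4.6.1 of Vershynin with $t=\sqrt{\log(4/\delta)}$ for the normalised Gram matrix, cancellation of the $\zeta_i$ scaling, and the same dimension condition with $C'\ge4C^2$ (the paper takes $C'=\max(64,4C^2)$). The only difference is cosmetic, and both routes give exactly $(\epsilon_1+\epsilon_0)/(1-\epsilon_0)$. You bound $\Lambda^{-1/2}(E-\diag E)\Lambda^{-1/2}$ by $\norm{\Lambda^{-1}}_{\mathrm{op}}(\norm{E}_{\mathrm{op}}+\max_i|E_{ii}|)$, whereas the paper splits $\Lambda M_0\Lambda-I=\Lambda(M_0-I)\Lambda+(\Lambda^2-I)$ with $\Lambda=\diag(\norm{u_i}^{-1})$.
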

\begin{lemma}[Gaussian refinement of Lemma~\ref{lem:realised}]
\label{lem:gauss-eta}
If $\beta_i\sim\mathcal N(0,\tfrac{\zeta_i^2}{d}I_d)$ independently and
$d\ge C'(n+\log(12n/\delta))$ with $C'$ as in Lemma~\ref{lem:kappa}, then there is an
absolute constant $C$ such that with probability at least $1-\delta$,
$\max_i|\eta_i|\le C\,\bar q\sqrt{\log(6n/\delta)/d}$.
\end{lemma}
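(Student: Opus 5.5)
Expanding the centred energy makes $\eta_i$ an explicit combination of inner products $\inner{\beta_i}{\beta_j}$ with $i\ne j$, and I will bound those inner products using Gaussian concentration, not through $\kappa$. Write $\bar\beta=\tfrac1n\sum_j\beta_j$. Then
\[
\norm{\wt\beta_i}^2=q_i-\tfrac2n\inner{\beta_i}{\textstyle\sum_j\beta_j}+\tfrac1{n^2}\norm{\textstyle\sum_j\beta_j}^2 .
\]
Keeping only the diagonal terms gives exactly $(Tq)_i$, so
\[
\eta_i=-\tfrac2n\sum_{j\ne i}\inner{\beta_i}{\beta_j}+\tfrac1{n^2}\sum_{j\ne k}\inner{\beta_j}{\beta_k}.
\]
The proof therefore reduces to two bounds: one on the cross sum $A_i:=\inner{\beta_i}{\sum_{j\ne i}\beta_j}$, and one on the off-diagonal total $B:=\norm{\sum_j\beta_j}^2-\sum_jq_j$.

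\emph{Bound on $A_i$.} Condition on $\{\beta_j\}_{j\ne i}$. Given these, $A_i\sim\mathcal N\big(0,\tfrac{\zeta_i^2}{d}\norm{S_{-i}}^2\big)$, where $S_{-i}=\sum_{j\ne i}\beta_j\sim\mathcal N\big(0,\tfrac1d\sum_{j\ne i}\zeta_j^2\,I_d\big)$. A Laurent--Massart bound, using $d\gtrsim\log(n/\delta)$, gives $\norm{S_{-i}}^2\le 2\sum_j\zeta_j^2$ with probability $1-\delta/(6n)$. A Gaussian tail then gives
\[
|A_i|\lesssim\zeta_i\Big(\textstyle\sum_j\zeta_j^2\Big)^{1/2}\sqrt{\log(6n/\delta)/d}.
\]
Since $\zeta_i\sqrt{\sum_j\zeta_j^2}\le\tfrac12\big(n\zeta_i^2+\tfrac1n\sum_j\zeta_j^2\big)$, and $\zeta_i^2$ may be as large as $n\bar\zeta^2$, this crude route only yields $\tfrac2n|A_i|\lesssim\sqrt n\,\bar\zeta^2\sqrt{\log/d}$. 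To get the stated form I will use $\zeta_i\le\sqrt{\sum_j\zeta_j^2}$ instead, so $\tfrac2n|A_i|\lesssim\tfrac{2}{n}\sum_j\zeta_j^2\sqrt{\log/d}=2\bar\zeta^2\sqrt{\log/d}$. A union bound over $i$ costs $\log n$, which is already inside $\log(6n/\delta)$.

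\emph{Bound on $B$.} Write $B=\sum_i\inner{\beta_i}{S_{-i}}$. Alternatively, note that $\norm{\sum_j\beta_j}^2$ is $\tfrac{\sum_j\zeta_j^2}{d}\chi^2_d$, while $\sum_jq_j$ is a weighted $\chi^2$ sum. Laurent--Massart applied to each of the two quantities gives relative deviation $O(\sqrt{\log(1/\delta)/d})$, so
\[
|B|\lesssim n\bar\zeta^2\sqrt{\log(1/\delta)/d},
\]
and hence $\tfrac1{n^2}|B|\lesssim\tfrac1n\bar\zeta^2\sqrt{\log/d}$, which is negligible.

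\emph{Converting $\bar\zeta^2$ to $\bar q$.} By Lemma~\ref{lem:kappa}, used with its $\delta/3$ share of the failure budget (this is why the hypothesis reads $12n/\delta$), we have $\max_i|q_i/\zeta_i^2-1|\le\epsilon_0\le\tfrac12$. Hence $\bar\zeta^2\le2\bar q$. Splitting $\delta$ into thirds across the three events and taking a union bound finishes the proof.

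The main obstacle is the per-client cross term $A_i$. Bounding it through $\kappa$ would give only $\sqrt{n/d}$, so conditioning on the other clients is essential. That conditioning also requires the factor $\zeta_i\norm{S_{-i}}$ to be controlled by $\sum_j\zeta_j^2$ and not by $\sqrt n\,\bar\zeta^2$; the inequality $\zeta_i^2\le\sum_j\zeta_j^2$ handles this cleanly.
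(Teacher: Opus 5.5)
Your proof is correct. Its skeleton matches the paper's: the expansion of $\eta_i$, a per-client Gaussian bound on the cross term closed by $\zeta_i^2\le\sum_j\zeta_j^2=n\bar\zeta^2$, a union bound, and the conversion $\bar\zeta^2\le2\bar q$ through the norm event of Lemma~\ref{lem:kappa} at level $\delta/3$. Two steps differ.

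For the cross term, the paper conditions on $\beta_i$ rather than on the other clients. Then $A_i$ has conditional variance $q_i\sum_{j\ne i}\zeta_j^2/d$. The realised $q_i$ is controlled by the same norm event ($q_i\le\tfrac32\zeta_i^2$) that later converts $\bar\zeta^2$ into $\bar q$, so no separate bound on $\norm{S_{-i}}^2$ is needed. Your version pays for one extra Laurent--Massart event, which the dimension condition affords. Contrary to your closing remark, conditioning on the other clients is therefore not essential; conditioning on either side works.

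For the quadratic term, the paper writes $B$ as a Gaussian quadratic form with a zero-diagonal block matrix. It then applies Hanson--Wright, bounding the Frobenius and operator norms of that matrix by $n\bar\zeta^2/\sqrt d$ and $n\bar\zeta^2/d$. You instead write $B$ as the difference of two chi-square-type variables with the common mean $\sum_j\zeta_j^2$. This lets you use only Laurent--Massart. For $\sum_jq_j$ you need its weighted form, where $(\sum_j\zeta_j^4)^{1/2}\le\sum_j\zeta_j^2$ and $\max_j\zeta_j^2\le\sum_j\zeta_j^2$ give the relative deviation you assert. Your whole proof thus runs on chi-square and Gaussian tails, with explicit constants in place of the unspecified Hanson--Wright constant. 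What it gives up is the cancellation between the two pieces, which Hanson--Wright could capture through the Frobenius norm when one energy dominates. That costs nothing here, because the term carries a factor $1/n^2$.

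One small point: your AM--GM aside is off. With $\zeta_i^2$ up to $n\bar\zeta^2$ it gives order $n\bar\zeta^2$, not $\sqrt n\,\bar\zeta^2$. You discard it, though, and the inequality you actually use is right.
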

The next lemma carries a \emph{staleness} parameter $\delta_i$: zero for a linear
crowd, $\wt\ell_i\bar D$ for a non-linear one, where the residual is measured at a
moving point.

\begin{lemma}[Estimation error]
\label{thm:est}\label{thm:est-full}
Let Assumption~\ref{as:oracle} hold with arbitrary tilts, $n\ge3$, $K_0$ even, and
suppose the disagreement measured at the query points differs from $\wt\beta_i$ by at
most $\delta_i$ almost surely, for all $k\le K_0$. Put
$\vartheta_i:=\tau_i\sqrt{2/(bK_0)}$ and $\wt\zeta_i:=\norm{\wt\beta_i}$.
\begin{enumerate}[label=(\roman*),itemsep=1pt,topsep=2pt]
\item \emph{(Linear crowd, $\delta_i=0$.)} $\wh s_i$ is exactly unbiased for
$\norm{\wt\beta_i}^2$ given the tilts, for every adaptive query sequence, with
conditional variance at most
$8\wt\zeta_i^2\tau_i^2/(bK_0)+8\tau_i^4/(bK_0)^2$; the bound is dimension-free.
\item \emph{(General.)} $\E|\wh s_i-\norm{\wt\beta_i}^2|\le2\wt\zeta_i(\delta_i+\vartheta_i)+(\delta_i+\vartheta_i)^2$.
\item \emph{(Weights.)} With $\wh q=(T^{-1}\wh s)_+$ and Assumption~\ref{as:incoh},
$\Delta:=\max_i|\wh q_i-q_i|\le C_n(\max_i|\wh s_i-\norm{\wt\beta_i}^2|+3\kappa\bar q)$,
which converts into a high-probability bound by Chebyshev and a union bound, improving
to a $\sqrt{\log(dn/\delta)}$ dependence under sub-Gaussian noise
(Remark~\ref{rem:subgauss}).
\end{enumerate}
\end{lemma}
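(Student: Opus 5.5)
Everything is conditional on the realised tilts, and I take each half-average apart into signal and noise. Write $\mathcal K_1,\mathcal K_2$ for the odd and even rounds, each of size $K_0/2$. By Lemma~\ref{lem:residual}, $r_i^k=\wt\beta_i+e_i^k+\wt\eps_i^k$, where $e_i^k:=\wt{\nabla h_i}(x^{k-1})-\wt\beta_i$ is the staleness term, with $\norm{e_i^k}\le\delta_i$ by hypothesis. Averaging over a half gives
\[
\wh\beta_i^{(a)}=\wt\beta_i+\bar e_i^{(a)}+N_i^{(a)},
\qquad
N_i^{(a)}:=\tfrac{2}{K_0}\textstyle\sum_{k\in\mathcal K_a}\wt\eps_i^k .
\]
Each $N_i^{(a)}$ is a sum of martingale differences with respect to $\mathcal F_k$. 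Orthogonality of these increments and Lemma~\ref{lem:residual} give $\E\norm{N_i^{(a)}}^2\le\tfrac{4}{K_0^2}\cdot\tfrac{K_0}{2}\cdot\tfrac{\tau_i^2}{b}=\tfrac{2\tau_i^2}{bK_0}=\vartheta_i^2$.

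\emph{Part (i).} Here $\delta_i=0$, so
\[
\wh s_i=\norm{\wt\beta_i}^2+\inner{\wt\beta_i}{N^{(1)}_i+N^{(2)}_i}+\inner{N^{(1)}_i}{N^{(2)}_i}.
\]
The linear terms have mean zero. For the cross term I expand $\inner{N^{(1)}_i}{N^{(2)}_i}$ into pairs $\inner{\wt\eps^k_i}{\wt\eps^{k'}_i}$ with $k\ne k'$, since the halves share no round. For $k<k'$, conditioning on $\mathcal F_{k'-1}$ kills the pair, because $\wt\eps^k_i$ is $\mathcal F_{k'-1}$-measurable. Hence the cross term has mean zero, and exact unbiasedness holds for any adaptive query sequence.

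For the variance I use $(x+y)^2\le2x^2+2y^2$. The linear part has second moment at most $\wt\zeta_i^2\,\E\norm{N^{(1)}_i+N^{(2)}_i}^2\le4\wt\zeta_i^2\tau_i^2/(bK_0)$, and doubling it gives the $8\wt\zeta_i^2\tau_i^2/(bK_0)$ term. The quadratic part is the delicate step. Its square is a fourfold sum over $\E[\inner{\eps^{k}}{\eps^{k'}}\inner{\eps^{l}}{\eps^{l'}}]$ with $k,l$ odd and $k',l'$ even. I would show that only diagonal pairings $\{k,k'\}=\{l,l'\}$ survive: whenever some index is strictly the largest and unpaired, conditioning on the filtration just before that round removes the term. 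This argument needs care with adaptivity, because later noise laws may depend on earlier noise; it still works because the vanishing conditional mean is all that is used. Each surviving term is then bounded by iterated conditioning. Conditioning on $\mathcal F_{\max(k,k')-1}$ gives $\E\inner{\eps^k}{\eps^{k'}}^2\le\tfrac{\tau_i^2}{b}\E\norm{\eps^{\min}}^2\le\tau_i^4/b^2$. With $(K_0/2)^2$ pairs and prefactor $16/K_0^4$, doubling yields $8\tau_i^4/(bK_0)^2$. Neither bound involves $d$, so the variance bound is dimension-free.

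\emph{Part (ii).} Put $u_a:=\bar e^{(a)}_i+N^{(a)}_i$. Then
\[
\wh s_i-\norm{\wt\beta_i}^2=\inner{\wt\beta_i}{u_1+u_2}+\inner{u_1}{u_2},
\qquad
\bigl|\wh s_i-\norm{\wt\beta_i}^2\bigr|\le\wt\zeta_i\bigl(\norm{u_1}+\norm{u_2}\bigr)+\norm{u_1}\norm{u_2}.
\]
By Jensen, $\E\norm{u_a}\le\delta_i+\E\norm{N^{(a)}_i}\le\delta_i+\vartheta_i$. For the product, Cauchy--Schwarz together with Minkowski ($\sqrt{\E\norm{u_a}^2}\le\delta_i+\vartheta_i$) gives $\E\norm{u_1}\norm{u_2}\le(\delta_i+\vartheta_i)^2$. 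Together these give the stated bound $2\wt\zeta_i(\delta_i+\vartheta_i)+(\delta_i+\vartheta_i)^2$.

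\emph{Part (iii).} Clipping $[\cdot]_+$ is $1$-Lipschitz and $q\ge0$, so $|\wh q_i-q_i|\le|(T^{-1}\wh s)_i-q_i|$. I split
\[
T^{-1}\wh s-q=T^{-1}\bigl(\wh s-\norm{\wt\beta}^2\bigr)+\bigl(T^{-1}\norm{\wt\beta}^2-q\bigr).
\]
The first piece is controlled by $\norm{T^{-1}}_{\infty\to\infty}=C_n$ from Theorem~\ref{thm:ident}. The second is at most $3C_n\kappa\bar q$ by Lemma~\ref{lem:realised}. Adding them gives $\Delta\le C_n(\max_i|\wh s_i-\norm{\wt\beta_i}^2|+3\kappa\bar q)$. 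The high-probability version follows by applying Chebyshev to each $i$ using the variance from (i) and taking a union bound over the $n$ clients. The sub-Gaussian improvement I defer to Remark~\ref{rem:subgauss}.
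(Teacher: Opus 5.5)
Your parts (ii) and (iii), the unbiasedness in (i), and the linear term of the variance in (i) follow the paper's argument. The quadratic term of the variance in (i), however, rests on a claim that is false.

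\textbf{The gap.} Expanding $\E\inner{N^{(1)}_i}{N^{(2)}_i}^2$ into a fourfold sum, it is not true under the adaptivity the lemma allows that only the diagonal pairings $\{k,k'\}=\{l,l'\}$ survive. Take a term whose largest index is paired but whose other two indices differ, say $k=l>k',l'$ with $k'\neq l'$ (or symmetrically $k'=l'$ largest with $k\neq l$). Put $\Sigma^t_i:=\E[\wt\eps^t_i(\wt\eps^t_i)^\top\mid\mathcal F_{t-1}]$. Conditioning on $\mathcal F_{k-1}$ leaves $\E[(\wt\eps^{k'}_i)^\top\Sigma^k_i\,\wt\eps^{l'}_i]$.

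The conditional covariance $\Sigma^k_i$ depends on the query point $x^{k-1}$, as stochastic-gradient noise generally does. Hence it depends on $\wt\eps^{k'}_i$ and $\wt\eps^{l'}_i$, and is not $\mathcal F_{\max(k',l')-1}$-measurable, so the vanishing conditional mean of the later noise cannot be applied. For example, if the round-$k$ noise has larger covariance whenever $\inner{\wt\eps^{k'}_i}{\wt\eps^{l'}_i}>0$, the term is strictly positive. Your remark that ``the vanishing conditional mean is all that is used'' is exactly where the argument breaks. These order-$K_0^3$ terms are neither shown to vanish nor covered by your count of $(K_0/2)^2$ diagonal terms.

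\textbf{The repair.} This is the paper's Lemma~\ref{lem:crossmart}: group the pairs by their later round instead of expanding fully. Write $\inner{N^{(1)}_i}{N^{(2)}_i}=\tfrac{4}{K_0^2}\sum_tD_t$ with $D_t:=\inner{A_t}{\wt\eps^t_i}$, where $A_t$ is the sum of $\wt\eps^s_i$ over rounds $s<t$ of the opposite half.

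Since $A_t$ is $\mathcal F_{t-1}$-measurable, the $D_t$ are martingale differences, so $\E(\sum_tD_t)^2=\sum_t\E D_t^2$. Moreover $\E D_t^2=\E[A_t^\top\Sigma^t_iA_t]\le\tfrac{\tau_i^2}{b}\E\norm{A_t}^2$. The off-diagonal terms are thus absorbed by Cauchy--Schwarz inside the conditional expectation, rather than required to vanish.

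Orthogonality of increments bounds $\E\norm{A_t}^2$ by $\tau_i^2/b$ times the number of earlier rounds in the other half. Summing over $t$ therefore counts each of the $(K_0/2)^2$ cross pairs exactly once, giving
$\tfrac{16}{K_0^4}\cdot\tfrac{K_0^2}{4}\cdot\tfrac{\tau_i^4}{b^2}=4\tau_i^4/(bK_0)^2$.
This is your constant before doubling, so the stated bound is true; your proof just needs this step in place of the diagonal-pairing claim.
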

\begin{remark}[Sub-Gaussian improvement]
\label{rem:subgauss}
Suppose the centred noises are conditionally norm-sub-Gaussian,
$\PP(\norm{\wt\eps^k_i}\ge t\mid\mathcal F_{k-1})\le2e^{-bt^2/(2\tau_i^2)}$. The
concentration inequality for vector martingales with norm-sub-Gaussian increments of
\citet{jin2019subgaussian} then gives, with probability at least $1-\delta$ and for
all $i$ and $a$ simultaneously,
$\norm{\nu^{(a)}_i}\le C\tau_i\sqrt{\log(8dn/\delta)/(bK_0)}$ with an absolute $C$.
On that event the bound of case (ii) with $\delta_i=0$ gives
$|\wh s_i-\norm{\wt\beta_i}^2|\le2\wt\zeta_iC\tau_i\sqrt{\log(8dn/\delta)/(bK_0)}+C^2\tau_i^2\log(8dn/\delta)/(bK_0)$,
so the factors $\sqrt{n/\delta}$ and $n/\delta$ of the moment version become
$\sqrt{\log(dn/\delta)}$ and $\log(dn/\delta)$. We keep the moment version in the main
text because it assumes nothing beyond Assumption~\ref{as:oracle}.
\end{remark}
The three
error sources differ in character: $\vartheta_i$ decays as $(bK_0)^{-1/2}$,
the floor $\kappa\bar q$ decays with $d$ and not with $K_0$, and non-linearity enters
through $\delta_i$ alone. A crude estimate suffices, the map from cost estimates to
attained cost being $2$-Lipschitz in $\ell_\infty$ uniformly in $n$.

\begin{lemma}[Weight stability]
\label{thm:weightsub}
Let $c,\wh c\in\R^n_{>0}$ with $\norm{\wh c-c}_\infty\le\Delta$ and let
$\wh w_i\propto\wh c_i^{-1}$. Then
$\sum_i\wh w_i^2c_i\le\min_{w\in\simplex}\sum_iw_i^2c_i+2\Delta$. In particular, with
$c_i=q_i+\bar\sigma_i^2/(bT)$ and the floored costs
$\wh c_i=\max\{(\wh q_i)_++\bar\sigma_i^2/(bT),\varrho\}$ of Algorithm~\ref{alg:weights},
$V(\wh w)\le\wh\Psi(bT)+2(\Delta+\varrho)$, where
$\wh\Psi(K)=(\sum_i(q_i+\bar\sigma_i^2/K)^{-1})^{-1}$ is the realised counterpart
of~\eqref{eq:psiK}: clipping at zero never hurts, and the floor adds at most $\varrho$
to the $\ell_\infty$ error.
\end{lemma}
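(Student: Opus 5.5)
The plan is to prove the inequality by a two-sided comparison through the perturbed problem. Write $\Phi_c(w):=\sum_iw_i^2c_i$. By Proposition~\ref{prop:oracle}, $\wh w_i\propto\wh c_i^{-1}$ is exactly the minimiser of $\Phi_{\wh c}$ over $\simplex$. Let $w^\star$ be a minimiser of $\Phi_c$ over $\simplex$.

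The only estimate needed is that for any $w\in\simplex$ and any $a,a'$ with $\norm{a-a'}_\infty\le\Delta$,
\[
|\Phi_a(w)-\Phi_{a'}(w)|\le\Delta\sum_iw_i^2\le\Delta,
\]
because $\sum_iw_i^2\le(\sum_iw_i)^2=1$ on the simplex. Applying this twice gives
\[
\Phi_c(\wh w)\le\Phi_{\wh c}(\wh w)+\Delta\le\Phi_{\wh c}(w^\star)+\Delta\le\Phi_c(w^\star)+2\Delta .
\]
The middle step is the optimality of $\wh w$ for $\wh c$. This is the first claim.

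Positivity of $c$ is used only so that the closed form $H(c)$ of Proposition~\ref{prop:oracle} names the minimum. The chain itself needs only that a minimiser $w^\star$ exists, which holds by compactness of $\simplex$. So the argument also covers $c\ge0$, which matters in the noiseless case $\bar\sigma_i=0$, $q_i=0$. Positivity of $\wh c$ is guaranteed by the floor, and that is what makes $\wh w$ well defined.

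For the specialisation, set $c_i=q_i+\bar\sigma_i^2/(bT)$. Then $\min_{\simplex}\Phi_c=\wh\Psi(bT)$, either directly by Proposition~\ref{prop:oracle} or, if some $c_i=0$, as the limit of that formula, both sides being $0$. It then suffices to show $\norm{\wh c-c}_\infty\le\Delta+\varrho$, where $\Delta=\max_i|\wh q_i-q_i|$ is measured before clipping.

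\emph{Clipping.} Since $q_i\ge0$ and $x\mapsto x_+$ is the projection onto $[0,\infty)$, which is $1$-Lipschitz and fixes $q_i$, we get $|(\wh q_i)_+-q_i|\le|\wh q_i-q_i|$. So clipping never increases the error.

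\emph{Floor.} For any real $a$, $0\le\max\{a,\varrho\}-a\le\varrho$ whenever $a\ge0$. Here $a=(\wh q_i)_++\bar\sigma_i^2/(bT)\ge0$, so the floor adds at most $\varrho$.

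Combining the two, $|\wh c_i-c_i|\le\Delta+\varrho$ for every $i$. Finally, $V(\wh w)=\sum_i\wh w_i^2q_i\le\Phi_c(\wh w)$ because the noise terms are nonnegative. The first part then yields $V(\wh w)\le\wh\Psi(bT)+2(\Delta+\varrho)$.

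None of the steps is a serious obstacle. The only care needed is bookkeeping: $\wh w$ must be compared with $w^\star$ under the \emph{true} cost $c$, not under $\wh c$. The factor $2$ comes from paying the $\ell_\infty$ perturbation once at $\wh w$ and once at $w^\star$. The bound $\sum_iw_i^2\le1$ is what makes the constant independent of $n$. I would also note that a sharper factor such as $2\max_i w_i$ is available but not needed.
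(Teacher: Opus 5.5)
Your proof is correct and matches the paper's argument: the same comparison $\Phi_c(\wh w)\le\Phi_{\wh c}(\wh w)+\Delta\le\Phi_{\wh c}(w^\star)+\Delta\le\Phi_c(w^\star)+2\Delta$ via $\sum_iw_i^2\le1$, followed by $c_i\ge q_i$, the $1$-Lipschitz clipping, and a floor step. The only difference is cosmetic: you bound the floor by the triangle inequality through the unfloored cost, whereas the paper uses a case split that gives $\max\{\varrho,\Delta\}\le\Delta+\varrho$.
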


Lemma~\ref{thm:weightsub} is deterministic and assumes nothing about how $\wh c$ was
obtained. The price of estimation enters additively and uniformly in $n$, unlike in
the forecast-combination puzzle \citep{stock2004,claeskens2016}. Composing it with
Lemma~\ref{thm:est} and a standard projected-\textsc{SGD} bound gives the end-to-end
bound. We state the \textsc{SGD} bound in a form whose stepsize does not depend on the
unknown minimiser.

\begin{lemma}[Constant-step projected SGD]
\label{lem:sgd}
Let $F$ be $\mu_F$-strongly convex and $L_F$-smooth on a closed convex $\Xset$ with
minimiser $x^\star$ over $\Xset$, and $x_t=\proj(x_{t-1}-\eta g_t)$ with
$\E[g_t\mid\mathcal F_{t-1}]=\nabla F(x_{t-1})$ and
$\E[\norm{g_t-\nabla F(x_{t-1})}^2\mid\mathcal F_{t-1}]\le V$. If $\eta\le1/L_F$,
then $a_t:=\E[\norm{x_t-x^\star}^2\mid\mathcal F_0]$ obeys
$a_t\le(1-\mu_F\eta)a_{t-1}+\eta^2V$, hence $a_T\le e^{-\mu_F\eta T}a_0+\eta V/\mu_F$.
For any $R>0$ let $\ell:=\ln(e+\mu_F^2R^2T/V)$ and
$\eta:=\min\{\tfrac1{2L_F},\tfrac{\ell}{\mu_FT}\}$. Then
\[
a_T\ \le\ a_0e^{-\mu_FT/(2L_F)}+\Big(\frac{a_0}{R^2}+\ell\Big)\frac{V}{\mu_F^2T},
\]
which is at most $a_0e^{-\mu_FT/(2L_F)}+(1+\ell)V/(\mu_F^2T)$ when $a_0\le R^2$.
\end{lemma}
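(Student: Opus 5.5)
We prove Lemma~\ref{lem:sgd} by following the standard one-step contraction for projected SGD, using only that $\proj$ is nonexpansive and $x^\star=\proj(x^\star-\eta\nabla F(x^\star))$. The latter holds because $x^\star$ minimises $F$ over $\Xset$, so the variational inequality $\inner{\nabla F(x^\star)}{y-x^\star}\ge0$ holds for all $y\in\Xset$.

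\emph{Step 1 (recursion).} Write $g_t=\nabla F(x_{t-1})+\xi_t$ with $\E[\xi_t\mid\mathcal F_{t-1}]=0$. By nonexpansiveness,
\[
\norm{x_t-x^\star}^2\le\norm{x_{t-1}-x^\star-\eta(\nabla F(x_{t-1})-\nabla F(x^\star))-\eta\xi_t}^2 .
\]
Conditioning on $\mathcal F_{t-1}$ kills the cross term with $\xi_t$ and leaves $\eta^2V$ from the noise. For the deterministic part we use the co-coercivity bound valid for $\mu_F$-strongly convex, $L_F$-smooth $F$:
\[
\inner{\nabla F(x)-\nabla F(y)}{x-y}\ge\frac{\mu_FL_F}{\mu_F+L_F}\norm{x-y}^2+\frac{1}{\mu_F+L_F}\norm{\nabla F(x)-\nabla F(y)}^2 .
\]
Simpler still, we combine $\mu_F$-monotonicity with $\norm{\nabla F(x)-\nabla F(y)}^2\le L_F\inner{\nabla F(x)-\nabla F(y)}{x-y}$, which gives
\[
\norm{x-y-\eta(\nabla F(x)-\nabla F(y))}^2\le(1-2\eta\mu_F+\eta^2L_F\mu_F)\norm{x-y}^2\le(1-\mu_F\eta)\norm{x-y}^2
\]
for $\eta\le1/L_F$. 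The gradient-difference term $-2\eta\inner{\cdot}{\cdot}+\eta^2\norm{\cdot}^2\le-\eta(2-\eta L_F)\inner{\cdot}{\cdot}$ is then bounded by $-\eta\mu_F\norm{x-y}^2$. Taking $\E[\cdot\mid\mathcal F_0]$ via the tower property yields $a_t\le(1-\mu_F\eta)a_{t-1}+\eta^2V$. Unrolling with $1-u\le e^{-u}$ and summing the geometric series $\sum_s(1-\mu_F\eta)^s\le1/(\mu_F\eta)$ gives $a_T\le e^{-\mu_F\eta T}a_0+\eta V/\mu_F$.

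\emph{Step 2 (stepsize choice).} This is the only mildly delicate step, a two-case analysis on which term attains the minimum in $\eta$.

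\emph{Case $\eta=\ell/(\mu_FT)\le1/(2L_F)$.} The contraction factor is $e^{-\ell}\le\big(e+\mu_F^2R^2T/V\big)^{-1}\le V/(\mu_F^2R^2T)$, so the bias term is at most $a_0V/(R^2\mu_F^2T)$. The noise term is $\eta V/\mu_F=\ell V/(\mu_F^2T)$. Together these are exactly $(a_0/R^2+\ell)V/(\mu_F^2T)$.

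\emph{Case $\eta=1/(2L_F)<\ell/(\mu_FT)$.} The bias term is $a_0e^{-\mu_FT/(2L_F)}$. The noise term satisfies $\eta V/\mu_F\le\ell V/(\mu_F^2T)$, since $\eta<\ell/(\mu_FT)$.

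In either case the displayed bound holds, because each case is dominated by the sum of the two expressions. The final specialisation $a_0\le R^2$ is immediate. The logarithm $\ell\ge1$ plays no further role beyond ensuring the case split is exhaustive. I expect no real obstacle. The only point to check carefully is the contraction constant $1-\mu_F\eta$ under $\eta\le1/L_F$, which avoids needing $\eta\le2/(\mu_F+L_F)$.
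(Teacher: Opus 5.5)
Your proof is correct and follows the paper's argument. Both use the fixed-point property of $x^\star$ under the projected gradient map together with nonexpansiveness of $\proj$. Both derive the one-step contraction from strong monotonicity and co-coercivity, with one inessential algebraic difference: you substitute $\norm{\nabla F(x)-\nabla F(y)}^2\le L_F\inner{\nabla F(x)-\nabla F(y)}{x-y}$ into the $\eta^2$ term, whereas the paper averages the two inequalities. The unrolling and the two-case analysis of the tuned stepsize then match the paper's.
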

The bound is linear in $a_0$, so it can be averaged over the conditioning: if only
$\E a_0\le R^2$ is known, the same display holds for $\E\norm{x_T-x^\star}^2$ with
$a_0$ replaced by $R^2$.
\begin{theorem}[End-to-end excess risk, linear crowd]
\label{thm:endtoend}
Let Assumptions~\ref{as:crowd}--\ref{as:incoh} hold with $h_i$ linear, $\Xset=\R^d$,
$n\ge3$. Run Algorithm~\ref{alg:crowd} with $\mathcal R=\{K_0\}$, then $T$ rounds of
\textsc{SGD} on $F_{\wh w}$ with the tuned stepsize of Lemma~\ref{lem:sgd}, writing
$V_\sigma:=\sum_i\wh w_i^2\bar\sigma_i^2/b$, $R_0$ an $\mathcal F_{K_0}$-measurable
bound on $\norm{x^{K_0}-x^\star_{\wh w}}$ used in the stepsize (it cannot depend on the
unknown minimiser) and $\ell_T:=1+\ln(e+\mu^2R_0^2T/V_\sigma)$. Then,
conditionally on the first $K_0$ rounds and with $\Delta$ the realised error of
Lemma~\ref{thm:est}(iii) enlarged by the floor $\varrho$ of Algorithm~\ref{alg:weights},
for every $\epsilon>0$,
\[
\E\big[f_0(x^{K_0+T})-f_0^\star\big]\ \le\
\frac{(1+\epsilon^{-1})L^2}{2\mu}\Big(R_0^2e^{-\mu T/(2L)}+\frac{\ell_TV_\sigma}{\mu^2T}\Big)
\ +\ \frac{(1+\epsilon)(1+\kappa)}{2\mu}\big(\wh\Psi(bT)+2\Delta\big) .
\]
\end{theorem}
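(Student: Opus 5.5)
Conditioning on $\mathcal F_{K_0}$ throughout, I treat $\wh w$ as a fixed simplex point; the tilts are deterministic given the first $K_0$ rounds, so $F_{\wh w}=f_0+\inner{\bias(\wh w)}{\cdot}$ is a fixed objective. In the linear case $\lambda_{\wh w}=0$, so Lemma~\ref{lem:transfer}(i) gives that $F_{\wh w}$ is $\mu$-strongly convex and $L$-smooth. The sharp form of Lemma~\ref{lem:transfer}(ii) then yields, pointwise in $x=x^{K_0+T}$,
\[
f_0(x)-f_0^\star\le(1+\epsilon)\frac{\Gamma_{\wh w}^2}{2\mu}+\Big(1+\tfrac1\epsilon\Big)\frac{L}{\mu}\big[F_{\wh w}(x)-\min F_{\wh w}\big].
\]
Taking conditional expectations splits the proof into a bias term and an optimization term. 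I handle each separately below.

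For the optimization term, I use $L$-smoothness to get $F_{\wh w}(x)-\min F_{\wh w}\le\tfrac L2\norm{x-x^\star_{\wh w}}^2$; this produces the prefactor $(1+\epsilon^{-1})L^2/(2\mu)$. Next I apply Lemma~\ref{lem:sgd} to the $T$ post-refresh rounds, with $F=F_{\wh w}$, $\mu_F=\mu$ and $L_F=L$. The stochastic gradient is $\sum_i\wh w_i\wh g_i^k$. It is conditionally unbiased for $\nabla F_{\wh w}(x^{k-1})$ because $\wh w$ is $\mathcal F_{K_0}$-measurable and each client's noise is conditionally mean-zero. Its conditional variance is at most $\sum_i\wh w_i^2\sigma_i^2/b\le V_\sigma$, by conditional independence across clients and $\bar\sigma_i\ge\sigma_i$. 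I take $a_0=\norm{x^{K_0}-x^\star_{\wh w}}^2\le R_0^2$ and $R=R_0$. The tuned stepsize then gives $a_T\le R_0^2e^{-\mu T/(2L)}+(1+\ln(e+\mu^2R_0^2T/V_\sigma))V_\sigma/(\mu^2T)$, and the logarithmic factor is exactly $\ell_T$. The stepsize is $\mathcal F_{K_0}$-measurable, which is why $R_0$ was required to be.

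For the bias term, Lemma~\ref{lem:proxy} gives $\Gamma_{\wh w}^2\le(1+\kappa)V(\wh w)$, and Lemma~\ref{thm:weightsub} with $c_i=q_i+\bar\sigma_i^2/(bT)$ bounds the weighted cost. The bound I want is $V(\wh w)\le\wh\Psi(bT)+2\Delta$, where $\Delta$ already absorbs $\varrho$. Lemma~\ref{thm:weightsub} states this bound for $V$, but what it controls directly is $\sum\wh w_i^2c_i=V(\wh w)+\sum\wh w_i^2\bar\sigma_i^2/(bT)$. Since $V\le\sum\wh w_i^2c_i$, dropping the nonnegative noise part is valid. Substituting into the first display gives the second summand of the theorem.

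\textbf{Main obstacle.} The delicate point is bookkeeping rather than analysis. The analysis is conditional on $\mathcal F_{K_0}$ and $\Delta$ is the realised error, so no independence between $\wh w$ and the tilts is needed; Lemma~\ref{lem:proxy} is deterministic given the realisation. The horizon $K$ used inside Algorithm~\ref{alg:weights} must be set to $T$, so that the costs match $\wh\Psi(bT)$. I must also check that Lemma~\ref{lem:sgd}'s requirement $\eta\le1/L_F$ holds: the choice $\eta\le1/(2L)$ satisfies it.
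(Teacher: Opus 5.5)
Your proof is correct and follows the paper's argument essentially step for step. You condition on $\mathcal F_{K_0}$ to freeze $\wh w$, apply the sharp transfer inequality of Lemma~\ref{lem:transfer}(ii) pointwise, control the optimisation term by $L$-smoothness together with Lemma~\ref{lem:sgd} (taking $a_0\le R_0^2=R^2$ and $V=V_\sigma$), and bound the bias term via Lemma~\ref{lem:proxy} and Lemma~\ref{thm:weightsub}, dropping the noise part of the costs exactly as the paper does; the only difference is that you present the transfer before the SGD step.
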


The first bracket vanishes with the horizon; the second is what the weights cost and
survives $T\to\infty$. A small $\epsilon$ brings the bias constant arbitrarily close
to $1/(2\mu)$ at the price of a larger multiple of the vanishing terms. Any
$\epsilon_T\to0$ with $\epsilon_T^{-1}=o(T/\log T)$ works. A single refresh at $K_0$
freezes the sampling part of $\Delta$; refreshing with all rounds seen so far keeps
shrinking it. Theorem~\ref{thm:anytime} in the next subsection covers the geometric grid
$\mathcal R=\{\lceil\gamma^j\rceil\}$, with $\lceil\log_\gamma T\rceil$ refreshes in $T$
rounds, at a factor $(1-\gamma^{-1})^{-1}$ on the vanishing terms. We use $\gamma=2$.
Refreshing every round behaves the same, and a single early refresh can stall well
above the oracle floor (Figure~\ref{fig:refresh}).

\subsection{The anytime algorithm}
\label{app:anytime}

Theorem~\ref{thm:endtoend} analyses the two-phase schedule, in which the weights are
computed once and then frozen. The experiments run the anytime version, in which the
weights are refreshed and therefore depend on the trajectory they themselves generate.
This subsection analyses the refresh schedule we recommend, a geometric grid.
\begin{theorem}[Anytime \CROWD]
\label{thm:anytime}
Let the assumptions of Theorem~\ref{thm:endtoend} hold, fix $\gamma>1$, and run
Algorithm~\ref{alg:crowd} with the refresh set
$\mathcal R=\{\lceil\gamma^j\rceil:j\ge j_0\}\cap[1,T/\gamma]$ with $j_0\ge1$ and
$T\ge\gamma^{j_0+1}$, so that $\mathcal R$ contains at least one refresh (repeated
values of $\lceil\gamma^j\rceil$ are counted once), using at each refresh
all rounds seen so far and the horizon of the coming segment as $K$, and on each
segment the tuned stepsize of Lemma~\ref{lem:sgd} restarted with that segment's
length and a fixed constant $\bar R>0$ in place of $R$. Let $t_J\le T/\gamma$ be the
last refresh, $\Delta_J$ the weight error there, $T_J=T-t_J\ge(1-\gamma^{-1})T$ the
length of the final segment, $\bar V:=\max_i\bar\sigma_i^2/b$ and
$\bar\ell:=1+\ln(e+\mu^2\bar R^2T/\bar V)$. Then, conditionally on the tilts, for every
$\epsilon>0$,
\begin{multline*}
\E\big[f_0(x^{T})-f_0^\star\big]\ \le\
\frac{(1+\epsilon^{-1})L^2}{2\mu}\Big(\wt R^2\Big(e^{-\mu T_J/(2L)}+\frac{\bar V}{\mu^2\bar R^2T_J}\Big)+\frac{\bar\ell\bar V}{\mu^2T_J}\Big)\\
+\ \frac{(1+\epsilon)(1+\kappa)}{2\mu}\big(\wh\Psi(bT_J)+2\E\Delta_J\big),
\end{multline*}
where $\wt R^2:=\E\norm{x^{t_J}-x^\star_{w^{(J)}}}^2$ is bounded by a constant that
depends on the instance and on $\gamma$ but not on $T$ (Remark~\ref{rem:earlyrefresh}),
and $\Delta_J$ is built from $t_J\ge T/\gamma^2-1$ rounds. The bound is that of
Theorem~\ref{thm:endtoend} with $T$ replaced by $T_J$, so the anytime schedule costs a
factor $(1-\gamma^{-1})^{-1}$ in the vanishing terms and nothing in the bias term.
\end{theorem}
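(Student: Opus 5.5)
The plan is to reduce the anytime statement to the two-phase argument of Theorem~\ref{thm:endtoend}, applied to the final segment only. Let $t_J$ be the last refresh and $w^{(J)}$ the weight vector it produces. On rounds $t_J+1,\dots,T$ the weights are frozen and $\mathcal F_{t_J}$-measurable, and in the linear-tilt case $F_{w^{(J)}}=f_0+\inner{\bias(w^{(J)})}{\cdot}$ is $\mu$-strongly convex and $L$-smooth (Lemma~\ref{lem:transfer}(i) with $\lambda_w=0$). Conditionally on $\mathcal F_{t_J}$, the final segment is therefore exactly projected \textsc{SGD} on a fixed objective, with conditional variance at most $\sum_iw_i^{(J)2}\bar\sigma_i^2/b\le\bar V$. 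The rest of the trajectory enters only through the starting point $x^{t_J}$ and through $w^{(J)}$.

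\textbf{Step 1: SGD on the final segment.} Apply Lemma~\ref{lem:sgd} on the last segment with $T_J$ rounds, $R=\bar R$ and $a_0=\norm{x^{t_J}-x^\star_{w^{(J)}}}^2$. The lemma's first display, with $V\le\bar V$, gives
\[
a_{T}\le a_0e^{-\mu T_J/(2L)}+\Big(\tfrac{a_0}{\bar R^2}+\ell\Big)\tfrac{\bar V}{\mu^2T_J}.
\]
This bound is linear in $a_0$, so taking expectations replaces $a_0$ by $\wt R^2$. The logarithm $\ell$ is monotone in $V$ and $T_J\le T$, so replacing $V$ by $\bar V$ inside it only requires care, and it is absorbed into $\bar\ell$. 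This yields the $\wt R^2(\cdots)$ and $\bar\ell\bar V/(\mu^2T_J)$ terms.

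\textbf{Step 2: transfer and weight cost.} Next use Lemma~\ref{lem:transfer}(ii), the sharp linear form. Strong convexity and smoothness give $F_w(x)-\min F_w\le\tfrac L2\norm{x-x^\star_w}^2$, which produces the prefactor $(1+\epsilon^{-1})L^2/(2\mu)$. The bias term is $(1+\epsilon)\Gamma_{w^{(J)}}^2/(2\mu)$. Lemma~\ref{lem:proxy} bounds this by $(1+\kappa)V(w^{(J)})$, and Lemma~\ref{thm:weightsub}, with horizon $K=T_J$ used at the refresh, bounds $V(w^{(J)})$ by $\wh\Psi(bT_J)+2\Delta_J$. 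Here $\Delta_J$ already includes the floor $\varrho$. Taking expectations gives $\E\Delta_J$. The count $t_J\ge T/\gamma^2-1$ follows because the last grid point $\lceil\gamma^j\rceil\le T/\gamma$ is followed by $\gamma^{j+1}>T/\gamma-1$. The bound $T_J\ge(1-\gamma^{-1})T$ follows from $t_J\le T/\gamma$.

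\textbf{Main obstacle.} The delicate step is justifying the estimation error $\Delta_J$ when the statistic is computed at iterates that depended on earlier weights. For a linear crowd the residuals equal $\wt\beta_i+\wt\eps_i^k$ regardless of the query points (Lemma~\ref{lem:residual}). So Lemma~\ref{thm:est}(i)--(iii) applies verbatim with $\delta_i=0$ and adaptive queries, using the $t_J$ rounds with halves by parity (an odd count only changes constants). What remains is the claim that $\wt R^2$ is bounded independently of $T$. I would get it by chaining Step 1 over the earlier geometric segments. Each segment restarts from the previous endpoint. Distances between successive minimisers are bounded via Lemma~\ref{lem:transfer}(i) by $(\Gamma_{w}+\Gamma_{w'})/\mu\le 2\max_i\norm{\beta_i}/\mu$. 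The Step 1 recursion is then a contraction plus an additive constant of order $\bar V/(\mu^2)$ times a bounded factor, since segment lengths grow, so the sequence stays bounded by a constant depending on $x^0$, the tilts, $\bar R$ and $\gamma$. This is the content deferred to Remark~\ref{rem:earlyrefresh}, and I would state it there rather than inside the theorem's proof.
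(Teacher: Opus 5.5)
Your proposal is correct and follows the paper's proof essentially step for step: Lemma~\ref{lem:sgd} on the final segment conditionally on $\mathcal F_{t_J}$ (with $V=\bar V$, $R=\bar R$), the sharp linear transfer of Lemma~\ref{lem:transfer}(ii), Lemmas~\ref{lem:proxy} and~\ref{thm:weightsub} with $K=T_J$, and Lemma~\ref{thm:est} with $\delta_i=0$ for adaptive queries. It also shares the paper's segment-by-segment recursion for $\wt R$, in which successive targets move by $O(\max_i\norm{\beta_i}/\mu)$; your triangle-inequality bound $\Gamma_w\le\max_i\norm{\beta_i}$ is a slightly simpler substitute for the paper's $G$. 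One detail to tighten in that recursion: on the early short segments the tuned bound is not a contraction, since its factor $e^{-\mu s/(2L)}+\bar V/(\mu^2\bar R^2 s)$ can exceed one, so, as the paper does, use there a non-expansive bound such as $a_t\le a_0+\bar V/(2L\mu)$ from the one-step recursion and note that only a $T$-independent number $N_\star$ of segments are short.
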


\begin{remark}[A bad early refresh cannot spoil the bound]
\label{rem:earlyrefresh}
Every $w^{(j)}$ lies in the simplex, so $V(w^{(j)})\le\max_iq_i$ and, by
Lemma~\ref{lem:proxy} and Lemma~\ref{lem:transfer}(i), every target lies within
$G:=\sqrt{(1+\kappa)\max_iq_i}/\mu$ of $x_0^\star$, whatever the estimation errors
$\Delta_j$ were. Appendix~\ref{app:pf-anytime} turns this into
\[
\wt R\ \le\ \norm{x^0-x_0^\star}+G+(N_\star+2)\,c_\star,
\qquad
c_\star:=2G+\sqrt{\frac{\bar V}{2L\mu}}+\sup_{t\ge T_\star}\sqrt{\frac{(1+\ln(e+\mu^2\bar R^2t/\bar V))\bar V}{\mu^2t}},
\]
with $T_\star:=\max\{4L\ln8/\mu,\,8\bar V/(\mu^2\bar R^2)\}$ and
$N_\star:=\lceil\log_\gamma(2T_\star/(\gamma-1))\rceil+1$, none of which depends on $T$.
A refresh that is badly wrong moves the target by a bounded amount.
\end{remark}

Two further remarks. First, the result does not require $\Delta_j$ to be monotone in
$j$: only the last refresh enters the bias term, and earlier refreshes affect the
bound solely through $\wt R$. Second, a uniform grid would give $J=\Theta(T)$ segments
and the restart cost of the tuned stepsize would no longer be negligible, which is why
the geometric grid is the schedule we analyse and recommend.

\subsection{A matching lower bound}
\label{sec:lower}

\CROWD{} attains the harmonic value; this section bounds what any algorithm can
attain. By Proposition~\ref{prop:ident-neg} a weighting rule can only use the
disagreement, and the oracle weights exhaust that channel.

\begin{proposition}[The oracle weights exhaust the disagreement]
\label{prop:orth}
With $w^\star_i=\Psi_0/\zeta_i^2$: \emph{(i)}
$\E\inner{\bias(w^\star)}{\wt\beta_j}=0$ for every $j$; \emph{(ii)}
$\min_{v\in\R^n}\E\norm{\bar\beta-\sum_iv_i\wt\beta_i}^2=\Psi_0$, attained by
$\bar\beta-\bias(w^\star)$; \emph{(iii)} in the Gaussian model
$\E[\bar\beta\mid\wt\beta]=\bar\beta-\bias(w^\star)$ and the residual variance is
exactly $\Psi_0$.
\end{proposition}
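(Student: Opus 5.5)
All three parts reduce to second-moment computations with the independent, mean-zero tilts. Throughout write $\bar\beta=\tfrac1n\sum_j\beta_j=\bias(\one/n)$, so that $\wt\beta_j=\beta_j-\bar\beta$. Write $\Cov(\beta_i,\beta_j)=0$ for $i\ne j$ and $\E\norm{\beta_i}^2=\zeta_i^2$. The oracle weights $w^\star_i=\Psi_0/\zeta_i^2$ sum to one because $\Psi_0=(\sum_i\zeta_i^{-2})^{-1}$.

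For (i), expand
\[
\E\inner{\bias(w^\star)}{\wt\beta_j}=\sum_iw^\star_i\E\inner{\beta_i}{\beta_j}-\tfrac1n\sum_iw^\star_i\E\norm{\beta_i}^2 .
\]
The cross terms vanish, leaving $w^\star_j\zeta_j^2-\tfrac1n\sum_iw^\star_i\zeta_i^2$. Since $w^\star_i\zeta_i^2=\Psi_0$ for every $i$, this equals $\Psi_0-\tfrac1n\cdot n\Psi_0=0$. This is the key identity: the residual $\bias(w^\star)$ is orthogonal in $L^2$ to every disagreement direction.

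For (ii), first observe that $\bar\beta-\bias(w^\star)=\sum_i(\tfrac1n-w^\star_i)\beta_i$. Because the coefficients $\tfrac1n-w^\star_i$ sum to zero, this equals $\sum_i(\tfrac1n-w^\star_i)\wt\beta_i$, so it lies in the span of the $\wt\beta_i$. More generally, every $\sum_iv_i\wt\beta_i$ can be written as $\sum_iv_i\beta_i-(\one^\top v)\bar\beta$. Hence
\[
\bar\beta-\sum_iv_i\wt\beta_i=\sum_iu_i\beta_i,\qquad u=(1+\one^\top v)\one/n-v .
\]
Here $\one^\top u=1$, and as $v$ ranges over $\R^n$ the vector $u$ ranges over the whole affine hull $\{\one^\top u=1\}$. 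To see this, given such a $u$, take $v=\one/n-u+t\one$ for any $t$; the extra multiples of $\one$ are exactly the kernel of $v\mapsto\sum_iv_i\wt\beta_i$. So
\[
\min_v\E\norm{\bar\beta-\textstyle\sum_iv_i\wt\beta_i}^2=\min_{\one^\top u=1}\sum_iu_i^2\zeta_i^2 .
\]
The equality uses the exact bias identity of Proposition~\ref{prop:oracle}, applied to deterministic $u$ on the affine hull; the cross terms cancel by independence and mean zero, so the identity does not need $u\ge0$. Proposition~\ref{prop:oracle} gives the minimum value $\Psi_0$, attained at $u=w^\star$, which corresponds to $\bar\beta-\bias(w^\star)$. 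Alternatively, (ii) follows from (i) by the projection theorem: $\bias(w^\star)$ is orthogonal to the span, so it is the projection residual, and its norm is $\E\Gamma_{w^\star}^2=\sum_i(w^\star_i)^2\zeta_i^2=\Psi_0$.

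For (iii), in the Gaussian model the vector $(\bar\beta,\wt\beta_1,\dots,\wt\beta_n)$ is jointly Gaussian and centred, being a linear image of the independent Gaussian $\beta_i$. For jointly Gaussian centred vectors, zero cross-covariance implies independence, and the conditional expectation equals the $L^2$ projection onto the linear span of the conditioning variables. Part (i) only gives orthogonality in expected inner product, so the main obstacle is upgrading it to full matrix orthogonality, $\E[\bias(w^\star)\wt\beta_j^\top]=0$ for every $j$. The plan is to redo the computation in (i) with outer products:
\[
\E[\beta_i\beta_j^\top]=\delta_{ij}\Sigma_i,
\]
where $\Sigma_i$ is the covariance of $\beta_i$. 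This gives $\E[\bias(w^\star)\wt\beta_j^\top]=w^\star_j\Sigma_j-\tfrac1n\sum_iw^\star_i\Sigma_i$. That vanishes once $w^\star_i\Sigma_i$ is independent of $i$, which holds in the isotropic model $\Sigma_i=\tfrac{\zeta_i^2}{d}I_d$ used throughout the paper (Lemma~\ref{lem:kappa}). I will state that the Gaussian model means exactly this isotropic form. Then $\bias(w^\star)$ is independent of $\wt\beta$, while $\bar\beta-\bias(w^\star)$ is a function of $\wt\beta$ by (ii). Therefore $\E[\bar\beta\mid\wt\beta]=\bar\beta-\bias(w^\star)$, and the residual variance $\E\norm{\bias(w^\star)}^2=\Psi_0$ is exactly the value computed in (ii).
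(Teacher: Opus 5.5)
Your proposal is correct and follows the paper's proof essentially step for step. Part (i) is the same cross-term computation. Part (ii) uses the same reparametrisation $v\mapsto u=(1+\one^\top v)\one/n-v$ onto the affine hull $\{\one^\top u=1\}$. For (iii) you upgrade (i) to vanishing cross-covariance through the isotropic covariance $\tfrac{\zeta_i^2}{d}I_d$, writing it with outer products where the paper uses coordinates, and then invoke joint Gaussianity. Because you establish that $\bias(w^\star)$ is independent of $\wt\beta$, the residual variance $\Psi_0$ holds conditionally on $\wt\beta$ and not only on average, which is the sense of ``exactly'' in the statement.
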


So $\Psi_0$ is the variance of the consensus error that remains after conditioning on
everything the crowd can observe about itself, and weighting is optimal Bayesian
denoising through the only available channel. The next theorem drops even the
restriction to weighting rules.

\begin{theorem}[Bayesian minimax lower bound]
\label{thm:lower}
Let $\mu,d,n$, the energies $\zeta_i$, the noise levels $\sigma_i$ and a per-client
query budget $K$ all be given, and consider the following instances: the truth is the
quadratic $f_0=\tfrac\mu2\norm{x-x_c}^2$ with unknown centre $x_c$, the clients are
$f_i=f_0+\inner{\beta_i}{\cdot}$ with $\beta_i\sim\mathcal N(0,\tfrac{\zeta_i^2}dI_d)$,
and each query to client $i$ returns its gradient corrupted by
$\mathcal N(0,\tfrac{\sigma_i^2}dI_d)$ noise. Then over all algorithms making at most
$K$ queries per client -- adaptive query points, arbitrary computation on the reports,
full knowledge of $\mu$, the $\zeta_i$ and the $\sigma_i$ --
\[
\inf_A\sup_{x_c\in\R^d}\E\big[f_0(\wh x)-\min f_0\big]\ \ge\ \frac{1}{2\mu}\Psi(K).
\]
An algorithm additionally handed the exact disagreements still pays at least
$(\Psi_0+s^2)/(2\mu)$, where $s^2=(\sum_iK/\sigma_i^{2})^{-1}$; the two floors
coincide when the bias and noise profiles are proportional.
\end{theorem}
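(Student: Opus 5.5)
We would lower-bound the minimax risk by a Bayes risk under a Gaussian prior on the centre, $x_c\sim\mathcal N(0,\rho^2 I_d)$ independent of the tilts and the noise, and then let $\rho\to\infty$. Since $\sup_{x_c}\ge$ the Bayes average for every $\rho$, it suffices to show that the Bayes risk tends to $\Psi(K)/(2\mu)$. On these instances $f_0(\wh x)-\min f_0=\tfrac\mu2\norm{\wh x-x_c}^2$. It is convenient to reparametrise by $\theta:=-\mu x_c$, so that the excess risk equals $\tfrac1{2\mu}\norm{\wh\theta-\theta}^2$ with $\wh\theta:=-\mu\wh x$.

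\emph{Step 1: removing adaptivity.} A query to client $i$ at a point $x$ returns $\mu x+\theta+\beta_i+\eps$. The algorithm knows $x$ and $\mu$, so the report is in bijection with $y=\theta+\beta_i+\eps$. The law of $y$ does not depend on $x$, and its noise is fresh and independent of the past. We would show by induction on the queries that the transcript is a measurable function of the $y$'s, together with internal randomness independent of $(\theta,\beta,\eps)$. Hence any algorithm is dominated by an estimator built from $K$ i.i.d.\ observations $y_i^k=\theta+\beta_i+\eps_i^k$ per client. Making fewer than $K$ queries can only lose information, since extra observations may be ignored. Within client $i$, the mean $\bar y_i=\theta+\beta_i+\bar\eps_i$ is sufficient. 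Across clients, these means are independent given $\theta$, and each coordinate has variance $c_i/d$ with $c_i=\zeta_i^2+\sigma_i^2/K$.

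\emph{Step 2: Gaussian location computation.} Coordinatewise this is a conjugate Gaussian model. The posterior variance of each coordinate of $\theta$ is
\[
\Big(\tfrac{1}{\mu^2\rho^2}+d\sum_ic_i^{-1}\Big)^{-1},
\]
so the Bayes risk, attained by the posterior mean, is
\[
\frac{d}{2\mu}\Big(\tfrac1{\mu^2\rho^2}+d\sum_ic_i^{-1}\Big)^{-1}\ \longrightarrow\ \frac{1}{2\mu}\Big(\sum_ic_i^{-1}\Big)^{-1}=\frac{\Psi(K)}{2\mu}.
\]
For the strengthened claim, we would also hand the algorithm $\wt\beta=(\beta_i-\bar\beta)_i$. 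The remaining data then reduce to $z_i^k=y_i^k-\wt\beta_i=\theta+\bar\beta+\eps_i^k$. Conditionally on $\wt\beta$, the vector $\bar\beta$ is Gaussian with mean $\bar\beta-\bias(w^\star)$, which is a function of $\wt\beta$, and total variance $\Psi_0$ by Proposition~\ref{prop:orth}(iii). It is independent of $\theta$ and of the noise. The sufficient statistic for $\theta$ is then the precision-weighted mean of the $z_i^k$, whose noise has total variance $s^2=(\sum_iK/\sigma_i^2)^{-1}$. The same flat-prior limit therefore gives Bayes risk $(\Psi_0+s^2)/(2\mu)$. Equality of the two floors, $\Psi(K)=\Psi_0+s^2$, is exactly the equality case of Lemma~\ref{lem:harmonic} with $a=\zeta^2$ and $b=\sigma^2/K$. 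In general Lemma~\ref{lem:harmonic} gives $\Psi(K)\ge\Psi_0+s^2$, consistent with the extra information.

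\emph{Main obstacle.} We expect the difficulty to lie in Step 1, in making the reduction rigorous for adaptive algorithms with randomised stopping and an output $\wh x$ that may depend on all of this. We would formalise it via a kernel argument: the joint law of (transcript, $\theta$) factors through the i.i.d.\ $y$-array and independent internal randomness, and then the Bayes risk conditional on the $y$-array is minimised by the posterior mean. A secondary point is the exchange of $\sup$ and the limit $\rho\to\infty$. This is harmless because the inequality $\sup\ge$ Bayes holds for every $\rho$.
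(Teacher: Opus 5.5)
Your proposal is correct and follows the paper's proof essentially step for step. It uses a Gaussian prior on the centre, and removes adaptivity by subtracting the known $\mu x$, which turns every report into a draw from a fixed location array. Per-client averaging gives the sufficient statistic, the conjugate posterior variance is summed over coordinates and the flat-prior limit is taken. For the strengthened claim it combines Proposition~\ref{prop:orth}(iii), precision-weighted averaging and Lemma~\ref{lem:harmonic}. The only differences are cosmetic: your prior has covariance $\rho^2 I_d$ rather than $\tfrac{\rho^2}{d}I_d$, and you reparametrise by $-\mu x_c$ rather than $\mu x_c$; neither affects the limit.
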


Three features matter. Firstly, the hard instance is perfectly conditioned, so the difficulty
is identification rather than optimisation. Secondly, adaptivity does not help, since $\mu$ is
known and a report carries the same information at every query point. Finally, the
upper bound of \CROWD{} has the same form $\Psi(bT)$ up to an estimation term.
The bound is not an oracle-complexity bound: it restricts only the number of gradient
queries per client, not computation or adaptivity, and the obstruction is purely
statistical, a Gaussian location problem with a diffuse nuisance. It has no counterpart
in \citet{kiselman2026spcacw}, whose guarantee is the minimisation of an upper bound:
with an observable target the problem is not statistically obstructed.

\begin{corollary}[Per-instance sharpness against the minimax floor]
\label{cor:optimal}
Run Algorithm~\ref{alg:crowd} as in Theorem~\ref{thm:endtoend} at a fixed centre and
let $T\to\infty$. On every realisation with $2\Delta\le\epsilon_1\wh\Psi(bT)$ and
$\kappa\le\epsilon_2$, the excess risk is at most
$(1+o(1))(1+\epsilon_1)(1+\epsilon_2)\wh\Psi(bT)/(2\mu)$, the
$o(1)$ vanishing more slowly than the horizon term of Theorem~\ref{thm:endtoend}. The
lower bound of Theorem~\ref{thm:lower} for the same per-client budget is
$\Psi(b(K_0+T))/(2\mu)$. Both values tend to their bias parts, $\wh\Psi_0$ and
$\Psi_0$, where $\wh\Psi_0:=(\sum_iq_i^{-1})^{-1}$: at every fixed centre \CROWD{}
attains, up to the factor $(1+\epsilon_1)(1+\epsilon_2)\wh\Psi_0/\Psi_0$, the floor
below which no algorithm can go uniformly over centres.
\end{corollary}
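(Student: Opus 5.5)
The plan is to derive the corollary directly from Theorem~\ref{thm:endtoend}, choosing $\epsilon=\epsilon_T\to0$ slowly, and then to compare with Theorem~\ref{thm:lower} at the budget $K=b(K_0+T)$.

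\textbf{Upper bound.} Conditionally on the first $K_0$ rounds, and on a realisation with $2\Delta\le\epsilon_1\wh\Psi(bT)$ and $\kappa\le\epsilon_2$, Theorem~\ref{thm:endtoend} gives
\[
\E\big[f_0(x^{K_0+T})-f_0^\star\big]\le\frac{(1+\epsilon_T^{-1})L^2}{2\mu}\Big(R_0^2e^{-\mu T/(2L)}+\frac{\ell_TV_\sigma}{\mu^2T}\Big)+\frac{(1+\epsilon_T)(1+\epsilon_2)(1+\epsilon_1)}{2\mu}\wh\Psi(bT).
\]
Here $R_0$ is fixed by the first phase. We have $V_\sigma\le\max_i\bar\sigma_i^2/b$ and $\ell_T=O(\log T)$, so the first bracket is $O(\log T/T)$. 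The term $\wh\Psi(bT)$ decreases to $\wh\Psi_0=(\sum_iq_i^{-1})^{-1}>0$, because every $q_i>0$. Hence, if $\epsilon_T^{-1}=o(T/\log T)$, for instance $\epsilon_T=(\log T/T)^{1/2}$, the whole first term is $o(\wh\Psi(bT))$. The bound then reads $(1+o(1))(1+\epsilon_1)(1+\epsilon_2)\wh\Psi(bT)/(2\mu)$, where $o(1)=\epsilon_T+O(\epsilon_T^{-1}\log T/T)$. This decays like $(\log T/T)^{1/2}$, more slowly than the horizon term $O(\log T/T)$, which is the remark in the statement.

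\textbf{Lower bound side.} \CROWD{} makes $K_0+T$ rounds with $b$ samples each. Treating each sample as one query, the per-client budget is $K=b(K_0+T)$, and Theorem~\ref{thm:lower} gives the floor $\Psi(b(K_0+T))/(2\mu)$. For the limits, $\Psi(K)$ and $\wh\Psi(K)$ are harmonic values $H(\cdot)$ of cost vectors that decrease to $\zeta^2$ and to $q$ respectively. Since $H$ is continuous and monotone on $\R^n_{>0}$ (Lemma~\ref{lem:harmonic}), and $\sigma_i^2/K\to0$, we get $\Psi(K)\to\Psi_0$ and $\wh\Psi(K)\to\wh\Psi_0$. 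Dividing the upper bound by the floor gives the ratio $(1+o(1))(1+\epsilon_1)(1+\epsilon_2)\wh\Psi(bT)/\Psi(b(K_0+T))$, which tends to $(1+\epsilon_1)(1+\epsilon_2)\wh\Psi_0/\Psi_0$.

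\textbf{Main obstacle.} The delicate point is logical rather than computational. The upper bound holds at a fixed centre and fixed tilts, while the lower bound is a supremum over centres of a Bayes risk over the tilts. I would state the comparison in exactly that form: the per-instance quantity $\wh\Psi_0$ is compared with the floor $\Psi_0$ that no algorithm can beat uniformly over $x_c$. This is why the factor $\wh\Psi_0/\Psi_0$ stays explicit instead of being claimed to be one.

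A second point needs checking: the upper bound in Theorem~\ref{thm:endtoend} is conditional on $\mathcal F_{K_0}$ and holds on the stated event. The event is $\mathcal F_{K_0}$-measurable, because $\Delta$ and $\kappa$ are determined by the tilts and the first-phase reports. So conditioning on it is legitimate, and the $o(1)$ is uniform on the event once $R_0$ is fixed.
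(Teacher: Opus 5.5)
Your proposal is correct and follows the paper's proof essentially step for step. The paper likewise applies Theorem~\ref{thm:endtoend} with a slowly vanishing $\epsilon_T$ on the stated event, obtains Theorem~\ref{thm:lower} at budget $b(K_0+T)$ by simulating each batch with $b$ queries, and uses continuity of the harmonic value to pass to $\wh\Psi_0$ and $\Psi_0$. The only differences are minor: the paper takes $\epsilon_T=T^{-1/4}$, whereas your $(\log T/T)^{1/2}$ balances the two terms and gives a faster $o(1)$, and the paper adds the extra remark, not needed for the statement, that $\wh\Psi_0/\Psi_0$ lies within $\epsilon_0$ of one on the event of Lemma~\ref{lem:kappa}.
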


The two bounds share the functional and the constant $1/(2\mu)$, but they are not
the same quantity, and the corollary is not a claim of uniform minimax optimality.
The upper bound is conditional: it fixes the centre, requires its two hypotheses and
uses the realised benchmark $\wh\Psi$. The lower bound is a supremum over centres and
an average over the crowd prior. Three bridges narrow the gap. Over
centres, the transient of Theorem~\ref{thm:endtoend} depends on the centre only
through $R_0$, so the upper bound is uniform over any fixed ball once $T$ is large.
Over the prior, $H$ is concave (Lemma~\ref{lem:harmonic}), so
$\E\wh\Psi_0\le\Psi_0$ by Jensen. Lemma~\ref{lem:kappa} puts $\wh\Psi_0/\Psi_0$
within $\epsilon_0$ of one and $\kappa$ below $\epsilon_2$ once
$d\gtrsim\epsilon_2^{-2}(n+\log(n/\delta))$. Over the hypotheses,
$2\Delta\le\epsilon_1\wh\Psi$ asks for $K_0\gtrsim\epsilon_1^{-2}$ probing rounds.
The next subsection assembles the three into an unconditional statement: the Bayes
risk of a prior-aware variant of \CROWD{} matches the Bayes floor as the prior becomes
diffuse.

\subsection{Closing the loop: unconditional Bayes optimality}
\label{app:closure}

Corollary~\ref{cor:optimal} compares a conditional, fixed-centre upper bound with a
worst-case, prior-averaged lower bound, and says so. This subsection reduces the
asymmetry. The device is to compare \emph{Bayes risks in one and the same
formulation}. The centre is drawn from the Gaussian prior at a finite scale $\rho$.
Both the floor and \CROWD's risk are then unconditional expectations over the centre,
the crowd, the oracle noise and the algorithm's randomness. Neither side is
conditional or pointwise, and both are the same functional of the same experiment.

Throughout, fix $\mu>0$, $n\ge3$, energies $\zeta\in(0,\infty)^n$ with means
$\AM,\HM$, noise levels $\sigma\in[0,\infty)^n$, and a batch size $b$; for each
dimension $d$ consider the Gaussian class of Theorem~\ref{thm:lower},
\[
f_0=\tfrac\mu2\norm{x-x_c}^2,\qquad
\beta_i\sim\mathcal N\big(0,\tfrac{\zeta_i^2}dI_d\big),\qquad
\text{query noise }\mathcal N\big(0,\tfrac{\sigma_i^2}dI_d\big),
\]
now with the centre itself random, $x_c\sim\mathcal N(x^0,\tfrac{\rho^2}dI_d)$,
independent of everything else. For an algorithm $A$ with output $\wh x$ write
$\mathfrak B_\rho(A):=\E[f_0(\wh x)-f_0^\star]$ for its Bayes excess risk, the
expectation running over $x_c$, the tilts, the noises and $A$'s randomness. Step~4 of
the proof of Theorem~\ref{thm:lower} establishes the exact finite-$\rho$ floor
\emph{before} the limit $\rho\to\infty$ is taken. Every algorithm making at
most $K$ queries per client satisfies
\begin{equation}
\label{eq:bayes-floor}
\mathfrak B_\rho(A)\ \ge\ \mathfrak B^\star_\rho(K)
:=\frac{1}{2\mu}\Big(\frac{1}{\mu^2\rho^2}+\frac{1}{\Psi(K)}\Big)^{-1}.
\end{equation}

The algorithm to be compared is the two-phase \CROWD{} of
Theorem~\ref{thm:endtoend}, made uniform in the centre by a projection whose radius
the prior supplies. Let \CROWD$^\star$ denote Algorithm~\ref{alg:crowd} run on the ball
$\Xset=B(x^0,D)$ of radius $D:=2\rho+\sqrt2\,\zeta_{\max}/\mu$,
$\zeta_{\max}:=\max_i\zeta_i$, with the projection $\proj$ applied in \emph{every}
round, probing included. Started at $x^0$, it runs $K_0$ probing rounds ($K_0$ even)
with uniform weights, all $n$ clients reporting in every round. It is then refreshed
once by Algorithm~\ref{alg:weights} with horizon $K=T$, noise bounds
$\bar\sigma_i=\sigma_i$ and floor $\varrho>0$. Finally it runs $T$ rounds of projected
\textsc{SGD} on $F_{\wh w}$ with the tuned stepsize of Lemma~\ref{lem:sgd} and $R=2D$.
Theorem~\ref{thm:lower} grants every algorithm $\mu$, the $\sigma_i$ and $T$; besides
these, \CROWD$^\star$ uses the prior scale $\rho$ and $\zeta_{\max}$ through $D$. It
is therefore a prior-aware variant of Algorithm~\ref{alg:crowd}, not
Algorithm~\ref{alg:crowd} itself.

\begin{theorem}[The loop closed]
\label{thm:closure}
Consider \CROWD$^\star$ as just defined, on the Gaussian class with linear tilts. Fix
$\delta\in(0,1)$ and let $\epsilon_0\le\tfrac12$, $\epsilon_1\le1$ be the quantities
of Lemma~\ref{lem:kappa} at confidence $\delta$, with
$\epsilon_2:=(\epsilon_1+\epsilon_0)/(1-\epsilon_0)<1$. Put
$\bar V:=\max_i\sigma_i^2/b$, $\ell_T:=1+\ln(e+4\mu^2D^2T/\bar V)$ (read
$\ell_T\bar V$ as $0$ when $\bar V=0$),
$s_i:=(1-\tfrac2n)\zeta_i^2+\AM/n$, $\vartheta_i:=\tau_i\sqrt{2/(bK_0)}$ and
\[
a(K_0):=\sum_{i=1}^n\big(2\sqrt{s_i}\,\vartheta_i+\sqrt2\,\vartheta_i^2\big).
\]
Then for every $\epsilon_T>0$,
\begin{align*}
\mathfrak B_\rho(\CROWD^\star)\ \le\
&\frac{(1+\epsilon_T)(1+\epsilon_2)(1+\epsilon_0)}{2\mu}
\Big[\Psi(bT)+2C_na(K_0)+6C_n\epsilon_2\AM+2\varrho\Big]\\
&+(1+\epsilon_T^{-1})\,\frac{\mu}{2}\Big(4D^2e^{-T/2}+\frac{\ell_T\bar V}{\mu^2T}\Big)
\ +\ 2\mu\big(D^2+\rho^2\big)\big(\delta+e^{-d/8}\big).
\end{align*}
\end{theorem}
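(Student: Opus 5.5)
The plan is to condition on a good event and reuse the deterministic chain of Theorem~\ref{thm:endtoend}, but now on the ball $\Xset=B(x^0,D)$ and with the constrained transfer of Lemma~\ref{lem:transfer-c}. We then pay for the bad event with a crude bound that holds because everything lives in a ball.

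\textbf{Step 1: the good event.} Let $\mathcal G$ be the intersection of three events:
\begin{itemize}
\item the event of Lemma~\ref{lem:kappa}, on which $\max_i|q_i/\zeta_i^2-1|\le\epsilon_0$ and $\kappa\le\epsilon_2$;
\item the event $\norm{x_c-x^0}\le2\rho$, which fails with probability at most $e^{-d/8}$ by a $\chi^2_d$ tail bound \citep{laurent2000};
\item the event that every $\norm{\beta_i}$ is at most $\sqrt{1+\epsilon_0}\,\zeta_i$, which is implied by the first.
\end{itemize}
On $\mathcal G$ we have $x_0^\star=x_c$, and it lies in the interior of $B(x^0,D)$ with margin $\sqrt2\zeta_{\max}/\mu$. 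Every weighted target satisfies $\norm{x^\star_w-x_c}\le\Gamma_w/\mu\le\sqrt{(1+\kappa)\max_iq_i}/\mu<\sqrt2\zeta_{\max}/\mu$, so it is interior as well. The projection therefore never binds at the optimum, and $f_0(x)-f_0^\star=\frac\mu2\norm{x-x_c}^2$ lets us use the sharp linear-tilt form of Lemma~\ref{lem:transfer}(ii), with $L=\mu$. Indeed, for linear tilts $\nabla F_w=\mu(x-x_w^\star)$ globally, so interiority gives the $(1+\epsilon)$ split directly by Young's inequality on $x-x_c=(x-x_w^\star)+(x_w^\star-x_c)$.

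\textbf{Step 2: conditional bound on $\mathcal G$.} During probing the projection changes only the query points. By Lemma~\ref{lem:residual} the residuals of a linear crowd are $\wt\beta_i+\wt\eps_i^k$ at any point, so Lemma~\ref{thm:est}(i)--(ii) with $\delta_i=0$ gives $\E|\wh s_i-\norm{\wt\beta_i}^2|\le2\wt\zeta_i\vartheta_i+\vartheta_i^2$.

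To obtain the deterministic constant $a(K_0)$, we take expectations over the tilts and apply Cauchy--Schwarz, $\E\wt\zeta_i\le\sqrt{\E\norm{\wt\beta_i}^2}=\sqrt{s_i}$ by Theorem~\ref{thm:ident}. We then bound $\E\max_i$ by $\E\sum_i$. The factor $\sqrt2$ on $\vartheta_i^2$ absorbs the variance term of part (i) in place of the crude square.

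Lemma~\ref{lem:realised} contributes $3C_n\kappa\bar q\le3C_n\epsilon_2(1+\epsilon_0)\AM$. Through Lemma~\ref{thm:weightsub} this yields $V(\wh w)\le\wh\Psi(bT)+2C_n a+6C_n\epsilon_2\AM+2\varrho$ in expectation, where the factor $(1+\epsilon_0)$ is pulled outside. We also use $\wh\Psi(bT)\le(1+\epsilon_0)\Psi(bT)$, which holds since $H$ is monotone and homogeneous and $q_i\le(1+\epsilon_0)\zeta_i^2$. Finally Lemma~\ref{lem:proxy} contributes $(1+\kappa)\le1+\epsilon_2$.

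The optimisation part comes from Lemma~\ref{lem:sgd} with $\mu_F=L_F=\mu$, $R=2D$ and $a_0\le4D^2$, since both points lie in the ball. This gives $4D^2e^{-T/2}+\ell_T\bar V/(\mu^2T)$ after absorbing $a_0/R^2\le1$ into $\ell_T$. Multiplying by $\mu/2$ and by $(1+\epsilon_T^{-1})$ gives the middle term.

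\textbf{Step 3: bad event.} The output lies in $B(x^0,D)$, so $f_0(\wh x)-f_0^\star=\frac\mu2\norm{\wh x-x_c}^2\le\mu(D^2+\norm{x_c-x^0}^2)$. We split $\E[\cdot\,\one_{\mathcal G^c}]$ and bound $\E[\norm{x_c-x^0}^2\one_{\mathcal G^c}]$ by Cauchy--Schwarz, or more simply by noting that the Gaussian tail contributes at most $O(\rho^2)(\delta+e^{-d/8})$. A factor $2$ of slack covers both pieces, giving $2\mu(D^2+\rho^2)(\delta+e^{-d/8})$.

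\textbf{Main obstacle.} The delicate point is the bad-event term. The tail $\E[\norm{x_c-x^0}^2\one_{\mathcal G^c}]$ is not simply $\rho^2\PP(\mathcal G^c)$. I would split $\mathcal G^c$ into the crowd part, which is independent of $x_c$ and contributes exactly $\rho^2\delta$, and the centre part $\{\norm{x_c-x^0}>2\rho\}$, where a $\chi^2$ moment-tail computation should give $\le2\rho^2e^{-d/8}$ or a similar constant.

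A second, smaller obstacle is making the expectation over tilts of the estimation error compatible with conditioning on $\mathcal G$. We handle it by bounding unconditional expectations of the nonnegative error terms, which dominate the restricted ones.
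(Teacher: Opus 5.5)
Your route is the paper's. You use the same good event (the Lemma~\ref{lem:kappa} event intersected with $\{\norm{x_c-x^0}\le2\rho\}$) and the same unconditional handling of the estimation term via $\E\wt\zeta_i\le\sqrt{s_i}$ and $\E\max_i\le\E\sum_i$. You also apply Lemma~\ref{lem:sgd} with $R=2D$ and use the same crude bound $\mu(D^2+\norm{x_c-x^0}^2)$ off the good event. Your direct Young split of $x-x_c=(x-x^\star_w)+(x^\star_w-x_c)$, with $\norm{x^\star_w-x_c}=\Gamma_w/\mu$ exactly, is a clean substitute for Lemma~\ref{lem:transfer}(ii) plus $\mu$-smoothness.

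However, one step fails as written. You put $x^\star_{\wh w}$ inside $\Xset$ via $\Gamma_w\le\sqrt{(1+\kappa)\max_iq_i}<\sqrt2\,\zeta_{\max}$. On the good event the middle quantity is only bounded by $\sqrt{(1+\epsilon_2)(1+\epsilon_0)}\,\zeta_{\max}$, and $(1+\epsilon_2)(1+\epsilon_0)$ can exceed $2$ under the hypotheses. For example, $\epsilon_0=0.3$ and $\epsilon_1=0.35$ give $\epsilon_2\approx0.93$ and a product of about $2.5$. This matters because $D=2\rho+\sqrt2\,\zeta_{\max}/\mu$ is calibrated exactly. If the unconstrained target could leave the ball, projected \textsc{SGD} would converge to the constrained minimiser, and your Young split around the unconstrained $x^\star_{\wh w}$ would no longer be fed by the \textsc{SGD} bound. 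The fix is the third event you listed but never invoked: $\norm{\bias(w)}=\norm{\sum_iw_i\beta_i}\le\max_i\norm{\beta_i}\le\sqrt{1+\epsilon_0}\,\zeta_{\max}\le\sqrt2\,\zeta_{\max}$ for every $w\in\simplex$. This triangle-inequality bound is what the paper uses. The diagonal proxy of Lemma~\ref{lem:proxy} is looser here by exactly the factor $1+\kappa$.

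The tail term you flag as the main obstacle also does not close with the probability you state. With $\PP(\norm{x_c-x^0}>2\rho)\le e^{-d/8}$, Cauchy--Schwarz gives only $\sqrt3\,\rho^2e^{-d/16}$ for $\E[\norm{x_c-x^0}^2\one\{\norm{x_c-x^0}>2\rho\}]$. Laurent--Massart actually gives $\PP(\chi^2_d\ge4d)\le e^{-d/4}$. Cauchy--Schwarz with $\E\norm{x_c-x^0}^4=\rho^4(1+2/d)\le3\rho^4$ then yields $\sqrt3\,\rho^2e^{-d/8}$. Adding $\mu\delta(D^2+\rho^2)$ from the crowd event, which is at most (not exactly) this and independent of $x_c$, gives the stated $2\mu(D^2+\rho^2)(\delta+e^{-d/8})$.
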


Every term is an unconditional expectation, and each excess factor is priced. The
crowd-realisation risk is paid by the tail term $2\mu(D^2+\rho^2)(\delta+e^{-d/8})$
rather than assumed away. The incoherence floor appears as $6C_n\epsilon_2\AM$ with
$\epsilon_2$ supplied by Lemma~\ref{lem:kappa}, and the estimation error as the
explicit $a(K_0)$. Uniformity in the centre is bought from the prior itself, through
the projection radius $D$. Matching \eqref{eq:bayes-floor} is then a matter of sending the
parameters to their limits in the right order, the prior scale $\rho$ last.

\begin{corollary}[Unconditional matching in the diffuse-prior limit]
\label{cor:closure}
Run \CROWD$^\star$ with $\varrho=1/T$, $\epsilon_T=T^{-1/4}$ and $\delta=e^{-\sqrt d}$,
and let $K=b(K_0+T)$ be the per-client budget shared by both sides. At every fixed
prior scale $\rho$,
\[
\limsup_{d\to\infty}\ \limsup_{K_0\to\infty}\ \limsup_{T\to\infty}\
\frac{\mathfrak B_\rho(\CROWD^\star)}{\mathfrak B^\star_\rho(b(K_0+T))}\ \le\
1+\frac{\Psi_0}{\mu^2\rho^2},
\]
and the ratio is at least one at every parameter value by~\eqref{eq:bayes-floor}.
Hence the ratio tends to one as the prior becomes diffuse, $\rho\to\infty$: for every
$\epsilon\in(0,1]$ there is $\rho_0(\epsilon)$ and, for each $\rho\ge\rho_0$,
thresholds $d_0,K_0',T_0$ such that $d\ge d_0$, $K_0\ge K_0'$ and $T\ge T_0$ give
$\mathfrak B_\rho(\CROWD^\star)\le(1+\epsilon)\,\mathfrak B^\star_\rho(K)$.
\end{corollary}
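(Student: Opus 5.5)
The plan is to take the explicit bound of Theorem~\ref{thm:closure} with $\varrho=1/T$, $\epsilon_T=T^{-1/4}$, $\delta=e^{-\sqrt d}$, divide it by the floor \eqref{eq:bayes-floor} at $K=b(K_0+T)$, and pass to the limits in the stated order: first $T\to\infty$, then $K_0\to\infty$, then $d\to\infty$. The lower bound "ratio $\ge1$" is immediate from \eqref{eq:bayes-floor}, since \CROWD$^\star$ makes $b(K_0+T)$ queries per client.

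\emph{Limit in $T$.} With $\epsilon_T=T^{-1/4}$ the second line of Theorem~\ref{thm:closure} is $O(T^{1/4}(e^{-T/2}+\ell_T/T))\to0$. Here $\ell_T=O(\log T)$ and $D$ is fixed because $\rho$ is fixed. The factor $1+\epsilon_T\to1$ and $2\varrho=2/T\to0$. $\Psi(bT)$ is monotone decreasing in $T$ with limit $\Psi_0$, by continuity of $H$ (Lemma~\ref{lem:harmonic}). For the denominator, $\Psi(b(K_0+T))\to\Psi_0$ as well, so $\mathfrak B^\star_\rho\to\frac1{2\mu}(\mu^{-2}\rho^{-2}+\Psi_0^{-1})^{-1}=\frac{\Psi_0}{2\mu}(1+\Psi_0/(\mu^2\rho^2))^{-1}$. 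Hence the $\limsup_T$ of the ratio is at most
\[
(1+\epsilon_2)(1+\epsilon_0)\Big[1+\frac{2C_na(K_0)+6C_n\epsilon_2\AM}{\Psi_0}\Big]\Big(1+\frac{\Psi_0}{\mu^2\rho^2}\Big)+\frac{4\mu^2(D^2+\rho^2)(\delta+e^{-d/8})}{\Psi_0}\Big(1+\frac{\Psi_0}{\mu^2\rho^2}\Big).
\]

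\emph{Limits in $K_0$ and $d$.} Each $\vartheta_i=\tau_i\sqrt{2/(bK_0)}\to0$, and $s_i$ does not depend on $K_0$, so $a(K_0)\to0$. Next take $d\to\infty$ with $\delta=e^{-\sqrt d}$. Then $\log(4n/\delta)=\sqrt d+\log 4n$, so $\epsilon_0=O(d^{-1/4})\to0$ and $\epsilon_1=O(\sqrt{n/d}+d^{-1/4})\to0$; in particular $\epsilon_0\le\tfrac12$ and $\epsilon_1\le1$ hold for large $d$, as Theorem~\ref{thm:closure} requires. Consequently $\epsilon_2\to0$, and $\delta+e^{-d/8}\to0$. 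The constants $D,\rho,\AM,\Psi_0,C_n$ do not depend on $d$, because $\zeta$ and $n$ are fixed across the dimension sequence. The bracket therefore tends to $1$ and the tail term to $0$, which leaves $1+\Psi_0/(\mu^2\rho^2)$.

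\emph{Thresholds.} For the quantitative statement, given $\epsilon$ choose $\rho_0$ with $\Psi_0/(\mu^2\rho_0^2)\le\epsilon/2$. Then unwind the three $\limsup$'s with slack $\epsilon/2$ split among them, picking $d_0$, then $K_0'$ depending on $d$, then $T_0$ depending on both. The nested limsup gives exactly this nested quantifier order.

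The main obstacle is bookkeeping rather than an idea: the thresholds must be chosen in the order $d$, then $K_0$, then $T$, yet the bound at fixed $d$ must already be uniform in the inner limits. I would check that $a(K_0)$ does not depend on $d$ (it does not; $\tau_i$ and $s_i$ are dimension-free), and that $\ell_T$ involves $D$ but not $d$. Given that, the order of limits causes no interference, and the only genuinely $d$-dependent quantities, $\epsilon_0,\epsilon_2,\delta$, are all controlled in the outermost limit.
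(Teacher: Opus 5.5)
Your route is the paper's: divide the bound of Theorem~\ref{thm:closure} by the floor~\eqref{eq:bayes-floor}, let $T$, then $K_0$, then $d$ tend to infinity, and read off $1+\Psi_0/(\mu^2\rho^2)$. Your limit computations are correct, including the $T\to\infty$ expression and the rates $\epsilon_0=O(d^{-1/4})$ and $\epsilon_1=O(\sqrt{n/d}+d^{-1/4})$ under $\delta=e^{-\sqrt d}$.

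The last step does not deliver the statement as written.

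\emph{What is claimed.} For each $\rho\ge\rho_0$, the corollary asks for thresholds $d_0,K_0',T_0$ fixed once, such that every triple with $d\ge d_0$, $K_0\ge K_0'$, $T\ge T_0$ satisfies the inequality.

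\emph{What you get.} Unwinding an iterated $\limsup$ abstractly gives only $K_0'(d)$ and $T_0(d,K_0)$. That is strictly weaker, so the nested quantifier order is not what the statement asserts. Passing to the limit of the denominator $\mathfrak B^\star_\rho(b(K_0+T))$ also gives no control at finite $T$.

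\emph{The repair.} This is the paper's device, and your closing remark about which quantities depend on $d$ already contains the key ingredient.
\begin{itemize}
\item Since $\Psi$ is non-increasing with limit $\Psi_0$, $\mathfrak B^\star_\rho(K)\ge\mathfrak B^\star_\rho(\infty):=\frac1{2\mu}\big((\mu^2\rho^2)^{-1}+\Psi_0^{-1}\big)^{-1}>0$ for every $K$. So the ratio is bounded at every $(d,K_0,T)$ by the bound of Theorem~\ref{thm:closure} divided by this constant.
\item In that bound each term depends on one parameter only. The quantities $\epsilon_0$, $\epsilon_2$ and the tail term depend on $d$. The term $a(K_0)$, which is non-increasing, depends on $K_0$. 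The quantities $\epsilon_T$, $\Psi(bT)$, $\varrho$ and the transient depend on $T$.
\item The factors multiplying the $T$-terms stay bounded for $d\ge d_0$ and $K_0\ge K_0'$, for example $(1+\epsilon_2)(1+\epsilon_0)\le3$ and $a(K_0)\le a(K_0')$. Hence the convergence in $T$ is uniform on that range.
\end{itemize}
You can therefore choose $d_0$ from the $d$-terms alone, $K_0'$ from $a$ alone, and $T_0$ from the $T$-terms alone. Taking $\Psi_0/(\mu^2\rho_0^2)\le\epsilon/4$ and a slack of order $\epsilon$ for each stage then gives the joint $(1+\epsilon)$ statement.
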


Four remarks on scope. First, at a fixed prior scale the two sides do not match. The
gap $\Psi_0/(\mu^2\rho^2)$ is the information in the prior: the Bayes floor uses it,
and \CROWD$^\star$, which does not shrink towards $x^0$, does not. The matching is
therefore a statement about the diffuse-prior limit, the regime of the minimax bound
of Theorem~\ref{thm:lower}. Second, what is matched is the bias part
$\Psi_0/(2\mu)$ and the constant $1/(2\mu)$. At finite $K_0$ the upper side carries
the estimation term $2C_na(K_0)$, of order $K_0^{-1/2}$, whereas $\Psi(bT)-\Psi_0$
is of order $1/T$. The finite-sample exchange rate $\sigma_i^2/K$ inside $\Psi(K)$
is therefore not certified by this comparison. Third, the order of limits is forced. The
horizon must grow at fixed prior scale, since a fixed number of steps cannot serve an
arbitrarily diffuse prior, and $d/n\to\infty$ drives the incoherence floor to zero,
as in Corollary~\ref{cor:optimal}. The finite-sample terms are explicit in the
theorem. Fourth, the statement is about the Gaussian class of
Theorem~\ref{thm:lower}, the class the lower bound is proved for. For a single
deployment the conditional, per-instance reading of Corollary~\ref{cor:optimal}
remains the operative statement.

\subsection{Beyond the linear model}
\label{sec:beyond}

A non-linear crowd necessarily perturbs curvature (Proposition~\ref{prop:rigid}).
By Lemma~\ref{lem:transfer}(i) the cost of that perturbation is governed by
$\lambda_w/\mu$. The residual~\eqref{eq:resid} still cancels $\nabla f_0$ for arbitrary
$h_i$, but it now measures the disagreement at the current iterate. Therefore the
estimator is stale by $\delta_i\le\wt\ell_i\norm{x^{k-1}-x_0^\star}$, which does not
vanish as probing converges to the uniform minimiser. The analysis goes through with
$\mu-\lambda$ in place of $\mu$ and with the crude transfer constant; the sharp one is
available only for a linear crowd.

\begin{theorem}[Non-linear crowd]
\label{thm:nonlin}
Let Assumptions~\ref{as:crowd}--\ref{as:incoh} hold on a closed ball $\Xset$ of
diameter $\bar D$, with $\lambda:=\max_{w\in\simplex}\lambda_w\le\ell<\mu/2$, and write
$L'=L+\lambda$ and $\Gamma_{\max}^2:=(1+\kappa)\max_iq_i$. Assume the interior of
$\Xset$ contains the closed ball of radius $\Gamma_{\max}/(\mu-\lambda)$ about
$x_0^\star$. Run
Algorithm~\ref{alg:crowd} with $\mathcal R=\{K_0\}$ and uniform weights during probing,
then $T$ rounds of projected \textsc{SGD} on $F_{\wh w}$ with the tuned stepsize of
Lemma~\ref{lem:sgd} and $R=\bar D$, and put $V_\sigma:=\sum_i\wh w_i^2\bar\sigma_i^2/b$
and $\ell_T:=1+\ln(e+(\mu-\lambda)^2\bar D^2T/V_\sigma)$.
Then Lemma~\ref{thm:est} applies with $\delta_i=\wt\ell_i\bar D$,
$\wt\ell_i=\ell_i+\tfrac1n\sum_j\ell_j$, and, conditionally on the first $K_0$ rounds,
with $R_0=\bar D$ below,
\[
\E\big[f_0(x^{K_0+T})-f_0^\star\big]\ \le\
\frac{LL'}{\mu-\lambda}\Big(R_0^2e^{-(\mu-\lambda)T/(2L')}+\frac{\ell_TV_\sigma}{(\mu-\lambda)^2T}\Big)
\ +\ \frac{L(1+\kappa)}{(\mu-\lambda)^2}\big(\wh\Psi(bT)+2\Delta\big).
\]
The sharp bias constant of Lemma~\ref{lem:transfer}(ii) is available only for a linear
crowd, so the bias term carries $L/(\mu-\lambda)^2$ here rather than the $1/(2\mu)$ of
Theorem~\ref{thm:endtoend}; at $\lambda=0$ the two differ by the factor $2L/\mu$.
\end{theorem}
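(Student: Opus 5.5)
The proof has three parts. The probing phase is handled by the staleness version of Lemma~\ref{thm:est}, the optimisation phase by Lemma~\ref{lem:sgd} applied to $F_{\wh w}$, and the two are joined by the constrained bias transfer of Lemma~\ref{lem:transfer-c}, with the bias energy controlled through Lemma~\ref{lem:proxy} and Lemma~\ref{thm:weightsub}. Everything after the probing phase is conditional on $\mathcal F_{K_0}$, so $\wh w$ is fixed. $\wh w$ depends on the tilts, so Proposition~\ref{prop:oracle} cannot be used. Only realised, deterministic inequalities, namely \eqref{eq:incoh-facts} and the weight-stability lemma, are invoked.

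\emph{Step 1 (staleness).} By Lemma~\ref{lem:residual}, $\E[r_i^k\mid\mathcal F_{k-1}]=\wt{\nabla h_i}(x^{k-1})$. Since $\beta_i=\nabla h_i(x_0^\star)$ and $\nabla h_i$ is $\ell_i$-Lipschitz on $\Xset$, we get
\[
\norm{\wt{\nabla h_i}(x^{k-1})-\wt\beta_i}\le \ell_i\norm{x^{k-1}-x_0^\star}+\tfrac1n\textstyle\sum_j\ell_j\norm{x^{k-1}-x_0^\star}\le\wt\ell_i\bar D .
\]
Here every probing iterate lies in $\Xset$ because the projection is applied, and $x_0^\star\in\Xset$. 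This is exactly the hypothesis of Lemma~\ref{thm:est} with $\delta_i=\wt\ell_i\bar D$. Part (iii) of that lemma then defines $\Delta$. Combined with Lemma~\ref{thm:weightsub} and the flooring, it gives $V(\wh w)\le\wh\Psi(bT)+2\Delta$, where $\Delta$ is enlarged by $\varrho$ as in Theorem~\ref{thm:endtoend}. Lemma~\ref{lem:proxy} then gives $\Gamma_{\wh w}^2\le(1+\kappa)(\wh\Psi(bT)+2\Delta)$.

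\emph{Step 2 (optimisation).} $F_{\wh w}=f_0+H_{\wh w}$ is $(\mu-\lambda)$-strongly convex and $L'$-smooth on $\Xset$, by Lemma~\ref{lem:transfer}(i), whose proof is pointwise via Hessians and so holds on $\Xset$. The aggregate $\sum_i\wh w_i\wh g_i^k$ is conditionally unbiased for $\nabla F_{\wh w}(x^{k-1})$, with variance at most $V_\sigma$ by conditional independence across clients. Lemma~\ref{lem:sgd} with $\mu_F=\mu-\lambda$, $L_F=L'$ and $R=\bar D\ge\norm{x^{K_0}-x^\star_{\wh w}}$ (both points lie in $\Xset$) gives
\[
\E\norm{x^{K_0+T}-x^\star_{\wh w}}^2\le \bar D^2e^{-(\mu-\lambda)T/(2L')}+\ell_TV_\sigma/((\mu-\lambda)^2T).
\]
Smoothness converts this into a function gap: $F_{\wh w}(x)-F_{\wh w}(x^\star_{\wh w})\le\tfrac{L'}{2}\norm{x-x^\star_{\wh w}}^2$. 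This is the main obstacle. The inequality needs $\nabla F_{\wh w}(x^\star_{\wh w})=0$, that is, an interior minimiser. The hypothesis that $\Xset$ contains the ball of radius $\Gamma_{\max}/(\mu-\lambda)$ about $x_0^\star$ is what supplies this. Every simplex point has $V(w)\le\max_iq_i$, so $\Gamma_w\le\Gamma_{\max}$ by Lemma~\ref{lem:proxy}. The unconstrained minimiser then lies within $\Gamma_w/(\mu-\lambda)$ of $x_0^\star$ by Lemma~\ref{lem:transfer}(i), applied to $F_w$ extended or via the argument of Lemma~\ref{lem:transfer-c}. It is therefore interior and coincides with the constrained minimiser. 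If extending $h_i$ outside $\Xset$ is problematic, I would instead argue by strong monotonicity of $\nabla F_w$ along the segment from $x_0^\star$, using $\norm{\nabla F_w(x_0^\star)}=\Gamma_w$.

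\emph{Step 3 (transfer).} Lemma~\ref{lem:transfer-c} gives
\[
f_0(x)-f_0^\star\le\tfrac{2L}{\mu-\lambda}[F_{\wh w}(x)-F_{\wh w}(x^\star_{\wh w})]+\tfrac{L\Gamma_{\wh w}^2}{(\mu-\lambda)^2}.
\]
We take expectation conditional on $\mathcal F_{K_0}$ and insert Steps 1–2. The factor $\tfrac{2L}{\mu-\lambda}\cdot\tfrac{L'}{2}=\tfrac{LL'}{\mu-\lambda}$ multiplies the SGD bracket. The bias term becomes $\tfrac{L(1+\kappa)}{(\mu-\lambda)^2}(\wh\Psi(bT)+2\Delta)$, which is the claimed bound. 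One point needs checking: the condition $\lambda_{\wh w}<\mu$ required by Lemma~\ref{lem:transfer-c} holds since $\lambda\le\ell<\mu/2$.
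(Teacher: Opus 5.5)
Your proposal is correct and follows the paper's proof step for step: the staleness bound $\wt\ell_i\bar D$ feeds Lemma~\ref{thm:est}, Lemma~\ref{lem:sgd} is applied with moduli $(\mu-\lambda,L')$, and Lemma~\ref{lem:transfer-c} joins the two, with the interiority hypothesis supplying $\nabla F_{\wh w}(x^\star_{\wh w})=0$ for the $L'$-smoothness conversion. For that interiority step the paper takes your fallback route: it applies the displacement bound of Lemma~\ref{lem:transfer-c} directly to the minimiser over $\Xset$, which avoids your first option of extending $h_i$ (and the curvature bound $\lambda_w$) beyond $\Xset$.
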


To keep the non-linear part of the estimation error below $\epsilon\Psi_0$, a
well-started run needs $\ell/\mu\lesssim\epsilon n^{-1/2}\HM/\AM$; the next remark
derives this condition. The runs of Figure~\ref{fig:misspec} (Appendix~\ref{app:exp})
show no degradation even where it fails.

\begin{remark}[Where the condition on $\ell/\mu$ comes from]
\label{rem:ellmu}
The condition is an order-of-magnitude reading of Theorem~\ref{thm:nonlin}. Its proof
uses $\bar D$ only as a bound on $\norm{x^{k-1}-x_0^\star}$ during probing (Step~1 of
Appendix~\ref{app:pf-nonlin}), so if the probing iterates stay within distance $r$ of
$x_0^\star$, Lemma~\ref{thm:est} applies with $\delta_i=\wt\ell_i r$. By
Lemma~\ref{thm:est}(ii)--(iii) the non-linear contribution to $\Delta$ is then
$C_n(2\max_i\wt\zeta_i\wt\ell_ir+\max_i\wt\ell_i^2r^2)$, and $\wt\ell_i\le2\ell$.
Suppose, as in a well-started run, that probing takes place near the minimiser of the
uniform objective, $r\le c\sqrt{\AM/n}/\mu$ (the radius Lemma~\ref{lem:transfer}(i)
gives for $x^\star_{w_{\mathrm{unif}}}$ when $\Gamma^2_{w_{\mathrm{unif}}}\approx\AM/n$),
and that no single energy dominates, $\max_i\wt\zeta_i^2\le c'\AM$. Then the first term
is at most $4cc'^{1/2}C_n\ell\,\AM/(\mu\sqrt n)$, and asking it to be at most
$\epsilon\Psi_0=\epsilon\HM/n$ gives $\ell/\mu\lesssim\epsilon\,n^{-1/2}\HM/\AM$. The
quadratic term is smaller by a further factor of the same order. Without the two
suppositions the condition changes accordingly; the theorem itself does not need them.
\end{remark}

The final result of this section drops the crowd model entirely.

\begin{proposition}[Assumption-free operating guarantee]
\label{thm:free}
Let $\bar x$ be any point, and let the tilts $\beta_i=\nabla h_i(\bar x)$ there be
arbitrary vectors: no independence, no zero mean, no linearity, no distributional
assumption. Let $\kappa<1$ be their incoherence and let $\wh w_i\propto\wh c_i^{-1}$ be
\CROWD's weights with $\norm{\wh c-q}_\infty\le\Delta$. Then, deterministically,
\begin{equation}
\label{eq:free-1}
\norm{\bias(\wh w)}^2\ \le\ \frac{1+\kappa}{1-\kappa}\min_{w\in\simplex}\norm{\bias(w)}^2+2(1+\kappa)\Delta ,
\end{equation}
and the gain over uniform weighting is at least
$\tfrac{1-\kappa}{1+\kappa}\AM(q)/(\HM(q)+2n\Delta)$, for $\AM,\HM$ the means of the
realised energies.
\end{proposition}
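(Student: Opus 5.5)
The plan is to chain three deterministic facts already available: the diagonal proxy of Lemma~\ref{lem:proxy}, the weight-stability bound of Lemma~\ref{thm:weightsub}, and the harmonic-mean closed form of Proposition~\ref{prop:oracle}. None of them uses independence, zero mean or linearity. Lemma~\ref{lem:proxy} needs only the Gram matrix of the vectors $\beta_i=\nabla h_i(\bar x)$ and its incoherence $\kappa<1$. Lemma~\ref{thm:weightsub} is pure algebra on cost vectors. The first part of Proposition~\ref{prop:oracle} is likewise pure algebra, namely the minimisation of $\sum_iw_i^2c_i$ over $\simplex$. So the whole argument can be run at the fixed point $\bar x$ with $q_i=\norm{\beta_i}^2$ as the true costs.

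For \eqref{eq:free-1}, first apply the upper half of Lemma~\ref{lem:proxy} to $\wh w$:
\[
\norm{\bias(\wh w)}^2\le(1+\kappa)V(\wh w).
\]
Next apply Lemma~\ref{thm:weightsub} with $c=q$ and $\wh c$ the algorithm's costs, using $\norm{\wh c-q}_\infty\le\Delta$:
\[
V(\wh w)\le\min_{w\in\simplex}V(w)+2\Delta.
\]
Then use the lower half of Lemma~\ref{lem:proxy} at every $w$, which gives $V(w)\le\norm{\bias(w)}^2/(1-\kappa)$. Minimising over $\simplex$ yields
\[
\min_{w\in\simplex} V(w)\le\frac{1}{1-\kappa}\min_{w\in\simplex}\norm{\bias(w)}^2.
\]
Multiplying through by $(1+\kappa)$ gives \eqref{eq:free-1}.

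One point needs care. If some $q_i=0$, Lemma~\ref{lem:proxy} as stated uses $D^{-1/2}$. I would handle this either by assuming $q_i>0$, as in Lemma~\ref{lem:realised}, or by noting that a zero tilt contributes nothing to either side. In the latter case one restricts the Gram matrix to the clients with nonzero tilt, and the minimum is then $0$ up to the $2\Delta$ slack.

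For the gain statement, bound the two sides separately. Lemma~\ref{lem:proxy} at uniform weights gives
\[
\norm{\bias(\one/n)}^2\ge(1-\kappa)V(\one/n)=(1-\kappa)\AM(q)/n.
\]
On the other side, the argument above together with Proposition~\ref{prop:oracle} gives
\[
\norm{\bias(\wh w)}^2\le(1+\kappa)\big(\HM(q)/n+2\Delta\big)=\frac{1+\kappa}{n}\big(\HM(q)+2n\Delta\big),
\]
since $\min_{w\in\simplex}V(w)=(\sum_iq_i^{-1})^{-1}=\HM(q)/n$. Taking the ratio gives the claimed $\tfrac{1-\kappa}{1+\kappa}\AM(q)/(\HM(q)+2n\Delta)$.

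I expect no serious obstacle; the main thing to check is that Lemma~\ref{thm:weightsub} is applied with the realised costs $q$ rather than with $q$ plus noise terms. The statement's hypothesis $\norm{\wh c-q}_\infty\le\Delta$ already absorbs any noise offset and the floor $\varrho$ into $\Delta$, so the lemma's first, general sentence applies directly.
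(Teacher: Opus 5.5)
Your proof is correct and follows the paper's argument essentially step for step: Lemma~\ref{thm:weightsub} with $c=q$, the lower half of Lemma~\ref{lem:proxy} to pass from $\min_{w\in\simplex}V(w)$ to $\min_{w\in\simplex}\norm{\bias(w)}^2$ and the upper half at $\wh w$, then Proposition~\ref{prop:oracle} giving $\min_{w\in\simplex}V(w)=\HM(q)/n$ for the gain ratio. Your side remark on $q_i=0$ is unnecessary, because the incoherence $\kappa$ (built from $D^{-1/2}$) is defined only when every $q_i>0$, and the hypothesis $\kappa<1$ therefore already presupposes this.
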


Inequality~\eqref{eq:free-1} makes \CROWD{} $\tfrac{1+\kappa}{1-\kappa}$-competitive
with the exactly optimal weights for the realised instance. This holds even where
those weights exploit correlations that \CROWD{} does not model. The crowd hypotheses
enter only through the benchmark and the size of $\Delta$. As $\kappa\to1$ the ratio
degrades: nearly aligned tilts admit cancellations that a diagonal rule cannot
exploit.

\begin{remark}[What is and is not assumption-free]
Proposition~\ref{thm:free} uses Assumption~\ref{as:oracle} (about the sampling noise)
and Assumption~\ref{as:incoh} (a property of the realised tilts), and nothing else:
in particular no independence, no zero mean, no linearity, and no distribution over
crowds. The crowd hypotheses enter only the \emph{interpretation} of the benchmark:
under them $\min_w\norm{\bias(w)}^2$ concentrates around $\Psi_0$, and
$\Psi_0/(2\mu)$ is the lower bound of Theorem~\ref{thm:lower}.
\end{remark}

\subsection{What the transcript can and cannot be used to check}
\label{app:check}

Centring (Assumption~\ref{as:crowd}) and incoherence (Assumption~\ref{as:incoh})
differ in whether a practitioner can check them from the transcript. By
Proposition~\ref{prop:ident-neg}(i), centring cannot be tested at all. Incoherence is a
property of the realised tilts, and the residuals carry enough information to estimate
it.

\begin{proposition}[Checkability]
\label{prop:check}
Under Assumptions~\ref{as:crowd}--\ref{as:oracle}:
\begin{enumerate}[label=(\roman*),itemsep=1pt,topsep=2pt]
\item \emph{(Centring is not checkable.)} For every $c\in\R^d$ the instance
$(f_0,h_1,\dots,h_n)$ and the instance obtained by replacing $f_0$ with
$f_0+\inner{c}{\cdot}$ and each $h_i$ with $h_i-\inner{c}{\cdot}$ generate the same
joint law of the transcript under every protocol, while their targets and their
optimal weights differ. Hence no test based on the transcript has power against
``the crowd is centred on a distorted objective''.
\item \emph{(Incoherence is estimable up to a rank-two correction.)} Assume in addition
that the tilts are linear, and let $J:=I-\tfrac1n\one\one^\top$. The shift of (i) maps
$\beta_i$ to $\beta_i-c$ and $Q$ to $Q-\one a^\top-a\one^\top+\norm c^2\one\one^\top$
with $a_i=\inner{\beta_i}{c}$, which changes $\kappa$ in general, so $\kappa$ is not a
function of the transcript law; only $JQJ$ is. Let $\wh G$ be the matrix with entries
$\wh G_{ij}=\tfrac12(\inner{\wh\beta^{(1)}_i}{\wh\beta^{(2)}_j}+\inner{\wh\beta^{(2)}_i}{\wh\beta^{(1)}_j})$
formed from the two half-averages of~\eqref{eq:est}, let
$\wh q=(T^{-1}\diag\wh G)_+$, and define
\[
\wh Q_{ij}:=\wh G_{ij}+\frac{\wh q_i+\wh q_j}{n}-\frac{1}{n^2}\sum_k\wh q_k,
\qquad
\wh\kappa:=\big\|\wh D^{-1/2}(\wh Q-\wh D)\wh D^{-1/2}\big\|_{\mathrm{op}},\quad \wh D=\diag\wh q .
\]
Then $\E[\wh G\mid B]=JQJ$ exactly, for every adaptive query sequence. The correction makes every row sum of
$\wh Q-\wh D$ vanish identically, and zero is also the mean, under
Assumption~\ref{as:crowd}, of the off-diagonal row sums
$\varsigma_i:=\sum_{j\ne i}Q_{ij}$ of $Q$. With
$\wh G$ and $\wh q$ replaced by their targets $JQJ$ and $q$, the resulting
$\wh\kappa_\circ$ satisfies, deterministically,
\[
|\wh\kappa_\circ-\kappa|\ \le\ \kappa\Big(2\sqrt{\AM(q)/\HM(q)}+\AM(q)/\HM(q)\Big).
\]
\end{enumerate}
\end{proposition}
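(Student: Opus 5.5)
Part (i) is Proposition~\ref{prop:ident-neg}(i) restated, so I would reuse that argument. Replacing $(f_0,h_i)$ by $(f_0+\inner{c}{\cdot},\,h_i-\inner{c}{\cdot})$ leaves every $f_i$, hence every oracle answer $\wh g^k_i$ and the law of any transcript, unchanged, while $x_0^\star$ moves when $c\ne0$. For the weights, the realised tilts become $\beta_i-c$ and $\bias(w)$ becomes $\bias(w)-c$ for every $w\in\simplex$. The map $w\mapsto\norm{\bias(w)-c}^2$ has a different minimiser over the simplex for generic $c$; I would exhibit this by choosing $c$ along the gradient of the original objective at its minimiser, restricted to the simplex tangent space. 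The optimal weights therefore change. The no-power statement follows because any test is a function of the transcript, whose law is identical under the two instances.

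For part (ii), the shift formula is direct algebra. Expanding $\inner{\beta_i-c}{\beta_j-c}$ gives $Q_{ij}-a_i-a_j+\norm c^2$. The transformed $Q$ has different diagonal and off-diagonal entries in general, so $\kappa$ changes. Since $J(\beta_i-c)_i=J(\beta_i)_i$, only $JQJ$ (the Gram matrix of the $\wt\beta_i$) is invariant.

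For $\E[\wh G\mid B]=JQJ$, I would write $\wh\beta^{(a)}_i=\wt\beta_i+\nu^{(a)}_i$ using Lemma~\ref{lem:residual} in the linear case, where $\nu^{(a)}_i$ is the average of $\wt\eps^k_i$ over the half $\mathcal K_a$. The terms linear in $\nu$ have mean zero by the tower property. The cross term $\E\inner{\nu^{(1)}_i}{\nu^{(2)}_j}$ is a double sum over pairs of distinct rounds $k\ne k'$. Conditioning on $\mathcal F_{\max(k,k')-1}$ kills the later increment even under adaptive queries, exactly as in Lemma~\ref{thm:est}(i). This martingale bookkeeping is the step I would expect to need the most care, although it is standard.

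The row-sum claim is combinatorial. The residuals sum to zero over $i$ in every round, so $\sum_j\wh\beta^{(a)}_j=0$ and each row of $\wh G$ sums to zero. Summing the correction over $j$ gives $\wh q_i+\tfrac1n\sum_j\wh q_j-\tfrac1n\sum_k\wh q_k=\wh q_i$. Hence the row sums of $\wh Q$ equal $\wh q_i$, and those of $\wh Q-\wh D$ vanish. Independence and mean zero give $\E Q_{ij}=0$ for $i\ne j$, so $\E\varsigma_i=0$.

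For the deterministic bound, I would put $\wh G=JQJ$ and $\wh q=q$; then $\wh D=D$ and no clipping occurs. Writing $Q\one=q+\varsigma$ and expanding $JQJ=Q-\tfrac1n(Q\one\one^\top+\one\one^\top Q)+\tfrac{\one^\top Q\one}{n^2}\one\one^\top$, the $q$-terms cancel against the correction and leave
\[
\wh Q_\circ-Q=-\tfrac1n(\varsigma\one^\top+\one\varsigma^\top)+\tfrac{\one^\top\varsigma}{n^2}\one\one^\top .
\]
The triangle inequality for the operator norm then gives $|\wh\kappa_\circ-\kappa|\le\norm{D^{-1/2}(\wh Q_\circ-Q)D^{-1/2}}_{\mathrm{op}}$, and I would bound this term by term.

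Put $M:=D^{-1/2}(Q-D)D^{-1/2}$, so that $\norm M_{\mathrm{op}}=\kappa$. Then $\varsigma=D^{1/2}MD^{1/2}\one$, which gives $\norm{D^{-1/2}\varsigma}\le\kappa\norm{D^{1/2}\one}=\kappa\sqrt{n\AM}$. We also have $\norm{D^{-1/2}\one}^2=n/\HM$. Each rank-one term therefore has norm at most $\tfrac1n\kappa\sqrt{n\AM}\sqrt{n/\HM}=\kappa\sqrt{\AM/\HM}$, and there are two of them. For the last term, $|\one^\top\varsigma|=|\inner{D^{1/2}\one}{MD^{1/2}\one}|\le\kappa n\AM$. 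Multiplying by $\norm{D^{-1/2}\one}^2/n^2$ gives $\kappa\AM/\HM$. Summing the three pieces yields the stated bound.
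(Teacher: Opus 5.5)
Your argument for (ii) follows the paper's proof essentially line by line. You use the decomposition $\wh\beta^{(a)}_i=\wt\beta_i+\nu^{(a)}_i$ and kill the cross term by conditioning on the later round. You obtain the identity $\wh Q_\circ-Q=-\tfrac1n(\varsigma\one^\top+\one\varsigma^\top)+\tfrac{\one^\top\varsigma}{n^2}\one\one^\top$. You then bound it through $\norm{D^{-1/2}\one}^2=n/\HM(q)$, $\norm{D^{-1/2}\varsigma}\le\kappa\sqrt{n\AM(q)}$ and $|\one^\top\varsigma|\le\kappa n\AM(q)$. You also check explicitly that every row of $\wh G$ sums to zero, because the residuals do in every round, and that the correction restores the row sums $\wh q_i$. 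The paper does not spell this out, so your version is more complete there.

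The one place you depart from the paper is the clause of (i) that the optimal weights differ, and there the argument does not work as written. The paper reads ``optimal weights'' as the oracle weights, $w_i\propto\norm{\beta_i}^{-2}$ against $w_i\propto\norm{\beta_i-c}^{-2}$. These differ unless the shifted energies $q_i-2\inner{\beta_i}{c}+\norm{c}^2$ happen to be proportional to the $q_i$.

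You instead take the exact simplex minimiser of $\norm{\bias(w)-c}^2$, and propose to choose $c$ ``along the gradient of the original objective at its minimiser, restricted to the simplex tangent space''. That gradient lies in $\R^n$ while $c\in\R^d$. Moreover, at a relative-interior minimiser $w^\star$ its tangential part is zero, so the construction gives nothing. What works in the interior case is any $c$ with $JBc\ne0$, where $B$ is the matrix with rows $\beta_i^\top$. Since $JBB^\top w^\star=0$, the shifted gradient has tangential part $-2JBc\ne0$, and $w^\star$ is no longer stationary.

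At a vertex minimiser, however, small shifts change nothing, so ``for generic $c$'' fails under your reading. For example, take $\beta_2=2\beta_1$ and $\beta_3=3\beta_1$. The shifted objective has gradient $2\inner{\beta_1}{\beta_1-c}(1,2,3)^\top$ at $e_1$. So $e_1$ stays optimal for every $c$ in the half-space $\inner{\beta_1}{c}\le\norm{\beta_1}^2$, whereas the oracle weights change for every $c\ne0$. Either adopt the paper's energy-based reading, or claim a different exact minimiser only for suitable, not generic, $c$.
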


The hypothesis that makes the target meaningful cannot be tested. Any claim that a
federated aggregate is closer to ground truth rests on it. The hypothesis that makes
the estimator work can be estimated cheaply: $\wh\kappa$ reuses the two half-averages
the algorithm already holds and costs one $n\times n$ Gram product. The estimate is
not consistent for $\kappa$: its population error is a rank-two term of the same order
as $\kappa$ itself, which is the price of $\kappa$ not being identified. Where the
estimate is reliable is therefore a question for measurement, and
Appendix~\ref{app:regime} calibrates the threshold empirically. Without the linearity
hypothesis the residual measures the disagreement at the iterate rather than at
$x_0^\star$, and in our runs this deflates $\wh\kappa$ (Appendix~\ref{app:regime}).
This suggests running \CROWD{} with a guard.

\begin{remark}[Guarded \CROWD: a screening heuristic, not a certificate]
\label{rem:guard}
Compute $\wh\kappa$ at the refresh rounds. If $\wh\kappa<1$, use \CROWD{} and report
$\tfrac{1+\wh\kappa}{1-\wh\kappa}$ as an estimate of the competitive ratio of
Proposition~\ref{thm:free}. If $\wh\kappa\ge1$, the diagonal proxy is uninformative on
this instance and the bound is vacuous; fall back to uniform weighting. The guard
indicates which regime the deployment is likely in but does not certify it.
Proposition~\ref{prop:check} shows that $\wh\kappa$ is not consistent for $\kappa$:
its population error is of the same order as $\kappa$ itself. The reported ratio is
therefore an estimate, not a bound. In place of a guarantee the guard comes with a
measured error rate. It is cheap, and in the linear regime it gives the deployment an
estimate of the quantity Assumption~\ref{as:incoh} bounds.
Appendix~\ref{app:diag} measures $\wh\kappa$ against $\kappa$ on three mechanisms, and
Appendix~\ref{app:regime} calibrates the threshold: the empirical probability that
$\kappa\ge1$ given $\wh\kappa<1$ is $4.2\%$. The restriction to a near-linear crowd
matters. The estimator inherits the staleness of Lemma~\ref{thm:est}, so when probing
happens far from the optimum $\wh\kappa$ under-reports, by a factor three to five in
Appendix~\ref{app:regime}. That failure is
detectable and partly repairable, and Remark~\ref{rem:staleguard} does both. In the
real-data study of Appendix~\ref{app:gain}, where the probe is warm, $\wh\kappa=0.52$
against a realised $\kappa=0.48$. The guard is also conservative: \CROWD{} often still
improves on uniform weighting when $\kappa$ exceeds one, at $\kappa=1.28$ in
Appendix~\ref{app:region}, $\kappa\approx2$ in Appendix~\ref{app:femnist} and
$\kappa\approx2$ to $3.9$ in Appendix~\ref{app:mlp}.
\end{remark}

\begin{remark}[A staleness-aware guard, and what it costs]
\label{rem:staleguard}
The restriction to a near-linear crowd can be checked on the trajectory the algorithm
already produces. Split the probing window into two disjoint
\emph{time}-halves, compute $\wh\kappa$ on each from its own parity split, and read the
drift between them. A crowd that is effectively linear over the probing region returns
the same value twice; a curved one drifts, because the two halves measure the Gram at
different points. Scaling that drift by how far the iterate still has to travel, which
the probing displacement itself estimates, gives
\[
\wh\kappa_{\mathrm{corr}}
=\wh\kappa_{\text{late}}+\big|\wh\kappa_{\text{late}}-\wh\kappa_{\text{early}}\big|
\cdot\frac{\norm{x^{\text{end}}-x^{\text{start}}}}{\norm{x^{\text{end}}-x^{\text{mid}}}} ,
\]
at the cost of one extra pair of accumulators and no extra samples. The rule is a
heuristic. The supplementary materials measure it on two mechanisms and three
probing budgets:

\begin{center}\small
\begin{tabular}{@{}llcccc@{}}
\toprule
Mechanism & Probe & $\kappa$ & $\wh\kappa$ & drift & $\wh\kappa_{\mathrm{corr}}$ \\
\midrule
low rank  & cold, short & $2.19$ & $1.48$ & $0.054$ & $1.88$ \\
low rank  & cold, long  & $2.19$ & $1.48$ & $0.007$ & $1.53$ \\
low rank  & warm, long  & $2.19$ & $1.48$ & $0.001$ & $1.48$ \\
full rank & cold, short & $0.49$ & $0.49$ & $0.019$ & $0.71$ \\
full rank & cold, long  & $0.49$ & $0.49$ & $0.004$ & $0.57$ \\
full rank & warm, long  & $0.49$ & $0.49$ & $0.002$ & $0.49$ \\
\bottomrule
\end{tabular}
\end{center}

Three observations follow. The drift falls by one to two orders of magnitude as the probe warms
and lengthens, so it is a working detector of a stale measurement. The correction is
inert on a settled probe, leaving both estimates untouched, so it does not penalise a
good measurement with a false refusal. And on a cold short probe it inflates in the
safe direction, moving the low-rank case from $1.48$ towards the true $2.19$ while
leaving the full-rank case at $0.71$, still below the threshold. The gap does not close
entirely: even on the best probe $\wh\kappa=1.48$ against $\kappa=2.19$. The remaining
gap comes from the change of evaluation point. The estimate is of the incoherence \emph{at the probing point}, which is the quantity
Lemma~\ref{lem:realised} needs -- the inversion is performed on the disagreement
measured there -- whereas Proposition~\ref{thm:free} compares bias energies at the truth's
own optimum. The practical rule therefore has two parts. First, probe after a burn-in and
check the drift, which supports the inversion. Second, treat the bound of
Proposition~\ref{thm:free} as conditional on the drift being small, since only then are
the two evaluation points interchangeable.
\end{remark}

\subsection{Partial participation}
\label{app:partial}

In the usual federated setting only part of the crowd reports in each round. The
energies stay identified as long as participation does not depend on the tilts.

\begin{assumption}[Participation]
\label{as:part}
At round $k$ a set $P_k\subseteq[n]$ with $|P_k|=m\ge2$ reports, drawn uniformly at
random and independently of $\mathcal F_{k-1}$ and of the round-$k$ noises. The residual is
centred \emph{within} the participating set,
$r^k_i=\wh g^k_i-\tfrac1m\sum_{j\in P_k}\wh g^k_j$ for $i\in P_k$.
\end{assumption}

Lemma~\ref{lem:residual} applies verbatim with $[n]$ replaced by $P_k$: the residual
still cancels $\nabla f_0(x^{k-1})$ identically, since the centring is again an
average of the participants' reports. What changes is the linear map to be inverted
and the effective sample size.
\begin{proposition}[Identifiability under uniform participation]
\label{prop:partial}
Let Assumptions~\ref{as:crowd}, \ref{as:oracle} (linear tilts) and~\ref{as:part} hold,
$n\ge3$, and put $p:=m/n$ and $\rho:=\frac{(m-1)n}{m(n-1)}\in[\tfrac12,1]$.
\begin{enumerate}[label=(\roman*),itemsep=1pt,topsep=2pt]
\item \emph{(The disagreement shrinks by a known factor.)} For every $i$ and $k$,
$\E[r^k_i\mid\mathcal F_{k-1},\,i\in P_k]=\rho\,\wt\beta_i$, with $\wt\beta_i$ the
full-crowd disagreement of Theorem~\ref{thm:ident}.
\item \emph{(An unbiased statistic.)} For the odd and even halves $\mathcal K_a$ of the
probing rounds let $N^{(a)}_i:=|\{k\in\mathcal K_a:i\in P_k\}|$ and, when both are
positive, $\wh\beta^{(a)}_i:=\frac1{N^{(a)}_i}\sum_{k\in\mathcal K_a,\,i\in P_k}r^k_i$
and $\wh s_i:=\inner{\wh\beta^{(1)}_i}{\wh\beta^{(2)}_i}$. Conditionally on the tilts
and on the participation indicators of client $i$, $\E\wh s_i=\rho^2\norm{\wt\beta_i}^2$
and
$\E(\wh s_i-\rho^2\norm{\wt\beta_i}^2)^2\le2\rho^2\norm{\wt\beta_i}^2(\vartheta_{i,1}^2+\vartheta_{i,2}^2)+2\vartheta_{i,1}^2\vartheta_{i,2}^2$,
where
\[
\vartheta_{i,a}^2:=\frac{1}{N^{(a)}_i}\Big(\psi_{m,n}\bar q+\frac{\tau_{i,P}^2}{b}\Big),
\]
$\psi_{m,n}:=\frac{n(n-m)}{m(n-1)(n-2)}$ and
$\tau_{i,P}^2:=\sigma_i^2+\frac{1}{m(n-1)}\sum_{j\ne i}\sigma_j^2$. With probability at
least $1-2ne^{-pK_0/16}$, every $N^{(a)}_i\ge pK_0/4$.
\item \emph{(Weights.)} $\wh q:=(T^{-1}\wh s/\rho^2)_+$ satisfies
$\max_i|\wh q_i-q_i|\le C_n\big(\max_i|\wh s_i/\rho^2-\norm{\wt\beta_i}^2|+3\kappa\bar q\big)$.
\end{enumerate}
The energies are therefore identified for every $m\ge2$ once $n\ge3$. The price of
partial participation is a factor of order $1/p$ in the number of reports, a factor
$\rho^{-2}\le4$ from rescaling, and the variance $\psi_{m,n}\bar q$ of drawing the
participants, which vanishes at $m=n$.
\end{proposition}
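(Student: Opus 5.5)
We prove the three parts in order. Part (i) and the shrinkage factor $\rho$ reduce to a computation over the uniformly random participation set. Part (ii) follows from the parity split, as in Lemma~\ref{thm:est}(i), with the participation randomness added as a second source of noise. Part (iii) reduces to Lemma~\ref{lem:realised} after rescaling.

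\emph{Part (i).} For linear tilts the residual of a participant $i\in P_k$ is
\[
r^k_i=\beta_i-\tfrac1m\sum_{j\in P_k}\beta_j+\eps^k_i-\tfrac1m\sum_{j\in P_k}\eps^k_j .
\]
Conditionally on $\mathcal F_{k-1}$ and $i\in P_k$, the set $P_k\setminus\{i\}$ is a uniform $(m-1)$-subset of $[n]\setminus\{i\}$, independent of the noises. Each $j\ne i$ therefore enters with probability $(m-1)/(n-1)$, and the noise terms have mean zero. The conditional mean is
\[
\Big(1-\tfrac1m\Big)\beta_i-\frac{m-1}{m(n-1)}\sum_{j\ne i}\beta_j .
\]
Writing $\sum_{j\ne i}\beta_j=n\bar\beta-\beta_i$, this equals
\[
\frac{(m-1)n}{m(n-1)}\big(\beta_i-\bar\beta\big)=\rho\,\wt\beta_i .
\]

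\emph{Part (ii).} Let $\xi^k_i:=r^k_i-\rho\wt\beta_i$ for $k$ with $i\in P_k$. Condition on the tilts and on client $i$'s participation indicators; since $P_k$ is drawn independently of $\mathcal F_{k-1}$, these indicators are exogenous. The $\xi^k_i$ then form a martingale difference sequence along the participating rounds. Because $\mathcal K_1$ and $\mathcal K_2$ are disjoint, the standard cross-fitting argument applies: $\E\langle\nu^{(1)},\nu^{(2)}\rangle=0$ by the tower property over the later round, and the cross terms with $\rho\wt\beta_i$ vanish. This gives $\E\wh s_i=\rho^2\norm{\wt\beta_i}^2$.

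For the variance, expand
\[
\wh s_i-\rho^2\norm{\wt\beta_i}^2=\rho\langle\wt\beta_i,\nu^{(1)}+\nu^{(2)}\rangle+\langle\nu^{(1)},\nu^{(2)}\rangle .
\]
Bound each piece with $\E\norm{\nu^{(a)}}^2\le\vartheta_{i,a}^2$, using orthogonality of martingale increments. This yields the stated bound $2\rho^2\norm{\wt\beta_i}^2(\vartheta_{i,1}^2+\vartheta_{i,2}^2)+2\vartheta_{i,1}^2\vartheta_{i,2}^2$ after one application of $(x+y)^2\le2x^2+2y^2$.

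The per-round second moment splits into a sampling part and a noise part.
\begin{itemize}
\item \emph{Sampling part.} The sample mean of an $(m-1)$-subset drawn without replacement from $n-1$ fixed vectors has variance $\frac{n-m}{(m-1)(n-2)}$ times the population variance. Scaling by $((m-1)/m)^2$ and bounding the population second moment of $\{\beta_j\}_{j\ne i}$ by $\tfrac{n}{n-1}\bar q$ gives $\psi_{m,n}\bar q$.
\item \emph{Noise part.} Conditional independence across clients gives $(1-\tfrac1m)^2\sigma_i^2/b+\tfrac1{m^2}\E\sum_{j\in P_k\setminus i}\sigma_j^2/b$. This is at most $\tau_{i,P}^2/b$, using $\PP(j\in P_k\mid i\in P_k)=(m-1)/(n-1)$.
\end{itemize}
The participation counts satisfy $N^{(a)}_i\sim\mathrm{Bin}(K_0/2,p)$. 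The multiplicative Chernoff bound $\PP(N<\tfrac12\E N)\le e^{-\E N/8}$ with $\E N=pK_0/2$, followed by a union bound over the $2n$ pairs $(i,a)$, gives the stated probability.

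\emph{Part (iii).} Apply Lemma~\ref{lem:realised} to $\norm{\wt\beta}^2=Tq+\eta$. Use the $\ell_\infty$ bound $C_n$ from Theorem~\ref{thm:ident}, together with the fact that clipping at zero is $1$-Lipschitz toward the nonnegative $q$. This gives the displayed bound exactly as in Lemma~\ref{thm:est}(iii).

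\emph{Expected obstacle.} The delicate step is the sampling variance in (ii). Two things need care. First, the conditioning must not break the martingale structure: participation of other clients is random, while client $i$'s indicators are conditioned on. Second, the finite-population correction must produce precisely $\psi_{m,n}$ with $\bar q$ rather than $\max_j q_j$. I would first verify the constant at $m=n$, where it must vanish, and at $m=2$, and then carry out the general finite-population computation.
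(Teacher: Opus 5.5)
Your proposal is correct and follows essentially the same route as the paper. Part (i) uses the same conditional-mean computation over the uniform $(m-1)$-subset $P_k\setminus\{i\}$. For (ii), you split $r^k_i-\rho\wt\beta_i$ into a participant-sampling martingale difference and a noise martingale difference, bound them by a finite-population variance formula and by conditional independence across clients, and cross-fit over the disjoint halves, with the same Chernoff and union bound for the counts $N^{(a)}_i$. Part (iii) is the same reduction to Lemma~\ref{lem:realised} together with $\norm{T^{-1}}_{\infty\to\infty}=C_n$.
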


Lemma~\ref{thm:weightsub} and Proposition~\ref{thm:free} then apply verbatim with the
$\Delta$ of (iii). The optimisation phase under partial participation has a sampling
variance of its own, which we do not analyse. A per-round squared disagreement has a
different mean, $\E[\norm{\wt\beta_i^P}^2\mid i\in P]$, because it also contains the
variance of drawing $P$; cross-fitting over rounds with independently drawn
participants removes that variance, so the map to invert is $\rho^2T$. Participation that is observed and independent of the tilts is
therefore harmless. Selection on the tilts themselves is a different matter.
\begin{proposition}[Selection on the tilts moves the target]
\label{prop:part-corr}
Suppose that the clients that report are those in a set $\mathcal R$, and that their
tilts are $\beta_i=c+\beta_i'$, $i\in\mathcal R$, with $c\in\R^d$ fixed and the
$\beta'_i$ independent and centred, as happens when tilts are drawn i.i.d.\ and
selection depends on each client's own tilt. Then:
(i) the transcript of any protocol has the same law under
$(f_0,(h_i)_{i\in\mathcal R})$ and under
$(f_0+\inner{c}{\cdot},(h_i-\inner{c}{\cdot})_{i\in\mathcal R})$, and the second
instance satisfies Assumption~\ref{as:crowd} on $\mathcal R$;
(ii) for every $w$ in the simplex on $\mathcal R$, $\bias(w)=c+\bias'(w)$ with
$\bias'(w):=\sum_iw_i\beta'_i$, so no weighting removes $c$;
(iii) the disagreements of the two instances coincide, so \CROWD{} computes the same
statistics under both, and they estimate the energies $\norm{\beta'_i}^2$ of the
second.
\end{proposition}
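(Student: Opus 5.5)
Part (i) is a direct substitution argument of the kind used in Proposition~\ref{prop:ident-neg}(i). Under the second instance every reporting client has risk $(f_0+\inner{c}{\cdot})+(h_i-\inner{c}{\cdot})=f_0+h_i$. This is the same function as in the first instance, for every $i\in\mathcal R$. The oracle of Assumption~\ref{as:oracle} returns $\nabla f_i(x^{k-1})+\eps^k_i$ with a noise law that depends only on the client and the filtration. An induction on rounds therefore shows that, under any adaptive protocol, the joint law of queries and reports coincides in the two instances.

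For the crowd property, the new perturbations are $h_i-\inner{c}{\cdot}$ with gradients $\nabla h_i-c$. I will read the hypothesis $\beta_i=c+\beta_i'$ as the statement that $\nabla h_i-c$ is centred and independent across $i\in\mathcal R$. At $x_0^\star$ this is given. In the linear-tilt case, where $\nabla h_i\equiv\beta_i$, it holds at every $x$, which is the case the proposition has in mind. So the second instance satisfies Assumption~\ref{as:crowd} on $\mathcal R$.

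I also need the second instance's truth optimum to be the point at which the new tilts are read. Its truth $f_0+\inner{c}{\cdot}$ has a different minimiser, so the new tilts are $\beta_i'$ evaluated at that point. This is consistent in the linear case, where gradients are constant. I would state the proposition for linear tilts, as with Theorem~\ref{thm:ident}, or else note that only gradients at a fixed point enter.

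Part (ii) is one line of algebra, using $\sum_{i\in\mathcal R}w_i=1$:
\[
\bias(w)=\sum_{i\in\mathcal R}w_i(c+\beta_i')=c+\bias'(w).
\]
To make ``no weighting removes $c$'' quantitative, I expand the bias energy and take expectations over the $\beta_i'$, treating $w$ as independent of them:
\[
\E\norm{\bias(w)}^2=\norm c^2+\sum_i w_i^2\E\norm{\beta_i'}^2\ge\norm c^2 .
\]
The cross term vanishes because each $\beta_i'$ is centred. Combined with Lemma~\ref{lem:transfer}(iii), this puts an $\Omega(\norm c^2/L)$ floor on the excess risk for every weight vector. Since $f_0$ is the same in both instances, the target of (ii) is the original one; part (i) shows this target cannot be distinguished from the shifted one.

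Part (iii) uses the centring. For $i,j\in\mathcal R$ the disagreement is $\wt\beta_i=\beta_i-\frac1{|\mathcal R|}\sum_j\beta_j$, and it is invariant under the common shift $\beta_i\mapsto\beta_i-c$. So the disagreements equal $\beta_i'-\bar\beta'$ in both instances. At the level of reports, Lemma~\ref{lem:residual} shows the residual cancels any common gradient term. Together with the equality in law from (i), this means Algorithm~\ref{alg:crowd} produces identically distributed $\wh s_i$, $\wh q_i$ and weights under both instances.

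Because the second instance satisfies Assumption~\ref{as:crowd}, Theorem~\ref{thm:ident} and Lemma~\ref{lem:realised} applied to it show that $T^{-1}\wh s$ targets $\norm{\beta_i'}^2$ and not $\norm{\beta_i}^2=\norm{c+\beta_i'}^2$. This follows in expectation for Theorem~\ref{thm:ident}, and up to the incoherence floor for Lemma~\ref{lem:realised}.

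The main obstacle is the bookkeeping in part (i) for non-linear tilts. The shift changes where the truth is minimised, so the centring condition must hold for the gradient fields and not only at one point. I would settle this by restricting to linear tilts, consistent with part (iii), rather than attempt a general statement.
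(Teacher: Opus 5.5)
Your proposal is correct and follows the paper's proof essentially step for step: (i) via the substitution argument of Proposition~\ref{prop:ident-neg}(i), with the crowd property verified for linear tilts as the paper also does; (ii) via $\sum_{i\in\mathcal R}w_i=1$; and (iii) via invariance of the centred disagreements under the common shift, after which the analysis of Section~\ref{sec:theory} applies to the shifted instance. Your expected floor $\E\norm{\bias(w)}^2\ge\norm c^2$ for tilt-independent weights is a valid extra, but the sentence saying ``$f_0$ is the same in both instances'' should instead say that the client risks coincide, since the two truths differ by $\inner{c}{\cdot}$.
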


The mechanism is the one of Proposition~\ref{prop:corr}: a component shared by the
reporting clients is invisible to the disagreement. Stragglers who are systematically
the miscalibrated sites therefore shift the target the crowd is centred on, by an
amount that no weighting rule built on the residuals can detect. The guarantee of
Proposition~\ref{prop:partial} requires participation to be ignorable, an assumption
about the deployment that the data cannot check.
\section{Numerical study}
\label{sec:exp}

\paragraph{What is compared.} We compare nine rules: \emph{uniform}; \emph{size}
$w_i\propto n_i$ (\textsc{FedAvg}); \emph{inv-$\sigma$}, $w_i\propto\bar\sigma_i^{-2}$
($n_i/\sigma_i^2$ on real data); \emph{valid}, a tuned power of each site's held-out
loss; \emph{common}, one shared heterogeneity variance; \emph{no-$T^{-1}$}, \CROWD{}
without the inversion; \emph{best single}, the client with the smallest bias; \CROWD{};
and the \emph{oracle}, handed the realised $q_i$. All rules share optimiser, seeds and
gradients, so only $w$ differs.

\begin{figure}[t]
\centering
\begin{subfigure}[t]{0.49\linewidth}\centering
\includegraphics[width=\linewidth]{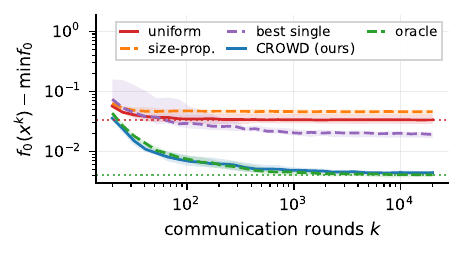}
\caption{Synthetic: bias floors}\label{fig:exp-a}
\end{subfigure}\hfill
\begin{subfigure}[t]{0.49\linewidth}\centering
\includegraphics[width=\linewidth]{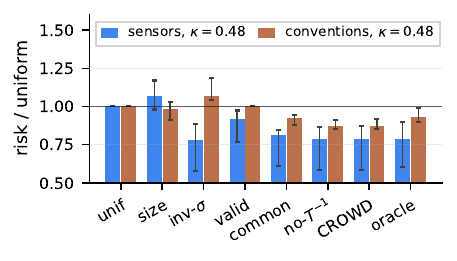}
\caption{Real federations}\label{fig:exp-b}
\end{subfigure}\\[2pt]
\begin{subfigure}[t]{0.49\linewidth}\centering
\includegraphics[width=\linewidth]{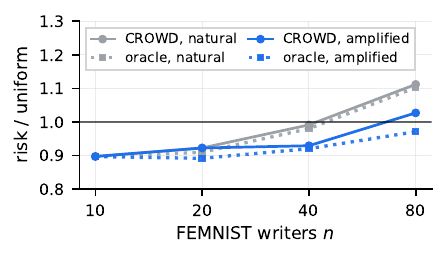}
\caption{FEMNIST writers}\label{fig:exp-c}
\end{subfigure}\hfill
\begin{subfigure}[t]{0.49\linewidth}\centering
\includegraphics[width=\linewidth]{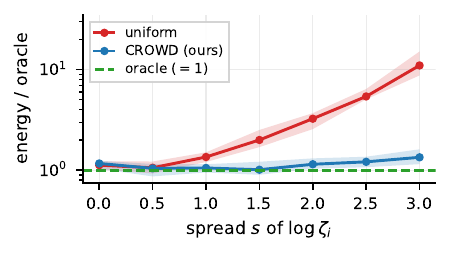}
\caption{Gain against the spread of $\zeta_i$}\label{fig:exp-d}
\end{subfigure}
\caption{(a) Excess risk on the truth, 12 seeds; dotted lines are the exact floors
$\tfrac12\bias(w)^\top A^{-1}\bias(w)$. (b, c) Excess risk relative to uniform, 8 and
12 seeds. (d) Bias energy relative to the oracle, 24 seeds. Bands and bars are
interquartile.}
\label{fig:exp}
\end{figure}

\paragraph{Bias floors (Figure~\ref{fig:exp-a}).} The synthetic instance satisfies the
model exactly. The truth $f_0$ is a quadratic on $\R^{150}$ with spectrum geometric in
$[\mu,L]=[0.5,5]$, and each of $n=20$ clients returns the gradient of
$f_i=f_0+\inner{\beta_i}{\cdot}$ with Gaussian noise. Bias and noise scales are drawn
\emph{independently}, log-uniformly in $[0.2,3]$ and $[0.5,3]$. Every rule converges
and stalls at the floor set by its own bias. Uniform weighting stops at
$3.358\cdot10^{-2}$ against $3.361\cdot10^{-2}$ predicted, and size weighting and the
best single client stop higher still. \CROWD{} stops $8\%$ above the oracle floor,
because its weights are built from the identified energies $\wh q_i$. The $7.56\times$
separation is in limit points, not rates.

\paragraph{Separating bias from noise (Figure~\ref{fig:exp-b}).} The data are $1797$
real digit scans, the model $\ell_2$-regularised multinomial logistic regression on a
frozen random feature map, so every risk is convex. Eight sites hold disjoint cohorts
seen through their own distortion, and $f_0$ averages the risk over forty distortions
from the same law. In the \emph{sensor} federation each site has a detector with its
own gain and offset on every pixel. \CROWD{}, the oracle and inverse-noise reach
$0.787$, $0.789$ and $0.778$ of the uniform excess risk: a miscalibrated
detector inflates noise along with bias. The \emph{convention} federation separates the two costs at the same $\kappa=0.48$.
All detectors share one severity, and sites differ only through their own
label-confusion channel. Cohort sizes run \emph{against} quality, so the worst site
is the largest one: quiet, but biased. Here precision weighting reaches $1.07$, worse than not weighting at all, and
validation tuning returns uniform weights. \CROWD{} reaches $0.875$, against $0.934$
for the bias-energy oracle. The reason is that $\wh s_i$ keeps only the part of a
site's disagreement that persists across rounds. Noise averages out, and the quiet
but biased site is down-weighted.

\paragraph{Real writers (Figure~\ref{fig:exp-c}).} In FEMNIST each client is one writer
and $f_0$ the writer-averaged risk of $400$ others, so the federation is not
constructed. With ten writers $\kappa=0.52<1$, the regime of
Theorem~\ref{thm:endtoend}, and \CROWD{} reaches $0.897$ of the uniform excess risk,
as does the oracle. For natural writers the gain fades and is gone at $n=40$. The
reason is that the bias energy $\AM/n$ of the uniform crowd decays with $n$, so the
weights act on a shrinking part of the risk. Amplifying each style by $\sqrt{n/10}$
slows that decay, and the gain holds: $0.929$ at $n=40$, although $\kappa\approx2$.
At $n=80$ ($\kappa\approx3.4$) the estimate stops working: \CROWD{} reaches $1.03$
and the oracle $0.97$.

\paragraph{Unbounded gain (Figure~\ref{fig:exp-d}).} We take a linear crowd with $n=20$,
$d=60$, unit noise and $\log\zeta_i$ uniform on an interval of width $s$. At $s=3$
uniform weighting falls $11.0\times$ behind the oracle, while \CROWD{} stays within
$1.34\times$. The gain of the oracle over uniform weighting is $\AM/\HM$ of the bias
energies, which grows without bound with the spread
(Proposition~\ref{prop:oracle}). \CROWD{} captures it because it estimates every
$q_i$ separately rather than one shared variance.

\begin{table}[t]
\centering
\small
\setlength{\tabcolsep}{4pt}
\begin{tabular}{@{}llcccccc@{}}
\toprule
Model, trained end to end & clients & $\kappa$ & $\wh\kappa$ & inv-$\sigma$ & \CROWD{} & oracle & \CROWD{}$<$unif. \\
\midrule
two-layer ReLU, raw FEMNIST pixels & $10$ writers & $1.09$ & $1.05$ & $0.815$ & $0.933$ & $0.816$ & $8/8$ \\
 & $20$ writers & $2.13$ & $2.52$ & $0.971$ & $\mathbf{0.891}$ & $0.970$ & $7/8$ \\
 & $40$ writers, ampl. & $3.88$ & $4.39$ & $1.434$ & $\mathbf{0.950}$ & $1.331$ & $4/6$ \\
convolutional net, raw digit scans & $8$ sensor sites & $1.63$ & $1.18$ & $1.141$ & $1.057$ & $1.218$ & $1/6$ \\
\bottomrule
\end{tabular}
\caption{Neural networks trained end to end: excess risk relative to uniform
weighting, medians (Appendix~\ref{app:mlp}). \CROWD{} is unguarded; $\wh\kappa$ is
the estimate the guard uses.}
\label{tab:nn}
\end{table}

\paragraph{Neural networks (Table~\ref{tab:nn}).} Outside the convex theory we train
a two-layer ReLU network on the writers of Figure~\ref{fig:exp-c} and a small
convolutional network on the sensor sites. The oracle uses the energies at a
reference network. On the two-layer network \CROWD{} lowers the pool risk by $5$--$11\%$ in the median,
with accuracy changing by less than one point. It is the only rule that improves on
uniform weighting in all three settings. \CROWD{} measures the disagreement along its
own trajectory, whereas inv-$\sigma$ and the oracle use one fixed point, and at
$n=20,40$ both lose to it. These gains are unguarded: $\wh\kappa\ge1$ in $20$ of $22$
seeds, so the guard below would have kept uniform weights and is conservative here.
On the convolutional network no rule improves on uniform weighting, and the guard
declines in five of six seeds.

\paragraph{Using \CROWD{} in practice.} Probe with uniform weights, refresh on the
doubling grid, and at each refresh estimate $\kappa$ and the drift between the two
half-averages (Proposition~\ref{prop:check}, Remark~\ref{rem:staleguard}). If
$\wh\kappa<1$ and the drift is small, weight, otherwise keep uniform weights. This is
a screening diagnostic, not a certificate. The estimate $\wh\kappa$ is not consistent for
$\kappa$: its false-accept rate is $4.2\%$ in the linear regime, and on a strongly
curved crowd it under-reports $\kappa$ three- to five-fold
(Appendix~\ref{app:regime}). Appendix~\ref{app:exp} adds further ablations. Among
them, local-step \textsc{FedAvg}, \textsc{FedProx} and \textsc{SCAFFOLD} reach the
same floor under the same weights (Figure~\ref{fig:fedopt}).

\section{Discussion and limitations}
\label{sec:conclusion}

Reading the weights of a federated objective as an estimator of a common truth turns
weight design into an identification problem. The truth itself is identifiable only up
to a linear tilt, and correcting individual clients cannot help. The expected per-client
bias energies are identified by the law of the crowd's disagreement, and the realised
ones are estimable up to an incoherence floor. \CROWD{} obtains them at no extra cost
from the reports of the optimisation run. Its excess risk meets a Bayesian minimax floor
in the same constant, per instance as the horizon grows and unconditionally in the
diffuse-prior limit, and for arbitrary realised tilts it stays
$\tfrac{1+\kappa}{1-\kappa}$-competitive with the optimal weights.

\paragraph{Limitations.} Centring is untestable, so calling an aggregate closer to
ground truth is a claim about the crowd (Propositions~\ref{prop:ident-neg}
and~\ref{prop:check}); participation that depends on the clients' own tilts shifts the
target by an amount no residual-based rule can detect
(Proposition~\ref{prop:part-corr}). The gain also needs a slowly vanishing summed tilt
and not many more clients than independent causes of heterogeneity
(Figures~\ref{fig:exp} and~\ref{fig:regime}): the window in which $\kappa<1$ is narrow,
and the guard of Remark~\ref{rem:guard} is a screening diagnostic rather than a
certificate, which under-reports $\kappa$ on strongly curved crowds. The guarantees
assume a strongly convex truth; the neural-network runs of Table~\ref{tab:nn} are a
stress test outside the theory.

\section*{Acknowledgements}
The work of I.~Kuruzov and A.~Gasnikov was supported by the Ministry of Science and
Higher Education of the Russian Federation (Grant No.~075-15-2024-529).

\bibliographystyle{plainnat}
\bibliography{refs}

\appendix
\section{Notation}
\label{app:notation}

Symbols used in more than one section, with the place they are defined. Two
symbols collide: $\ell$ and $\ell_i$ are curvature bounds while $\ell_T$ is a
logarithmic factor in the stepsize, and $K$ counts samples per client in $\Psi(K)$ but
rounds in Algorithm~\ref{alg:weights}, where $K$ rounds give $bK$ samples. The family
$\Psi$ carries several decorations, listed together below.

\begin{table}[h]
\centering
\small
\setlength{\tabcolsep}{3.5pt}
\begin{tabular}{@{}@{}l@{\hspace{5pt}}l@{\hspace{5pt}}p{72mm}@{}}
\toprule
Symbol & Defined in & Meaning \\
\midrule
$n,d,b,K$ & Sec.~\ref{sec:setup} & clients, dimension, batch, samples per client \\
$f_0,\ x_0^\star,\ \mu,L$ & Assum.~\ref{as:reg} & the truth, its optimum, its curvature bounds \\
$h_i,\ f_i=f_0+h_i$ & Assum.~\ref{as:crowd} & client $i$'s perturbation and risk \\
$\ell_i,\ \ell,\ \lambda_w,\ L'$ & Assum.~\ref{as:crowd} & curvature bound of $h_i$; $\ell=\max_i\ell_i$; $\lambda_w=\sup\norm{\nabla^2H_w}$; $L'=L+\lambda$ \\
$\ell_T$ & Thm.~\ref{thm:endtoend} & logarithmic factor in the tuned stepsize, unrelated to $\ell$ \\
$\beta_i,\ q_i,\ \zeta_i^2$ & Assum.~\ref{as:crowd} & realised tilt, realised energy $\norm{\beta_i}^2$, expected energy \\
$\bar\beta,\ \wt\beta_i$ & Eq.~\eqref{eq:resid} & crowd mean and disagreement of the tilts \\
$\bias(w),\ \Gamma_w$ & Eq.~\eqref{eq:intro-bias} & aggregate bias vector and its norm \\
$V(w)$ & Sec.~\ref{sec:est} & diagonal proxy $\sum_iw_i^2q_i$ for $\Gamma_w^2$ \\
$\AM,\HM$ & Prop.~\ref{prop:oracle} & arithmetic and harmonic means of the energies \\
$\Psi_0$ & Prop.~\ref{prop:oracle} & $\HM/n$, the attainable expected bias energy \\
$\Psi(K),\ \wh\Psi(K)$ & Eq.~\eqref{eq:psiK} & finite-sample target and its realised counterpart \\
$\kappa,\ \wh\kappa$ & Assum.~\ref{as:incoh}, Prop.~\ref{prop:check} & incoherence of the realised tilts and its estimate \\
$\sigma_i^2,\ \tau_i^2$ & Assum.~\ref{as:oracle} & oracle noise and its centred counterpart \\
$\wh g^k_i,\ r^k_i$ & Eq.~\eqref{eq:resid} & report and residual at round $k$ \\
$T,\ C_n$ & Thm.~\ref{thm:ident} & moment map and its $\ell_\infty$ condition number \\
$\wh s_i,\ \wh q_i,\ \wh w$ & Eq.~\eqref{eq:est} & cross-fitted statistic, inverted energy, weights \\
$\Delta,\ \delta_i,\ \vartheta_i$ & Lem.~\ref{thm:est} & weight error; its staleness and statistical parts \\
$\epsilon,\ \epsilon_T$ & Lem.~\ref{lem:transfer}(ii) & Young trade-off parameter of the analysis \\
$P_k,\ m,\ p,\ \rho$ & Sec.~\ref{app:partial} & participating set, its size and rate $m/n$, the shrinkage of the disagreement \\
\bottomrule
\end{tabular}
\end{table}

Hats denote estimates computed from the transcript and tildes denote quantities
centred within the crowd, so $\wt x_i=x_i-\tfrac1n\sum_jx_j$ throughout. Statements
labelled \emph{realised} or \emph{deterministic} hold for one fixed draw of the tilts
with no expectation.
\section{Proofs for Section~\ref{sec:setup}}
\label{app:setup}

Throughout, $\E[\cdot\mid B]$ denotes expectation conditional on the realised tilts.
Recall $H_w=\sum_iw_ih_i$, $F_w=f_0+H_w$, $\bias(w)=\nabla H_w(x_0^\star)=B^\top w$,
$\Gamma_w=\norm{\bias(w)}$, $V(w)=\sum_iw_i^2q_i$.

\subsection{Proof of Proposition~\ref{prop:rigid}}
\label{app:pf-rigid}

The proof works with gradients only, so it needs no integrability of $h_i$ itself:
Assumption~\ref{as:crowd} already presupposes that $\nabla h_i(x)$ is integrable for
every $x$. The idea is that convexity makes a certain gradient pairing non-negative
while the centring makes its mean zero, which forces it to vanish.

\begin{proof}
Let $U$ be the open convex set, fix $i$, and write $h=h_i$.

\emph{Step 1: the monotonicity defect vanishes pointwise.} For $x,y\in U$ put
$M(x,y):=\inner{\nabla h(x)-\nabla h(y)}{x-y}$. A differentiable convex function has a
monotone gradient, so $M(x,y)\ge0$ almost surely. By linearity of expectation and
Assumption~\ref{as:crowd},
$\E M(x,y)=\inner{\E\nabla h(x)-\E\nabla h(y)}{x-y}=0$. A non-negative random variable
with zero mean vanishes almost surely, so $M(x,y)=0$ almost surely, for each fixed pair.

\emph{Step 2: simultaneously for all pairs.} Let $U_0\subseteq U$ be countable and
dense. By countable additivity there is an event of probability one on which
$M(x,y)=0$ for all $x,y\in U_0$. On that event, continuity of $\nabla h$ ($h\in C^2$ by
Assumption~\ref{as:crowd}; $C^1$ would suffice) makes $M$ continuous on $U\times U$, so $M(x,y)=0$ for all $x,y\in U$.

\emph{Step 3: a vanishing defect forces affinity.} Fix $x_0\in U$ and $x\in U$, and put
$g(t):=h(x_0+t(x-x_0))$ for $t\in[0,1]$. Then $g'(t)-g'(0)=\inner{\nabla
h(x_0+t(x-x_0))-\nabla h(x_0)}{x-x_0}=M(x_0+t(x-x_0),x_0)/t=0$ for $t\in(0,1]$. Hence
$g$ is affine and $h(x)=h(x_0)+\inner{\nabla h(x_0)}{x-x_0}$ for every $x\in U$: $h$ is
affine on $U$. Since $i$ was arbitrary and there are finitely many clients, the
conclusion holds for all of them on one event of probability one.
\end{proof}

The same argument with a deterministic crowd shows: if $h_1,\dots,h_n$ are
differentiable and convex and $\sum_i\lambda_i\nabla h_i$ is constant for some
$\lambda_i>0$, then every $h_i$ is affine, since $\sum_i\lambda_iM_i(x,y)=0$ with
$M_i\ge0$ forces each $M_i(x,y)=0$.

\subsection{Proof of Lemma~\ref{lem:transfer}}
\label{app:pf-transfer}

\begin{proof}
\emph{(i)} $\nabla^2F_w=\nabla^2f_0+\nabla^2H_w$ and
$\norm{\nabla^2H_w}_{\mathrm{op}}\le\lambda_w$ give the moduli. For the
displacement, $\nabla F_w(x_0^\star)=\nabla f_0(x_0^\star)+\nabla H_w(x_0^\star)=\bias(w)$,
so by $(\mu-\lambda_w)$-strong convexity of $F_w$,
$\norm{x_0^\star-x_w^\star}\le\norm{\nabla F_w(x_0^\star)}/(\mu-\lambda_w)=\Gamma_w/(\mu-\lambda_w)$.

\emph{(ii)} By $L$-smoothness of $f_0$ and $\nabla f_0(x_0^\star)=0$,
$f_0(x)-f_0^\star\le\tfrac L2\norm{x-x_0^\star}^2\le L\norm{x-x^\star_w}^2+L\norm{x^\star_w-x_0^\star}^2$,
and $\norm{x-x_w^\star}^2\le\tfrac{2}{\mu-\lambda_w}[F_w(x)-\min F_w]$ by strong
convexity. Combining with (i) gives the claim.

For the linear case ($\lambda_w=0$, $H_w=\inner{\bias}{\cdot}$) we argue through the
gradient instead, which is what makes the constant sharp. Strong convexity gives the
Polyak--\L{}ojasiewicz inequality $f_0(x)-f_0^\star\le\norm{\nabla f_0(x)}^2/(2\mu)$,
and $\nabla f_0(x)=\nabla F_w(x)-\bias(w)$, so by Young's inequality, for any
$\epsilon>0$,
\[
\norm{\nabla f_0(x)}^2\le(1+\epsilon)\Gamma_w^2+\Big(1+\tfrac1\epsilon\Big)\norm{\nabla F_w(x)}^2 .
\]
Smoothness of $F_w$ (constant $L$, since $\lambda_w=0$) gives
$\norm{\nabla F_w(x)}^2\le2L[F_w(x)-\min F_w]$, and dividing by $2\mu$ yields the
claim.

\emph{(iii)} $\nabla f_0(x_w^\star)=-\nabla H_w(x^\star_w)$ and, by $\lambda_w$-Lipschitz
continuity of $\nabla H_w$ together with (i),
\[
\norm{\nabla H_w(x_w^\star)}\ \ge\ \Gamma_w-\lambda_w\norm{x_w^\star-x_0^\star}
\ \ge\ \Gamma_w\Big(1-\frac{\lambda_w}{\mu-\lambda_w}\Big)=\Gamma_w\frac{\mu-2\lambda_w}{\mu-\lambda_w}.
\]
For convex $L$-smooth $f_0$ the descent lemma at $y^+=y-\nabla f_0(y)/L$ gives
$f_0(y)-f_0^\star\ge\norm{\nabla f_0(y)}^2/(2L)$; apply at $y=x_w^\star$.
\end{proof}

\subsection{Proof of Lemma~\ref{lem:transfer-c}}
\label{app:pf-transfer-constrained}

\begin{proof}
Put $m:=\mu-\lambda_w$; $F_w$ is $m$-strongly convex on $\Xset$ as in
Lemma~\ref{lem:transfer}(i). First-order optimality of $x^\star_w$ over $\Xset$ gives
$\inner{\nabla F_w(x^\star_w)}{y-x^\star_w}\ge0$ for every $y\in\Xset$.

\emph{Displacement.} Strong monotonicity of $\nabla F_w$ and optimality with
$y=x_0^\star$ give
\[
m\norm{x_0^\star-x^\star_w}^2\le\inner{\nabla F_w(x_0^\star)-\nabla F_w(x^\star_w)}{x_0^\star-x^\star_w}
\le\inner{\nabla F_w(x_0^\star)}{x_0^\star-x^\star_w}\le\Gamma_w\norm{x_0^\star-x^\star_w},
\]
since $\nabla F_w(x_0^\star)=\nabla f_0(x_0^\star)+\bias(w)=\bias(w)$, the truth's
minimiser being interior.

\emph{Excess risk.} Strong convexity and optimality give
$F_w(x)-F_w(x^\star_w)\ge\inner{\nabla F_w(x^\star_w)}{x-x^\star_w}+\tfrac m2\norm{x-x^\star_w}^2\ge\tfrac m2\norm{x-x^\star_w}^2$.
Since $\nabla f_0(x_0^\star)=0$ and $f_0$ is $L$-smooth on the segment
$[x_0^\star,x]\subseteq\Xset$,
$f_0(x)-f_0^\star\le\tfrac L2\norm{x-x_0^\star}^2\le L\norm{x-x^\star_w}^2+L\norm{x^\star_w-x_0^\star}^2$,
and the two displays combine to the claim.
\end{proof}

\subsection{Proof of Proposition~\ref{prop:oracle}}
\label{app:pf-oracle}

\begin{proof}
\emph{Identity.} $\E\norm{\sum_iw_i\beta_i}^2=\sum_{i,j}w_iw_j\E\inner{\beta_i}{\beta_j}$;
for $i\ne j$ independence and $\E\beta_i=\E\nabla h_i(x_0^\star)=0$ give
$\E\inner{\beta_i}{\beta_j}=\inner{\E\beta_i}{\E\beta_j}=0$, leaving
$\sum_iw_i^2\zeta_i^2$.

\emph{Optimum.} For $\one^\top w=1$, Cauchy--Schwarz gives
$1=(\sum_iw_i\sqrt{c_i}\cdot c_i^{-1/2})^2\le(\sum_iw_i^2c_i)(\sum_ic_i^{-1})$, so
$\sum_iw_i^2c_i\ge(\sum_ic_i^{-1})^{-1}$ with equality iff $w_i\propto c_i^{-1}$.
That candidate is strictly positive, hence lies in the relative interior of
$\simplex$: the affine and simplex-constrained problems have the same solution and
value. Strict convexity gives uniqueness.

\emph{(i)} The ratio of the two values is $(\AM/n)/(\HM/n)=\AM/\HM\ge1$ by the
inequality between the arithmetic and harmonic means, with equality iff all
$\zeta_i^2$ coincide. \emph{(ii)} $\Psi_0=(\sum_j\zeta_j^{-2})^{-1}$ and
$\sum_j\zeta_j^{-2}\ge\zeta_{\min}^{-2}$ give $\Psi_0\le\zeta_{\min}^2$.
\emph{(iii)} Take $\zeta_1^2=R$ and $\zeta_j^2=1$ for $j\ge2$. Then
$\AM=(R+n-1)/n$ and $\HM=n/(R^{-1}+n-1)$, so
$\AM/\HM=(R+n-1)(R^{-1}+n-1)/n^2\ge(n-1)R/n^2\to\infty$ as $R\to\infty$.
\end{proof}

\subsection{Proof of Lemma~\ref{lem:harmonic}}
\label{app:pf-harmonic}

\begin{proof}
By Proposition~\ref{prop:oracle}, $H(c)=\min_{\one^\top w=1}\phi_c(w)$ with
$\phi_c(w):=\sum_iw_i^2c_i$, and the minimiser $w^c\propto c^{-1}$ is unique. As a
pointwise minimum of functions linear in $c$, $H$ is concave, and $H(tc)=tH(c)$ for
$t>0$ is immediate. For superadditivity and its equality case, write
$w:=w^{a+b}$ and use $\phi_{a+b}=\phi_a+\phi_b$:
\[
H(a+b)=\phi_a(w)+\phi_b(w)\ \ge\ H(a)+H(b),
\]
with equality iff $\phi_a(w)=H(a)$ and $\phi_b(w)=H(b)$, that is, by uniqueness of the
minimisers, iff $w^{a+b}=w^a=w^b$. Since $w^c$ is the normalisation of
$(c_1^{-1},\dots,c_n^{-1})$, $w^a=w^b$ holds iff $a^{-1}\propto b^{-1}$, i.e.\ iff
$b=ta$ for some $t>0$; and then $w^{a+b}=w^a$ as well.
\end{proof}
\section{Proofs for Section~\ref{sec:alg}}
\label{app:alg}

This section proves the two impossibilities of Proposition~\ref{prop:ident-neg} and
establishes the property of the residual~\eqref{eq:resid} that the algorithm rests on.

\subsection{Proof of Proposition~\ref{prop:ident-neg}(i)}
\label{app:identneg-i}

For $c\in\R^d$ let $I_c$ denote the instance with truth $f_0+\inner{c}{\cdot}$ and
perturbations $h_i-\inner{c}{\cdot}$, the same noise laws, and the same realised
crowd otherwise; $I_0$ is the original instance.

\begin{proof}
\emph{Client risks.} For every $i$ and $x$,
$\big(f_0+\inner{c}{\cdot}\big)(x)+\big(h_i-\inner{c}{\cdot}\big)(x)=f_0(x)+h_i(x)=f_i(x)$,
so $I_c$ and $I_0$ have the same tuple $(f_1,\dots,f_n)$.

\emph{Transcript.} A protocol's transcript is produced from (a) the algorithm's
private randomness $U$, independent of the instance, (b) the reports, whose
conditional law given the past depends on the instance only through the queried
client's $\nabla f_i$ at the queried point, by Assumption~\ref{as:oracle}, and (c)
measurable maps of the two. We show by induction on the round $k$ that the joint law
of $(U,\text{reports up to }k)$ is the same under $I_c$ and $I_0$. For $k=0$ only $U$
is present. If the claim holds up to $k-1$, the round-$k$ query points are the same
measurable function of that history under both instances, and the conditional law of
the round-$k$ reports given the history is the same because the $\nabla f_i$ agree;
hence the joint law up to $k$ agrees.

\emph{Targets and biases.} On $\Xset=\R^d$ the minimiser of $f_0+\inner{c}{\cdot}$
solves $\nabla f_0(x)=-c$, which no solution of $\nabla f_0(x)=0$ satisfies when
$c\ne0$; so the targets differ. For linear tilts the tilts of $I_c$ at its own
minimiser are $\beta_i-c$, and $\sum_iw_i(\beta_i-c)=\bias(w)-c$ because
$\sum_iw_i=1$.
\end{proof}

The statement is about the realised crowd, which is what a protocol faces. The
shifted instance stays inside the model class. For Gaussian linear tilts every
realisation $(\beta_i-c)_i$ lies in the support of the centred law, so $I_c$ is itself
a possible realisation of an instance that satisfies Assumption~\ref{as:crowd}. At
fixed $n$ no amount of data selects between the two.

\subsection{Proof of Proposition~\ref{prop:ident-neg}(ii)}
\label{app:identneg-ii}

The deviation of client $i$ from the crowd mean at $x$ is
$\wt{\nabla h_i}(x):=\nabla h_i(x)-\tfrac1n\sum_j\nabla h_j(x)$, and the corrected
report is $\wh g^{\,\mathrm{corr}}_i:=\wh g_i-\wt{\nabla h_i}(x)$.

\begin{proof}
By Assumption~\ref{as:oracle}, $\E[\wh g_i\mid x]=\nabla f_0(x)+\nabla h_i(x)$, so
$\E[\wh g^{\,\mathrm{corr}}_i\mid x]=\nabla f_0(x)+\tfrac1n\sum_j\nabla h_j(x)=\nabla F_{w_{\mathrm{unif}}}(x)$
for every $i$. For any $w\in\simplex$ the aggregate therefore has conditional mean
$\sum_iw_i\nabla F_{w_{\mathrm{unif}}}(x)=\nabla F_{w_{\mathrm{unif}}}(x)$: it is an
unbiased gradient of $F_{w_{\mathrm{unif}}}$, whatever $w$ is. Its bias vector at
$x_0^\star$ is $\bar\beta=\bias(w_{\mathrm{unif}})$, whose expected energy is $\AM/n$
by Proposition~\ref{prop:oracle} with uniform weights, and $\AM/n>\HM/n=\Psi_0$
unless all $\zeta_i$ coincide.
\end{proof}

\subsection{Proof of Lemma~\ref{lem:residual}}
\label{app:residual}

\begin{proof}
\emph{Representation.} By Assumption~\ref{as:oracle},
$\wh g_i^k=\nabla f_0(x^{k-1})+\nabla h_i(x^{k-1})+\eps_i^k$. Averaging over $i$ and
subtracting gives
$r_i^k=\nabla h_i(x^{k-1})-\tfrac1n\sum_j\nabla h_j(x^{k-1})+\eps_i^k-\bar\eps^k
=\wt{\nabla h_i}(x^{k-1})+\wt\eps_i^k$, with $\wt\eps^k_i:=\eps^k_i-\bar\eps^k$, in
which $\nabla f_0(x^{k-1})$ has cancelled identically; no property of $x^{k-1}$ or of
$f_0$ was used. For linear tilts $\wt{\nabla h_i}\equiv\wt\beta_i$.

\emph{Maximal invariance.} If $\wh g'^k_i=\wh g^k_i+c^k$ for all $i,k$ then the
row means shift by $c^k$ and the residuals are unchanged. Conversely, if two arrays
have the same residuals then $\wh g'^k_i-\wh g^k_i=\bar{\wh g}'^k-\bar{\wh g}^k=:c^k$
does not depend on $i$. Hence the residual array separates exactly the orbits of the
group, which is the definition of a maximal invariant \citep[Ch.~6]{lehmann2005}.

\emph{Moments.} $\wt\eps^k_i=(1-\tfrac1n)\eps_i^k-\tfrac1n\sum_{j\ne i}\eps_j^k$ has
conditional mean zero. By conditional independence across clients and zero conditional
means, the cross terms vanish and
\begin{align*}
\E\big[\norm{\wt\eps_i^k}^2\mid\mathcal F_{k-1}\big]
&=\Big(1-\tfrac1n\Big)^2\E\big[\norm{\eps_i^k}^2\mid\mathcal F_{k-1}\big]
+\tfrac1{n^2}\sum_{j\ne i}\E\big[\norm{\eps_j^k}^2\mid\mathcal F_{k-1}\big]\\
&\le\frac{\sigma_i^2}{b}+\frac{1}{n^2}\sum_j\frac{\sigma_j^2}{b}=\frac{\tau_i^2}{b}. \qedhere
\end{align*}
\end{proof}
\section{Proofs for Section~\ref{sec:theory}}
\label{app:theory}

The subsections below follow the order of Section~\ref{sec:theory}: identification
and correlated crowds, then the estimation error, the end-to-end bound and the anytime
schedule, then the lower bound and its unconditional form, then the non-linear crowd and
the assumption-free guarantee, and finally checkability and partial participation.

\subsection{Proof of Theorem~\ref{thm:ident}}
\label{app:pf-ident}

The proof runs in five steps. Step~1 computes the expected disagreement energy
of one client, which is where independence removes the cross terms. Step~2 sums that
identity over clients to recover the total energy, and so inverts the map in closed
form. Step~3 reads the $\ell_\infty$ condition number off the rows of the inverse.
Step~4 checks that the quantity being inverted is a functional of the observable law.
Step~5 treats the degenerate case $n=2$.

\begin{proof}
\emph{Step 1: the forward map.} By Assumption~\ref{as:crowd},
$\E\inner{\beta_i}{\beta_j}=0$ for $i\ne j$ and $\E\norm{\beta_i}^2=\zeta_i^2$.
Hence, with $Z:=\sum_j\zeta_j^2$,
\[
\E\inner{\beta_i}{\bar\beta}=\frac1n\Big(\zeta_i^2+\sum_{j\ne i}\E\inner{\beta_i}{\beta_j}\Big)=\frac{\zeta_i^2}{n},
\qquad
\E\norm{\bar\beta}^2=\frac{1}{n^2}\sum_j\zeta_j^2=\frac{Z}{n^2},
\]
so that
\[
s_i=\E\norm{\beta_i-\bar\beta}^2=\zeta_i^2-\frac{2\zeta_i^2}{n}+\frac{Z}{n^2}
=\Big(1-\frac2n\Big)\zeta_i^2+\frac{1}{n^2}\sum_j\zeta_j^2=(T\zeta^2)_i .
\]

\emph{Step 2: inversion.} Summing over $i$,
$\sum_is_i=(1-\tfrac2n)Z+\tfrac{Z}{n}=Z\tfrac{n-1}{n}$, so
$Z=\tfrac{n}{n-1}\sum_is_i$, and therefore
\[
\zeta_i^2=\frac{n}{n-2}\Big(s_i-\frac{Z}{n^2}\Big)
=\frac{n}{n-2}\Big(s_i-\frac{1}{n(n-1)}\sum_js_j\Big),
\]
which is well defined for $n\ge3$. This exhibits a two-sided inverse of $T$, hence
$T$ is a bijection.

\emph{Step 3: conditioning.} The $i$th row of $T^{-1}$ has entries
$\frac{n}{n-2}\big(\delta_{ij}-\frac1{n(n-1)}\big)$, whose absolute values sum to
\[
\frac{n}{n-2}\Big[\Big(1-\frac1{n(n-1)}\Big)+\frac{n-1}{n(n-1)}\Big]
=\frac{n}{n-2}\cdot\frac{n(n-1)-1+(n-1)}{n(n-1)}=\frac{n^2-2}{(n-1)(n-2)} .
\]
Since $\norm{A}_{\infty\to\infty}$ is the maximum absolute row sum and all rows are
permutations of each other, $\norm{T^{-1}}_{\infty\to\infty}=C_n$ as claimed. The
numerical values follow, and $C_n=1+3/n+O(n^{-2})\to1$.

\emph{Step 4: identifiability.} For linear tilts Lemma~\ref{lem:residual} gives
$r^k_i=\wt\beta_i+\wt\eps^k_i$ with $\wt\beta$ measurable with respect to
$\mathcal F_0$ and $(\wt\eps^k_i)_k$ a martingale difference sequence. For any two
rounds $k<l$,
\[
\E\inner{r^k_i}{r^l_i}=\E\norm{\wt\beta_i}^2+\E\inner{\wt\beta_i}{\wt\eps^k_i}
+\E\inner{\wt\beta_i+\wt\eps^k_i}{\E[\wt\eps^l_i\mid\mathcal F_{l-1}]}=s_i ,
\]
because the middle term vanishes on conditioning on $\mathcal F_{k-1}$ and the last on
conditioning on $\mathcal F_{l-1}$. So $s_i$ is a functional of the joint law of the
residuals of two rounds, whatever the query points and without knowledge of the noise
level; $\zeta^2=T^{-1}s$ is then identified, and so is $w^\star_i\propto\zeta_i^{-2}$.

\emph{Step 5: failure at $n=2$.} For $n=2$, $(Tq)_i=0\cdot q_i+\tfrac14(q_1+q_2)$, so $T$
has rank one and $\zeta_1^2,\zeta_2^2$ are not separately identified.
\end{proof}

\subsection{Proof of Proposition~\ref{prop:corr}}
\label{app:pf-corr}

The proof runs in four steps. Step~1 evaluates the expected bias energy of a
correlated crowd and identifies the optimal weights. Step~2 shows that the two
candidate covariances have the same disagreement law, so no shift-invariant rule can
tell them apart. Step~3 checks that both are legitimate covariances. Step~4 computes
the two optima and their values, which differ.

\begin{proof}
\emph{Step 1: the objective.} With $\Cov(\beta_i,\beta_j)=\Sigma_{ij}I_d/d$ and
$\E\beta=0$,
\[
\E\norm{\bias(w)}^2=\sum_{i,j}w_iw_j\,\E\inner{\beta_i}{\beta_j}
=\sum_{i,j}w_iw_j\tr\big(\Sigma_{ij}I_d/d\big)=w^\top\Sigma w .
\]
Minimising over
$\{\one^\top w=1\}$ gives $w\propto\Sigma^{-1}\one$ with value
$(\one^\top\Sigma^{-1}\one)^{-1}$, by the same Cauchy--Schwarz (in the
$\Sigma$-inner product) as in Proposition~\ref{prop:oracle}.

\emph{Step 2: same disagreement law.} $\wt\beta=(J\otimes I_d)\beta$ has covariance
$J\Sigma J\otimes I_d/d$. Since $J\one=0$,
$J(\one u^\top+u\one^\top)J=(J\one)u^\top J+Ju\,(\one^\top J)=0$, so
$J\Sigma'J=J\Sigma J$. Hence $\wt\beta$ has the same law under both models, and by
Lemma~\ref{lem:residual} so does the residual array $(r_i^k)$ (the noise law is
unchanged). Any rule that is a measurable function of the residuals therefore has
the same output law under $\Sigma$ and $\Sigma'$.

\emph{Step 2$'$: the kernel is exactly this family.} Let $\Sigma$ be symmetric with
$J\Sigma J=0$. For $z\perp\one$, $Jz=z$, so $J\Sigma z=0$ and $\Sigma z=\alpha(z)\one$
for a linear functional $\alpha$ on $\one^\perp$; extend it by $\alpha(\one)=0$ and write
$\alpha(z)=a^\top z$ with $a\perp\one$. Then $\Sigma J=\one a^\top$, and with
$v:=\Sigma\one/n$,
$\Sigma=\Sigma J+\Sigma\tfrac{\one\one^\top}{n}=\one a^\top+v\one^\top$. Symmetry gives
$\one a^\top+v\one^\top=a\one^\top+\one v^\top$, i.e.\
$\one(a-v)^\top=(a-v)\one^\top$; a rank-one matrix $xy^\top$ equal to $yx^\top$ has
$y\propto x$, so $a-v=\gamma\one$ for a scalar $\gamma$. Hence
$\Sigma=\one a^\top+a\one^\top-\gamma\one\one^\top=\one u^\top+u\one^\top$ with
$u:=a-\tfrac\gamma2\one$. The map $u\mapsto\one u^\top+u\one^\top$ is injective:
applying the image to $\one$ gives $nu+(\one^\top u)\one$, which vanishes only if
$u=\gamma'\one$ for a scalar $\gamma'$, and then the image $2\gamma'\one\one^\top$
vanishes only if $\gamma'=0$. So the kernel has dimension $n$.

\emph{Step 3: positive definiteness.} $\Sigma=I_3\succ0$;
$\Sigma'=\begin{psmallmatrix}1.2&0.1&0.1\\0.1&1&0\\0.1&0&1\end{psmallmatrix}$
is symmetric and strictly diagonally dominant with positive diagonal, hence
$\succ0$.

\emph{Step 4: different optima.} For $\Sigma$ we have $\Sigma^{-1}\one=\one$, so
$w^\star(\Sigma)=\one/3$. For $\Sigma'$, solving $\Sigma'x=\one$ gives
$x_2=x_3=1-0.1x_1$ from the last two rows and, from the first,
\[
x_1\big(1+2t-2t^2\big)=1-2t\quad\text{with }t=0.1,
\qquad\text{so}\qquad
x_1=0.6780,\quad x_2=x_3=0.9322 .
\]
Normalising gives $w^\star(\Sigma')=(0.2667,0.3667,0.3667)$, interior to the simplex,
so the two optima differ. Finally the values differ strictly:
$w^\star(\Sigma)^\top\Sigma'w^\star(\Sigma)=\tfrac19\sum_{ij}\Sigma'_{ij}=\tfrac{3.6}{9}=0.4$,
whereas $(\one^\top\Sigma'^{-1}\one)^{-1}=1/2.5424=0.3933$. Hence a rule that is
optimal under $\Sigma$ (necessarily outputting $\one/3$, up to the shared output law)
is suboptimal under $\Sigma'$ by a factor $1.017$; in the other direction
$w^\star(\Sigma')^\top\Sigma w^\star(\Sigma')=0.3400$ against $1/3$, a factor $1.020$.
\end{proof}

\subsection{Proof of Lemma~\ref{lem:proxy}}
\label{app:pf-proxy}

\begin{proof}
$\norm{\bias(w)}^2=w^\top Qw=V(w)+w^\top(Q-D)w$ and, with $z=D^{1/2}w$,
$|w^\top(Q-D)w|=|z^\top D^{-1/2}(Q-D)D^{-1/2}z|\le\kappa\norm{z}^2=\kappa V(w)$.
\end{proof}

\subsection{Proof of Lemma~\ref{lem:realised}}
\label{app:pf-realised}

\begin{proof}
Expanding as in the previous proof but without expectations,
\[
\norm{\wt\beta_i}^2=q_i-\frac2n\Big(q_i+\sum_{j\ne i}\inner{\beta_i}{\beta_j}\Big)
+\frac1{n^2}\Big(\sum_jq_j+\sum_{j\ne l}\inner{\beta_j}{\beta_l}\Big)
=(Tq)_i+\eta_i,
\]
with $\eta_i=-\frac2n\sum_{j\ne i}\inner{\beta_i}{\beta_j}+\frac1{n^2}\sum_{j\ne l}\inner{\beta_j}{\beta_l}$.
Write $M:=D^{-1/2}(Q-D)D^{-1/2}$, $\norm{M}_{\mathrm{op}}\le\kappa$. Then
\[
\Big|\sum_{j\ne i}\inner{\beta_i}{\beta_j}\Big|=\big|e_i^\top(Q-D)\one\big|
=\big|(D^{1/2}e_i)^\top M(D^{1/2}\one)\big|
\le\kappa\sqrt{q_i}\,\sqrt{\textstyle\sum_jq_j}=\kappa\sqrt{q_i}\,\sqrt{n\bar q},
\]
and $\big|\sum_{j\ne l}\inner{\beta_j}{\beta_l}\big|=|\one^\top(Q-D)\one|\le\kappa n\bar q$.
Using $q_i\le n\bar q$,
\[
|\eta_i|\le\frac2n\kappa\sqrt{q_i}\,\sqrt{n\bar q}+\frac{\kappa n\bar q}{n^2}
\le\frac{2\kappa}{n}\cdot n\bar q+\frac{\kappa\bar q}{n}\le3\kappa\bar q .
\]
The final claim follows by applying $T^{-1}$ and
$\norm{T^{-1}\eta}_\infty\le C_n\norm{\eta}_\infty$.
\end{proof}

\subsection{Proof of Lemma~\ref{lem:kappa}}
\label{app:pf-kappa}

\begin{proof}
Write $\beta_i=\zeta_iu_i$ with $u_i\sim\mathcal N(0,I_d/d)$ independent, so that
$q_i/\zeta_i^2=\norm{u_i}^2$ and $d\norm{u_i}^2\sim\chi^2_d$.

\emph{Norms.} The Laurent--Massart inequality \citep{laurent2000} gives
$\PP(|\chi^2_d/d-1|>2\sqrt{t/d}+2t/d)\le2e^{-t}$. With $t=\log(4n/\delta)$ and a union
bound over $i$, the event $E_1=\{\max_i|\norm{u_i}^2-1|\le\epsilon_0\}$ has
probability at least $1-\delta/2$.

\emph{Gram matrix.} Let $Z\in\R^{n\times d}$ have the rows $\sqrt d\,u_i$, so $Z$ has
independent standard normal entries and $M_0:=\tfrac1dZZ^\top$ has entries
$\inner{u_i}{u_j}$. The rows of $Z^\top$ are independent $\mathcal N(0,I_n)$ vectors,
so \citet[Thm.~4.6.1]{vershynin2018} with $t=\sqrt{\log(4/\delta)}$ gives, with
probability at least $1-2e^{-t^2}=1-\delta/2$,
$\norm{M_0-I_n}_{\mathrm{op}}\le K^2\max(\epsilon_1',\epsilon_1'^2)$ for
$\epsilon_1':=\epsilon_1/K^2$. Since $K\ge1$, this is at most
$\max(\epsilon_1,\epsilon_1^2)=\epsilon_1$, using $\epsilon_1\le1$; call this event
$E_2$.

\emph{Normalisation.} Since $Q_{ij}=\zeta_i\zeta_j\inner{u_i}{u_j}$ and
$D_{ii}=\zeta_i^2\norm{u_i}^2$, we have $D^{-1/2}QD^{-1/2}=\Lambda M_0\Lambda$ with
$\Lambda:=\diag(\norm{u_i}^{-1})$, whose diagonal is one. Hence
\[
D^{-1/2}(Q-D)D^{-1/2}=\Lambda M_0\Lambda-I=\Lambda(M_0-I)\Lambda+(\Lambda^2-I).
\]
On $E_1$, $\norm\Lambda^2_{\mathrm{op}}=\max_i\norm{u_i}^{-2}\le1/(1-\epsilon_0)$ and
$\norm{\Lambda^2-I}_{\mathrm{op}}=\max_i|1-\norm{u_i}^2|/\norm{u_i}^2\le\epsilon_0/(1-\epsilon_0)$.
On $E_1\cap E_2$, which has probability at least $1-\delta$, the triangle inequality
gives $\kappa\le(\epsilon_1+\epsilon_0)/(1-\epsilon_0)$.

\emph{Dimension.} Let $C':=\max(64,4C^2)$ and $d\ge C'(n+\log(4n/\delta))$. Then
$\log(4n/\delta)/d\le\tfrac1{64}$, so $\epsilon_0\le\tfrac28+\tfrac2{64}<\tfrac12$;
and $n/d\le\tfrac1{4C^2}$, $\log(4/\delta)/d\le\tfrac1{4C^2}$, so
$\epsilon_1\le C(\tfrac1{2C}+\tfrac1{2C})=1$.
\end{proof}

\subsection{Proof of Lemma~\ref{lem:gauss-eta}}
\label{app:pf-gauss}

\begin{proof}
Write $\bar\zeta^2:=\frac1n\sum_j\zeta_j^2$ and recall
$\eta_i=-\frac2nX_i+\frac1{n^2}Y$ with $X_i:=\inner{\beta_i}{\sum_{j\ne i}\beta_j}$
and $Y:=\sum_{j\ne l}\inner{\beta_j}{\beta_l}$ from the proof of
Lemma~\ref{lem:realised}. We use three events, each of probability at least
$1-\delta/3$.

\emph{Event $E_0$.} Lemma~\ref{lem:kappa} with $\delta/3$ in place of $\delta$ gives
$\max_i|q_i/\zeta_i^2-1|\le\tfrac12$, hence $\tfrac12\bar\zeta^2\le\bar q\le\tfrac32\bar\zeta^2$.

\emph{Event $E_1$: the linear terms.} Conditionally on $\beta_i$, $\sum_{j\ne i}\beta_j$
is $\mathcal N(0,\tfrac1d\sum_{j\ne i}\zeta_j^2I_d)$, so $X_i$ is centred Gaussian with
variance $q_i\sum_{j\ne i}\zeta_j^2/d\le q_in\bar\zeta^2/d$. The Gaussian tail and a
union bound over $i$ give, with probability at least $1-\delta/3$ and for all $i$,
$|X_i|\le\sqrt{q_in\bar\zeta^2/d}\,\sqrt{2\log(6n/\delta)}$. On $E_0$,
$q_i\le\tfrac32\zeta_i^2\le\tfrac32n\bar\zeta^2$, so
$\tfrac2n|X_i|\le2\sqrt{3/2}\,\bar\zeta^2\sqrt{2\log(6n/\delta)/d}$.

\emph{Event $E_2$: the quadratic term.} Stack $g:=(\sqrt d\beta_j/\zeta_j)_{j}$, a
standard Gaussian vector in $\R^{nd}$. Then $Y=g^\top Ag$ where $A$ has the $d\times d$
blocks $A_{jl}=\tfrac{\zeta_j\zeta_l}{d}I_d$ for $j\ne l$ and $A_{jj}=0$. Its norms are
$\norm A_F^2=\tfrac1d\sum_{j\ne l}\zeta_j^2\zeta_l^2\le(n\bar\zeta^2)^2/d$ and
$\norm A_{\mathrm{op}}\le\tfrac1d\norm{\zeta\zeta^\top-\diag(\zeta^2)}_{\mathrm{op}}\le\tfrac1d\norm\zeta^2=n\bar\zeta^2/d$,
since $-\diag(\zeta^2)\preceq\zeta\zeta^\top-\diag(\zeta^2)\preceq\zeta\zeta^\top$.
Since $\E Y=\operatorname{tr}A=0$, the Hanson--Wright inequality
\citep[Thm.~6.2.1]{vershynin2018} gives, with probability at least $1-\delta/3$,
$|Y|\le C_{\mathrm{HW}}n\bar\zeta^2\big(\sqrt{\log(6/\delta)/d}+\log(6/\delta)/d\big)$,
which is at most $2C_{\mathrm{HW}}n\bar\zeta^2\sqrt{\log(6/\delta)/d}$ because
$\log(6/\delta)\le d$ under the dimension condition.

\emph{Conclusion.} On $E_0\cap E_1\cap E_2$, which has probability at least
$1-\delta$, $|\eta_i|\le\tfrac2n|X_i|+\tfrac1{n^2}|Y|\le C''\bar\zeta^2\sqrt{\log(6n/\delta)/d}$
for an absolute $C''$, and $\bar\zeta^2\le2\bar q$ on $E_0$.
\end{proof}

\subsection{Proof of Lemma~\ref{thm:est}}
\label{app:pf-est}

\paragraph{Notation for the proof.} Let $\mathcal F_k$ be the history through round
$k$, including the tilts and the private randomness, so that $\wt\beta$ is
$\mathcal F_0$-measurable. By Lemma~\ref{lem:residual}, which uses only
Assumption~\ref{as:oracle} and the decomposition $f_i=f_0+h_i$, the $\wt\eps^k_i$ form a
martingale difference sequence with
$\E[\norm{\wt\eps^k_i}^2\mid\mathcal F_{k-1}]\le\tau_i^2/b$. \emph{Adaptive query
points are allowed throughout.} The halves $\mathcal K_1,\mathcal K_2$ are the odd and
the even probing rounds, each of size $K_0/2$. Write
\[
\nu^{(a)}_i:=\frac2{K_0}\sum_{k\in\mathcal K_a}\wt\eps^k_i,\qquad
u^{(a)}_i:=\frac2{K_0}\sum_{k\in\mathcal K_a}\big(\wt{\nabla h_i}(x^{k-1})-\wt\beta_i\big),
\qquad m_i^{(a)}:=u_i^{(a)}+\nu_i^{(a)},
\]
so that $\wh\beta^{(a)}_i=\wt\beta_i+m_i^{(a)}$ and
\begin{equation}
\label{eq:err-decomp}
\wh s_i-\norm{\wt\beta_i}^2=\inner{\wt\beta_i}{m_i^{(1)}+m_i^{(2)}}+\inner{m_i^{(1)}}{m_i^{(2)}}.
\end{equation}
Each summand of $u^{(a)}_i$ has norm at most $\delta_i$ by hypothesis, so
$\norm{u_i^{(a)}}\le\delta_i$ on every trajectory; the proof of (ii) uses only the
weaker $(\E\norm{u^{(a)}_i}^2)^{1/2}\le\delta_i$. In the linear case $u^{(a)}_i=0$.

We use one fact about martingale sums twice.

\begin{lemma}[Cross products of martingale sums over disjoint rounds]
\label{lem:crossmart}
Let $(X_k)$ and $(Y_k)$ be $\R^d$-valued martingale difference sequences with respect
to $(\mathcal F_k)$, with $\E[\norm{X_k}^2\mid\mathcal F_{k-1}]\le v_X$ and
$\E[\norm{Y_k}^2\mid\mathcal F_{k-1}]\le v_Y$, and let $\mathcal K_1,\mathcal K_2$ be
disjoint sets of rounds. Put $A:=\sum_{k\in\mathcal K_1}X_k$ and
$B:=\sum_{l\in\mathcal K_2}Y_l$. Then $\E\norm A^2\le|\mathcal K_1|v_X$,
$\E\inner AB=0$ and $\E\inner AB^2\le|\mathcal K_1||\mathcal K_2|v_Xv_Y$.
\end{lemma}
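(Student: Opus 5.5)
The plan is to expand everything in terms of the martingale increments and use the tower property, conditioning on $\mathcal F_{k-1}$ at the latest index in each product. I treat the three claims in order.

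\emph{First claim.} Write $\norm A^2=\sum_{k\in\mathcal K_1}\norm{X_k}^2+2\sum_{k<k'}\inner{X_k}{X_{k'}}$. For $k<k'$ condition on $\mathcal F_{k'-1}$: $X_k$ is measurable and $\E[X_{k'}\mid\mathcal F_{k'-1}]=0$, so the cross term has zero mean. The diagonal terms give at most $|\mathcal K_1|v_X$ after taking expectations of the conditional bounds. Integrability of the cross terms follows from Cauchy--Schwarz, since each increment is square-integrable.

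\emph{Second claim.} Write $\inner AB=\sum_{k\in\mathcal K_1,\,l\in\mathcal K_2}\inner{X_k}{Y_l}$. Because the sets are disjoint, $k\ne l$ in every term. If $k<l$, condition on $\mathcal F_{l-1}$ to kill $Y_l$; if $l<k$, condition on $\mathcal F_{k-1}$ to kill $X_k$. So each term has mean zero.

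\emph{Third claim.} This is the main step. Expand
\[
\inner AB^2=\sum_{k,k'\in\mathcal K_1}\ \sum_{l,l'\in\mathcal K_2}\inner{X_k}{Y_l}\inner{X_{k'}}{Y_{l'}} .
\]
In each term the four indices satisfy $k,k'\ne l,l'$. Let $m$ be the largest index among them. If $m$ occurs exactly once (for instance $m=k>k'$), conditioning on $\mathcal F_{m-1}$ leaves an expression linear in that increment, with all other factors measurable, so the term vanishes. The surviving terms therefore have $k=k'$ maximal or $l=l'$ maximal.

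\emph{Case $k=k'>l,l'$.} Condition on $\mathcal F_{k-1}$. The term becomes $\E[\inner{X_k}{B_{<k}}^2\mid\mathcal F_{k-1}]$, where $B_{<k}$ is the partial sum of $B$ over $l<k$. This is at most $\norm{B_{<k}}^2\,\E[\norm{X_k}^2\mid\mathcal F_{k-1}]\le v_X\norm{B_{<k}}^2$.

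A cleaner route is to organise the computation as a Doob-type decomposition. Order all rounds in $\mathcal K_1\cup\mathcal K_2$ and write $\inner AB=\sum_m D_m$, where $D_m=\inner{X_m}{B_{<m}}$ for $m\in\mathcal K_1$ and $D_m=\inner{A_{<m}}{Y_m}$ for $m\in\mathcal K_2$. Each $D_m$ is a martingale difference, because the partial sums are $\mathcal F_{m-1}$-measurable. Orthogonality of the increments then gives $\E\inner AB^2=\sum_m\E D_m^2$.

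Applying the conditional Cauchy--Schwarz bound to each increment:
\[
\E D_m^2\le v_X\,\E\norm{B_{<m}}^2\le v_X v_Y\,\#\{l\in\mathcal K_2:l<m\}\quad\text{for }m\in\mathcal K_1,
\]
using the first claim for $\E\norm{B_{<m}}^2$. Symmetrically, for $m\in\mathcal K_2$ we get $\E D_m^2\le v_Xv_Y\,\#\{k\in\mathcal K_1:k<m\}$.

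Summing over $m$, each pair $(k,l)\in\mathcal K_1\times\mathcal K_2$ is counted exactly once, at whichever of $k,l$ is later. The total is therefore at most $|\mathcal K_1||\mathcal K_2|v_Xv_Y$, which is the claim.

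The only delicate point is integrability of the fourth-order products; working with $\E D_m^2$ via conditional second moments avoids assuming fourth moments. I would state the bounds with the convention that they hold whenever the right side is finite.
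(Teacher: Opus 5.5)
Your proof is correct and follows the paper's argument: the first claim by orthogonality of martingale increments, and the other two by writing $\inner AB=\sum_m D_m$ with each pair $(k,l)$ charged to its later round, then using orthogonality of the $D_m$ together with conditional Cauchy--Schwarz and the first claim. The four-index expansion you start with is unnecessary, and it is where the fourth-moment integrability concern you flag would arise. You can drop it and keep only the decomposition you end with, which needs nothing beyond the stated conditional second moments.
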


\begin{proof}
The first claim is orthogonality of martingale increments. For the other two, order
the rounds of $\mathcal K_1\cup\mathcal K_2$ increasingly and put
$A_{t}:=\sum_{k\in\mathcal K_1,k<t}X_k$, $B_t:=\sum_{l\in\mathcal K_2,l<t}Y_l$, both
$\mathcal F_{t-1}$-measurable. Every pair $(k,l)\in\mathcal K_1\times\mathcal K_2$ has
$k\ne l$ and is counted once, at its later round, so
\[
\inner AB=\sum_{t}D_t,\qquad D_t:=\begin{cases}\inner{A_t}{Y_t},&t\in\mathcal K_2,\\
\inner{X_t}{B_t},&t\in\mathcal K_1.\end{cases}
\]
Each $D_t$ has conditional mean zero given $\mathcal F_{t-1}$, so $\E\inner AB=0$ and,
by orthogonality of the $D_t$,
$\E\inner AB^2=\sum_t\E D_t^2\le\sum_{t\in\mathcal K_2}v_Y\E\norm{A_t}^2+\sum_{t\in\mathcal K_1}v_X\E\norm{B_t}^2$.
With $\E\norm{A_t}^2\le v_X|\{k\in\mathcal K_1:k<t\}|$ and the analogue for $B_t$, the
right side is $v_Xv_Y$ times the number of pairs $(k,l)$, i.e.\
$|\mathcal K_1||\mathcal K_2|v_Xv_Y$.
\end{proof}

With $X_k=\tfrac2{K_0}\wt\eps^k_i$ this gives
$\E\norm{\nu^{(a)}_i}^2\le\frac{K_0}2\cdot\frac{4}{K_0^2}\cdot\frac{\tau_i^2}b=\vartheta_i^2$,
with $\vartheta_i^2=2\tau_i^2/(bK_0)$.

The three parts are proved in order, each from the
decomposition~\eqref{eq:err-decomp}. Part~(i) uses the disjointness of the halves to
kill both terms in expectation and then Lemma~\ref{lem:crossmart} for the variance.
Part~(ii) drops every independence claim between trajectory and noise and bounds the
terms absolutely. Part~(iii) applies $T^{-1}$ and adds the realised inversion error of
Lemma~\ref{lem:realised}.

\begin{proof}
\emph{(i) Linear crowd.} Here $m_i^{(a)}=\nu^{(a)}_i$. Each summand of
$\inner{\wt\beta_i}{\nu^{(1)}_i+\nu^{(2)}_i}$ is
$\tfrac2{K_0}\inner{\wt\beta_i}{\wt\eps^k_i}$, whose conditional mean given
$\mathcal F_{k-1}$ is zero because $\wt\beta_i$ is $\mathcal F_0$-measurable; and
$\E[\inner{\nu^{(1)}_i}{\nu^{(2)}_i}\mid\mathcal F_0]=0$ by
Lemma~\ref{lem:crossmart}, applied conditionally on $\mathcal F_0$. Hence
$\E[\wh s_i\mid\mathcal F_0]=\norm{\wt\beta_i}^2$ exactly.

For the variance, $\nu^{(1)}_i+\nu^{(2)}_i$ is a martingale sum over all $K_0$ rounds,
so $\E[\norm{\nu^{(1)}_i+\nu^{(2)}_i}^2\mid\mathcal F_0]\le4\tau_i^2/(bK_0)$ and, by
Cauchy--Schwarz with $\wt\zeta_i=\norm{\wt\beta_i}$ fixed given $\mathcal F_0$,
$\E[\inner{\wt\beta_i}{\nu^{(1)}_i+\nu^{(2)}_i}^2\mid\mathcal F_0]\le4\wt\zeta_i^2\tau_i^2/(bK_0)$.
Lemma~\ref{lem:crossmart} with $v_X=v_Y=\tfrac4{K_0^2}\cdot\tfrac{\tau_i^2}b$ and
$|\mathcal K_1|=|\mathcal K_2|=K_0/2$ gives
$\E[\inner{\nu^{(1)}_i}{\nu^{(2)}_i}^2\mid\mathcal F_0]\le\tfrac{K_0^2}4\cdot\tfrac{16\tau_i^4}{b^2K_0^4}=\tfrac{4\tau_i^4}{(bK_0)^2}$.
Applying $(u+v)^2\le2u^2+2v^2$ to~\eqref{eq:err-decomp} yields the stated bound
$8\wt\zeta_i^2\tau_i^2/(bK_0)+8\tau_i^4/(bK_0)^2$. No dimension appears because
Assumption~\ref{as:oracle} bounds $\E\norm{\eps}^2$ directly.

\emph{(ii) General.} The $u^{(a)}_i$ depend on the trajectory, which depends on the
past noise, so no independence between $u^{(a)}_i$ and $\nu^{(a)}_i$ is available and
we bound absolutely. By Cauchy--Schwarz applied to~\eqref{eq:err-decomp},
\[
\big|\wh s_i-\norm{\wt\beta_i}^2\big|
\le\wt\zeta_i\big(\norm{m_i^{(1)}}+\norm{m_i^{(2)}}\big)+\norm{m_i^{(1)}}\norm{m_i^{(2)}}.
\]
Minkowski's inequality in $L^2$ gives
$\norm{m^{(a)}_i}_{L^2}\le\norm{u^{(a)}_i}_{L^2}+\norm{\nu^{(a)}_i}_{L^2}\le\delta_i+\vartheta_i$,
using only $L^2$ bounds on each piece. Then $\E\norm{m^{(a)}_i}\le\delta_i+\vartheta_i$
by Jensen and
$\E[\norm{m^{(1)}_i}\norm{m^{(2)}_i}]\le\norm{m^{(1)}_i}_{L^2}\norm{m^{(2)}_i}_{L^2}\le(\delta_i+\vartheta_i)^2$
by Cauchy--Schwarz, so
$\E|\wh s_i-\norm{\wt\beta_i}^2|\le2\wt\zeta_i(\delta_i+\vartheta_i)+(\delta_i+\vartheta_i)^2$.
The same argument applies conditionally on $\mathcal F_0$, and on any $\mathcal F_{k_0}$
when the probing rounds come after $k_0$.

\emph{(iii) Weights.} Lemma~\ref{lem:realised} gives $\norm{\wt\beta}^2=Tq+\eta$ with
$\norm{\eta}_\infty\le3\kappa\bar q$, so that
\[
T^{-1}\wh s-q=T^{-1}\Big(\big(\wh s-\norm{\wt\beta}^2\big)+\eta\Big),
\]
and $\norm{T^{-1}}_{\infty\to\infty}=C_n$ by Theorem~\ref{thm:ident}. Clipping at zero
does not increase the error: the positive part is the projection onto $[0,\infty)$, a
$1$-Lipschitz map fixing $q_i\ge0$.

\emph{High probability.} In case (i), Chebyshev's inequality with threshold
$\sqrt{n\,\Var_i/\delta}$, $\Var_i$ the variance bound of (i), and a union bound over
$i$ give $\max_i|\wh s_i-\norm{\wt\beta_i}^2|\le\max_i\sqrt{n\Var_i/\delta}$ with
probability at least $1-\delta$. In case (ii), Markov's inequality with threshold
$n\E|\cdot|/\delta$ and a union bound give the analogous statement. A sharper form
of case (ii), used in Theorem~\ref{thm:nonlin}, applies Chebyshev to the noise alone:
$\PP(\norm{\nu^{(a)}_i}>\vartheta_i\sqrt{2n/\delta})\le\delta/(2n)$, so with
probability at least $1-\delta$, simultaneously for all $i$ and $a$,
$\norm{m^{(a)}_i}\le\delta_i+\vartheta_i\sqrt{2n/\delta}$ when
$\norm{u^{(a)}_i}\le\delta_i$ surely, and then
$|\wh s_i-\norm{\wt\beta_i}^2|\le2\wt\zeta_i(\delta_i+\vartheta_i\sqrt{2n/\delta})+(\delta_i+\vartheta_i\sqrt{2n/\delta})^2$.
\end{proof}
\subsection{Proof of Lemma~\ref{thm:weightsub}}
\label{app:pf-weightsub}

The argument uses only that the weights lie in the simplex, so it holds verbatim with
$\simplex$ replaced by any closed convex subset $\mathcal W\subseteq\simplex$, with
$\wh w$ and $w^\star$ the minimisers over $\mathcal W$, for instance a simplex with
capped coordinates.

\begin{proof}
For $w\in\simplex$, $\sum_iw_i^2\le(\sum_iw_i)^2=1$, so
$|\sum_iw_i^2c_i-\sum_iw_i^2\wh c_i|\le\sum_iw_i^2|c_i-\wh c_i|\le\Delta$ for any such
$w$. Let $w^\star$ minimise $\sum_iw_i^2c_i$ and $\wh w$ minimise $\sum_iw_i^2\wh c_i$
over the same feasible set; over $\simplex$ both are given in closed form by
Proposition~\ref{prop:oracle}, and $\wh w_i\propto\wh c_i^{-1}$. Then
\[
\sum_i\wh w_i^2c_i\le\sum_i\wh w_i^2\wh c_i+\Delta\le\sum_i(w^\star_i)^2\wh c_i+\Delta
\le\sum_i(w^\star_i)^2c_i+2\Delta ,
\]
the middle step by optimality of $\wh w$ for $\wh c$. For the specialisation, take
$c_i=q_i+\bar\sigma_i^2/(bT)$: then $V(\wh w)\le\sum_i\wh w_i^2c_i$ because
$c_i\ge q_i$, and $\min_{w\in\simplex}\sum_iw_i^2c_i=\wh\Psi(bT)$ by
Proposition~\ref{prop:oracle}. Finally consider the floored costs
$\wh c_i=\max\{(\wh q_i)_++\bar\sigma_i^2/(bT),\varrho\}$ of
Algorithm~\ref{alg:weights}, which are positive by construction -- the floor is what
guarantees positivity, since $(\wh q_i)_+=0$ is attainable and $\bar\sigma_i=0$ is
admissible. Write $\wh c_i^{\,0}:=(\wh q_i)_++\bar\sigma_i^2/(bT)$; then
$|\wh c_i^{\,0}-c_i|=|(\wh q_i)_+-q_i|\le|\wh q_i-q_i|\le\Delta$ because the positive
part is a $1$-Lipschitz projection onto a set containing $q_i$. If the floor is
inactive, $\wh c_i=\wh c_i^{\,0}$; if it is active, $\wh c_i=\varrho>\wh c_i^{\,0}$ and
$|\wh c_i-c_i|\le\max\{\varrho-c_i,\,c_i-\wh c_i^{\,0}\}\le\max\{\varrho,\Delta\}$, using
$c_i\ge0$. In both cases $\norm{\wh c-c}_\infty\le\Delta+\varrho$, and the first display
applies with $\Delta+\varrho$ in place of $\Delta$.
\end{proof}

\subsection{Proof of Lemma~\ref{lem:sgd}}
\label{app:pf-sgd}

\begin{proof}
\emph{One step.} The minimiser is a fixed point of the projected gradient map,
$x^\star=\proj(x^\star-\eta\nabla F(x^\star))$, by the first-order optimality
condition. Nonexpansiveness of $\proj$ then gives
$\norm{x_t-x^\star}\le\norm{x_{t-1}-x^\star-\eta(g_t-\nabla F(x^\star))}$. Squaring,
taking conditional expectations, and writing $e:=x_{t-1}-x^\star$ and
$\Delta_F:=\nabla F(x_{t-1})-\nabla F(x^\star)$,
\[
\E[\norm{x_t-x^\star}^2\mid\mathcal F_{t-1}]\le\norm e^2-2\eta\inner{\Delta_F}{e}+\eta^2\big(\norm{\Delta_F}^2+V\big).
\]
For a $\mu_F$-strongly convex $L_F$-smooth function,
$\inner{\Delta_F}{e}\ge\mu_F\norm e^2$ and $\inner{\Delta_F}{e}\ge\norm{\Delta_F}^2/L_F$
\citep[Sec.~2.1]{nesterov2018}; averaging the two gives
$\inner{\Delta_F}{e}\ge\tfrac{\mu_F}2\norm e^2+\tfrac1{2L_F}\norm{\Delta_F}^2$. Hence the
right side is at most
$(1-\mu_F\eta)\norm e^2-\eta(\tfrac1{L_F}-\eta)\norm{\Delta_F}^2+\eta^2V\le(1-\mu_F\eta)\norm e^2+\eta^2V$
for $\eta\le1/L_F$. Taking expectations given $\mathcal F_0$ gives the recursion.

\emph{Unrolling.} $a_T\le(1-\mu_F\eta)^Ta_0+\eta^2V\sum_{t<T}(1-\mu_F\eta)^t\le e^{-\mu_F\eta T}a_0+\eta V/\mu_F$.

\emph{Tuned step.} If $\eta=\ell/(\mu_FT)$ then
$e^{-\mu_F\eta T}a_0=a_0e^{-\ell}\le a_0V/(\mu_F^2R^2T)$ and
$\eta V/\mu_F=\ell V/(\mu_F^2T)$. Otherwise $\eta=1/(2L_F)<\ell/(\mu_FT)$, so
$e^{-\mu_F\eta T}a_0=a_0e^{-\mu_FT/(2L_F)}$ and $\eta V/\mu_F<\ell V/(\mu_F^2T)$. In
both cases the stated bound holds.
\end{proof}
\subsection{Proof of Theorem~\ref{thm:endtoend}}
\label{app:pf-endtoend}

The proof runs in three steps. Step~1 conditions on the probing phase, which freezes
the weights and makes Phase~II an ordinary stochastic-approximation run on a fixed
objective. Step~2 applies the \textsc{SGD} lemma to that objective. Step~3 converts
accuracy on $F_{\wh w}$ into accuracy on the truth by the sharp form of the transfer
lemma, and bounds the bias term by the diagonal proxy. Throughout, $\E_{K_0}$ denotes
expectation conditional on $\mathcal F_{K_0}$.

\begin{proof}
\emph{Step 1: freezing the weights.} $\wh w$, $\Delta$, $x^{K_0}$ and $R_0$ are
$\mathcal F_{K_0}$-measurable. During Phase~II the aggregate
$\sum_i\wh w_i\wh g^k_i$ has conditional mean $\nabla F_{\wh w}(x^{k-1})$ and, by
conditional independence across clients and $\sigma_i\le\bar\sigma_i$, conditional
variance at most $\sum_i\wh w_i^2\sigma_i^2/b\le V_\sigma$. Since the tilts are linear,
$\lambda_{\wh w}=0$ and $F_{\wh w}$ is $\mu$-strongly convex and $L$-smooth.

\emph{Step 2: optimisation.} Lemma~\ref{lem:sgd}, applied conditionally on
$\mathcal F_{K_0}$ with $a_0=\norm{x^{K_0}-x^\star_{\wh w}}^2\le R_0^2$, gives
$\E_{K_0}\norm{x^{K_0+T}-x^\star_{\wh w}}^2\le R_0^2e^{-\mu T/(2L)}+\ell_TV_\sigma/(\mu^2T)$,
and $L$-smoothness gives $F_{\wh w}(x)-\min F_{\wh w}\le\tfrac L2\norm{x-x_{\wh w}^\star}^2$.

\emph{Step 3: transfer.} The sharp form of Lemma~\ref{lem:transfer}(ii), a
deterministic inequality, holds pointwise at $x=x^{K_0+T}$; taking $\E_{K_0}$ gives,
for every $\epsilon>0$,
\[
\E_{K_0}[f_0(x^{K_0+T})-f_0^\star]\le\frac{1+\epsilon}{2\mu}\norm{\bias(\wh w)}^2
+\Big(1+\tfrac1\epsilon\Big)\frac{L}{\mu}\cdot\frac L2\,\E_{K_0}\norm{x^{K_0+T}-x^\star_{\wh w}}^2 .
\]
Lemma~\ref{lem:proxy} and Lemma~\ref{thm:weightsub} bound the first term through
$\norm{\bias(\wh w)}^2\le(1+\kappa)V(\wh w)\le(1+\kappa)(\wh\Psi(bT)+2\Delta)$, and
Step~2 bounds the second. This is the statement of Theorem~\ref{thm:endtoend}.
\end{proof}

\subsection{Proof of Corollary~\ref{cor:optimal}}
\label{app:pf-optimal}

\begin{proof}[Proof of Corollary~\ref{cor:optimal}]
\emph{Upper bound.} Work conditionally on the probing phase, on the event named in the
hypotheses, and apply Theorem~\ref{thm:endtoend} with $\epsilon=\epsilon_T:=T^{-1/4}$.
The bias term is $\tfrac{(1+\epsilon_T)(1+\kappa)}{2\mu}(\wh\Psi(bT)+2\Delta)$, at most
$\tfrac{(1+\epsilon_T)(1+\epsilon_1)(1+\epsilon_2)}{2\mu}\wh\Psi(bT)$ under the two
hypotheses. The other term is
$(1+T^{1/4})\tfrac{L^2}{2\mu}\big(R_0^2e^{-\mu T/(2L)}+\ell_TV_\sigma/(\mu^2T)\big)$.
At a fixed centre, $R_0$, $V_\sigma$ and $K_0$ do not depend on $T$ and
$\ell_T=O(\log T)$, so this term is $O(T^{-3/4}\log T)$; it vanishes, more slowly
than the horizon term $O(T^{-1}\log T)$ of Theorem~\ref{thm:endtoend}. Any schedule
with $\epsilon_T\to0$ and $\epsilon_T^{-1}=o(T/\log T)$ works the same way; $\epsilon$
appears only in the analysis, so the algorithm does not depend on it.

\emph{Lower bound.} \CROWD{} draws $b(K_0+T)$ samples per client, and a batch of $b$
samples returns a gradient whose noise has second moment $\sigma_i^2/b$, which a
query-based algorithm can reproduce by averaging $b$ queries. Theorem~\ref{thm:lower}
with $K=b(K_0+T)$ therefore bounds the minimax risk of the problem \CROWD{} faces by
$\Psi(b(K_0+T))/(2\mu)$ from below.

\emph{The ratio.} As $T\to\infty$, $\wh\Psi(bT)\to\wh\Psi_0$ and
$\Psi(b(K_0+T))\to\Psi_0$, since $(\sum_i(a_i+t_i)^{-1})^{-1}$ is continuous in
$t\ge0$. Hence the ratio of the upper bound to the lower bound tends to
$(1+\epsilon_1)(1+\epsilon_2)\wh\Psi_0/\Psi_0$. For the last factor, on the event of
Lemma~\ref{lem:kappa}, $(1-\epsilon_0)\zeta_i^2\le q_i\le(1+\epsilon_0)\zeta_i^2$ for
every $i$, and $H(c)=(\sum_ic_i^{-1})^{-1}$ is non-decreasing in each coordinate and
positively homogeneous, so $(1-\epsilon_0)\Psi_0\le\wh\Psi_0\le(1+\epsilon_0)\Psi_0$.
\end{proof}

The comparison is deliberately asymmetric, in two ways, and the corollary's statement
in the main text names both. First, it is pointwise in the centre: the upper bound
holds for every fixed $x_c$ as $T\to\infty$, while the lower bound is a supremum over
$x_c$. A fixed number of steps from a fixed starting point cannot be uniformly
accurate over $x_c\in\R^d$. Since $R_0$ is the only quantity through which the centre
enters, the upper bound is uniform over centres in any fixed ball for $T$ large. This
is the form in which an iterative method can approach a minimax floor over an
unbounded class. Second, it is conditional on the realisation: the upper bound is
against the realised benchmark $\wh\Psi$ on the event named in the hypotheses, while
the lower bound averages over the crowd prior. Concavity of $H$
(Lemma~\ref{lem:harmonic}) gives $\E\wh\Psi_0\le\Psi_0$ by Jensen, and
Lemma~\ref{lem:kappa} bounds the probability of the event. The corollary itself
therefore claims per-instance, asymptotic sharpness. Section~\ref{app:closure}
integrates it into an unconditional statement, through Theorem~\ref{thm:closure} and
Corollary~\ref{cor:closure}. The ingredients are a projection that makes the transient
uniform, tail control off the incoherence event, and the finite-$\rho$ Bayes floor
established in Step~4 of the proof of Theorem~\ref{thm:lower}.

\subsection{Proof of Theorem~\ref{thm:anytime}}
\label{app:pf-anytime}

The proof runs in four steps. Step~1 records the measurability fact that makes each
segment an ordinary run with a fixed objective. Step~2 locates the last refresh.
Step~3 proves the display by applying the \textsc{SGD} lemma to the final segment.
Step~4 proves the bound of Remark~\ref{rem:earlyrefresh} on $\wt R$ by a recursion
over the segments. Everything is conditional on the tilts.

\begin{proof}
\emph{Step 1: segmentation.} Number the distinct refreshes $t_1<\dots<t_J$; the
hypothesis $T\ge\gamma^{j_0+1}$ gives $J\ge1$, so the last refresh and $\Delta_J$ are
defined. Repeated grid values $\lceil\gamma^j\rceil$, possible for $\gamma$ close to one
and absent for $\gamma\ge2$, are counted once, so no segment has length zero; the
bound on the number $N_\star$ of short segments in Step~4 counts distinct refreshes and
is unaffected. Put $t_0:=0$,
$t_{J+1}:=T$; segment $j$ is the rounds $t_j<k\le t_{j+1}$, run with weights
$w^{(j)}$ ($w^{(0)}$ uniform). The weights $w^{(j)}$ are
$\mathcal F_{t_j}$-measurable, being computed from rounds before the segment starts.
Conditionally on $\mathcal F_{t_j}$ the objective $F_{w^{(j)}}$ is therefore fixed, the
aggregate is conditionally unbiased for its gradient with conditional variance at most
$\sum_i(w^{(j)}_i)^2\sigma_i^2/b\le\bar V$, and Lemma~\ref{lem:sgd} applies to the
segment.

\emph{Step 2: the last refresh.} The grid points are $\lceil\gamma^j\rceil\le T/\gamma$,
so $t_J\le T/\gamma$ and $T_J=T-t_J\ge(1-\gamma^{-1})T$. The next grid point exceeds
$T/\gamma$, so $\gamma^{j+1}+1>T/\gamma$ for the last index $j$, whence
$t_J\ge\gamma^j>T/\gamma^2-1$.

\emph{Step 3: the final segment.} The refresh at $t_J$ splits the $t_J$ rounds seen
so far by parity; when $t_J$ is odd the two halves differ in size by one, and
Lemma~\ref{thm:est-full} holds with $|\mathcal K_a|$ in place of $K_0/2$, which changes
$\vartheta_i$ by a factor at most $\sqrt{t_J/(t_J-1)}$ and leaves $\E\Delta_J$ of the
same order. Conditionally on $\mathcal F_{t_J}$, apply
Lemma~\ref{lem:sgd} to segment $J$ with $\mu_F=\mu$, $L_F=L$, $V=\bar V$,
$R=\bar R$, horizon $T_J$ and $a_0=\norm{x^{t_J}-x^\star_{w^{(J)}}}^2$. Its logarithm
is $\ln(e+\mu^2\bar R^2T_J/\bar V)\le\bar\ell$, so
\[
\E\big[\norm{x^T-x^\star_{w^{(J)}}}^2\,\big|\,\mathcal F_{t_J}\big]
\le a_0\Big(e^{-\mu T_J/(2L)}+\frac{\bar V}{\mu^2\bar R^2T_J}\Big)+\frac{\bar\ell\bar V}{\mu^2T_J}.
\]
The sharp form of Lemma~\ref{lem:transfer}(ii) holds pointwise, and
Lemmas~\ref{lem:proxy} and~\ref{thm:weightsub}, with $K=T_J$ at the last refresh, give
$\norm{\bias(w^{(J)})}^2\le(1+\kappa)(\wh\Psi(bT_J)+2\Delta_J)$. Combining as in
Appendix~\ref{app:pf-endtoend} and taking expectations over $\mathcal F_{t_J}$ gives
the display, with $\E a_0=\wt R^2$ and $\E\Delta_J$ in place of $a_0$ and $\Delta_J$.

\emph{Step 4: the bound on $\wt R$.} Put $\rho_j:=(\E\norm{x^{t_j}-x^\star_{w^{(j)}}}^2)^{1/2}$
and $s_j:=t_{j+1}-t_j$, so that $\wt R=\rho_J$. By Remark~\ref{rem:earlyrefresh} every
target lies within $G$ of $x_0^\star$, so $\rho_0\le\norm{x^0-x_0^\star}+G$, and by
Minkowski's inequality in $L^2$,
$\rho_{j+1}\le(\E\norm{x^{t_{j+1}}-x^\star_{w^{(j)}}}^2)^{1/2}+2G$. Two bounds on the
first term are available for segment $j$.
\begin{itemize}[itemsep=1pt,topsep=2pt,leftmargin=5mm]
\item \emph{Any segment.} The recursion $a_t\le(1-\mu\eta)a_{t-1}+\eta^2\bar V$ of
Lemma~\ref{lem:sgd} keeps $a_t\le\max(a_0,\eta\bar V/\mu)\le a_0+\bar V/(2L\mu)$, since
$\eta\le1/(2L)$; so $\rho_{j+1}\le\rho_j+\sqrt{\bar V/(2L\mu)}+2G\le\rho_j+c_\star$.
\item \emph{Long segments.} If $s_j\ge T_\star$ then
$e^{-\mu s_j/(2L)}\le\tfrac1{64}$ and $\bar V/(\mu^2\bar R^2s_j)\le\tfrac18$, so the
tuned bound of Lemma~\ref{lem:sgd} gives
$\E\norm{x^{t_{j+1}}-x^\star_{w^{(j)}}}^2\le\tfrac14\rho_j^2+\ell(s_j)\bar V/(\mu^2s_j)$
with $\ell(s):=1+\ln(e+\mu^2\bar R^2s/\bar V)$, hence $\rho_{j+1}\le\rho_j/2+c_\star$.
\end{itemize}
For $j\ge1$ the segment runs between consecutive grid points, so
$s_j\ge\gamma^{j'}(\gamma-1)-1$ with $j'$ its grid index, which is at least $T_\star$
once $\gamma^{j'}\ge2T_\star/(\gamma-1)$, using $T_\star\ge1$. So all but the first
$N_\star$ segments, segment $0$ included, are long. Over the short ones
$\rho\le\rho_0+N_\star c_\star$. Over the long ones, if $\rho_j\le\Lambda$ with
$\Lambda\ge2c_\star$ then $\rho_{j+1}\le\Lambda/2+c_\star\le\Lambda$. With
$\Lambda:=\rho_0+(N_\star+2)c_\star$ both regimes stay below $\Lambda$, which is the
bound of Remark~\ref{rem:earlyrefresh}.
\end{proof}

\subsection{Proof of Proposition~\ref{prop:orth}}
\label{app:pf-orth}

\begin{proof}
\emph{(i)} With $w^\star_i=\Psi_0/\zeta_i^2$ (so $\sum_iw^\star_i=1$),
independence and $\E\beta_i=0$ give, for each $j$,
\[
\E\inner{\bias(w^\star)}{\wt\beta_j}
=\E\Big\langle\sum_iw_i^\star\beta_i,\ \beta_j-\frac1n\sum_l\beta_l\Big\rangle
=w_j^\star\zeta_j^2-\frac1n\sum_iw^\star_i\zeta_i^2
=\Psi_0-\frac1n\cdot n\Psi_0=0 .
\]

\emph{(ii)} For $v\in\R^n$ put $w_i:=\tfrac1n-v_i+\bar v$, where $\bar v$ is the mean
of $v$. Since only the centred part of $v$ acts on the disagreements,
\[
\bar\beta-\sum_iv_i\wt\beta_i=\sum_iw_i\beta_i=\bias(w),
\qquad \one^\top w=1 .
\]
Conversely, given $w$ with $\one^\top w=1$, the choice $v_i=\tfrac1n-w_i$ has
$\bar v=0$ and reproduces $w$. The correspondence $v\leftrightarrow w$ is therefore
onto $\{\one^\top w=1\}$, and by Proposition~\ref{prop:oracle}
the minimal value is $\min_{\one^\top w=1}\sum_iw_i^2\zeta_i^2=\Psi_0$, attained at
$w=w^\star$.

\emph{(iii)} Write $\bar\beta=(\bar\beta-\bias(w^\star))+\bias(w^\star)$. The first
summand is $\sum_iv^\star_i\wt\beta_i$ with $v^\star_i=\tfrac1n-w^\star_i$ by (ii), a
function of $\wt\beta$. For the second, take coordinates $k,l\in[d]$ and a client $j$.
The coordinates of the $\beta_i$ are independent across $k$, so
$\E[\bias(w^\star)_k\wt\beta_{j,l}]=0$ for $k\ne l$; for $k=l$ the covariance
structure $\tfrac{\zeta_i^2}dI_d$ gives
$\E[\bias(w^\star)_k\wt\beta_{j,k}]=\tfrac1d\E\inner{\bias(w^\star)}{\wt\beta_j}=0$ by
(i). So $\bias(w^\star)$ is uncorrelated with every coordinate of $\wt\beta$, and since
$(\bias(w^\star),\wt\beta)$ is jointly Gaussian it is independent of $\wt\beta$.
Hence $\E[\bar\beta\mid\wt\beta]=\bar\beta-\bias(w^\star)$, and the conditional law of
$\bar\beta$ given $\wt\beta$ is Gaussian with covariance that of $\bias(w^\star)$,
namely $\tfrac1d\sum_i(w^\star_i)^2\zeta_i^2I_d=\tfrac{\Psi_0}dI_d$, constant in
$\wt\beta$.
\end{proof}
\subsection{Proof of Theorem~\ref{thm:lower}}
\label{app:pf-lower}

The proof runs in five steps. Step~1 replaces the minimax risk by a Bayes risk
with a diffuse prior on the centre, and shows that the transcript is a function of a
fixed array that does not depend on the query points. Step~2 reduces that array to a
per-client average. Step~3 computes the posterior variance of the centre in the
resulting Gaussian location model. Step~4 converts that variance into excess risk and
lets the prior become flat. Step~5 repeats the computation with the exact
disagreements handed to the algorithm, and shows the bound survives.

\begin{proof}
Fix $\rho>0$ and put the prior $x_c\sim\mathcal N(0,\tfrac{\rho^2}{d}I_d)$,
independent of $\beta$ and of all noises. Since the minimax risk over $x_c\in\R^d$
dominates the Bayes risk for this prior, it suffices to lower bound the latter.

\emph{Step 1: reduction to a location model.} The algorithm interacts with client
$i$ only through at most $K$ gradient queries; the $m$-th query at a point $x$ returns
$\nabla f_i(x)+\eps_i^m=\mu(x-x_c)+\beta_i+\eps_i^m$, with the $\eps_i^m$ independent
$\mathcal N(0,\tfrac{\sigma_i^2}dI_d)$. The algorithm chooses $x$ and
knows $\mu$, so the transcript is a measurable function of the private seed $U$ and
of the array
\[
\xi_i^m:=-\mu x_c+\beta_i+\eps_i^m,\qquad i\in[n],\ m\in[K],
\]
and conversely (by induction over rounds, the algorithm's state after each report
is a measurable function of $U$ and the $\xi$'s seen so far, and adding the known
$\mu x$ recovers the report). Note that the law of $\xi_i^m$ does not depend on the
query point: adaptivity provides no information. Hence
$\sigma(\text{transcript})\subseteq\sigma(U,\{\xi^m_i\})$, and it suffices to bound
the Bayes risk of estimating $x_c$ from $(U,\{\xi^m_i\})$.

\emph{Step 2: sufficiency.} Conditionally on $(x_c,\beta_i)$ the vectors
$\xi^1_i,\dots,\xi^K_i$ are i.i.d.\ Gaussian with mean $-\mu x_c+\beta_i$ and
covariance $\tfrac{\sigma_i^2}{d}I_d$; hence
$\bar\xi_i=\tfrac1K\sum_m\xi_i^m$ is sufficient, and
\[
\bar\xi_i=-t+\theta_i,\qquad t:=\mu x_c,\qquad
\theta_i:=\beta_i+\bar\eps_i\sim\mathcal N\Big(0,\frac{v_i}{d}I_d\Big),\qquad
v_i:=\zeta_i^2+\frac{\sigma_i^2}{K},
\]
with $\theta_1,\dots,\theta_n$ independent of each other and of $t$, and
$t\sim\mathcal N(0,\tfrac{\mu^2\rho^2}{d}I_d)$.

\emph{Step 3: Gaussian Bayes risk.} All quantities decouple across the $d$
coordinates. For a single coordinate, $t$ has prior variance
$P:=\mu^2\rho^2/d$ and we observe $n$ independent measurements $-t+\theta_i$ with
noise variances $v_i/d$; the posterior of $t$ is Gaussian with variance
$\big(P^{-1}+\sum_i d/v_i\big)^{-1}$. The Bayes risk for squared error is the
posterior variance, and independence of $U$ from everything else cannot reduce it.
Summing over $d$ coordinates,
\[
\E\norm{\wh t-t}^2\ \ge\ d\Big(\frac{d}{\mu^2\rho^2}+\sum_i\frac{d}{v_i}\Big)^{-1}
=\Big(\frac{1}{\mu^2\rho^2}+\sum_i\frac1{v_i}\Big)^{-1}.
\]

\emph{Step 4: back to the objective.} $f_0(\wh x)-\min f_0=\tfrac\mu2\norm{\wh x-x_c}^2
=\tfrac1{2\mu}\norm{\mu\wh x-t}^2$, and $\mu\wh x$ is an admissible estimator of
$t$. Hence the Bayes risk is at least
$\tfrac1{2\mu}\big(\tfrac{1}{\mu^2\rho^2}+\sum_iv_i^{-1}\big)^{-1}$; letting
$\rho\to\infty$ gives $\Psi(K)/(2\mu)$.

\emph{Step 5: the algorithm may even be told $\wt\beta$.} Condition on
$\wt\beta$. By Proposition~\ref{prop:orth}(iii), $\bar\beta=m(\wt\beta)+e$ with
$m(\wt\beta):=\bar\beta-\bias(w^\star)$ known once $\wt\beta$ is, and
$e:=\bias(w^\star)\sim\mathcal N(0,\tfrac{\Psi_0}dI_d)$ independent of $\wt\beta$, of
$t$ and of the noise. Subtracting the known $\wt\beta_i+m(\wt\beta)$ from $\bar\xi_i$
leaves $z_i=-t+e+\bar\eps_i$, $i\in[n]$, with $\bar\eps_i\sim\mathcal N(0,\tfrac{\sigma_i^2}{Kd}I_d)$
independent. The $z_i$ share the mean $e-t$, so their precision-weighted average
$y$ is sufficient for $e-t$, and $y=e-t+\bar\eps$ with
$\bar\eps\sim\mathcal N(0,\tfrac{s^2}dI_d)$, $s^2:=(\sum_iK/\sigma_i^2)^{-1}$ (if some
$\sigma_i=0$ then $s^2=0$ and the argument is unchanged). Per coordinate, $t$ has prior
variance $P$ and $y$ is $t$ observed through independent Gaussian noise of variance
$(\Psi_0+s^2)/d$, so $\Var(t\mid y)=P(\Psi_0+s^2)/(dP+\Psi_0+s^2)\to(\Psi_0+s^2)/d$ as
$P\to\infty$, and the Bayes risk is at least $(\Psi_0+s^2)/(2\mu)$ in the limit, as in
Step~4. By Lemma~\ref{lem:harmonic} applied with
$a_i=\zeta_i^2$ and $b_i=\sigma_i^2/K$,
\[
\Psi_0+s^2\ =\ H(a)+H(b)\ \le\ H(a+b)\ =\ \Psi(K),
\]
with equality exactly when $\zeta^2$ and $\sigma^2$ are proportional. Handing the
algorithm the disagreements therefore lowers the floor, by the harmonic defect
$\Psi(K)-\Psi_0-s^2$, and never below $\Psi_0/(2\mu)$.
\end{proof}

\begin{remark}
The instance used has $L=\mu$, $\Xset=\R^d$ and Gaussian everything, so the
obstruction is neither conditioning, nor constraint geometry, nor heavy tails. It
is the diffuse nuisance $x_c$, i.e.\ exactly the unidentifiability of
Proposition~\ref{prop:ident-neg}(i). Step 5 separates the part of the risk the
disagreement can remove from the part it cannot. Exact knowledge of $\wt\beta$ lowers the floor from
$\Psi(K)/(2\mu)$ to $(\Psi_0+s^2)/(2\mu)$. The drop is zero when the bias and
noise profiles are proportional and positive otherwise. The consensus error $\Psi_0$
survives in every case, since it is the one component the disagreement does not
determine. Proposition~\ref{prop:orth} is the companion statement for weighting rules,
where no such side information is available and the floor is $\Psi_0$ itself.
\end{remark}
\subsection{Proofs of Theorem~\ref{thm:closure} and Corollary~\ref{cor:closure}}
\label{app:pf-closure}

Throughout, $\mathfrak G:=\sigma(x_c,\beta_1,\dots,\beta_n)$ and
$\mathcal F_k:=\mathfrak G\vee\sigma(U,\text{noises through round }k)$, $U$ the
algorithm's private randomness. The oracle noises are independent of
$(x_c,\beta)$, so Assumption~\ref{as:oracle} holds conditionally on $\mathfrak G$
with $\bar\sigma_i=\sigma_i$, the tilts are $\mathcal F_0$-measurable, and
Lemmas~\ref{lem:residual}, \ref{thm:est-full} and~\ref{lem:sgd} apply conditionally.
Write $q_i=\norm{\beta_i}^2$, $\bar q=\AM(q)$, and $\mathrm{exc}:=f_0(x^{K_0+T})-f_0^\star
=\tfrac\mu2\norm{x^{K_0+T}-x_c}^2$.

\begin{proof}[Proof of Theorem~\ref{thm:closure}]
Define two events. $G$ is the event of Lemma~\ref{lem:kappa} at confidence $\delta$:
$\max_i|q_i/\zeta_i^2-1|\le\epsilon_0$ and $\kappa\le\epsilon_2$, with
$\PP(G^c)\le\delta$; $G\in\sigma(\beta)$. And
$E_c:=\{\norm{x_c-x^0}\le2\rho\}$; since $d\norm{x_c-x^0}^2/\rho^2\sim\chi^2_d$, the
Laurent--Massart bound gives $\PP(E_c^c)\le e^{-d/4}$, and $E_c\in\sigma(x_c)$ is
independent of $G$.

\emph{Step 1: off the good event.} The output lies in $\Xset=B(x^0,D)$, so on every
outcome $\norm{x^{K_0+T}-x_c}\le D+\norm{x_c-x^0}$ and
$\mathrm{exc}\le\mu(D^2+\norm{x_c-x^0}^2)$. Independence of $G$ and $x_c$ gives
$\E[\one_{G^c}\,\mathrm{exc}]\le\mu\delta(D^2+\rho^2)$. By Cauchy--Schwarz and
$\E\norm{x_c-x^0}^4=\rho^4(1+2/d)\le3\rho^4$,
\[
\E[\one_{E_c^c}\,\mathrm{exc}]
\le\mu\big(D^2e^{-d/4}+\sqrt3\,\rho^2e^{-d/8}\big)
\le\sqrt3\,\mu\,(D^2+\rho^2)\,e^{-d/8}.
\]
Since $\one_{(G\cap E_c)^c}\le\one_{G^c}+\one_{E_c^c}$,
\begin{equation}
\label{eq:closure-tail}
\E\big[\one_{(G\cap E_c)^c}\,\mathrm{exc}\big]\ \le\ 2\mu\,(D^2+\rho^2)\big(\delta+e^{-d/8}\big).
\end{equation}

\emph{Step 2: geometry on the good event.} On $G$,
$\max_i\norm{\beta_i}\le\sqrt{(1+\epsilon_0)}\,\zeta_{\max}\le\sqrt2\,\zeta_{\max}$,
so $\norm{\bias(w)}\le\sqrt2\,\zeta_{\max}$ for every $w\in\simplex$. The tilts are
linear and $f_0$ is the $\mu$-quadratic, so $F_{\wh w}$ is $\mu$-strongly convex and
$\mu$-smooth with unconstrained minimiser $x^\star_{\wh w}=x_c-\bias(\wh w)/\mu$; on
$G\cap E_c$, $\norm{x^\star_{\wh w}-x^0}\le2\rho+\sqrt2\,\zeta_{\max}/\mu=D$, so
$x^\star_{\wh w}\in\Xset$ and the minimiser of $F_{\wh w}$ over $\Xset$ is
$x^\star_{\wh w}$.

\emph{Step 3: conditional bound.} $\one_{G\cap E_c}$ is
$\mathcal F_{K_0}$-measurable. Conditionally on $\mathcal F_{K_0}$, Phase~II is
projected \textsc{SGD} on $F_{\wh w}$ over $\Xset$ with unbiased gradients of
conditional variance at most $\sum_i\wh w_i^2\sigma_i^2/b\le\bar V$. Because
\CROWD$^\star$ projects onto $\Xset$ in every round, probing included,
$x^{K_0}\in\Xset$; on $G\cap E_c$ also $x^\star_{\wh w}\in\Xset$ by Step~2, and
$\Xset$ has diameter $2D$. Lemma~\ref{lem:sgd} with $\mu_F=L_F=\mu$ and
$a_0=\norm{x^{K_0}-x^\star_{\wh w}}^2\le(2D)^2=R^2$ therefore gives
\[
\E_{K_0}\norm{x^{K_0+T}-x^\star_{\wh w}}^2\ \le\ 4D^2e^{-T/2}+\frac{\ell_T\bar V}{\mu^2T}.
\]
Lemma~\ref{lem:transfer}(ii) is a pointwise inequality on $\R^d$ for a linear crowd,
with $\min F_{\wh w}$ the unconstrained minimum. On $G\cap E_c$ that minimum is
attained at $x^\star_{\wh w}\in\Xset$, where $\nabla F_{\wh w}(x^\star_{\wh w})=0$, so
$\mu$-smoothness gives
$F_{\wh w}(x)-F_{\wh w}(x^\star_{\wh w})\le\tfrac\mu2\norm{x-x^\star_{\wh w}}^2$ at the
output $x=x^{K_0+T}\in\Xset$. (Off $G\cap E_c$ the unconstrained minimiser may leave
$\Xset$, and the display below is neither claimed nor used there.) For every
$\epsilon_T>0$, on $G\cap E_c$,
\begin{equation}
\label{eq:closure-cond}
\E_{K_0}[\mathrm{exc}]\ \le\
\frac{1+\epsilon_T}{2\mu}\,\Gamma_{\wh w}^2
+\big(1+\epsilon_T^{-1}\big)\frac\mu2\Big(4D^2e^{-T/2}+\frac{\ell_T\bar V}{\mu^2T}\Big).
\end{equation}

\emph{Step 4: the bias term in expectation.} On $G$, Assumption~\ref{as:incoh} holds
with $\kappa\le\epsilon_2<1$, so Lemma~\ref{lem:proxy} and
Lemma~\ref{thm:weightsub} with the floored costs and $c_i=q_i+\sigma_i^2/(bT)$ give
$\Gamma^2_{\wh w}\le(1+\kappa)\big(\wh\Psi(bT)+2(\Delta+\varrho)\big)$ with $\Delta$
as in Lemma~\ref{thm:est-full}(iii). Three bounds hold on $G$. First,
$q_i+\sigma_i^2/(bT)\le(1+\epsilon_0)(\zeta_i^2+\sigma_i^2/(bT))$, so monotonicity and
homogeneity of $H$ give $\wh\Psi(bT)\le(1+\epsilon_0)\Psi(bT)$. Second,
$\Delta\le C_n(M_s+3\kappa\bar q)$ with $M_s:=\max_i|\wh s_i-\norm{\wt\beta_i}^2|$,
and $3\kappa\bar q\,\one_G\le3\epsilon_2(1+\epsilon_0)\AM$. Third, the estimation
term is bounded without conditioning on $G$, since $\one_{G\cap E_c}M_s\le M_s$.
Lemma~\ref{thm:est-full}(i) applies conditionally on $\mathfrak G$: the crowd is
linear ($\delta_i=0$), $K_0$ is even, all $n$ clients report in every probing round,
and projected query points are adaptive query points, which the lemma allows. It gives
$\E[|\wh s_i-\norm{\wt\beta_i}^2|\mid\mathfrak G]
\le(4\wt\zeta_i^2\vartheta_i^2+2\vartheta_i^4)^{1/2}
\le2\wt\zeta_i\vartheta_i+\sqrt2\,\vartheta_i^2$ with $\vartheta_i$ deterministic.
Taking the unconditional expectation and using
$\E\wt\zeta_i\le(\E\wt\zeta_i^2)^{1/2}=\sqrt{s_i}$, where the equality is the
unconditional identity of Theorem~\ref{thm:ident},
$\E[\one_{G\cap E_c}M_s]\le\E M_s\le\sum_i\E|\wh s_i-\norm{\wt\beta_i}^2|\le a(K_0)$.
(Summing over $i$ bounds the maximum crudely; a union bound would replace the
factor $n$ by about $\sqrt n$.) Combining, and using $1+\kappa\le1+\epsilon_2$ and
$1\le1+\epsilon_0$ where convenient,
\[
\E\big[\one_{G\cap E_c}(1+\kappa)\big(\wh\Psi(bT)+2\Delta+2\varrho\big)\big]
\ \le\ (1+\epsilon_2)(1+\epsilon_0)\Big[\Psi(bT)+2C_na(K_0)+6C_n\epsilon_2\AM+2\varrho\Big].
\]

\emph{Step 5: assembly.} Multiply~\eqref{eq:closure-cond} by $\one_{G\cap E_c}$, take
full expectations, insert Step~4, and add~\eqref{eq:closure-tail}.
\end{proof}

\begin{proof}[Proof of Corollary~\ref{cor:closure}]
The ratio is at least one at every parameter value by~\eqref{eq:bayes-floor}. For the
upper limit, note first that $\Psi(K)$ is non-increasing in $K$ with limit $\Psi_0$,
so $\mathfrak B^\star_\rho(K)\ge\mathfrak B^\star_\rho(\infty)
:=\big((\mu^2\rho^2)^{-1}+\Psi_0^{-1}\big)^{-1}/(2\mu)>0$ for every $K$; dividing the
bound of Theorem~\ref{thm:closure} by $\mathfrak B^\star_\rho(\infty)$ therefore bounds
the ratio at every $T$ and $K_0$. Hold $\rho$, $d$, $K_0$ fixed and let $T\to\infty$
with $\varrho=1/T$ and $\epsilon_T=T^{-1/4}$: the transient of
Theorem~\ref{thm:closure} vanishes ($\ell_T=O(\log T)$), $\Psi(bT)\to\Psi_0$ and
$\varrho\to0$, so
\[
\limsup_{T\to\infty}\frac{\mathfrak B_\rho(\CROWD^\star)}{\mathfrak B^\star_\rho(b(K_0+T))}
\ \le\
\frac{(1+\epsilon_2)(1+\epsilon_0)\big[\Psi_0+2C_na(K_0)+6C_n\epsilon_2\AM\big]
+4\mu^2(D^2+\rho^2)(\delta+e^{-d/8})}
{\big((\mu^2\rho^2)^{-1}+\Psi_0^{-1}\big)^{-1}} .
\]
Now let $K_0\to\infty$: $a(K_0)\to0$ since every $\vartheta_i\to0$. Then let
$d\to\infty$ with $\delta=e^{-\sqrt d}$: $\log(4n/\delta)=\log4n+\sqrt d$, so the
$\epsilon_0$ and $\epsilon_1$ of Lemma~\ref{lem:kappa} tend to zero and $\epsilon_2\to0$.
The hypotheses of Lemma~\ref{lem:kappa}, $\epsilon_0\le\tfrac12$ and $\epsilon_1\le1$,
hold once $d\ge C'(n+\log4n+\sqrt d)$, which is true for all large $d$, and then
$\epsilon_2<1$ as Theorem~\ref{thm:closure} requires; $D$ does not depend on $d$, so
the tail term vanishes. The bound tends to
$\Psi_0\big/\big((\mu^2\rho^2)^{-1}+\Psi_0^{-1}\big)^{-1}=1+\Psi_0/(\mu^2\rho^2)$,
which is the displayed limit at fixed $\rho$ and tends to one as $\rho\to\infty$.
For the $(1+\epsilon)$ form with $\epsilon\in(0,1]$: choose $\rho_0$ with
$\Psi_0/(\mu^2\rho_0^2)\le\epsilon/4$; for each $\rho\ge\rho_0$ choose $d_0$ and
$K_0'$ so that the $T\to\infty$ limit of the bound is at most $(1+\epsilon/4)^2$;
then choose $T_0$ so that the bound itself is at most $(1+\epsilon/4)^3$ for
$T\ge T_0$, which is possible because it converges and the denominator was replaced by
its $T$-independent lower bound $\mathfrak B^\star_\rho(\infty)$. Finally
$(1+\epsilon/4)^3\le1+\tfrac34\epsilon+\tfrac3{16}\epsilon^2+\tfrac1{64}\epsilon^3\le1+\epsilon$
for $\epsilon\le1$.
\end{proof}

\subsection{Proof of Theorem~\ref{thm:nonlin}}
\label{app:pf-nonlin}

The proof runs in three steps. Step~1 bounds the staleness of the residual by the
diameter of the domain times the Lipschitz constant of the centred gradient, which is
what Lemma~\ref{thm:est} needs. Step~2 runs the optimisation phase with the moduli of
the weighted objective. Step~3 transfers to the truth with the constrained transfer
lemma, keeping the two smoothness constants apart.

\begin{proof}
\emph{Step 1: staleness.} On $\Xset$, $\norm{\nabla^2h_i}_{\mathrm{op}}\le\ell_i$, so
$\wt{\nabla h_i}=\nabla h_i-\frac1n\sum_j\nabla h_j$ is $\wt\ell_i$-Lipschitz with
$\wt\ell_i=\ell_i+\frac1n\sum_j\ell_j$. Every query point lies in $\Xset$, as does
$x_0^\star$, so for every $k\le K_0$ and every $i$,
\[
\norm{\wt{\nabla h_i}(x^{k-1})-\wt\beta_i}\le\wt\ell_i\norm{x^{k-1}-x_0^\star}\le\wt\ell_i\bar D=:\delta_i
\]
surely. Lemma~\ref{thm:est}(ii)--(iii) therefore applies with these $\delta_i$, and its
high-probability form gives, with probability at least $1-\delta$,
$\Delta\le C_n\big(\max_i[2\wt\zeta_i(\delta_i+\vartheta_i\sqrt{2n/\delta})+(\delta_i+\vartheta_i\sqrt{2n/\delta})^2]+3\kappa\bar q\big)$.

\emph{Step 2: optimisation.} Conditionally on $\mathcal F_{K_0}$ the weights $\wh w$ are
fixed, $\nabla^2F_{\wh w}=\nabla^2f_0+\nabla^2H_{\wh w}$ with
$\norm{\nabla^2H_{\wh w}}_{\mathrm{op}}\le\lambda$, so $F_{\wh w}$ is
$(\mu-\lambda)$-strongly convex and $L'$-smooth on $\Xset$, and the aggregate has
conditional variance at most $V_\sigma$. Lemma~\ref{lem:sgd} with
$\mu_F=\mu-\lambda$, $L_F=L'$, $R=\bar D$ and $a_0\le\bar D^2$ (both points lie in
$\Xset$) gives
\[
\E_{K_0}\norm{x^{K_0+T}-x^\star_{\wh w}}^2\le\bar D^2e^{-(\mu-\lambda)T/(2L')}+\frac{\ell_TV_\sigma}{(\mu-\lambda)^2T},
\]
where $x^\star_{\wh w}$ minimises $F_{\wh w}$ over $\Xset$.

\emph{Step 3: transfer.} Lemma~\ref{lem:transfer-c} applies because $x_0^\star$ is
interior and $\lambda<\mu$; it gives, pointwise,
$f_0(x)-f_0^\star\le\tfrac{2L}{\mu-\lambda}[F_{\wh w}(x)-F_{\wh w}(x^\star_{\wh w})]+\tfrac{L}{(\mu-\lambda)^2}\norm{\bias(\wh w)}^2$.
Two smoothness constants occur: $L$ is that of the truth, used by the lemma, while
$L'$ is that of $F_{\wh w}$, which converts iterate accuracy into function accuracy.
By Lemma~\ref{lem:transfer-c}, Lemma~\ref{lem:proxy} and $V(\wh w)\le\max_iq_i$,
$\norm{x^\star_{\wh w}-x_0^\star}\le\Gamma_{\wh w}/(\mu-\lambda)\le\Gamma_{\max}/(\mu-\lambda)$,
so $x^\star_{\wh w}$ lies in the interior of $\Xset$ by hypothesis and
$\nabla F_{\wh w}(x^\star_{\wh w})=0$. $L'$-smoothness then gives
$F_{\wh w}(x)-F_{\wh w}(x^\star_{\wh w})\le\tfrac{L'}2\norm{x-x^\star_{\wh w}}^2$, and
\[
\E_{K_0}[f_0-f_0^\star]\le\frac{2L}{\mu-\lambda}\cdot\frac{L'}{2}\,\E_{K_0}\norm{x-x^\star_{\wh w}}^2
+\frac{L}{(\mu-\lambda)^2}\norm{\bias(\wh w)}^2 ,
\]
and $\norm{\bias(\wh w)}^2\le(1+\kappa)V(\wh w)\le(1+\kappa)(\wh\Psi(bT)+2\Delta)$ by
Lemmas~\ref{lem:proxy} and~\ref{thm:weightsub}. Collecting terms gives the display.
\end{proof}
\subsection{Proof of Proposition~\ref{thm:free}}
\label{app:pf-free}

\begin{proof}
Every ingredient is deterministic. Lemma~\ref{thm:weightsub} applied with $c=q$ and
the estimate $\wh c$, $\norm{\wh c-q}_\infty\le\Delta$, gives
\begin{equation}
\label{eq:free-core}
V(\wh w)=\sum_i\wh w_i^2q_i\ \le\ \min_{w\in\simplex}\sum_iw_i^2q_i+2\Delta .
\end{equation}
Let $w^{\mathrm{opt}}\in\argmin_{w\in\simplex}\norm{\bias(w)}^2=\argmin_{w\in\simplex}w^\top Qw$,
which exists by compactness. Then
$\min_{w\in\simplex}\sum_iw_i^2q_i\le V(w^{\mathrm{opt}})\le\frac{1}{1-\kappa}\norm{\bias(w^{\mathrm{opt}})}^2$
by the lower half of Lemma~\ref{lem:proxy}, which needs $\kappa<1$. Combining with the
upper half, $\norm{\bias(\wh w)}^2\le(1+\kappa)V(\wh w)$, and~\eqref{eq:free-core},
\[
\norm{\bias(\wh w)}^2\le(1+\kappa)\Big(\frac{\norm{\bias(w^{\mathrm{opt}})}^2}{1-\kappa}+2\Delta\Big)
=\frac{1+\kappa}{1-\kappa}\min_{w\in\simplex}\norm{\bias(w)}^2+2(1+\kappa)\Delta,
\]
which is~\eqref{eq:free-1}. For the gain over uniform weighting,
Proposition~\ref{prop:oracle} applied to the realised vector $q$ gives
$\min_w\sum_iw_i^2q_i=(\sum_iq_i^{-1})^{-1}=\HM(q)/n$, so
$\norm{\bias(\wh w)}^2\le(1+\kappa)(\HM(q)/n+2\Delta)$, while
$\norm{\bias(w_{\mathrm{unif}})}^2\ge(1-\kappa)V(w_{\mathrm{unif}})=(1-\kappa)\AM(q)/n$;
dividing gives the claim.

\emph{The size of $\Delta$.} \CROWD{} uses $\wh c_i=(\wh q_i)_++\bar\sigma_i^2/(bT)$, so
$\Delta\le\max_i|\wh q_i-q_i|+\max_i\bar\sigma_i^2/(bT)$. The first term is bounded by
Lemma~\ref{thm:est}(iii), which assumes Assumption~\ref{as:oracle} and nothing about
the tilts beyond Assumption~\ref{as:incoh}; the second vanishes with the horizon.
\end{proof}

\subsection{Proof of Proposition~\ref{prop:check}}
\label{app:pf-check}

\begin{proof}
\emph{(i)} This is Proposition~\ref{prop:ident-neg}(i). The optimal weights of the two
instances are computed from the energies $\norm{\beta_i}^2$ and $\norm{\beta_i-c}^2$,
which differ for generic $c$.

\emph{(ii), non-identifiability.} Under the shift of (i) with linear tilts the tilts of
the second instance at its own minimiser are $\beta_i-c$, so its Gram matrix is
$Q_{ij}-\inner{\beta_i}{c}-\inner{\beta_j}{c}+\norm c^2$, as stated. The transcript
law is the same, and $\kappa$ changes in general, so $\kappa$ is not a function of
that law. The disagreements $\wt\beta_i=\beta_i-\bar\beta$ are invariant, and so is
$JQJ=(\inner{\wt\beta_i}{\wt\beta_j})_{ij}$.

\emph{(ii), unbiasedness.} For linear tilts
$\wh\beta^{(a)}_i=\wt\beta_i+\nu^{(a)}_i$ in the notation of
Appendix~\ref{app:pf-est}. Expanding,
$\E[\inner{\wh\beta^{(1)}_i}{\wh\beta^{(2)}_j}\mid B]=\inner{\wt\beta_i}{\wt\beta_j}+\E[\inner{\wt\beta_i}{\nu^{(2)}_j}+\inner{\nu^{(1)}_i}{\wt\beta_j}\mid B]+\E[\inner{\nu^{(1)}_i}{\nu^{(2)}_j}\mid B]$.
The middle terms vanish because each is a sum of martingale increments against an
$\mathcal F_0$-measurable vector, and the last vanishes by
Lemma~\ref{lem:crossmart} applied to the martingale difference sequences
$(\tfrac2{K_0}\wt\eps^k_i)_k$ and $(\tfrac2{K_0}\wt\eps^k_j)_k$ over the disjoint halves.
Symmetrising gives $\E[\wh G\mid B]=JQJ$.

\emph{(ii), the correction.} Write $\varsigma:=(Q-D)\one$, so $Q\one=q+\varsigma$ and
$\one^\top Q\one=\one^\top q+\one^\top\varsigma$. Expanding $JQJ$,
\[
JQJ=Q-\tfrac1n\big((q+\varsigma)\one^\top+\one(q+\varsigma)^\top\big)+\tfrac{\one^\top q+\one^\top\varsigma}{n^2}\one\one^\top .
\]
The population version of $\wh Q$ adds back
$\tfrac1n(q\one^\top+\one q^\top)-\tfrac{\one^\top q}{n^2}\one\one^\top$, so
$\wh Q_\circ=Q+E$ with $E:=-\tfrac1n(\varsigma\one^\top+\one\varsigma^\top)+\tfrac{\one^\top\varsigma}{n^2}\one\one^\top$;
under Assumption~\ref{as:crowd} each $\varsigma_i$ has conditional mean zero given
$\beta_i$, which is the sense in which the correction sets the row sums to their
mean. With $\wh D=D$, $\wh\kappa_\circ=\norm{D^{-1/2}(Q-D+E)D^{-1/2}}_{\mathrm{op}}$, so
$|\wh\kappa_\circ-\kappa|\le\norm{D^{-1/2}ED^{-1/2}}_{\mathrm{op}}$. Put
$u:=D^{-1/2}\one$, with $\norm u^2=\sum_iq_i^{-1}=n/\HM(q)$, and
$r:=D^{-1/2}\varsigma=MD^{1/2}\one$, $M:=D^{-1/2}(Q-D)D^{-1/2}$, so that
$\norm r\le\kappa\norm{D^{1/2}\one}=\kappa\sqrt{n\AM(q)}$ and
$|\one^\top\varsigma|=|(D^{1/2}\one)^\top M(D^{1/2}\one)|\le\kappa n\AM(q)$. Then
$D^{-1/2}ED^{-1/2}=-\tfrac1n(ru^\top+ur^\top)+\tfrac{\one^\top\varsigma}{n^2}uu^\top$ and
\[
\norm{D^{-1/2}ED^{-1/2}}_{\mathrm{op}}\le\frac{2\norm r\norm u}{n}+\frac{|\one^\top\varsigma|\norm u^2}{n^2}
\le2\kappa\sqrt{\AM(q)/\HM(q)}+\kappa\,\AM(q)/\HM(q). \qedhere
\]
\end{proof}
\subsection{Proof of Proposition~\ref{prop:partial}}
\label{app:pf-partial}

\begin{proof}
\emph{(i)} Fix $k$ and $i$ and condition on $\mathcal F_{k-1}$ and on $i\in P_k$. By
Assumption~\ref{as:part} the other $m-1$ members of $P_k$ form a uniform
$(m-1)$-subset $S$ of $[n]\setminus\{i\}$, independent of the past and of the round's
noise. For linear tilts,
$r^k_i=\beta_i-\tfrac1m\sum_{j\in P_k}\beta_j+\wt\eps^{P,k}_i$ with
$\wt\eps^{P,k}_i:=\eps^k_i-\tfrac1m\sum_{j\in P_k}\eps^k_j$, whose conditional mean is
zero. Since $\E\sum_{j\in S}\beta_j=\tfrac{m-1}{n-1}\sum_{j\ne i}\beta_j=\tfrac{m-1}{n-1}(n\bar\beta-\beta_i)$,
\[
\E\Big[\beta_i-\tfrac1m\sum_{j\in P_k}\beta_j\Big]
=\Big(1-\tfrac1m+\tfrac{m-1}{m(n-1)}\Big)\beta_i-\tfrac{(m-1)n}{m(n-1)}\bar\beta
=\rho(\beta_i-\bar\beta),
\]
because $1-\tfrac1m+\tfrac{m-1}{m(n-1)}=\tfrac{m-1}{m}\cdot\tfrac{n}{n-1}=\rho$. The
bounds on $\rho$ follow from $\rho=\tfrac{m-1}m\cdot\tfrac n{n-1}$, which is increasing
in $m$, equals $n/(2(n-1))>\tfrac12$ at $m=2$ and $1$ at $m=n$.

\emph{(ii)} Condition on the tilts and on the indicators $(\one\{i\in P_k\})_k$. The
$P_k$ are independent across rounds, independent of the noise, and given
$i\in P_k$ the set $P_k\setminus\{i\}$ is a uniform $(m-1)$-subset of
$[n]\setminus\{i\}$. So, enlarging $\mathcal F_0$ by the indicators, which are
independent of everything else, the facts used in (i) still hold at every round with
$i\in P_k$. On such a round decompose
$r^k_i=\rho\wt\beta_i+\chi^k_i+\wt\eps^{P,k}_i$ with
$\chi^k_i:=\beta_i-\tfrac1m\sum_{j\in P_k}\beta_j-\rho\wt\beta_i$. By (i),
$\E[\chi^k_i\mid\mathcal F_{k-1}]=0$, and $\wt\eps^{P,k}_i$ has conditional mean zero
given $P_k$; so $X_k:=\chi^k_i+\wt\eps^{P,k}_i$ is a martingale difference sequence
over the rounds with $i\in P_k$, and the two terms are orthogonal. For the first, the
variance of the sum of a simple random sample of size $m-1$ from the $n-1$ vectors
$(\beta_j)_{j\ne i}$ is
$\tfrac{(m-1)(n-m)}{(n-1)(n-2)}\sum_{j\ne i}\norm{\beta_j-\bar\beta_{-i}}^2\le\tfrac{(m-1)(n-m)}{(n-1)(n-2)}\,n\bar q$
\citep[Thm.~2.2]{cochran1977}, so
$\E[\norm{\chi^k_i}^2\mid\mathcal F_{k-1}]\le\tfrac{(m-1)(n-m)n}{m^2(n-1)(n-2)}\bar q\le\psi_{m,n}\bar q$.
For the second, conditional independence of the noise across clients gives
$\E[\norm{\wt\eps^{P,k}_i}^2\mid\mathcal F_{k-1}]\le\tfrac1b\big((1-\tfrac1m)^2\sigma_i^2+\tfrac{m-1}{m^2(n-1)}\sum_{j\ne i}\sigma_j^2\big)\le\tau_{i,P}^2/b$.
Hence $\wh\beta^{(a)}_i=\rho\wt\beta_i+m^{(a)}_i$ with $m^{(a)}_i$ the average of $N^{(a)}_i$
martingale increments, $\E\norm{m^{(a)}_i}^2\le\vartheta_{i,a}^2$. The
decomposition~\eqref{eq:err-decomp} with $\rho\wt\beta_i$ in place of $\wt\beta_i$ and
Lemma~\ref{lem:crossmart} over the disjoint halves give $\E\wh s_i=\rho^2\norm{\wt\beta_i}^2$,
$\E\inner{\rho\wt\beta_i}{m^{(1)}_i+m^{(2)}_i}^2\le\rho^2\norm{\wt\beta_i}^2(\vartheta_{i,1}^2+\vartheta_{i,2}^2)$
and $\E\inner{m^{(1)}_i}{m^{(2)}_i}^2\le\vartheta_{i,1}^2\vartheta_{i,2}^2$, and
$(u+v)^2\le2u^2+2v^2$ gives the variance bound. Finally
$N^{(a)}_i\sim\mathrm{Bin}(K_0/2,p)$, and the multiplicative Chernoff bound gives
$\PP(N^{(a)}_i<pK_0/4)\le e^{-pK_0/16}$; a union bound over $i$ and $a$ finishes.

\emph{(iii)} As in Lemma~\ref{thm:est}(iii), with $\wh s/\rho^2$ in place of $\wh s$.
\end{proof}

\subsection{Proof of Proposition~\ref{prop:part-corr}}
\label{app:pf-partcorr}

\begin{proof}
\emph{(i)} The transcript is generated by the reporting clients only, and
Proposition~\ref{prop:ident-neg}(i), applied to the instance restricted to
$\mathcal R$, shows that the shift by $c$ leaves its law unchanged. In the shifted
instance the tilts are $\beta_i-c=\beta'_i$, independent and centred, which is
Assumption~\ref{as:crowd} on $\mathcal R$ for linear tilts.
\emph{(ii)} $\sum_iw_i\beta_i=\sum_iw_i(c+\beta'_i)=c+\bias'(w)$ because
$\sum_iw_i=1$.
\emph{(iii)} $\beta_i-\bar\beta=\beta'_i-\bar\beta'$ for every $i\in\mathcal R$, so
the residuals, and every statistic built from them, coincide. In the shifted instance
the tilts are $\beta'_i$, whose energies are $\norm{\beta'_i}^2$, and the analysis of
Section~\ref{sec:theory} applies to it.
\end{proof}
\section{Experimental details and further figures}
\label{app:exp}

This section records how the runs of Section~\ref{sec:exp} were produced and reports
the ones that did not fit there. Every figure comes from a seeded script. The
supplementary materials contain the code, the recorded output of every run holding
each quoted number, a README with per-experiment instructions and runtimes, and a
document mapping the quoted numbers to the statements they test. The synthetic
instances match the model by construction: their purpose is to check constants,
exponents and floors, not to argue about realism. The studies on real scans
begin in Section~\ref{app:regime}.

\paragraph{Default instance.} $n=20$, $d=150$,
$f_0(x)=\tfrac12(x-x_c)^\top A(x-x_c)$ with $A=Q\Lambda Q^\top$, $Q$ random
orthogonal and $\Lambda$ geometric in $[\mu,L]=[0.5,5]$;
$x_c\sim\mathcal N(0,4I_d/d)$; $\log\zeta_i\sim\mathrm U[\log0.2,\log3]$,
$\beta_i\sim\mathcal N(0,\zeta_i^2I_d/d)$; $\log\sigma_i\sim\mathrm U[\log0.5,\log3]$,
Gaussian gradient noise of second moment $\sigma_i^2/b$, $b=1$.

\paragraph{Rules compared.} \emph{Uniform}; \emph{size-proportional}
($w_i\propto n_i$, $n_i\sim\mathrm U\{20,\dots,199\}$ drawn independently of $\beta$,
so size and quality are unrelated); \emph{inverse-noise} ($w_i\propto\sigma_i^{-2}$);
\emph{best single client} ($w=\delta_{i^\star}$, $i^\star=\argmin_i\norm{\beta_i}^2$,
chosen with oracle knowledge); \emph{oracle} ($w_i\propto\norm{\beta_i}^{-2}$); and
\CROWD{} in its anytime form, refreshing at every round. All take the same stochastic-gradient steps and differ
only in $w$.

\paragraph{Figure-specific settings.}
\begin{itemize}[itemsep=2pt,topsep=2pt]
\item \textbf{Figure~\ref{fig:main}}: 12 seeds, $K=4000$, $\eta=1/(2L)$;
reports the exact realised bias energy $\norm{B^\top w^k}^2$.
\item \textbf{Figure~\ref{fig:exp-a}}: 12 seeds, $K=20000$,
$\eta_k=\min\{1/(2L),2/(\mu(k+2L/\mu))\}$; the decaying step makes the sampling term
vanish so the bias floors become visible. Dotted lines are
$\tfrac12\bias(w)^\top A^{-1}\bias(w)$, which for a quadratic $f_0$ is the exact
excess risk of $\argmin F_w$ -- a strengthening of Lemma~\ref{lem:transfer}(iii)
holding with equality.
\item \textbf{Figure~\ref{fig:est}}: 12 seeds; panel (a) reports
$\max_i|\wh s_i-\norm{\wt\beta_i}^2|$ at $d=60$, panel (b)
$\max_i|\wh q_i-\norm{\beta_i}^2|$ for $d\in\{60,240,960\}$. The query point is fixed
and random, which is legitimate because Lemma~\ref{lem:residual} makes the residual
independent of it.
\item \textbf{Figure~\ref{fig:exp-d}}: 24 seeds, $K=3000$, $A=I$, $\sigma_i\equiv1$,
$\log\zeta_i\sim\mathrm U[-s/2,s/2]$ with spread $s\in\{0,0.5,\dots,3\}$.
\item \textbf{Figure~\ref{fig:misspec}}: 6 seeds, $K=3000$, $n=20$. Panel (a):
$d=100$, $h_i(x)=\inner{\beta_i}{x}-\tfrac{s_i\lambda}{\omega^2}\sum_j\cos(\omega z_j)$
with $z=x-x_0^\star$, $\omega=3$ and random signs $s_i=\pm1$, so
$\nabla h_i(x_0^\star)=\beta_i$ exactly and
$\norm{\nabla^2h_i}_{\mathrm{op}}\le\lambda$; $\lambda/\mu$ is swept over
$\{0,0.05,0.1,0.2,0.4,0.6,0.8\}$. Panel (b): $d=400$,
$\beta_i=\zeta_i(\sqrt{1-\rho}\,\xi_i+\sqrt\rho\,\chi_i)$ where $\chi_i$ is a shared
factor within a pair of clients (\emph{pairwise}) or common to all
(\emph{common factor}); the optimal $w$ for the realised $Q$ is computed by projected
gradient on the simplex.
\end{itemize}

\paragraph{Reported statistics.} Trajectory plots show medians and interquartile
bands over seeds. Log--log slopes are least-squares fits over the plotted grid.
Figure~\ref{fig:est}(a) is fitted over the last nine grid points, to avoid the
small-$K$ regime where the quadratic remainder of Lemma~\ref{thm:est}(i) dominates.

\begin{figure}[t]
\centering
\includegraphics[width=0.5\textwidth]{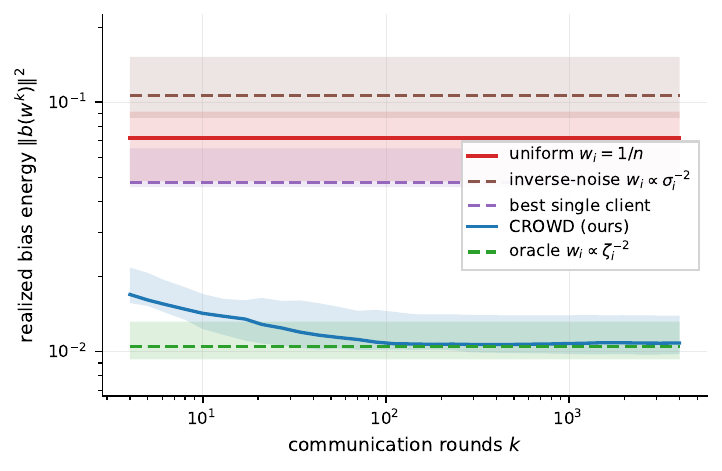}
\caption{Realised bias energy $\norm{\bias(w^k)}^2$ against the round index on the
default instance ($n=20$, $d=150$, quadratic $f_0$ with spectrum geometric in
$[0.5,5]$, $\log\zeta_i$ and $\log\sigma_i$ uniform with $\zeta_i\in[0.2,3]$ and
$\sigma_i\in[0.5,3]$, $b=1$; medians and interquartile bands over 12 seeds), all
rules taking identical stochastic gradients and differing only in $w$. \CROWD{}
reaches the oracle level within $3.4\%$ in a few tens of rounds; the
oracle-selected best single client is $4.6\times$ worse, as
Proposition~\ref{prop:oracle}(ii) implies. The companion excess-risk trajectories,
with the exact floors $\tfrac12\bias(w)^\top A^{-1}\bias(w)$, are
Figure~\ref{fig:exp-a}.}
\label{fig:main}
\end{figure}

\begin{figure}[t]
\centering
\includegraphics[width=0.8\textwidth]{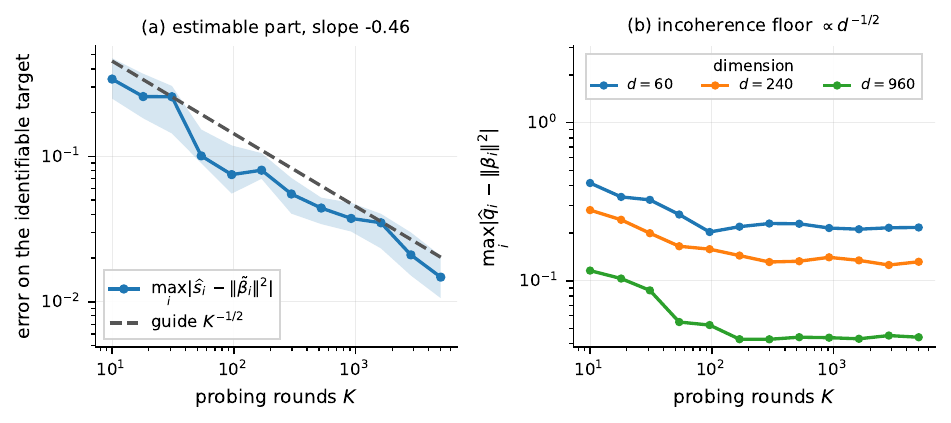}
\caption{Estimator accuracy on the default instance (12 seeds), error against the
number of probing rounds $K_0$ on log--log axes. (a) On the identifiable target
$\norm{\wt\beta_i}^2$
the error decays at the predicted rate: measured log--log slope $-0.46$ against the
$-1/2$ of Lemma~\ref{thm:est}(i). (b) On $q_i$ the error stalls at the incoherence
floor of Lemma~\ref{lem:realised}, which does not decay with $K$ at all; the floor
falls by $4.9\times$ when $d$ grows by $16\times$, against the $4.0\times$ that the
$d^{-1/2}$ of Lemma~\ref{lem:gauss-eta} predicts. The two individual $4\times$ steps
give $1.6\times$ and $3.0\times$ against $2.0\times$ each, so the floor is of the
predicted size and direction but not cleanly power-law at these dimensions.}
\label{fig:est}
\end{figure}

\begin{figure}[t]
\centering
\includegraphics[width=0.68\textwidth]{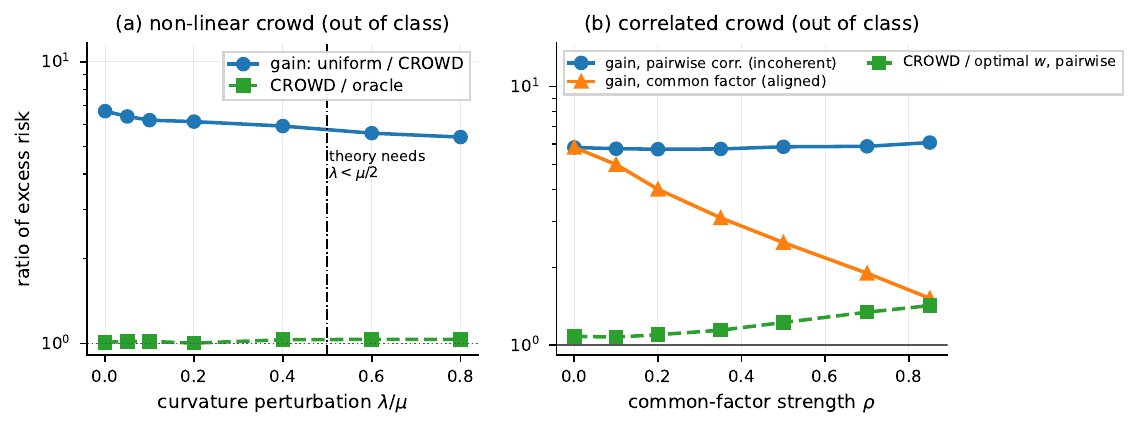}
\caption{\CROWD{} applied \emph{outside} its class: $n=20$, $K=3000$, 6 seeds, all
rules taking identical gradients. \emph{(a)} Non-linear
crowds $\nabla h_i(x)=\beta_i\pm\tfrac\lambda\omega\sin(\omega(x-x^\star_0))$: the
gain over uniform weights falls only from $6.7\times$ to $5.4\times$ as $\lambda$
grows to $0.8\mu$, and \CROWD{} stays within $1.00$--$1.03\times$ of the oracle --
including where Theorem~\ref{thm:nonlin} does not apply. \emph{(b)} Correlated
crowds: under pairwise correlation the gain stays between $5.7\times$ and $6.1\times$,
even where $\kappa$ exceeds one, and the competitive ratio stays in $[1.07,1.42]$,
while a common factor -- the invisible kernel of
Proposition~\ref{prop:corr} -- erodes it smoothly.}
\label{fig:misspec}
\end{figure}

\begin{figure}[t]
\centering
\includegraphics[width=0.85\textwidth]{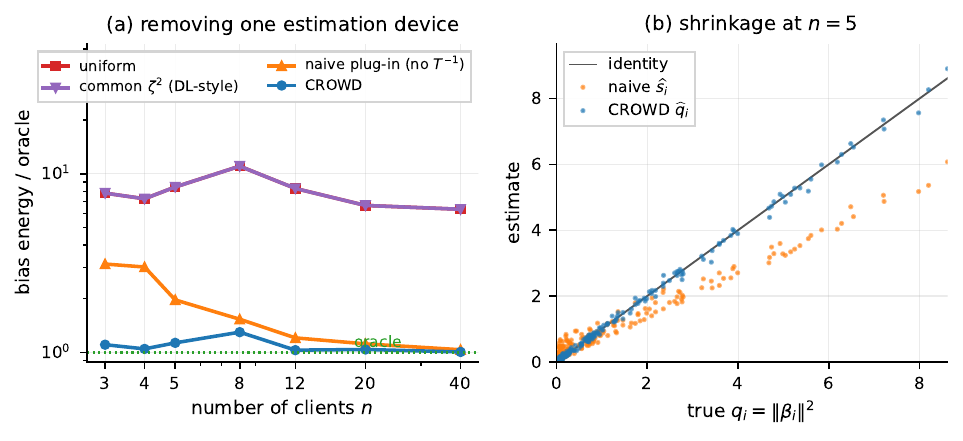}
\caption{The contribution of each of \CROWD's two estimation devices, on the default instance
with the crowd size $n$ swept over $\{3,\dots,40\}$ (12 seeds; medians).
\emph{(a)} Realised bias energy relative to the oracle. Fitting a \emph{single}
heterogeneity variance in the manner of random-effects meta-analysis is
indistinguishable from uniform weighting at every $n$ ($7.80$ versus $7.80$ at
$n=3$): the gain comes from the per-client second moments. Dropping the inversion
$T^{-1}$ and using the raw disagreement energy costs $3.1\times$ at $n=3$ and
$1.04\times$ at $n=40$, decreasing with $n$ as $C_n$ does.
\emph{(b)} Estimated versus true $q_i$ at $n=5$: the naive statistic is shrunk
towards the crowd mean with measured slope $0.634$ against the $1-2/n+1/n^2=0.64$ that
Lemma~\ref{lem:realised} predicts for a fit pooled over seeds, while \CROWD's slope is
$0.995$.}
\label{fig:baselines}
\end{figure}

\paragraph{What the baselines are, under their usual names.} Three of the rules of
Section~\ref{sec:exp} implement published families. \emph{Inverse-noise} is
precision-weighted federated learning \citep{reyes2021,toelle2025}.
\emph{No-inversion} is the scoring step of truth discovery \citep{chen2026td},
which by Lemma~\ref{lem:realised} estimates $(Tq)_i$ rather than $q_i$.
\emph{Common-$\zeta^2$} is the \citet{dersimonian1986} fit. \textsc{SCAFFOLD},
\textsc{FedProx} and their relatives change the optimiser rather than
the weights, and by Lemma~\ref{lem:transfer}(iii) every convergent optimiser stalls at
the floor set by $w$. Figure~\ref{fig:fedopt} checks this with $E=5$ local steps per
round ($8$ seeds). With
uniform weights, \textsc{FedAvg}, \textsc{FedProx} and \textsc{SCAFFOLD} all end at
$8.65$ times the oracle floor, against $8.67$ for the exact floor of uniform weighting.
With \CROWD's weights, computed from the first local gradient of each round,
\textsc{FedAvg} and \textsc{SCAFFOLD} both end at $1.05$. We also run a quadratic crowd whose
client Hessians differ by centred perturbations of norm $0.4\mu$, with a constant
local step so that client drift is present. There the three optimisers end at
$8.63$--$8.66$ with uniform weights, and both end at $1.07$ with \CROWD's weights. The
drift-correcting optimiser and the weights act on different errors, and they compose.

\begin{figure}[t]
\centering
\includegraphics[width=0.95\textwidth]{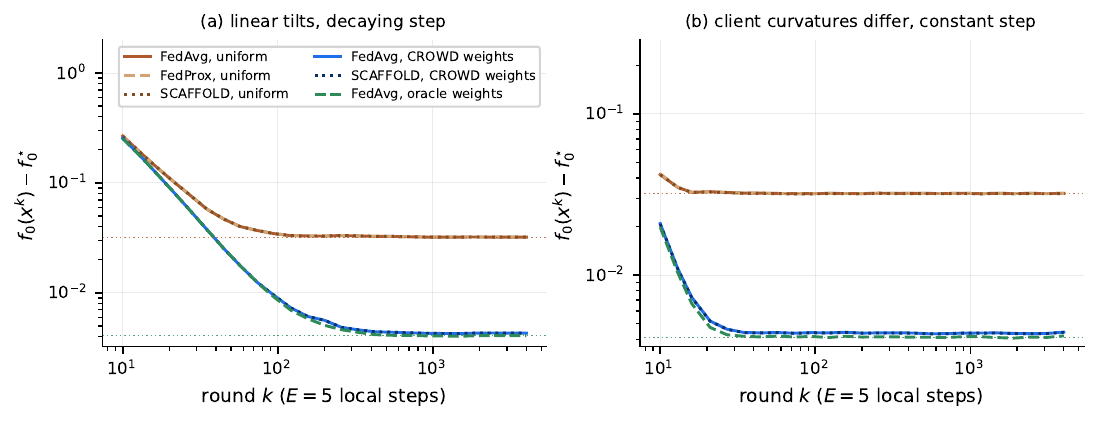}
\caption{Optimisers with local steps against weights ($n=20$, $E=5$, $8$ seeds,
medians). Dotted lines are the exact floors of uniform (upper) and oracle (lower)
weights. (a) Linear tilts, decaying step. (b) Client Hessians differ, constant local
step.}
\label{fig:fedopt}
\end{figure}

\subsection{Refresh schedules}
\label{app:refresh}

The synthetic runs of Section~\ref{sec:exp}
refresh at every round with a decaying stepsize, a schedule the theorem does not cover;
the real-data runs refresh once, after probing, as in Theorem~\ref{thm:endtoend}.
Figure~\ref{fig:refresh} compares the schedules on the default synthetic instance and
on the same instance with five times the noise ($12$ seeds). After $2\cdot10^4$ rounds,
refreshing every round and refreshing on the doubling grid ($14$ refreshes) end at
$1.08$ and $1.08$ times the oracle's exact floor under the default noise, and at $1.21$
and $1.21$ under the larger noise. A single refresh pays the bias of uniform weighting
until it happens, and if it happens early it freezes a large estimation error. Under
the larger noise a refresh at $k=100$ ends at $1.89$ times the floor, with an error in
the energies of $0.31\,\bar q$ against $0.11\,\bar q$ for the doubling grid. A single
refresh at $k=1000$ ends at $1.17$.

\begin{figure}[t]
\centering
\includegraphics[width=0.95\textwidth]{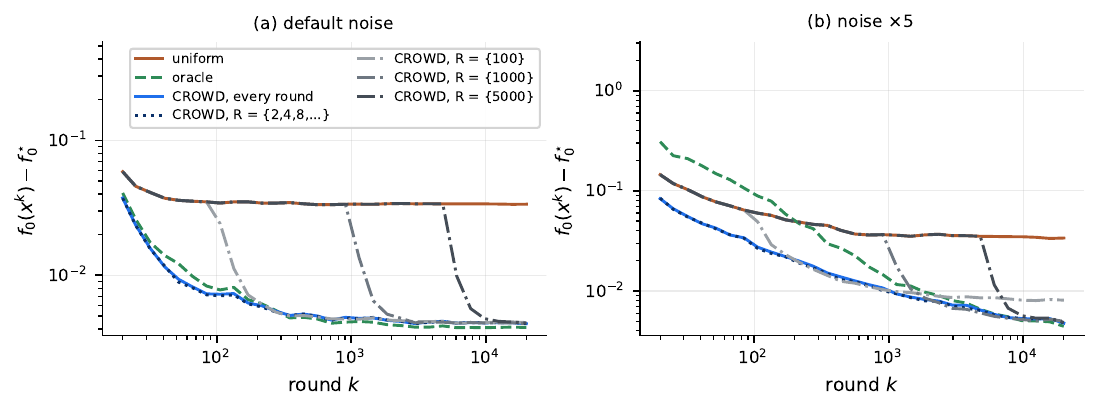}
\caption{Excess risk on the truth for different refresh sets $\mathcal R$, synthetic
instance, medians over $12$ seeds.
All schedules of one seed see the same gradient noise.}
\label{fig:refresh}
\end{figure}

\subsection{Diagnostics: when is a crowd incoherent, and what breaks the estimator}
\label{app:diag}

\begin{figure}[t]
\centering
\includegraphics[width=\textwidth]{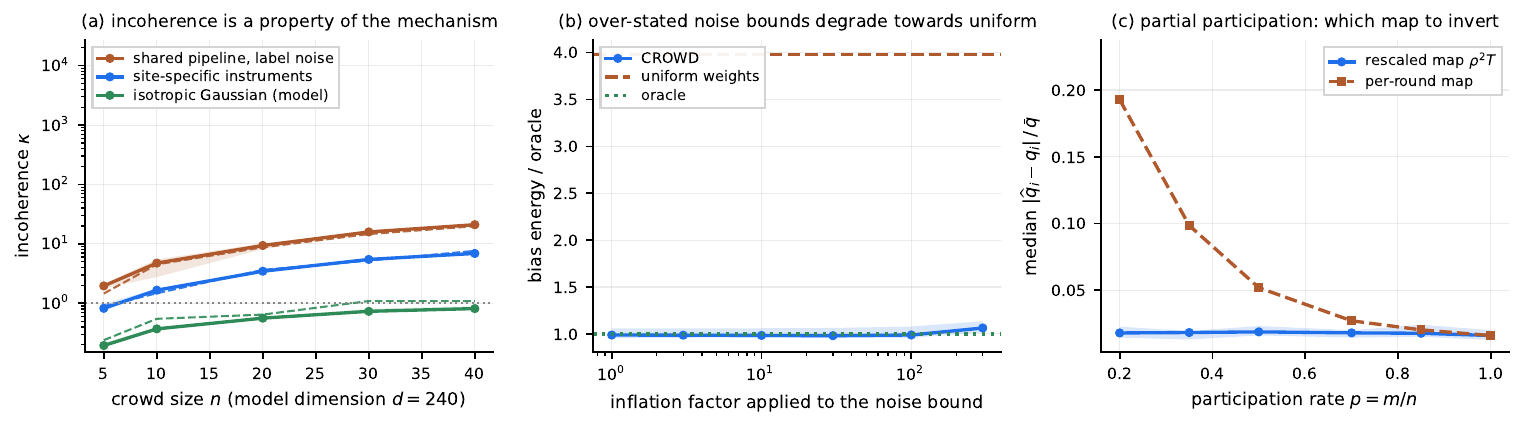}
\caption{Diagnostics.
(a) Incoherence $\kappa$ of the realised tilts under three mechanisms of heterogeneity,
at fixed model dimension $d=240$ ($5$ seeds, medians and interquartile bands; dashed
lines are the estimate $\wh\kappa$ of Proposition~\ref{prop:check}, computed from
residuals alone). (b) \CROWD{} with the noise bound $\bar\sigma_i^2$ inflated by a
factor $\rho$, bias energy relative to the oracle, $12$ seeds. (c) Median error of the estimated energies under
uniform partial participation, inverting the rescaled map of
Proposition~\ref{prop:partial} or the per-round map, $16$ seeds.}
\label{fig:diag}
\end{figure}

\paragraph{(a) Incoherence is a property of the heterogeneity mechanism.}
Assumption~\ref{as:incoh} is the one hypothesis of this paper that concerns the
realised instance rather than the model, so the question is which mechanisms produce
it. We compare three, all with the same
convex model ($\ell_2$-logistic regression on a frozen random feature map, parameter
dimension $d=240$) and the same crowd sizes.
\begin{itemize}[itemsep=1pt,topsep=2pt,leftmargin=6mm]
\item \emph{Shared pipeline, label noise.} Every client runs the same feature map on
exchangeable data and differs only through its label channel. Measured $\kappa$ rises
from $1.96$ at $n=5$ to $21.0$ at $n=40$.
\item \emph{Site-specific instruments.} Each client additionally applies its own
per-feature gain and offset before the shared map, drawn once per client -- the
motivating deployment, different machines at different hospitals. Measured $\kappa$
falls to $0.82$ at $n=5$ and $1.66$ at $n=10$.
\item \emph{Isotropic Gaussian tilts}, the model reference: $0.19$ at $n=5$ and $0.81$
at $n=40$, consistent with Lemma~\ref{lem:kappa}.
\end{itemize}
The pattern has a structural explanation, and it is the main limitation of the
method. Label-channel heterogeneity is \emph{low rank}: with $c$ classes the channel
deviations live in a space of dimension at most $c^2$, and all tilts factor through
one design matrix, so a crowd larger than that dimension cannot be incoherent whatever
$d$ is. Instrument heterogeneity has as many independent causes as there are
measured features, which pushes the threshold out. The condition $\kappa<1$
therefore compares $n$ with the \emph{number of independent causes of heterogeneity},
and Lemma~\ref{lem:kappa} is the special case in
which that number is $d$. A standard benchmark with a single frozen feature map and
synthetic label noise sits at the wrong end of this scale, which is why we do not
present one as supporting evidence.

The dashed curves are the estimate $\wh\kappa$ of Proposition~\ref{prop:check},
which uses no knowledge of the tilts. It tracks $\kappa$ across all three mechanisms
and over two orders of magnitude. For Gaussian tilts it over-states $\kappa$ at the
larger crowds, $1.09$ against $0.73$ at $n=30$, which errs towards refusing. So the
guard of Remark~\ref{rem:guard} is implementable: a deployment can estimate which
regime it is in before deciding to weight at all.

\paragraph{(b) Over-stated noise bounds.} The bound $\bar\sigma_i$ is the one input a
practitioner must supply, and the natural failure is to set it far too large. Inflating
$\bar\sigma_i^2$ by up to $100\times$ leaves the bias energy within $2\%$ of the oracle,
and a $300\times$ inflation costs $6\%$, against $3.98\times$ for uniform weighting on
the same instances. An inflated noise term is common to the
cost of every client up to the spread of $\sigma_i$, so it shrinks the weights towards
uniform, and
Proposition~\ref{thm:free} bounds that loss. Under-stating $\bar\sigma_i$ is the dangerous
direction, and there the guarantee is the one of Lemma~\ref{thm:weightsub} with
$\Delta$ enlarged by the shortfall.

\paragraph{(c) Partial participation.} At each round a uniformly drawn set of $m=pn$
clients reports, the residual is centred within it, and each client's two
half-averages are taken over the rounds in which it participated. Inverting the
rescaled full-crowd map $\rho^2T$ of Proposition~\ref{prop:partial} gives a median
error of $0.016$ at $p=1$ and between $0.018$ and $0.019$ for every $p$ down to $0.2$,
so at this probing budget the loss from partial participation is small. Inverting
instead the map of the per-round squared disagreement, which also contains the
variance of drawing the participants, leaves an error that grows as participation
falls, $0.020$, $0.027$, $0.052$, $0.098$ and $0.193$ at
$p=0.85,0.7,0.5,0.35,0.2$. That error is a bias of the per-round map and does not
decrease as $K_0$ grows.

\subsection{The applicability window, the guard, and a run on real data}
\label{app:regime}

\begin{figure}[t]
\centering
\includegraphics[width=\textwidth]{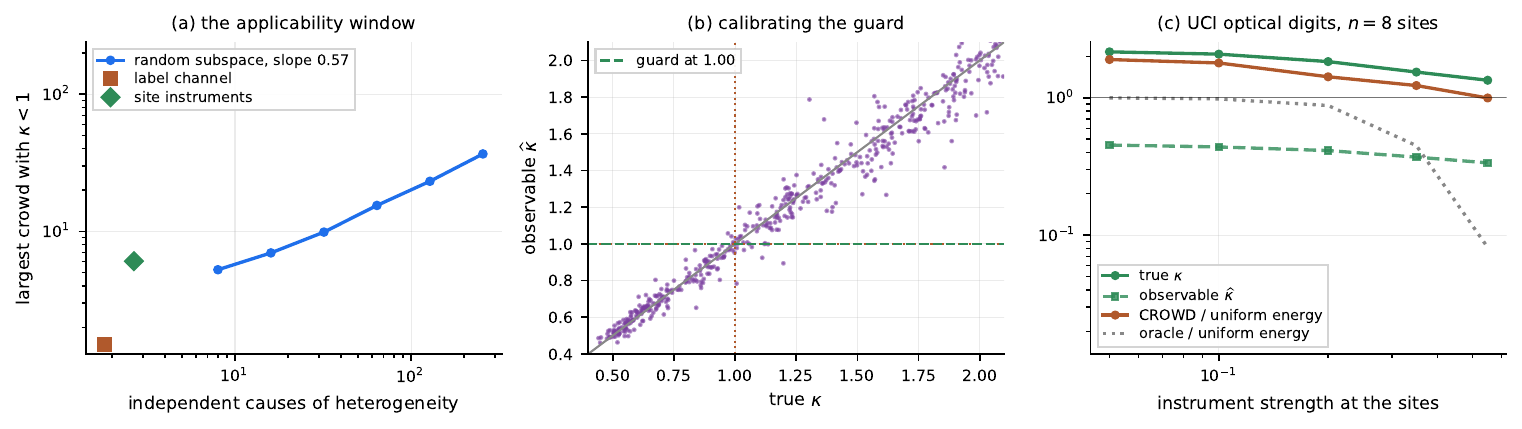}
\caption{The applicability window.
(a) The largest crowd for which the realised $\kappa$ still lies below one, against the
number of independent causes of heterogeneity, for tilts drawn from a random subspace
of that dimension ($7$ seeds); the two markers are the label-channel and site-instrument
mechanisms of Section~\ref{app:diag}, placed at their measured effective rank.
(b) Estimated $\wh\kappa$ against true $\kappa$ on $541$ instances with $n=20$,
$d=240$ and $2500$ probing rounds, in the linear regime.
(c) UCI optical digits with $n=8$ sites and per-site instrument distortion of varying
strength ($4$ seeds): true and estimated incoherence, and the bias energy of \CROWD{}
and of the oracle weights relative to uniform weighting.}
\label{fig:regime}
\end{figure}

\paragraph{(a) How wide is the window.} Section~\ref{app:diag} argued that
Assumption~\ref{as:incoh} is a condition on crowd size against the number of
independent causes of heterogeneity. Panel (a) turns that into a number. For tilts
drawn from a random subspace of dimension $r$ -- exactly $r$ independent causes -- the
largest crowd with $\kappa<1$ grows from $5.3$ at $r=8$ to $36.7$ at $r=256$, with
fitted exponent $0.57$. The two mechanisms sit at the left of that curve: measured
effective rank $1.8$ for label-channel heterogeneity with threshold $n^\star=1.5$, and
$2.7$ for site instruments with $n^\star=6.1$. The window is therefore narrow: with the site
instruments of Section~\ref{app:diag} incoherence already fails at ten clients, and the
whole-array federation of Section~\ref{app:gain} keeps $\kappa<1$ up to sixteen sites
and loses it by twenty-two. A federation in the regime the
theory describes needs heterogeneity with many independent causes, and
identifying such a federation is the practical question this paper leaves open.

\paragraph{(b) How tightly the guard can be set.} The decision $\wh\kappa<1$ is taken
where the estimate matters most, so panel (b) sweeps instances with $\kappa$ near one.
In the linear regime the mean absolute error of $\wh\kappa$ for $\kappa\in[0.8,1.2]$ is
$0.048$, its ninetieth percentile is $0.11$, and the probability that $\kappa\ge1$
given $\wh\kappa<1$ is $4.2\%$. No margin is needed there: the plain threshold already
holds the false-accept rate below five per cent.

\paragraph{(c) Real data, and a failure of the guard.} We run the protocol on UCI
optical digits ($1797$ real scans, ten classes). One frozen random feature map is
shared by all sites, the model is $\ell_2$-logistic regression, and there are eight
sites of unequal size. Each site has its own instrument: its scanner is miscalibrated
on its own subset of pixels, a minority of sites badly. Sweeping the distortion strength gives the following.
\begin{itemize}[itemsep=1pt,topsep=2pt,leftmargin=6mm]
\item \emph{Incoherence fails throughout.} The realised $\kappa$ runs from $2.16$ at the
weakest distortion to $1.34$ at the strongest, never below one. The crowd is small
($n=8$) and the parameter space large, so this is not a dimension effect; the tilts
share the digit structure.
\item \emph{The oracle gain is large.} The ratio $\AM/\HM$ of the realised energies
grows from $1.42$ to $11.85$, and the oracle weights reach $0.08$ times the uniform bias
energy at the strongest setting.
\item \emph{\CROWD{} does not realise it.} \CROWD{} attains $1.90,1.79,1.42,1.23,1.00$
times the uniform bias energy across the sweep -- never better than uniform. This is
what Lemma~\ref{thm:est}(iii) predicts once $\kappa>1$: the floor $3\kappa\bar q$
exceeds the spread of the $q_i$, and the inverted statistic carries no usable signal.
\item \emph{The guard fails here.} The estimate
$\wh\kappa$ reports $0.45$ down to $0.34$ across the sweep, against a true $\kappa$
between $2.16$ and $1.34$. This is an under-estimate by a factor three to five, on the
wrong side of the threshold. The cause is the staleness of Lemma~\ref{thm:est}. On a
strongly non-linear crowd the residual measured at the iterate is not the disagreement
at $x_0^\star$, and the same bias that inflates $\delta_i$ deflates the off-diagonal
Gram entries. Proposition~\ref{prop:check} is proved for the linear crowd, and panel (b)
checks it there. On this non-linear crowd $\wh\kappa$ under-reports by a factor we do
not control, and Remark~\ref{rem:guard} must be read with that restriction.
\item \emph{The standard construction is worse.} With an \textsc{IID} split and label
noise as the only heterogeneity, the same data give $\kappa=2.25$, $\AM/\HM=1.23$, and
\CROWD{} at $2.36$ times the uniform energy: the oracle gain is small and the
estimate is uninformative.
\end{itemize}
The three requirements of the method pull against each other on a small benchmark.
It needs many independent causes, so that $\kappa<1$; a wide spread of site quality,
so that $\AM/\HM$ is large; and perturbations small enough for the first-order model
to hold.
On UCI digits with eight sites we could not satisfy all three at once, and the
configuration above is reported as it stands.

\subsection{Three federations on the same real scans}
\label{app:gain}

\begin{figure}[t]
\centering
\includegraphics[width=\textwidth]{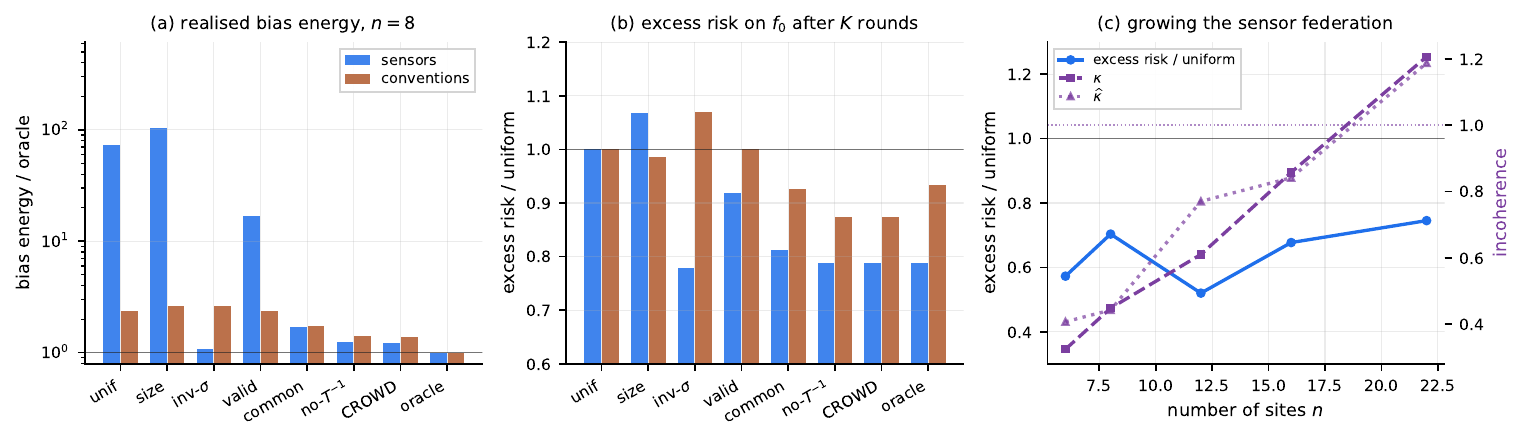}
\caption{UCI optical digits under two constructed federations ($n=8$, $8$ seeds,
medians). (a) Realised bias
energy relative to the oracle weights, log scale. (b) Excess risk on $f_0$ after
$K=2500$ rounds, relative to uniform weighting; below one is a gain. (c) The same ratio
as the sensor federation grows ($4$ seeds per $n$), with the measured and estimated
incoherence on the right axis.}
\label{fig:gainreal}
\end{figure}

\paragraph{What is real and what is constructed.} The scans are real: $1797$ scanned
$8\times8$ images, one frozen random feature map shared by all sites,
$\ell_2$-regularised logistic regression, so the learning problem is the convex one the
theory assumes. The \emph{federation} is not found in nature. We partition one dataset
into sites and impose a distortion on each. The independence of the tilts across
sites, their vanishing crowd mean and the spread of quality are consequences of that
construction, not measurements of an observed federation. So is $f_0$: we define it
as the risk averaged over forty draws from the population law of the distortions.
Assumption~\ref{as:crowd} then holds by construction, up to the Monte Carlo error of
those draws, and excess risk is evaluated exactly on them. The study
therefore tests a conditional claim: that the method behaves as predicted \emph{if}
heterogeneity is arranged this way. Whether federations with $\kappa<1$ occur in
practice is a question a constructed federation cannot settle. On real data \CROWD{}
and the oracle weight by $\wh q_i+\sigma_i^2/n_i$ and $q_i+\sigma_i^2/n_i$, the second
term being the variance of a site's mean gradient over its cohort, in place of the
$\bar\sigma_i^2/(bK)$ of Algorithm~\ref{alg:weights}. Three constructions follow, on the
same scans and the same model.

\begin{table}[h]
\centering
\small
\setlength{\tabcolsep}{3pt}
\begin{tabular}{@{}p{34mm}ccccccc@{}}
\toprule
Mechanism & $\kappa$ & $\wh\kappa$ & $\AM/\HM$ & size & inv-$\sigma$ & valid. & \CROWD{} \\
\midrule
Few pixels $+$ label channel & $1.34$--$2.16$ & $0.34$--$0.45$ & $1.4$--$11.9$ & --- & --- & --- & $\ge1^\dagger$ \\
Whole sensor arrays & $0.48$ & $0.52$ & $86.9$ & $1.069$ & $0.778$ & $0.918$ & $\mathbf{0.787}$ \\
Annotation conventions & $0.48$ & $0.53$ & $2.2$ & $0.985$ & $1.069$ & $1.000$ & $\mathbf{0.875}$ \\
\bottomrule
\end{tabular}
\caption{The three federations built on the same $1797$ real scans. The last four
columns are the \emph{excess} risk on $f_0$ after $K=2500$ rounds, relative to uniform
weighting: below one is a gain, and the oracle weights reach $0.789$ and $0.934$ in the
second and third rows. Row one is the failure of Section~\ref{app:regime}(c); the other
two are described below. $^\dagger$Bias energy relative to uniform
weighting, $1.00$--$1.90$; those runs do not record the risk.}
\label{tab:threefed}
\end{table}

\paragraph{Whole sensor arrays: incoherence holds and the estimator is accurate.}
Each site has its own detector array, miscalibrated across \emph{all} of its pixels
with a per-pixel gain and offset, and sites differ in severity over two decades. This
is the mechanism the motivating application is usually \emph{described} as having --
different machines at different hospitals -- reproduced on real images. The realised
incoherence is $\kappa=0.48$ and the estimate $\wh\kappa=0.52$, so the guard of
Remark~\ref{rem:guard} says proceed, correctly. The crowd-hypothesis ratio
$\norm{\bar\beta}^2/(\bar q/n)$, close to one for a centred independent crowd, is
$1.00$; the Spearman correlation between $\wh q$ and the realised energies is $0.95$,
and $\AM/\HM=86.9$. Uniform weighting sits at $72.9$ times the oracle bias energy and
\CROWD{} at $1.22$.

We report \emph{excess} risk $f_0(x)-f_0^\star$, not total risk: the total contains
$f_0^\star$, which no weighting can change, and ratios of totals understate every
effect. The end-of-run ratios are those of Section~\ref{sec:exp}; at the limit points
of the weighted objectives, computed exactly, \CROWD{} and the oracle reach $0.53$
and $0.54$ of the uniform excess. This federation cannot show the separation of
contribution~(1): a badly miscalibrated sensor inflates its site's gradient variance
as well as its bias, so inverse-noise weighting lands level with \CROWD{} ($0.52$ at
the limit). The mechanism confounds the two costs.

\paragraph{Annotation conventions: bias and noise pulled apart.} To separate them
we need a source of bias that does not scramble the covariates, and cohort sizes that do
not follow it. Every site now carries a detector array of the \emph{same} moderate
severity, which supplies the independent causes and keeps the gradient variances
comparable. Sites differ instead through their annotation convention: a site-specific
confusion channel applied to the labels, with strength spread over two decades. The
cohort sizes are then assigned \emph{against} quality, so the most badly conventioned
site receives the largest cohort. Such a site is quiet and biased, and every rule that
reads precision must prefer it. Precision weighting is then \emph{worse than not weighting
at all} and \CROWD{} beats the oracle (Section~\ref{sec:exp}); size-proportional
weighting reaches $0.985$, the no-inversion variant $0.874$ and the common-$\zeta^2$
fit $0.926$. At the limit points the ordering is the same: \CROWD{} $0.86$, the
oracle $0.92$, inverse-noise $1.01$. The
oracle weights minimise the realised bias energy, a first-order proxy for the excess
risk, and on this instance the proxy is loose (Section~\ref{app:region}). The
federation lies inside the region $\kappa<1$, with $\kappa=0.48$ and
$\wh\kappa=0.53$, so the guard says proceed. The result is consistent with
contribution~(1): the rule that reads precision does worse than uniform weighting, and
\CROWD{} does better.

\paragraph{Accuracy.} Held-out accuracy, measured on clean labels, moves with the risk
in both federations. It rises from $0.8469$ under uniform weighting to $0.8570$ under
\CROWD{} in the sensor federation, and from $0.5632$ to $0.5884$ in the convention one,
where inverse-noise weighting falls to $0.5231$. The theorems bound the risk under $f_0$, not
the accuracy, so we treat accuracy as corroborating evidence only.

\paragraph{The guard on these federations.} Both give $\wh\kappa$ on the correct side
of the threshold, $0.52$ against $\kappa=0.48$ on the sensor federation and $0.53$
against $0.48$ on the convention one. The drift check of Remark~\ref{rem:staleguard}
raises no alarm on either probe.

\paragraph{Validation-tuned weights.} The only validation data a federation has are
those of its sites. In that baseline each site scores the
consensus model on its own held-out split, weights are taken proportional to a power of
the inverse score, and the exponent is chosen by the pooled held-out loss. In the
convention federation it selects the uniform exponent and returns exactly uniform
weighting, $1.000$; in the sensor federation it reaches $0.918$, against $0.787$ for
\CROWD{}. This follows from Proposition~\ref{prop:ident-neg}(i): held-out data drawn from a
site's own tilted distribution measure how well the model fits \emph{that site}, and no
sample from $f_0$ exists to measure anything else. Validation tuning therefore detects
sites that the model fits poorly, which need not be the sites with large tilts.

\paragraph{Beyond $\kappa<1$.} Section~\ref{app:region} sweeps the common detector
severity and finds \CROWD{} below uniform weighting on both sides of $\kappa=1$, at
$0.851$ when $\kappa=1.28$ as well as at $0.832$ when $\kappa=0.37$.
Incoherence is what makes Proposition~\ref{thm:free} non-vacuous; the estimator itself
needs less, and the floor $3\kappa\bar q$ of Lemma~\ref{thm:est}(iii) is worst case
over sign patterns. Closing the gap between the condition the theory uses and the condition
the estimator needs is left open.

\paragraph{How far the crowd can grow.} Panel (c) of Figure~\ref{fig:gainreal} grows the
sensor federation on the
same data. The excess risk relative to uniform weighting is
$0.57$, $0.70$, $0.52$, $0.68$ and $0.75$ at $n=6,8,12,16,22$, while the measured
$\kappa$ rises through $0.32$, $0.45$, $0.61$, $0.86$ and $1.21$. The gain persists
over the whole range, including $n=22$ where $\kappa>1$, and with four seeds per point
the series is too noisy to locate a threshold. The evidence that the window closes is
the incoherence measurement itself, here and in Section~\ref{app:regime}(a). The
intended scale is a federation of a few to a dozen sites with physically distinct
instruments; device-scale federated learning lies outside it.

\subsection{The gain over a region of settings}
\label{app:region}

\begin{figure}[t]
\centering
\includegraphics[width=\textwidth]{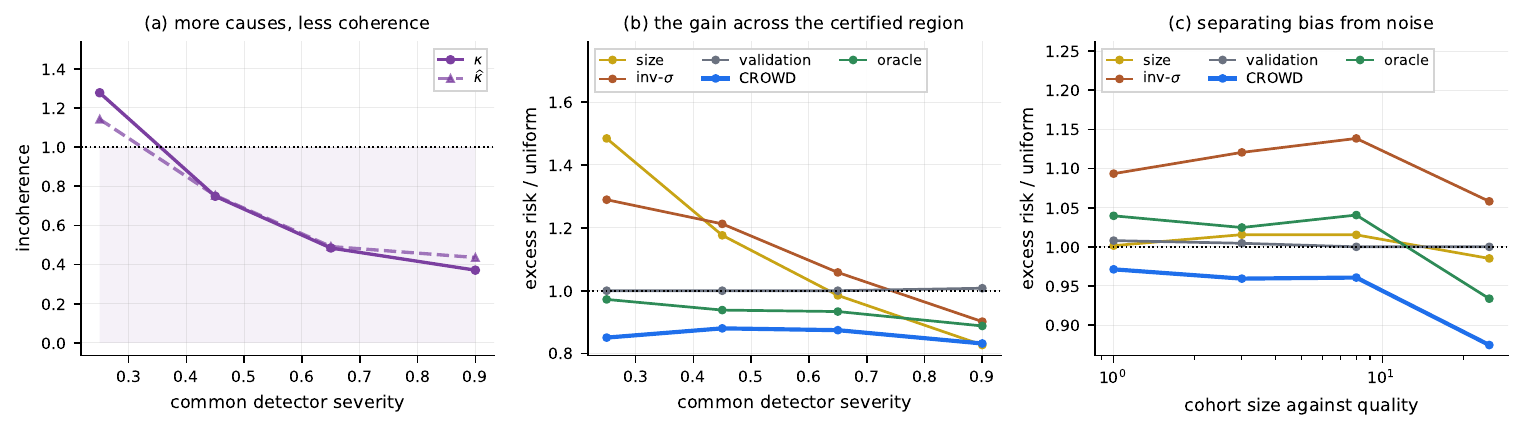}
\caption{Sweeping the two knobs of the convention federation ($n=8$, $6$ seeds,
medians). (a) The common
detector severity controls incoherence, the shaded band marking the region
$\kappa<1$. (b) Excess risk relative to uniform weighting along that axis. (c) The same
against the cohort-size contrast, which controls how far bias and sampling noise are
pulled apart. Below one is a gain.}
\label{fig:region}
\end{figure}

The federation of Section~\ref{app:gain} is built rather than found, so what needs
testing is whether the gain holds over a region of settings. We sweep the two knobs the theory says
should matter, keep everything else fixed, and fix the predictions in advance. The
\emph{common detector severity} is the same at every site, so it creates no quality
spread; what it supplies is independent causes of heterogeneity, which should push
$\kappa$ down. The \emph{cohort-size contrast} is how strongly the cohort sizes run
against quality, which should decide whether a precision rule can see the bias at all.

\paragraph{Along the incoherence axis.} Raising the common severity from $0.25$ to
$0.90$ takes $\kappa$ from $1.28$ to $0.37$ and the estimate $\wh\kappa$ from $1.14$ to
$0.44$, so the estimate tracks the crossing. \CROWD{} reaches
$0.851,0.880,0.875,0.832$ of the uniform excess risk along that axis: a gain at every
point, on both sides of $\kappa=1$, and below the oracle diagonal weights
($0.972,0.938,0.934,0.888$) at every point. Precision-weighted inverse-noise reaches
$1.289,1.213,1.058,0.902$ and size-proportional weighting $1.484,1.176,0.985,0.827$.
Both lose at the smaller severities and gain at the largest, where the detector
distortion, of the same scale at every site, outweighs the conventions, so that large
cohorts are no longer the badly biased ones.

\paragraph{Along the decoupling axis.} With the severity fixed at $0.65$, so that
$\kappa$ stays between $0.46$ and $0.52$ throughout, and the cohort contrast swept from
$1$ to $25$, \CROWD{} gives $0.971,0.959,0.961,0.875$ and inverse-noise
$1.094,1.121,1.139,1.058$. The two sweeps share one setting, so they cover seven
distinct ones. \CROWD{} improves on uniform weighting at all seven and inverse-noise
weighting is worse than uniform at six. The separation of contribution~(1) therefore
holds over a region of the mechanism's parameters.

\paragraph{Validation-tuned weights.} Along both axes validation tuning returns
essentially uniform weights, at $1.000$ to $1.008$ of the uniform excess risk. A site's
own held-out loss measures how well the consensus model fits that site, and by
Proposition~\ref{prop:ident-neg}(i) no held-out sample from $f_0$ exists to measure
anything else.

\paragraph{Where the oracle weights lose.} The oracle diagonal weights do worse than
\CROWD{} at every setting, and worse than uniform weighting at the three smaller
contrasts ($1.025$ to $1.041$). They minimise the realised bias energy, which is a
first-order proxy for the excess risk: it ignores the curvature of $f_0$, the
curvature differences between sites, and the error in the sampling term
$\sigma_i^2/n_i$. On a real instance the error of this proxy can exceed the error of the
estimator.

\subsection{FEMNIST: writers as a natural federation}
\label{app:femnist}

The federations of Sections~\ref{app:gain} and~\ref{app:region} are constructed.
Federated EMNIST supplies a crowd without construction: each client is one real
writer, and nothing is distorted or relabelled.

\paragraph{Setup.} We use the digits of $1000$ randomly chosen writers of the release,
about $100$ training digits each, one frozen random ReLU layer $784\to128$ shared by all
writers, and $\ell_2$-regularised logistic regression with coefficient $10^{-3}$. In
each seed $400$ writers form a pool whose writer-averaged risk is $f_0$, and $n$ other
writers are the clients, each with the empirical risk of its own training digits.
Probing takes $K_0=2000$ rounds with batch $16$ after a burn-in of $600$, and training
takes $K=3000$ rounds under each rule with identical seeds. The excess risk is
evaluated exactly on $f_0$. \CROWD{} and the oracle add the noise term $\sigma_i^2/(bK)$
of Algorithm~\ref{alg:weights}. We run $n\in\{10,20,40,80\}$ with $12$ seeds each.

\paragraph{Why the gain of natural writers fades with $n$.} The first four rows of
Table~\ref{tab:femnist} and Figure~\ref{fig:exp-c} show the
natural federation. With ten writers
$\kappa=0.52$, and \CROWD{} reaches $0.897$ of the uniform excess risk, as does the
oracle; it beats uniform weighting in all twelve seeds. The gain then fades: $0.922$
at $n=20$, $0.991$ at $n=40$, and at $n=80$ every weighting rule, the oracle included,
is about $10\%$ worse than uniform weighting. As $n$ grows, two effects combine. The
excess risk of uniform weighting halves with each doubling of $n$ (the ``uniform
excess'' column), as the
bias energy $\AM/n$ of the uniform crowd does, so the weights act on a shrinking part
of the risk. And $\kappa$ roughly doubles, from $0.52$ to $4.08$, so the estimate of
the energies degrades. The first effect is why even the oracle loses its gain, as the amplified series
below confirms.

\paragraph{Keeping the bias from vanishing.} To separate the two effects we amplify each
writer's style. Every digit is moved by $(\alpha-1)$ times the difference between its
writer's mean image for that class and the pooled class mean. The writers' mean
deviation stays zero, so the crowd remains centred, while every tilt grows with
$\alpha$. The incoherence would be unchanged by a pure rescaling of the tilts; here it
changes little up to $n=40$ ($1.10$ against $1.09$, $1.97$ against $2.07$). With $\alpha=\sqrt{n/10}$ the uniform crowd's excess risk decays much more
slowly, from $0.70$ at $n=10$ to $0.28$ at $n=80$ instead of $0.10$. The gain
persists (Figure~\ref{fig:exp-c}): \CROWD{} reaches $0.922$ at $n=20$ and $0.929$ at
$n=40$, beating uniform weighting in eleven and ten of twelve seeds, although
$\kappa\approx2$ at $n=40$. At $n=80$, $\kappa=3.39$ and the estimate stops working: the oracle still
reaches $0.971$, but \CROWD{} reaches $1.027$ and beats uniform weighting in five of
twelve seeds.

\begin{table}[h]
\centering
\small
\setlength{\tabcolsep}{3.5pt}
\begin{tabular}{@{}lcccccccc@{}}
\toprule
Writers & $\alpha$ & $\kappa$ & $\wh\kappa$ & uniform excess & inv-$\sigma$ & \CROWD{} & oracle & \CROWD{}${}<{}$unif. \\
\midrule
$10$ & $1$ & $0.52$ & $0.50$ & $0.70$ & $0.904$ & $\mathbf{0.897}$ & $0.897$ & $12/12$ \\
$20$ & $1$ & $1.09$ & $0.92$ & $0.38$ & $0.946$ & $\mathbf{0.922}$ & $0.909$ & $10/12$ \\
$40$ & $1$ & $2.07$ & $1.87$ & $0.19$ & $0.973$ & $0.991$ & $0.980$ & $6/12$ \\
$80$ & $1$ & $4.08$ & $3.54$ & $0.10$ & $1.106$ & $1.112$ & $1.102$ & $0/12$ \\
\midrule
$20$ & $1.41$ & $1.10$ & $0.93$ & $0.51$ & $0.925$ & $\mathbf{0.922}$ & $0.891$ & $11/12$ \\
$40$ & $2$ & $1.97$ & $1.75$ & $0.38$ & $0.930$ & $\mathbf{0.929}$ & $0.920$ & $10/12$ \\
$80$ & $2.83$ & $3.39$ & $3.03$ & $0.28$ & $1.010$ & $1.027$ & $0.971$ & $5/12$ \\
\bottomrule
\end{tabular}
\caption{FEMNIST, medians over $12$ seeds. The last four columns are the excess risk on
$f_0$ relative to uniform weighting and the number of seeds in which \CROWD{} beats
uniform weighting; ``uniform excess'' is the absolute excess risk of uniform weighting.
Size-proportional weighting, validation tuning and the common-$\zeta^2$ fit stay
within $0.01$ of uniform weighting in every row.}
\label{tab:femnist}
\end{table}

\paragraph{The trade-off.} The weights help when three conditions hold together. First, the
crowd's summed tilt must vanish slowly as $n$ grows, so that the bias term is a
sizeable part of the risk. Second, the clients must differ in quality, here only
mildly ($\AM/\HM\approx1.7$ to $2.3$, against $86.9$ in the sensor federation). Third,
the crowd must not be much larger than the number of independent causes of
heterogeneity, or the estimate fails. On FEMNIST the first condition limits natural writers to about twenty,
and amplification extends the window to forty, where the third condition takes over.
The theory's sufficient condition $\kappa<1$ is conservative here: \CROWD{} keeps its
gain up to $\kappa\approx2$.

\paragraph{Noise and accuracy.} Inverse-noise weighting tracks \CROWD{} throughout. A
writer whose digits are hard to fit has both a larger gradient variance and a larger
tilt, as with the miscalibrated sensors of Section~\ref{app:gain}, so FEMNIST does not
separate bias from noise; only the convention federation does. At the limit points of
the weighted objectives every ratio changes by less than $0.02$. Accuracy on the pool
writers' test digits moves with the risk, from $0.729$ under uniform weighting to
$0.734$ under \CROWD{} at $n=10$ (oracle $0.741$). The guard is wrong at $n=20$, where
$\wh\kappa\approx0.92$ against $\kappa\approx1.1$, the under-reporting of
Section~\ref{app:regime}(c); in these runs \CROWD{} still gains there.

\subsection{Neural networks outside the theory}
\label{app:mlp}

Every guarantee of the paper assumes a strongly convex truth. To see what happens
without it, we train a two-layer network, $784\to64\to10$ with ReLU and
$\ell_2$-coefficient $10^{-4}$, end to end on the raw FEMNIST pixels. The clients are
the writers of Section~\ref{app:femnist} (batch $16$, step $0.3$, burn-in $600$,
$K_0=2000$ probing rounds, $K=3000$ training rounds). The truth $f_0$ is again the writer-averaged risk of $400$ pool writers. It is
non-convex, so we report $f_0(x)-f_0(x_{\mathrm{ref}})$ relative to uniform weighting,
with $x_{\mathrm{ref}}$ a network trained on the pool by minibatch SGD from the common
initialisation. The oracle weights by the energies of $\nabla f_i-\nabla f_0$ at
$x_{\mathrm{ref}}$, which is a local reference and not a target the theory defines.

The risk ratios, incoherences and win rates are those of Table~\ref{tab:nn} in the
main text ($8$, $8$ and $6$ seeds for the three rows). \CROWD{} lowers the risk in
every setting, by $5$ to $11\%$ in the median, although the incoherence of the tilts
at the reference point exceeds one throughout. Inverse-noise weighting and the oracle
do well with ten writers and poorly with forty amplified ones, where they are $43\%$
and $33\%$ worse than uniform weighting. \CROWD{} measures the disagreement along its
own trajectory rather than at a fixed point, and it is the only rule that improves on
uniform weighting in all three settings. Accuracy on the pool writers' test digits
moves by less than one point in either direction ($0.807\to0.815$, $0.862\to0.859$
and $0.852\to0.846$ under \CROWD{}). We read this as a stress test: the mechanism
carries over to a non-convex model, and the size of the effect is modest.

\paragraph{A convolutional network on the digit scans, and the edge of the window.} To move further
from the theory we replace the frozen feature map of the sensor federation
(Section~\ref{app:gain}) by a network trained end to end on the raw scans. It has a
conv $3{\times}3$ layer with eight channels, ReLU, $2{\times}2$ max-pooling and a
linear layer, with $\ell_2$-coefficient $10^{-3}$, and is trained on each site's
distorted $8\times8$ images ($n=8$, batch $32$, step $0.15$, burn-in $600$,
$K_0=2000$, $K=2500$, $6$ seeds). The step was fixed on a pilot seed outside the
evaluation set. The truth is defined as in
Section~\ref{app:gain}, on $40$ population draws, and the protocol is that of the
two-layer network above, with the reference $\theta_{\mathrm{ref}}$ trained on the
population draws. The energies at the reference spread over three decades
($\AM/\HM\approx2\cdot10^3$), and the realised incoherence is $\kappa=1.63$ in the
median. The cross-fitted statistic is uncorrelated with the realised energies
(Spearman $-0.15$). No diagonal rule improves on uniform weighting
(Table~\ref{tab:nn}). The no-inversion and common-variance rules reach $1.06$, as
\CROWD{} does, the fixed-point oracle is again the worst rule, and accuracy changes
by less than one point. The guard of Remark~\ref{rem:guard} declines in five of six
seeds and would return uniform weighting, which is the better choice on this
instance. On the two-layer network it declines as well, in $20$ of $22$ seeds,
although unguarded \CROWD{} improves on uniform weighting there. On these models the
guard is therefore conservative: it does not separate the case where weighting
helps from the case where it does not.

\end{document}